\DeclareFontFamily{U}{matha}{\hyphenchar\font45}
\DeclareFontShape{U}{matha}{m}{n}{
      <5> <6> <7> <8> <9> <10> gen * matha
      <10.95> matha10 <12> <14.4> <17.28> <20.74> <24.88> matha12
      }{}
\DeclareSymbolFont{matha}{U}{matha}{m}{n}
\DeclareFontFamily{U}{mathx}{\hyphenchar\font45}
\DeclareFontShape{U}{mathx}{m}{n}{
      <5> <6> <7> <8> <9> <10>
      <10.95> <12> <14.4> <17.28> <20.74> <24.88>
      mathx10
      }{}
\DeclareSymbolFont{mathx}{U}{mathx}{m}{n}
\DeclareMathSymbol{\obot}         {2}{matha}{"6B}
\DeclareMathSymbol{\bigobot}       {1}{mathx}{"CB}
\numberwithin{equation}{section}
\theoremstyle{plain}
\newtheorem{proposition}{Proposition}[subsection]
\newtheorem{conj}[proposition]{Conjecture}
\newtheorem{cor}[proposition]{Corollary}
\newtheorem{lem}[proposition]{Lemma}
\newtheorem{thm}[proposition]{Theorem}
\newtheorem{prop}[proposition]{Proposition}
\theoremstyle{definition}
\newtheorem{defn}[proposition]{Definition}
\newtheorem{asmp}[proposition]{Assumption}
\theoremstyle{remark}
\newtheorem{rmk}[proposition]{Remark}
\numberwithin{equation}{section}
  \newcommand{\BA}{{\mathbb {A}}} \newcommand{\BB}{{\mathbb {B}}}
    \newcommand{\BC}{{\mathbb {C}}} \newcommand{\BD}{{\mathbb {D}}}
     \newcommand{\BF}{{\mathbb {F}}}
    \newcommand{\BG}{{\mathbb {G}}} \newcommand{\BH}{{\mathbb {H}}}
     \newcommand{\BP}{{\mathbb {P}}}
    \newcommand{\BQ}{{\mathbb {Q}}} \newcommand{\BR}{{\mathbb {R}}}
     \newcommand{\BV}{{\mathbb {V}}}
     \newcommand{\BZ}{{\mathbb {Z}}}
    \newcommand{\cA}{{\mathcal {A}}} 
    \newcommand{\cE}{{\mathcal {E}}} \newcommand{\cF}{{\mathcal {F}}}
     \newcommand{\cH}{{\mathcal {H}}}
    \newcommand{\cK}{{\mathcal {K}}} 
    \newcommand{\cM}{{\mathcal {M}}} \newcommand{\cN}{{\mathcal {N}}}
    \newcommand{\cO}{{\mathcal {O}}} \newcommand{\cP}{{\mathcal {P}}}
    \newcommand{\cS}{{\mathcal {S}}} 
     \newcommand{\cV}{{\mathcal {V}}}
     \newcommand{\cX}{{\mathcal {X}}}
    \newcommand{\cY}{{\mathcal {Y}}} \newcommand{\cZ}{{\mathcal {Z}}}
    \newcommand{\RM}{{\mathrm {M}}}
    \newcommand{\fa}{{\mathfrak{a}}}
    \newcommand{\wt}{\widetilde}\newcommand{\ol}{\overline}
    \newcommand{\wh}{\widehat}
    \newcommand{\pair}[1]{\langle {#1} \rangle}
    \newcommand{\incl}{\hookrightarrow}
    \newcommand{\bsl}{\backslash}
 \newcommand{\vep}{\varepsilon} \newcommand{\ep}{\epsilon}
  \newcommand{\vpl}{\varprojlim}
 \newcommand{\vil}{\varinjlim}  
 \newcommand{\lb}{\left(} \newcommand{\rb}{\right)}
 \newcommand{\ab}{{\mathrm{ab}}}
    \newcommand{\ad}{{\mathrm{ad}}}
\newcommand{\homology}{{\mathrm{hom}}}
    \newcommand{\Ch}{{\mathrm{Ch}}}
\newcommand{\coh}{{\mathrm{coh}}}
    \newcommand{\End}{{\mathrm{End}}}
    \newcommand{\Frac}{{\mathrm{Frac}}}
    \newcommand{\Gal}{{\mathrm{Gal}}} \newcommand{\GL}{{\mathrm{GL}}}
\newcommand{\rO}{{\mathrm{O}}}
\newcommand{\GO}{{\mathrm{GO}}}
    \newcommand{\Hom}{{\mathrm{Hom}}}
    \newcommand{\id}{{\mathrm{id}}}
    \newcommand{\inv}{{\mathrm{inv}}} \newcommand{\Inv}{{\mathrm{Inv}}}
    \newcommand{\Jac}{{\mathrm{Jac}}}
    \newcommand{\ord}{{\mathrm{ord}}} 
     \newcommand{\Pic}{\mathrm{Pic}}
   \newcommand{\bc}{{\mathrm{bc}}}
\newcommand{\nspl}{{\mathrm{nspl}}}
\newcommand{\Nm}{{\mathrm{Nm}}}
        \renewcommand{\Re}{{\mathrm{Re}}}
           \newcommand{\reg}{{\mathrm{reg}}}
    \newcommand{\supp}{{\mathrm{supp}}}
         \newcommand{\SL}{{\mathrm{SL}}}
    \newcommand{\Spec}{{\mathrm{Spec}}}     
    \newcommand{\SO}{{\mathrm{SO}}}
    \newcommand{\sgn}{{\mathrm{sgn}}}
    \newcommand{\ur}{{\mathrm{ur}}}  
    \newcommand{\Vol}{{\mathrm{Vol}}}
    \newcommand{\Tr}{{\mathrm{Tr}}}
\newcommand\supervisor[1]{\def\@supervisor{#1}}
\newcounter{elno}
\renewcommand{\cong}{\simeq}
 \author{Congling Qiu} 
\begin{document} 
 \subjclass[2010]{Primary 	11G18, 11G40, 11F12, 11F27, 11F72}
\keywords{CM cycles, Kuga--Sato varieties, Gross--Zagier formula, modularity}
\address{Department of Mathematics, Yale University, New Haven CT 06511, United States}
\email{congling.qiu@yale.edu}
\title{Modularity and Heights of CM cycles on Kuga--Sato varieties}

\maketitle 
\begin{abstract}
We  prove  a  higher weight general Gross--Zagier formula  for CM cycles on Kuga--Sato varieties over   modular curves of arbitrary levels.
To formulate and prove this result,  we  prove several results on the  modularity  of  CM cycles, in the sense that the Hecke modules they generate are semisimple modules
 whose  irreducible components are associated to  higher weight holomorphic cuspidal automorphic representations  of $\GL_{2,\BQ}$.
 These two types of results provide evidence toward two conjectures of Beilinson--Bloch.
The higher weight general 
  Gross--Zagier formula is proved using arithmetic relative trace formulas. The proof of the modularity  of  CM cycles is  inspired by  arithmetic theta lifting.

   \end{abstract}

 \tableofcontents   
 
 \addtocontents{toc}{\protect\setcounter{tocdepth}{1}}

   \section{Introduction}
   \subsection{Gross--Zagier formula and generalizations}

      Let $K$ be an  imaginary quadratic field.  On  modular curves,  there are  explicit algebraic points representing  elliptic curves with CM, i.e. complex multiplication, by $K$.   
They  were studied by  Heegner  in the  case of the modular curve $X_0(N)$   for some $N$, and  thus called  Heegner  points in this case.     
   In \cite{GZ}, Gross and Zagier established a remarkable  formula  
    that relates N\'eron--Tate  heights of   Heegner points  and     central derivatives of     base change  $L$-functions associated to holomorphic modular forms of weight 2 for  the modular curve $X_0(N)$. 
 Explicitly, 
for  a holomorphic   newform $f$ of weight 2, let $J_0(N)_f$ be the $f$-isotypical component ($f$-component for short) of the Jacobian of  the modular curve  $X_0(N)$.
For 
 a degree-0 divisor  $P$ of Heegner points  modified by the cusps, the Gross--Zagier formula says that 
 under  the Heegner condition on $K$ and $N$, 
the $f$-component $P_f\in J_0(N)_f$ 
has height
$$\pair{P_f,P_f}=c \cdot L'(f,K,1)$$
for an explicit nonzero constant $c$.
Since the first day of its establishment, the  Gross--Zagier formula  has been the  best evidence toward the Birch--Swinnerton-Dyer conjecture.

The Gross--Zagier formula has been generalized in many aspects. In the content of modular curves,  the two definite aspects are the  level aspect and  the weight aspect. We summarize some of  the relevant results in the following table, and explain them in the next paragraph. 
 \begin{table}[thb]
\centering
\caption{}
    \label{tab:my_label1}
\begin{tabular}{|l|c|c|}\hline
\diaghead{  \ \ \ \ \ \  \    \ \ \ \ \ \  \     }%
{Weight}{Level}&
 {$X_0(N)$ with Heegner condition}& {Arbitary modular curves}\\    \hline
2& Gross and Zagier (1986) & Yuan, S. Zhang, and W. Zhang (2013)\\    \hline

$2k>2$& S. Zhang (1997)&   \\    \hline
\end{tabular} \end{table}

  In the weight 2 case, Yuan, S. Zhang and W. Zhang \cite{YZZ} proved the   Gross--Zagier formula  at arbitrary levels. (In fact, they proved it even for
    all quaternionic  Shimura curves.) Moreover, their formula is in terms of general CM points and automorphic  forms, and thus deserves to be called the  general Gross--Zagier formula.
      It was   built on previous works of S. Zhang \cite{Zha1,Zha01}.
  In the higher  weight $2k>2$, 
 S. Zhang 
  \cite{Zha97} proved a  Gross--Zagier  type  formula for  
Heegner cycles  on Kuga--Sato varieties over $X_0(N)$ and     modular forms of weight $2k$. 
The purpose of this paper is to fill in the blank in   Table \ref{tab:my_label1}.

\subsection{Results} 
The main result  (Theorem \ref{stronghtderthm}) of this paper is  a  higher weight    analog of the 
 general Gross--Zagier formula    on Kuga--Sato varieties over   modular curves.      It serves as evidence toward the conjecture of Beilinson and Bloch \cite{Bei,Blo} that  is a higher dimensional generalization of the Birch--Swinnerton-Dyer conjecture. 
 
 The formulation of this main result is   automorphic 
 and  aligns  to  the one of  Yuan, S. Zhang and W. Zhang \cite{YZZ}.
To formulate   this result, we need two modularity results (Theorem \ref{strongmodularity} and Theorem \ref{strongermodularity}) for  the Hecke modules of CM cycles. 
 In this introduction, for simplicity, we only state  one modularity result (Theorem \ref{strongmodularityintro}) which is enough to formulate a weaker version (Theorem \ref{mainint}) of our main result. The formulation of this weaker version is closer to the original Gross--Zagier formula.

Let $X$ be the modular curve $X(N)$  over $\BQ$ where $N\geq 3$ so that  $X(N)$   is a fine moduli space. 
Let   $k\geq 2$  be  an  integer.  
The Kuga--Sato variety $ Y $ is   the canonical desingularization  of the $(2k-2)$-tuple product of  the universal generalized elliptic curve over $X$ (see \cite[2.2]{Zha97}).
For an elliptic curve $A$   with CM by $K$ that represents a CM point on $X$,   identify 
$A^{2k-2}$ as the fiber of $Y$ over this CM point and  define the CM cycle $Z(A)$   on $A^{2k-2}\subset Y$ as follows. 
   Fix an embedding of  $K$ to $\BC$.  
        Choose   a positive integer $D$ such that $\sqrt{-D}\in K$ and is an endomorphism  of $A$. 
          Let $\Gamma$ be the graph of   $\sqrt{-D} $. Let $\Gamma_0$ be the divisor $$\Gamma-A\times \{0\}-D\{0\}\times A$$ on $A\times A$.
Then $\Gamma_0^{k-1}$ is a $(k-1)$-cycle on  $A^{2k-2}$.  
 Define the CM cycle on $A^{2k-2}$ to be $$Z(A)=c\sum_{\sigma }\sgn(\sigma) \sigma^*\lb \Gamma_0^{k-1}\rb   , $$
where $\sigma$ runs over the symmetric group of $2k-2$ elements which acts on $A^{2k-2}$ naturally, and 
$c$ is a positive constant such that the self intersection  number of $Z(A) $ in $A^{2k-2} $ is $(-1)^{k-1}$. 
 Then the Chow cycle $[Z(A)]$ of $Z(A) $ does not depend on the choice of $D$ (see \cite[Proposition 2.4.1]{Zha97}). We understand $Z(A)$  as a cycle on $Y$ via $A^{2k-2}\subset Y$. Recall that
 CM points (and thus CM cycles)  are defined over $K^{\ab}$ (see \cite[3.1.2]{YZZ}).   Define
     $$[Z]=\int_{\Gal(K^\ab/K) } [Z(A)]^\tau   d\tau,$$
        where  
  $\Gal(K^\ab/K)$ is endowed with the Haar measure of  total volume 1. In fact, we  will allow  the twist of $[Z]$ by  a finite order  character of $\Gal(K^\ab/K)$ and use an equivalent adelic definition (Remark \ref{al(K}). 
  
  Let $S$ be a finite set  of prime numbers  containing all prime factors of $N$. Let  $\cH^S$ be the  unramified Hecke algebra outside $S$, which acts on the Chow groups of $Y$ via Hecke correpondences (see \ref{ion on cohomological Chow cycles}).      Let      ${CM}^S $    be   the   $\cH^S$-module generated by $[Z]$.
    Let  $\ol{CM} $    be the quotient of   ${CM} $         by the 
kernel of the   Beilinson--Bloch height pairing on $CM$ defined in \S \ref{ and the height pairing}.
 
       \begin{thm}[Theorem \ref{strongmodularity}]\label {strongmodularityintro}
        Assume that $S$  has cardinality at least three, and 
        contains the prime  2  and all finite places of $\BQ$   ramified in $K$.  
      The  $\cH^S$-module $\ol{CM} $   is  semi-simple   whose  irreducible components are the $\cH^S$-modules associated to   weight  $2k$ holomorphic cuspidal automorphic representations  of $\GL_{2,\BQ}$ with trivial central character.

    \end{thm}

This theorem  is predicted by the conjecture of Beilinson and Bloch on the injectivity of the Abel--Jacobi map, see Remark \ref{modconjrmk}.

Let $\pi$ be a  weight  $2k$ holomorphic cuspidal automorphic representation  of $\GL_{2,\BQ}$ with trivial central character.
Then the $\pi$-component of $[Z]$ is well-defined by Theorem \ref{strongmodularityintro}, and denoted by $[Z]_\pi$. 
   \begin{thm}      \label{mainint}
  Let $\pi$ be generated by modular forms of weight $2k$ on $X$\footnote{Equivalently, 
 $\pi$ has nonzero invariant vectors by the adelic principal congruence subgroup of $\GL_{2,\BQ}$ of level $N$.}.  Assume $L(1/2,\pi_K)=0$.   
 There is a nonzero constant  $c_\pi$ such that 
 $$\pair{[Z]_\pi,[Z]_\pi}=  c_\pi \cdot L' (1/2 ,\pi_K). $$                      \end{thm}   
                   
               This is a special case of Corollary \ref    {uncond}  of our  main result  (Theorem \ref{stronghtderthm}).
                In  S. Zhang \cite{Zha97} over $X_0(N)$,  
                   the assumption   $L(1/2,\pi_K)=0$ is implicit by the Heegner condition, which implies that the root number  of $L(s,\pi_K)$ is $-1$. 

We also remark that the definition of Kuga--Sato varieties, CM cycles,  Theorem \ref{strongmodularityintro} and Theorem \ref{mainint} apply to any 
fine modular curve $X$ (for Theorem \ref{strongmodularityintro}, 
  assume that $X$ has maximal level structures outside $S$). Indeed, we can dominate $X$ by some $X(N)$  with $S$ containing all prime factors of $N$. The results for  $X(N)$ imply the ones for $X$.    If $X$ is not a fine moduli space,  one may follow \cite[4.1]{Zha97}.

                                \subsection{Legacy from previous works} 
Our   proof of Theorem \ref{strongmodularityintro} is strongly influenced by the  previous works in    Table \ref{tab:my_label1}.

A key step in Gross and Zagier  \cite{GZ} for $X_0(N)$
 is the following   explicit  \textit{modularity of generating series of heights}. 
 Let $T_m$ be  the standard $m$-th Hecke operator in $\Pic\lb X_0(N)^2\rb
  $,  as a correspondence from $X_0(N)$ to $X_0(N)$. If   $m$ is coprime to $N$, the N\'eron-Tate height pairing 
$$\pair{P, T_m P}$$ is the $m$-th coefficient of an explicit  holomorphic  modular form of weight 2.

 In weight $2k>2$,   S. Zhang \cite{Zha97} followed Gross and Zagier's approach, and  proved the  modularity of generating series of heights with $Z$ replacing $P$. 
  A consequence of this result is an analog of Theorem \ref{strongmodularityintro} in the setting of loc. cit.. 
   We shall call Theorem \ref{strongmodularityintro} the  \textit{modularity  of Hecke modules of Heegner/CM cycles}. Analogous modularity  for Heegner/CM points follows from the Abel--Jacobi theorem for divisors on curves for free.

 We establish  a modularity  result of generating series of heights  \eqref{concl}. 
 However, our result is weak from two aspects. First, our result is  only for some ``regular" Hecke operators (see Assumption \ref{asmp2'}). 
 This can be remedied, as  S. Zhang computed some  complement Hecke operators.  
 Second,  our relevant modular forms, more precisely, automorphic forms,   live on an adelic group with many connected components.  Our modularity  result  is only on one component (see  Remark \ref{clear}).  
 The same issue appears in Yuan, S. Zhang and W. Zhang \cite{YZZ}. They overcame it using another  modularity result, proved in a separate work of them \cite{YZZ1}. However, we do not  (need to) overcome this issue in our work.

Before we   explain how to 
prove the modularity  of Hecke modules   from the modularity  of
 generating series, we summarize some key words in the proofs of the above modularity results.
    \begin{table}[thb]
\centering
\caption{}
    \label{tab:my_label2}
\begin{tabular}{|l|c|c|}\hline
Modularity of &
 {Generating series}& {Hecke module}\\    \hline
Gross--Zagier (1986) & Explicit computation & Abel--Jacobi theorem\\    \hline
YZZ  (2013)& Another modularity in YZZ  (2009) &Abel--Jacobi theorem\\    \hline
S. Zhang (1997) &Explicit computation  &  Jacquet--Langalnds--Shimizu \\    \hline
Current work &Weak version   &   Numerical  interpretation\\    \hline
\end{tabular} \end{table}

 From a representation theoretic viewpoint, S. Zhang's   proof  is essentially an arithmetic  version of 
  the Jacquet--Langalnds--Shimizu correspondence from $\GL_2$ to $\GL_2$ itself \footnote{
  The general Jacquet--Langalnds--Shimizu correspondence is   from  the unit group of a quaternion algebra to $\GL_2$.
  In this setting, the quaternion algebra the matrix algebra and the unit group is also $\GL_2$!  For us, we will  use a quaternionic notation in our proof in \ref{remove}.}.  
By only having a weak version, we can not establish  such a correspondence. However,
we can interpret the modularity  of Hecke modules of CM cycles in terms of the vanishing of some height pairings (Proposition \ref{strongmodularity1}). 
 This numerical statement can be tackled by our weak version of the modularity  of generating series of heights.

 Finally, we remark that the approach of  Gross and Zagier has now been further developed to the theory of
  arithmetic theta lifting  by Kudla \cite{Kud}.  
It is worth mentioning that recently Li and Liu \cite{LL1}\cite{LL2} used     arithmetic theta lifting to prove an analog of  the Gross--Zagier   formula  for higher dimensional unitary Shimura varieties.

      \subsection{Relative trace formulas}

To prove  the higher weight    analog of the 
 general Gross--Zagier formula,     instead of   arithmetic theta lifting (as in Yuan, S. Zhang and W. Zhang \cite{YZZ}), we   use   the   arithmetic variants of
   Jacquet's  relative trace formulas \cite{Jac87,JN}.  In this arithmetic   relative trace formula approach, 
  another modularity result (Theorem \ref{strongermodularity}) is  also vital. This modularity result is for the action of the full Hecke algebra on CM cycles.

The arithmetic   relative trace formula approach  was proposed by W. Zhang \cite{Zha12} to attack the arithmetic Gan--Gross--Prasad conjecture, which is  an exact generalization of the Gross--Zagier formula to unitary Shimura varieties.   Subsequently,  we used this method to prove the general Gross--Zagier formula over function fields \cite{Qiu}.       Moreover,   Yun and Zhang \cite{YZ1,YZ2} used  a
  geometric  version  of the arithmetic   relative trace formula approach to prove   an analog of  the Gross--Zagier   formula  for higher order derivatives.

 
 \subsection{Generalizations} 
 The   analog  of S. Zhang's work  for general Shimura curves has not been established yet. 
 This is the main  obstruction to the generalizations of our results to Shimura curves.
 A key ingredient to  the   analog  of S. Zhang's work  has been established by
    Wen \cite{Wen}. 
  We hope to return to   Shimura curves   in the  near future.

  \subsection{Structure of the paper}
  In \S \ref{FixN}, we  first define   Kuga--Sato varieties, Hecke operators and   CM cycles.
Then we state  our  modularity  results for the Hecke action on CM cycles   in \S \ref{Modularitysec},
 and  our higher weight    analog of the 
 general Gross--Zagier formula in  \S \ref {Height and derivatives}. 
 
  In \S \ref {The automorphic distributions},
we  first review the    relative trace formulas and related backgrounds.
Then we  establish  an arithmetic relative trace identity 
 in \S \ref {Height distribution}.  Finally in \S \ref{proofs},
we 
 prove the higher weight    
 general Gross--Zagier formula, providing  our  modularity  results.

  In \S \ref{Weak},   we first recall some basics of theta series and Eisenstein series. Then we prove an arithmetic mixed Siegel--Weil formula in 
  \S\ref{Local comparison II}. 
   In \S \ref{remove}-\S\ref{removeer}, we prove the modularity 
    of CM cycles.   
      \subsection{Some notations and conventions}\label{nots}

A local field $F$ is assumed to be equipped with  an absolute value $|\cdot|_F$.
Let $|\cdot|_p=  |\cdot|_{\BQ_p}$ be that assigns $1/p$ to $p\in \BQ_p$.  On $\BR$, we use the usual absolute value. On $\BC$, the absolute value  is the square of the usual one.
 Let $\BA$ be the ring of adeles of $\BQ$.
 The symbol $|a|$ for $a\in \BA$ means the product of all local absolute values of $a$.

For a set of places  $S$ of $\BQ$  and a decomposable adelic object $X$  
over $\BA$, we use $X_S$ (resp. $X^S$) to denote the $S$-component (resp. component away from $S$) of $X$ if the decomposition of $X$ into the product of  $X_S$  and $X^S$ is clear from the context. 
       If $S=\{v\}$, we write $X_v$ (resp. $X^v$) for $X_S$ (resp. $X^S$). For example,  $\BA^{\infty}$  is the  ring of finite adeles.

  We use $\cS_c(X)$ resp. $\cS(X)$ to denote the space of compactly supported smooth  functions resp. Schwartz functions on a reasonable space $X$.
  We will only use the following  cases. 
     If $X$ is a real manifold, we will only use $\cS_c(X)$, except when $X=\BR^n$, in which case the definition  of  $\cS(X)$ is classic. 
   If $X$ is locally profinite, $\cS_c(X)=\cS(X)$ and is the space of compactly supported locally constant functions.  
 
 For a group $G$ and a function $f$ on $G$, let $$f^\vee(g):=f(g^{-1}).$$
If $G$ is  a  locally profinite group, 
    the convolution on $\cS_c(G)=\cS(G)$ is $$f* f_1(h)=\int_{G}f(g)f_1(g^{-1} h) dg.$$

      \subsection*{Acknowledgements}     
      The author  thanks  Yifeng Liu, Xinyi Yuan and     Shouwu Zhang for their  help. 
      The author is grateful to the referees for their  careful   reading of the earlier versions of the paper and their valuable comments.
       The research   is partially supported by the NSF grant DMS-2000533. 

\addtocontents{toc}{\protect\setcounter{tocdepth}{2}}
  \section{CM cycles   on Kuga--Sato varieties} \label{FixN}
  
  We first define   Kuga--Sato varieties and Hecke operators on Chow cycles on Kuga--Sato varieties. Then define CM cycles as well as their height pairings.
Finally we  state our theorems on the modularity of CM cycles, and the higher weight general 
  Gross--Zagier formula.

  \subsection{Kuga--Sato varieties and Hecke operators}  
   \subsubsection{Kuga--Sato varieties}
    For a positive integer $N$, 
let  $X(N)^\circ$ be the moduli stack of  elliptic curves $A$ over  $\BQ$-schemes with  
full level $N$ structure $ (\BZ/N\BZ)^2\cong A[N]$. 
For simplicity, let $N\geq 3$ so that $X(N)^\circ$ is representable 
by a  smooth curve over $\BQ$, which is connected  but not geometrically connected. 
It might be helpful to remind the reader the complex uniformization  of the  $\BC$-points of $X(N)^\circ$ as a Riemann surface:
\begin{equation}
\label{cplx}
 X(N)^\circ(\BC)\cong \GL_2(\BQ)_{>0} \bsl  \BH\times \GL_2 (\BA^\infty)^\times /U(N) .
 \end{equation}
 Here    $\GL_2(\BQ)_{>0} \subset \GL_2(\BQ)$ consists of elements with positive  determinants, $ \BH\subset \BC$ is the upper half plane and is equipped with the action of $\GL_2(\BR)_{>0} $ by fractional linear transformation,
   and  $U(N)=1+N\RM_2(\wh \BZ)$ is the corresponding principal congruence subgroup of $\GL_2(\BA^\infty)$ of level $N$, where  $\wh \BZ\subset \BA^{\infty}$  is the ring of integral adeles.

The modular curve $X(N)$ be the smooth
 compactification  of $X(N)^\circ$, which is the moduli space of generalized elliptic curves. 
 Let $E\to X(N)$ be  the universal generalized elliptic curve 
so that $E|_{X^\circ}\to  {X^\circ}$ is  the universal elliptic curve. 

Let    $k\geq 2$ be  an  integer. Let $Y(N) ^\circ=(E|_{X(N)^\circ})^{2k-2}$, the $(2k-2)$-tuple product of $E|_{X(N)^\circ}$  over $X(N)^\circ$.
The Kuga--Sato variety $ Y(N)$ is   the canonical desingularization  of  $E^{2k-2}$ \cite[2.2]{Zha97}. 
Indeed, the
 desingularization  procedure  only happens at the cusps  so that $ Y (N)^\circ= Y(N) |_{X(N)^\circ}$.

Below, we will use some results in \cite{Zha97}, where the author takes a geometrically connected component of $X(N)$ (while using the notation $X(N)$ in loc. cit.). The discussion there can be carried on to our setting  accordingly.

   \subsubsection{Group and semi-group action}  \label{Hecke action}

      We will define Hecke operators on Chow cycles by mimicking the Hecke operators in the setting of automorphic  forms, rather than  classical modular forms. Thus we need an adelic group action. It is defined on some infinite level modular curves, and related Kuga--Sato varieties.
    
   Let $\BG \subset  \GL_2(\BA^\infty)$ be the subset of elements with integral coefficients. 
      Let  $$X_\infty^\circ=\vpl_{N} X(N)^\circ,\ Y_\infty^\circ =\vpl_{N} Y(N)^\circ, $$
   where the transition morphisms are finite flat.
  We define a   left $\GL_2(\BA^\infty)  $-action on $X_\infty^\circ$ and  a  left $\BG  $-action on $Y_\infty^\circ$
  following Drinfeld  \cite[5.D]{DriEll1}.

  For  $g\in \BG$,  $g$ defines an endomorphism of $(\BQ/\BZ)^2$. 
   Let $S$ be a $\BQ$-scheme and $A$  an elliptic curve over $S$ together with an infinite level structure $\psi:(\BQ/\BZ)^2\to A(S)$, i.e. for every positive integer $n $, the restriction of $\phi$ to $\BZ[\frac{1}{n}]$ is a (Drinfeld) $\Gamma(n)$-structure \cite[3.1]{KM}. This gives an $S$-point of $X_\infty^\circ$.
    Let $H\subset A$ be the sum of $\psi(\alpha)$'s with $\alpha$ running over the kernel of the endomorphism  $g$ on  $(\BQ/\BZ)^2$, which is a subgroup scheme of $A$ \cite[Proposition 1.11.2]{KM}.
Define $\psi_1$ so that the following diagram  commutes:
\begin{equation}\label{gisog}     \xymatrix{ 
 & \ar[d] ^{g}  \ar[r]^{\psi} (\BQ/\BZ)^2  &   A(S)\ar[d]    \\
 &  (\BQ/\BZ)^2\ar[r]^{\psi_1}  & A/H(S)   }.
\end{equation}
   Here the right vertical arrow is the natural quotient morphism.
    
    Let   $E_\infty$ be  the universal elliptic curve over $X_\infty^\circ$ with an infinite level structure. Applying the above construction, we get a quotient 
    elliptic curve $E'_\infty$ over $X_\infty^\circ$ with an infinite level structure, and 
    a quotient 
    map $E_\infty\to E'_\infty$ over $X_\infty^\circ$. The elliptic curve $E'_\infty$ over $X_\infty^\circ$ is the pullback of $E$ via  a  morphism $ X_\infty^\circ\to X_\infty^\circ$, which we define   as the    
     action of $g$ on $X_\infty^\circ$.  For  $g \in\BG\cap\BQ^\times $,  the action is trivial.
Thus we have an action of  $ \GL_2(\BA^\infty)$  on $X_\infty^\circ$ by  letting $\BQ^\times$ act trivially.

\begin{rmk} \label{opposite}  
The complex uniformization   \eqref{cplx} gives
$$
 X_{\infty}^\circ(\BC)\cong \GL_2(\BQ)_{>0} \bsl  \BH\times \GL_2 (\BA^\infty)^\times.
$$
Then $g$ acts as
the  ``right multiplication"   by  $g^{-1}  $ (compare with  \cite[3.1.1]{YZZ}).
 \end{rmk}     

The composition of the above morphisms $E_\infty\to E'_\infty\to E_\infty$ defines 
 a morphism $g: E_\infty\to E_\infty$ for $g\in \BG$.   
 However,  the action of $g \in\BG\cap\BQ^\times $ is not trivial. Indeed, if $g\in \BZ$, it acts trivially on $X_\infty^\circ$, and  by multiplying  by $g$ on each fiberal elliptic curves. 
 Since   $Y_\infty^\circ=E_\infty^{2k-2}$,   we have an action of $\BG$ on $Y_\infty^\circ$. Again, the action of $g \in\BG\cap\BQ^\times $ is not trivial. 
 
    \subsubsection{Hecke action on cohomological Chow cycles}  \label{ion on cohomological Chow cycles} 
Let $\Ch^k(Y(N))$ be   the spaces of codimension $k$ Chow cycles on $Y(N)_{\ol\BQ}$ with $\BC$-coefficients. 
We define  a right action of $ \GL_2(\BA^\infty)$ on $$ \vil _{N}\Ch^k(Y(N))$$ via pullback (this is the reason for calling  $ \vil _{N}\Ch^k(Y(N))$ the group of cohomological Chow cycles).

For   given $N$ and $g\in \GL_2(\BA^\infty)$, the above construction defines a morphism   $g_{N',N}:Y(N')^\circ\to Y(N)^\circ$ for a large enough multiple $N'$ of $N$.
Let $\Gamma_{g_{N',N}}$ be the Zariski closure   of the  graph of $g_{N',N}$ in $Y(N') \times Y(N) $.
For $z\in \Ch^k(Y(N))$,  let $g ^* z \in \Ch^k(Y(N'))=\Ch_{k-1}(Y(N'))$ be the pullback of $z$ by $\Gamma_{g_{N',N}}$.
The image of $g^*z$ in $ \vil _{N}\Ch^k(Y(N))$ does not depend on the choice of $N'$.
Define the natural right action of $g$ on $ \vil _{N}\Ch^k(Y(N))$ to be 
  \begin{equation*}R^{\coh}_g(z) = |\det (g)|^{k-1}  g^*z.\label{normalization}\end{equation*}
 Then  $R^{\coh}_g$ is the identity for   $g \in\BG\cap\BQ^\times $.
Let $\BQ^\times$ act trivially on  $\vil _{N}\Ch^k(Y(N))$. Thus we have  a right action  of $\GL_2(\BA^\infty)$ on $\vil _{N}\Ch^k(Y(N))$. 
 However, in view of Remark \ref {opposite} (as well as the conventions for automorphic forms), it is better to use  the opposite left action
    \begin{equation*}L^{\coh}_g  = R^{\coh}_{g^{-1}}.\label{normalization}\end{equation*} 
       The induced action of          $f\in  \cS(\GL_{2}(\BA^\infty))$ on $\vil _{N}\Ch^k(Y(N))$
         is $$L^{\coh}_f: =\int_{g\in\GL_{2}(\BA^\infty) }f(g)L^{\coh}_g  dg.$$
 
      \begin{rmk}  \label{yongshangl}                
A more well-known definition of the Hecke action  (in the unramified case) is as follows \cite[p 122]{Zha97}.
                                        For  $i\in \BZ_{\geq 0}$,   let 
                                $A_{p^i}$ be the characteristic function of $\GL_2(\BZ_p) \begin{bmatrix} p^i&0\\
 0&1\end{bmatrix}\GL_2(\BZ_p)$.      For a positive integer $n$ coprime to $N$,  let 
                    $A_n=\bigotimes_{p|n}A_{p^{\ord_p n}}$.    
                               Let $Y (N,n)^\circ $   be the noncuspidal locus of the Kuga--Sato variety of elliptic curves with the canonical full level $N$ structure, and  a subgroup of order $n$.  Let $Z ^\circ$ be the quotient of $Y(N,n) ^\circ$ by the  $(2k-2)$-tuple product of the   subgroup of order $n$ of the universal elliptic curve (and remember the full level $N$ structure).  
               Then we have natural morphisms
               $$Y(N)^\circ\leftarrow Y(N,n)^\circ\rightarrow   Z ^\circ\rightarrow  Y(N)^\circ.$$
               Let  $C$ be the  pushforward   of the fundamental cycle of $Y(N,n)^\circ$ to $Y(N) \times Y(N) .$
   Then up to the choice of the measure on $\GL_{2}(\BA^\infty)$,  the Hecke action $R(A_n)$  coincides with the 
               pullback   by    $n^{-(k-1)}C$.

     Let $\xi$ be a cusp  form of $\SL_2(\BZ)$ of level $N$ and weight $2k$, and $\phi_\xi$ the associated cusp form of $\GL_{2,\BQ}$ as in \cite[(3.4)]{Gel}.
           The pullback  of  $\xi$  by $n^{-(k-1)}C$ \cite[1.11]{Kat} and the usual Hecke action of $A_n$ on $\phi_\xi$ are compatible  \cite[Lemma 3.7]{Gel}. 
\end{rmk} 
 \subsubsection{Hecke action on homological Chow cycles}  
 We define   a left action of $\GL_2(\BA^\infty)$ on $$\vpl _{N}\Ch^k(Y(N))$$  via push-forward 
  (this is the reason for calling  $\vpl _{N}\Ch^k(Y(N))$ the group of homological Chow cycles).

  Let $g\in \BG$ act on $ \vpl _{N}\Ch^k(Y(N))$ by 
  \begin{equation*}
  L^{\homology}_g: = |\det (g)|^{k-1}  g_* .
  \end{equation*}  
     Then  $L^{\homology}_g$ is the identity for   $g \in\BG\cap\BQ^\times $.
Let $\BQ^\times$ act trivially on  $\vpl _{N}\Ch^k(Y(N))$.  Thus we have  a left action  of $\GL_2(\BA^\infty)$ on $\vpl _{N}\Ch^k(Y(N))$. 
For $  f\in  \cS(\GL_{2}(\BA^\infty))$, we have the induced action           
 \begin{equation}
  \label{Lhomf}
L^{\homology}_f: =\int_{g\in\GL_{2}(\BA^\infty) }f(g)L^{\homology}_g  dg
  \end{equation} 
on $\vpl _{N}\Ch^k(Y(N))$.

               \subsubsection{Relation}  \label{endof}
    
    Consider the natural inclusion  \begin{align}\label{iota}\iota: \vil _{N}\Ch^k(Y(N))&\to  \vpl _{N}\Ch ^k(Y(N))\\
    \nonumber      z=\{z_N\}_N &\mapsto \{\Vol(U(N) )z_N\}_N).\end{align}   
Then   the restriction of $L^{\homology}_g$  via $\iota$ is $L^{\coh}_{g}$.

          \subsection{CM cycles and height pairings} 
   
           \subsubsection{CM cycles}\label{CM cycles}
      
          Let $K$ be an imaginary quadratic field with a fixed embedding to $\BC$. For an elliptic curve $A$  over an integral domain $R$ of  characteristic 0 with CM by $K$,  define the CM cycle $Z(A)$  of $A$ as follows.
        Choose   a positive integer $D$ such that $\sqrt{-D}\in K$ and is an endomorphism  of $A$. 
                  Let $\Gamma$ be the graph of   $\sqrt{-D} $. Let $\Gamma_0$ be the divisor $$\Gamma-A\times \{0\}-D\{0\}\times A$$ of $A\times A$.
Then $\Gamma_0^{k-1}$ is a $(k-1)$-cycle on  $A^{2k-2}$. Define the CM cycle on $A^{2k-2}$ to be $$Z(A)=c\sum_{\sigma }\sgn(\sigma) \sigma^*\lb \Gamma_0^{k-1}\rb   , $$
where $\sigma$ runs over the symmetric group of $2k-2$ elements which acts on $A^{2k-2}$ naturally, and 
$c$ is a positive constant such that the self intersection  number of $Z(A)_{\Frac R}$ in $A^{2k-2}_{\Frac R}$ is $(-1)^{k-1}$.
 Then the Chow cycle  of $Z(A)_{\Frac R}$ does not depend on the choice of $D$ (see \cite[Proposition 2.4.1]{Zha97}).

         Let $\GL(\BA^\infty)$ act on $X^\circ_\infty$ as in \S \ref {Hecke action}.   
        Fix an embedding $K\incl   \RM_{2,\BQ}$ of $\BQ$-algebras.
   So we have      an embedding  $\BA_K^{\infty,\times} \incl \GL(\BA^\infty)$.   
                 Fix a point $P_0\in X^\circ_\infty(\ol \BQ)^{K^\times}$, which is a CM point.             For $g\in  \GL_2(\BA^\infty)$, let $P_g$ be the image of $g^{-1}\cdot P_0  \in X_\infty$  to  $X(N)$ by $X_\infty\to X(N)$.  Then $P_g$ is a CM point.
       Let $A=E_{\ol \BQ}|_{P_g}$ so that $Y(N)_{\ol \BQ}|_{P_g}=A^{2k-2}$.
                 Let the cycle $Z_g$  on $Y(N)_{\ol \BQ}$ be the pushforward of the  CM cycle  $Z(A)$ on $A^{2k-2}$ under $A^{2k-2}\to Y(N)_{\ol \BQ}$ .
                 Then $[Z_g]$ does not depend on the choice of $D$, and 
               is cohomologically trivial \cite[3.1]{Zha97}. 

       Let $\Omega$ be a   character of $K^\times\bsl \BA_K^\times$ trivial on $ K_\infty^\times$. 
Define    the $\Omega$-isotypic CM cycle to be 
          $$Z_\Omega=\int_{K^\times\bsl \BA_K^{\infty,\times } } Z_t \Omega(t) dt,$$
          where we endow $K^\times\bsl \BA_K^{\infty,\times }$ with the Haar measure such that   
           \begin{equation}
\label{VolK}
      \Vol(K^\times\bsl \BA_K^{\infty,\times })=1.
\end{equation}

Let $Z^k(Y(N))$  be the space of codimension $k$ cycles on $Y(N)_{\ol\BQ}$ with $\BC$-coefficients.
  As $N$ varies, 
  $Z_{g  },Z_\Omega$ are  compatible under pushforward.
   Thus we consider them as an elements in 
 $\vpl _{N}Z^k(Y(N)).$

For a cycle $Z$, let $[Z]$ be its Chow cycle, so that  $[Z_{g  }],[Z_\Omega]$ are   elements in 
 $\vpl _{N}\Ch^k(Y(N)).$ Then $$L^{\homology}_{h^{-1}}[Z_{g  }]=[Z_{gh  }].$$

 \subsubsection{Hecke translation}
         
        For $f\in \cS(\GL_{2}(\BA^\infty))$    right $U(N)$-invariant, define  the translation  of $Z_{\Omega}$ by $f$  to be 
                                           \begin{align}
\label{ZTZ}
Z_{\Omega,f}=& \frac{1}{\Vol(U(N))}\int_{K^\times\bsl \BA_K^{\infty,\times }  } \int_{  \GL_2(\BA ^\infty)  }  f(g) Z_{tg}  dg   \Omega(t) dt.\\
=& \int_{K^\times\bsl \BA_K^{\infty,\times }  } \sum_{g\in  \GL_2(\BA ^\infty) /U(N)}  f(g) Z_{tg}    \Omega(t) dt
\end{align}
                                               
                                                        As $N$ varies, 
    $Z_{\Omega,f}$ is compatible under pullback.
  Thus we consider  $Z_{\Omega,f}$ as an element in 
 $\vil _{N}Z^k(Y(N)).$           
 \begin{rmk}The   normalization factor  $ \frac{1}{\Vol(U(N))}  $ is 
 to achieve the compatibility as $N$ varies.  The   normalization factor    will be more complicated than  for general Shimura curves. See \cite[(3.4.4)]{YZZ} or \cite[3.1.1]{Qiu} if we pretend $k=1$.
  \end{rmk}
 Now we interpret $Z_{\Omega,f}$ by Hecke action. 
Under \eqref{iota}, we have
 \begin{equation}
\label{ZTZ}
\iota\lb  [Z_{\Omega,f}]\rb=L^{\homology}_{f^\vee} [Z_\Omega ].
\end{equation}

                      The following relations will be useful later:  for $g\in  \GL_2(\BA ^\infty) $, \begin{equation}\label{ZTZ00}L^{\coh}_g [ Z_{\Omega,f} ] =[Z_{\Omega, f^g}]  ,\end{equation} 
where $f^g(h)=f(hg )$, and  
                        \begin{equation}\label{ZTZ0}
                        L^{\coh}_{f_1} [ Z_{\Omega,f} ]=  [ Z_{\Omega,f*f_1^\vee}].
                        \end{equation}

                  \begin{rmk}\label{al(K}
                  CM points (and thus CM cycles)  are defined over $K^{\ab}$ (see \cite[3.1.2]{YZZ}).   Regarding $\Omega$ as a character of $\Gal(K^\ab/K)$ by the class field theory, then we can redefine
          $$[Z_\Omega]=\int_{\Gal(K^\ab/K) } [Z_1]^\tau  \Omega(\tau) d\tau$$
          (and           $ [Z_{\Omega,f}]$  similarly),
          where  
  $\Gal(K^\ab/K)$ is endowed with the Haar measure of  total volume 1.
\end{rmk}

 \subsubsection{Integral models}\label{Kintmodel}

 To define height pairings, we need integral models of Kuga--Sato varieties defined as follows.
 Let $N$ be  the product of two relatively prime integers which are $\geq 3$ until \ref{Height pairing}.
 
Let the morphism $\pi:\cE\to \cX$ of    regular, flat, and projective $\BZ $-schemes 
be as in  \cite[Theorem 2.1.1]{Zha97},
such that 
  $\cX_{\BQ }\cong X(N) $   and 
  $\cE$ is a generalized elliptic curve over  $\cX$ whose generic fiber is $E $.
   Let  $\cY$ be  the canonical desingularization  of  $\cE^{2k-2}$  over $\cX$ \cite[2.2]{Zha97}.
The desingularization  procedure  only happens at the cusps in the following sense.
 By the cusps of $\cX$, we mean   the Zariski closure of 
 $X(N)\bsl X(N)^\circ$, the closed subscheme of
 the cusps on the generic fiber. 
  Let $\cX^\circ$ be the noncuspidal locus of $\cX $   and $\cY ^\circ=\cY |_{\cX^\circ}$.
  Then $\cE|_{\cX^\circ}\to  {\cX^\circ}$ is smooth and  $\cY ^\circ=(\cE|_{\cX^\circ})^{2k-2}$.

For    a number field  $F$, we construct  regular  proper models $\cX'$ and $\cY'$ of $X_F$ and $Y_F$ over $ \Spec \cO_F$ as follows.
Let $\cX'$ be the minimal desingularization of  $\cX_{\cO_F}$. 
Let $\cN$ be a neighborhood  of the cusps of $\cX$ which is smooth over $\BZ$ \cite[Theorem 8.6.8]{KM}. 
Then   
\begin{equation}
\label{nsloc}\cX'|_{\cN}=\cN_{\cO_F} .
\end{equation}
Let $\cE'=\cE\times_ \cX\cX'$. 
As the non-smooth locus of  $\cE\to \cX$  is supported at  the cusps, it is contained in $\cE|_{\cN}$.
So by \eqref{nsloc},  the non-smooth locus of  $\cE'\to \cX'$  is contained in $\lb \cE|_{\cN} \rb_{\cO_F}\to \cN_{\cO_F}$.
Then  the explicit desingularization  procedure  in \cite[2.2]{Zha97} still applies  to   $\cE'^{2k-2}$. The desingularization   $\cY'$
 satisfies  $\cY'_{F}\cong Y(N)_F$ and  comes with  a natural morphism $\cY'\to \cY(N)_{\cO_F}$. 
On the non-cuspidal locus $\cX'^\circ=\cX'|_{\cX^\circ}$, 
 its preimage  $\cY '^\circ$ in $\cY $ 
 satisfies 
$$(\cY'^\circ\to \cX')=(\cY^\circ\times_{\cX^\circ}\cX'^\circ\to \cX'^\circ)=\lb(\cE|_{\cX^\circ})^{2k-2}\times_{\cX^\circ}\cX'^\circ\to \cX'^\circ\rb.$$  
In particular, 
$\cY'|_{x'}=\cE|_{x}^{2k-2}\times_x x'$ for $x'\in \cX'$ over noncupidal  $x\in \cX$.
 
 In fact,   we only care about $\cX'^\circ$ as our  integral CM   cycles below sit in this locus.
               \subsubsection{Height pairings and  integral CM   cycles} \label{ and the height pairing}

                    We define the Beilinson--Bloch height pairing   \cite{Bei,Blo} using arithmetic intersection pairing \cite{GS}.    We need to define  integral CM   cycles.
                    
                    Consider the CM elliptic curve $E|_{P_g}$ corresponding to the CM point $P_{g}\in X(N)(\ol\BQ)$ that is defined over $F$. Then 
       by CM theory \cite[Theorem 11.2]{Sil} and reduction theory  \cite[Proposition 5.4, 5.5]{Sil} of elliptic curves,     $E|_{P_g}$  has reductions outside the cusps.
         Then the Zariski closure    $\ol P_g$ of the CM point $P_{g}$ in $\cX_{\cO_F}$ is an $\cO_F$-point away from the cusps, and   defines a CM elliptic curve $A=\cE|_{ \ol P_g}$. 
So we have  the CM cycle $Z(A)$ on 
   $\cY_{\cO_F}|_{ \ol P_g}=A^{2k-2}$  (see \S \ref
 {CM cycles}).
   Let   $ \cZ_{g}$ be the pushforward   of  $Z(A)$ to $\cY_{\cO_F}$. Let $ \cZ_{g}'$ be the flat pullback  of $ \cZ_{g}$  to $\cY'$ (defined in \S \ref{Kintmodel}). 
               As the desingularization  procedure in \S \ref{Kintmodel} only happens at the cusps and   $ \cZ_{g}'$ is supported away from the cusps, $ \cZ_{g}'$ is compatible if we vary $F$. More precisely, let $F_1,F_2 $ be two choices of $F$ and let $ \cZ_{g,1}', \cZ_{g,2}' $ be defined accordingly. If $F_2$ is an extension of $F_1$, then $\cZ_{g,2}'$ is the natural pullback of $\cZ_{g,1}'$.

  Let $Z_{g_1},Z_{g_2}$ be  two CM   cycles. By taking      $F$ large enough, we have  two CM   cycles $ \cZ_{g_1}'$ and $ \cZ_{g_2}'$
both defined over $\cY'/\cO_F$.
                 Then the restrictions of $ \cZ_{g_1}'$ and $ \cZ_{g_2}'$
                  to the irreducible components of the special fibers of $\cY'$ are cohomologically trivial \cite[3.1]{Zha97}.
                                 So  the Beilinson--Bloch height pairing   \cite{Bei} 
 is unconditionally defined, and given by  an arithmetic intersection pairing \cite{GS}:
      \begin{equation}\label{BB}\pair{Z_{g_1},Z_{g_2}}_{Y }=\frac{1}{[F:\BQ]}(-1)^k(\cZ_{g_1}',G_{g_1})\cdot (\cZ_{g_2}',G_{g_2}).\end{equation}
      Here $G_{g_i}$ is the ``admissible"  Green current of $Z_{g_i}  $  in  the sense of the first paragraph of  \cite[p 125]{Zha97}.
          Note that if we replace one of $\cZ_{g_1}'$ and $\cZ_{g_2}'$ by its horizontal part, we still get the same height pairing through the arithmetic intersection pairing.
          And $\pair{Z_{g_1},Z_{g_2}}_{Y(N)}$ is independent of choice of $F$ by the projection formula and the compatibility discussed  in the end of the last paragraph\footnote{This is a special case of a more general fact \cite[Lemma 1.5]{Kun}.}. 
Moreover, by  \cite{Bei},   the height pairing between CM cycles factors through their images in        $\Ch^k(Y(N) )$.

                      \subsubsection{Vary the level}  \label{Height pairing}    
                                     Let us first discuss the general case, and then CM cycles.  
                                  We want to define a   height pairing
                                         $$\pair{\cdot,\cdot}:\vil _{N}\Ch^k(Y(N))_0\times \vpl _{N}\Ch^k(Y(N))_0\to \BC.$$ 
                                         Here $\Ch^k(Y(N))_0$ denotes   the subspace of $\Ch^k(Y(N))$ of cohomologically trivial cycles.
                                         For
                           $(x_N)\in \vil _{N}\Ch^k(Y(N))_0$  and $(y_N)\in \vpl _{N}\Ch^k(Y(N))_0$, 
                     by the projection formula for height pairing  \cite{Bei},
                            the   height pairing   $\pair{x_N,y_N}_{Y(N)}$,  
                             if well-defined (this is conjecturally the case) for $N$ large enough, does not depend on the choice of $N$.      Let $$\pair{(x_N),(y_N)}=\pair{x_N,y_N}_{Y(N)}.$$
             This in particular gives the height pairing  between  $x_N,y_N$ for  any $N$.

                           Then by the discussion in \S \ref{ and the height pairing}, 
                      the   height pairing     $\pair{\cdot,\cdot}$ is    well-defined  on the subspaces of CM cycles  in $\vil _{N}\Ch^k(Y(N))_0\times \vpl _{N}\Ch^k(Y(N))_0$. 
                                In particular,  
                        if $f$ is right $U(N)$-invariant, then
                                \begin{align}  \label{NHP}H(f)=\pair{  Z_{\Omega,f }, Z_{\Omega^{-1}}}  = \pair{  Z_{\Omega,f },Z_{\Omega^{-1}}} _{Y(N)}  \end{align}  is defined and 
does not depend on the choice of $N$. 
So   the normalized height pairing  between cohomological cycles \begin{align*}  \pair{  Z_{\Omega,f _1},Z_{\Omega^{-1},f_2}}^\sharp =   \Vol(U(N)) \pair{ Z_{\Omega,f _1},Z_{\Omega^{-1},f_2}} _{Y(N)}:= \pair{ Z_{\Omega,f _1}, \iota\lb Z_{\Omega^{-1},f_2}\rb} _{Y(N)}  \end{align*}  
does not depend on the choice of $N$ (as long as $f$ is right $U(N)$-invariant). 
Moreover,  by  \eqref{ZTZ}, the projection formula, and  \eqref{ZTZ0}, we have   
\begin{equation}\label{ydy} \begin{split} \pair{  Z_{\Omega,f _1},Z_{\Omega^{-1},f_2}}^\sharp&=  \pair{   [Z_{\Omega,f _1}] ,L^{\homology}_{f_2^\vee}[ Z_{\Omega^{-1}}]} _{Y(N)}\\
&= 
 \pair{ L^{\coh}_{f_2} [Z_{\Omega,f_1}], [Z_{\Omega^{-1}}]}\\
 &= 
 \pair{   [Z_{\Omega,f_1*f_2^\vee}], [Z_{\Omega^{-1}}]}. \end{split}  \end{equation}

    \begin{lem}\label{lem:eta}
    Let $\eta$  be the Hecke character of $\BA^\times $ associated to $K$ by the class field theory, and consider it as an automorphic function on $\GL_2(\BA)$ by composition with $\det$.
Assume that $\eta $ and $f$ are right $U(N)$-invariant. 
Then $ H(f)= H(f\eta)$.

       \end{lem}
       
    \begin{proof} 
    
By definition, $    \pair{Z_{g_1},Z_{g_2}}_{Y(N)}\neq0$ only if they are on the same geometrically connected components
of $Y(N)$, equivalently, $\det(g_1)\in \det(g_2) F^\times \det(U(N))$.
Thus, in the definition \eqref{Lhomf} and \eqref{ZTZ} of $
Z_{\Omega,f}$, only $g\in F^\times \det(U(N)) \Nm(\BA_E^\times)$ contributes to 
$ H(f)$. On these $g$'s, $\eta(g)=1$ by definition. So $ H(f)= H(f\eta)$.
       \end{proof}

                 \subsection{Modularity}\label{Modularitysec}    
                We  first formulate   the modularity conjecture for CM cycles. Then we prove some  cases of it.

                 \subsubsection{The conjecture}
                    Consider the following subspace of $ \vil _{N  }Z^k(Y(N)):$
$$ CM(\Omega ): =\{  Z_{\Omega,f} : f \in \cS(\GL_2(\BA^\infty))   \}.$$ 
      Let  $\ol{CM}(\Omega)$    be the quotient of  ${CM}(\Omega )$    by the 
kernel of the normalized height pairing \eqref{NHP} with   ${CM}(\Omega^{-1} )$.  
Define   $\ol{CM}(\Omega^{-1})$   similarly.
 By \eqref{ZTZ00},   the left $\GL_2(\BA^\infty)$-action $L^\coh$ on  $\vil_N\Ch^k(Y(N))$
 stabilizes  $\ol{CM}(\Omega)$ and  $\ol{CM}(\Omega^{-1})$, and thus
   induces   left $\GL_2(\BA^\infty)$-actions on them.
   From now on, we fix this left action.
   Under the left action, the central character of $\ol{CM}(\Omega)$ is $\Omega$.
  
Let $\cA $ be 
  the  set of cuspidal automorphic representations $\pi$ of $\GL_{2,\BQ}$ such that 
  \begin{itemize}\item $\pi_\infty$ is the holomorphic discrete series of weight $2k$,
  \item $\pi$ has  central character $\Omega^{-1}|_{\BA^\times}  $.
    
  \end{itemize}  
    Under the second condition, $  \pi_K\otimes \Omega$ is self-dual.
In particular, the $L$-function    $$L (s ,\pi_K\otimes \Omega) =L (s ,\wt\pi_K\otimes \Omega^{-1})$$
     with  root number  $$\ep (1/2 ,\pi_K\otimes \Omega) =\ep (1/2,\wt\pi_K\otimes \Omega^{-1})\in \{\pm1\}.$$

For $\pi\in \cA$,  let $\pi^\infty$ be the finite component. Let $\wt \pi^\infty$ be the admissible dual of $\pi^\infty$. 
Let 
\begin{align*}\ol{CM}[\pi]&=\Hom_{\GL_2(\BA^\infty)}(\wt \pi^\infty,\ol{CM}(\Omega)), \\
 \ol{CM}[\wt \pi]&=\Hom_{\GL_2(\BA^\infty)}( \pi^\infty,\ol{CM}(\Omega^{-1})).
 \end{align*}
     
We have  
natural embeddings of $\GL_2(\BA^\infty)$-modules 
\begin{equation}\label{mod0conj1}
\begin{split}\bigoplus_{\pi\in \cA}\wt \pi^\infty\otimes \ol{CM}[\pi]&\incl\ol{CM}(\Omega),\\\bigoplus_{\pi\in \cA}  \pi^\infty\otimes \ol{CM}[\wt\pi]&\incl\ol{CM}(\Omega^{-1}) .
\end{split}\end{equation}
 We propose the following modularity conjecture, following S. Zhang \cite{Zha19}.
\begin{conj} \label{modconj1} The embeddings  \eqref{mod0conj1} are surjective.
\end{conj}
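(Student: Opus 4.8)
The plan is to reduce Conjecture \ref{modconj1} to an analytic modularity statement for the generating function of height pairings, and then transport that statement back across the perfect pairing between $\ol{CM}(\Omega)$ and $\ol{CM}(\Omega^{-1})$. First note that, by \eqref{ydy}, the normalized height pairing on $CM(\Omega)\times CM(\Omega^{-1})$ is recovered from the single functional $f\mapsto H(f)=\pair{Z_{\Omega,f},Z_{\Omega^{-1}}}$ on $\cS(\GL_2(\BA^\infty))$, and by \eqref{ZTZ00} and \eqref{ZTZ0} this functional intertwines the Hecke action. Suppose one can identify $f\mapsto H(f)$ with the value at $g=1$ of the Whittaker function of an automorphic form $\Theta(f)$ on $\GL_{2,\BQ}(\BA)$, depending linearly and Hecke-equivariantly on $f$, whose cuspidal constituents all lie in $\cA$. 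Because the cycles $Z_{\Omega,f}$ are cohomologically trivial, the height pairing annihilates the constant-term and Eisenstein contributions, so $H(f)$ depends only on the cuspidal projection of $\Theta(f)$. Substituting $f_1*f_2^\vee$ and using \eqref{ydy}, the pairing on $CM(\Omega)\times CM(\Omega^{-1})$ then factors through the $\cA$-isotypic quotients; hence $\ol{CM}(\Omega)$ embeds $\GL_2(\BA^\infty)$-equivariantly into a finite sum $\bigoplus_{\pi\in\cA}\wt\pi^\infty\otimes M_\pi$ with each $M_\pi$ finite dimensional. A submodule of a semisimple smooth module is semisimple, so $\ol{CM}(\Omega)$ is semisimple with constituents among the $\wt\pi^\infty$, $\pi\in\cA$; then $\ol{CM}[\pi]=\Hom_{\GL_2(\BA^\infty)}(\wt\pi^\infty,\ol{CM}(\Omega))$ is precisely the multiplicity space of $\wt\pi^\infty$ (Schur's lemma, together with strong multiplicity one to separate distinct $\pi\in\cA$), which is exactly the surjectivity of \eqref{mod0conj1}.

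To realize the required identity I would take $\Theta(f)$ to be a mixed theta--Eisenstein series with $\GL_2$ in the role of $\GSp_{2}$, its theta kernel built from Schwartz functions on an adelic quadratic space attached to $K$ --- on which the Hecke algebra of $\GL_2$ acts through the similitude orthogonal group --- and archimedean data chosen so that $\Theta(f)$ is holomorphic of weight $2k$ with central character $\Omega^{-1}|_{\BA^\times}$; one then proves the Hecke-equivariant comparison
$$H(f)=W_{1}\bigl(\Theta(f)\bigr).$$
That the cuspidal constituents of a holomorphic weight-$2k$ cusp form with this central character lie in $\cA$ is automatic. See-saw identities for the relevant dual pairs further refine the $\pi$-components of $\Theta(f)$ in terms of local theta data, tying their (non)vanishing to $L(1/2,\pi_K\otimes\Omega)$ and the local root numbers; this is the input that feeds the height formula (Theorem \ref{stronghtderthm}) and the refinements discussed in \ref{Modularitysec}. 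Restricted to the unramified Hecke algebra the whole package goes through unconditionally --- that is Theorem \ref{strongmodularity} --- using only unramified $f$ together with the classical unramified computations of \cite{Zha97}.

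The main obstacle is establishing the comparison $H(f)=W_1(\Theta(f))$ for \emph{all} test functions $f$, not merely the ``nice'' ones. The route I would take: first impose a weak regularity condition on $f$ so that the CM cycles meet properly and the local arithmetic intersection numbers can be computed one place at a time, following \cite{Qiu} at the finite places; the genuinely new input is archimedean, where one truncates the admissible Green current, matches the truncated part with the function-field computation, and rewrites the remaining piece of $H(f)$ as an integral over $K^\times\bsl\BA_K^{\infty,\times}$ of a function on $X(N)$. Next, reinterpret S. Zhang's comparison \cite{Zha97} to cover certain non-regular $f$, and finally use the $\BA_K^{\infty,\times}$-action (via \eqref{ZTZ00}, \eqref{ZTZ0}) to generate arbitrary $f$; the ramified local computations this entails are more delicate than in the weight-$2$ case and are handled by importing the weight-$2$ global results of \cite{YZZ}, \cite{YuanZ}. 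Making this comparison --- and the extraction of the precise module structure from it --- fully unconditional over the \emph{entire} Hecke algebra is exactly the point that is not yet complete: at present it is unconditional only on the unramified Hecke algebra (Theorem \ref{strongmodularity}), while the complete refinement of each $\ol{CM}(\pi)$ is obtained only under the hypothesis $L(1/2,\pi_K\otimes\Omega)=0$ (Theorem \ref{strongermodularity}).
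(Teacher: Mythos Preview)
The statement is a \emph{conjecture}, not a theorem: the paper does not prove it and only remarks that it would follow from the Beilinson--Bloch conjecture on injectivity of the Abel--Jacobi map. Your proposal is likewise not a proof---you concede as much in your final paragraph---but rather a sketch of the route the paper takes toward its \emph{partial} results, Theorems~\ref{strongmodularity} and~\ref{strongermodularity}. So there is no proof here to compare against, and what you have written should be read as strategy, not as a demonstration.

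Within that strategy, two simplifications would cause trouble if one tried to push the argument to the full conjecture. First, the comparison is not a clean identity $H(f)=W_1(\Theta(f))$ for a single automorphic form on $\GL_{2,\BQ}$ whose cuspidal constituents already sit in $\cA$: the actual formula \eqref{concl} contains, besides the mixed theta--Eisenstein term $I(g,\Phi)$, correction theta series $\theta_{d,p}(g,\Phi)$ and $\theta_{l,p}(g,\Phi)$ for $p\in S_\fn$, which are theta lifts to $B(p)^\times$. Their cuspidal constituents live on the inner forms $B(p)^\times$, and bounding the level of their Jacquet--Langlands transfers back to $\BB^\times$ is the substance of Lemma~\ref{444}, Corollary~\ref{finally}, and Lemma~\ref{aidlem}; this is exactly where the global weight-$2$ input from \cite{YZZ} and \cite{YuanZ} is spent, and it only yields control by a fixed \emph{open compact} level, hence only unramified modularity. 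Second, the sentence ``because the cycles $Z_{\Omega,f}$ are cohomologically trivial, the height pairing annihilates the constant-term and Eisenstein contributions'' is not the operative mechanism: in the paper the constant term of $I'(0,g,\Phi)$ is suppressed by the regularity Assumption~\ref{asmp2'} on the Schwartz data (via Lemma~\ref{wcl}), not by any property of the cycles, and when that assumption fails one must invoke Zhang's unramified computation (Theorem~\ref{deyt}) and patch via $\BA_K^{\infty,\times}$-translates. Neither of these obstacles is currently bypassed, which is why Conjecture~\ref{modconj1} remains open.
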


 \begin{rmk} \label{moW}
  The reader should think of  $\ol{CM} $ as  a subgroup of the ``Mordell-Weil group" of the submotive of $\{Y(N)\}_N$ that is spanned by CM cycles.  
  We remind the reader of   the weight 2 case in \cite[p. 78]{YZZ}, there is a decomposition of abelian varieties up to isogeny similar to \eqref{mod0conj1}:  $$  \Jac (X(N)) =\bigoplus \wt \Pi ^{U(N)}\otimes_{\End(\Pi)} A_\Pi,$$ 
where
$\Pi$ runs over irreducible admissible representations of $\GL_2(\BA^\infty)$ over $\BQ$ such that $\Pi_\BC$ is a direct sum of the finite components of Galois Conjugate cuspidal automorphic representations of weight 2,  $\wt \Pi ^{U(N)}$ is the space of $U(N)$-invariants,
and  the $L$-function $L(s,A_\Pi)=L(s-1/2,\Pi)$. Here we consider $  \Jac (X(N))(\BQ)_\BC$'s  as
Chow groups of $X(N)$'s so that it is equipped with a left Hecke action by  the subalgebra of bi-$U(N)$-invariant functions in $\cS\lb\GL_{2}\lb\BA^{S,\infty}\rb\rb$
  as above. This decomposition respects the  Hecke-actions on both sides.
 
  \end{rmk}
  
\begin{rmk} \label{modconjrmk}
It is a conjecture  (attributed to Beilinson and Bloch, see  \cite{Jan}) that  for a smooth projective variety $Y$ over a number field $F$,
 the Abel--Jacobi map from the Chow group of cohomologically trivial codimension-$d$ cycles with $\BQ_l$-coefficients
  $$\Ch^d(Y)_{0,\BQ_l}\to H^1\lb \Gal(\ol F/F),H^{2d-1}(Y_{\ol F}, \BQ_l)\rb$$ 
should be 
injective \cite{Bei}. 
Conjecture \ref{modconj1} is implied by this conjecture as follows.

On   $Y(N)$, there is the natural  action of $\Delta_k: =  \{\pm 1\}\wr S_{2k-2}= \{\pm 1\}^{2k-2}\rtimes  S_{2k-2}$ as in \cite[2.3]{Zha97} with the obvious  compatibility as $N$ varies. 
Then  by the functoriality and conjectural injectivity  of the Abel--Jacobi map, we have 
a $\Delta_k\times \GL_2(\BA^\infty)$-equivariant embedding 
$$\vil_N \Ch^k(Y(N))_{0,\BQ_l}\to \vil _N H^1\lb \Gal(\ol \BQ/\BQ),H^{2k-1}(Y(N)_{\ol \BQ}, \BQ_l)\rb.$$
Let $\vep$ be the character of  $\Delta_k$ that is   the product map on  $ \{\pm 1\}^{2k-2}$, and is the sign character on $S_{2k-2}$. 
By \cite[Lemma 2.4.3]{Zha97},    the image of  $ CM(\Omega ) $  in  $\vil_N \Ch^k(Y(N))_{0,\BQ_l}$ is contained in the $\vep$-component. 
By 
\cite[Theorem 2.3.1]{Zha97} (due to Scholl), the $\vep$-component of $$\vil _N H^1\lb \Gal(\ol \BQ/\BQ),H^{2k-1}(Y(N)_{\ol \BQ}, \BQ_l)\rb\otimes_{\BQ_l} \BC$$ is 
a   direct sum of  elements in $\cA$. Then so are $\Ch^d(Y)_{0}$ (with $\BC$-coefficients) and its subquotient
$ CM(\Omega )$.  Then Conjecture \ref{modconj1}  follows.
\end{rmk}
 
\subsubsection{Unramified modularity} \label{Unramified Hecke action}
 We establish  a version of Conjecture  \ref{modconj1} for modules of   unramified Hecke algebras, which we now define.

For a  finite set $S$ of finite places of $\BQ $, let  $\cH^S\subset  \cS\lb\GL_{2}\lb\BA^{S,\infty}\rb\rb$ be the usual unramified Hecke algebra, i.e. the
sub-algebra   of bi-$\GL_2(\wh \BZ^S)$-invariant functions.
 Let  $$ CM(\Omega  ,\cH^S) =\{  Z_{\Omega, f } :    f\in \cS(\GL_2(\BA_S))\otimes \cH^S\}.$$
Let  $ \ol{CM}(\Omega  ,\cH^S)$ be the image of $ CM(\Omega  ,\cH^S)$  in $\ol{CM}(\Omega)$. 
Then $ \ol{CM}(\Omega  ,\cH^S)$ is a $\cH^S$-submodule of $\ol{CM}(\Omega)$.

  Let $\cA^{\GL_2(\wh \BZ^S)  }\subset \cA$ be 
  the  subset of  representations with nonzero $\GL_2(\wh \BZ^S)$-invariant vectors. 
    For $\pi\in  \cA^{\GL_2(\wh \BZ^S)  }$, let $L_{\pi^S}$ (resp.  $L_{\wt \pi^S}$) be the character  of $\cH^S$ on the 1-dimensional space of $\GL_2(\wh \BZ^S)  $-invariant vectors in $ \pi^{S,\infty}$ (resp. $ \wt\pi^{S,\infty}$).
   Let  
\begin{equation*}\begin{split}
\ol{CM}[ {\pi^S}]&=\Hom_{\cH^S}(L_{\wt \pi^S},\ol{CM}(\Omega,\cH^S)),\\
\ol{CM}[ {\wt \pi^S}]&=\Hom_{\cH^S}(L_{  \pi^S},\ol{CM}(\Omega^{-1},\cH^S)).
\end{split}
\end{equation*}
 
    By     the strong multiplicity one theorem \cite{PS}, $L_{\pi^S}$'s are pairwise non-isomorphic.   
So  the   following 
natural maps of $\cH^S$-modules  are embeddings
\begin{equation}\begin{split}
\bigoplus_{\pi\in \cA^{\GL_2(\wh \BZ^S)  }}L_{\wt \pi^S}\otimes \ol{CM}[\pi^S]&\incl\ol{CM}(\Omega,\cH^S),\\ \bigoplus_{\pi\in \cA^{\GL_2(\wh \BZ^S)  }} L_{  \pi^S}\otimes \ol{CM}[\wt\pi^S]&\incl\ol{CM}(\Omega^{-1},\cH^S) .\end{split}
\label{mod0conj1S}
\end{equation}

       \begin{thm}  \label{strongmodularity}     Assume that $S$  has cardinality at least three, and 
        contains the prime  2  and all finite places of $\BQ$   ramified in $K$.                 Then the embeddings  \eqref{mod0conj1S} are surjective.
       
      \end{thm}
      We will prove Theorem \ref{strongmodularity} in \S \ref{remove}.
       \begin{rmk}    
                As one may expect,  the assumption that $S$ contains 2  comes from local theta lifting, see Lemma \ref{not2}.           
            
             \end{rmk}

      \subsubsection{Full modularity} Let us reformulate  Conjecture  \ref{modconj1} to make it easier to attack.
       We use Theorem \ref{strongmodularity} for the reformulation.
      
      For $S\subset S'$,   we  naturally have  $\ol{CM}(\Omega,\cH^S)\subset \ol{CM}(\Omega,\cH^{S'})$. Choose any isomorphism  $L_{\wt \pi^{S'}} \cong L_{\wt \pi^S} $ (of 1-dimensional spaces).    
Then $f\in  \ol{CM}[{\pi^S}]$,  understood as a linear  map from $L_{\wt \pi^{S'}}$  to $ \ol{CM}(\Omega,\cH^{S'})$, is   also $\cH^{S'}$-equivariant.    Thus we have  an embedding  $\ol{CM}[{\pi^S}]\incl   \ol{CM}[{\pi^{S'}}]$  which gives an inclusion
       \begin{equation} 
          L_{\wt \pi^S}\otimes \ol{CM}[{\pi^S}]\subset L_{\wt \pi^{S'}}\otimes \ol{CM}[{\pi^{S'}}] 
       \label{mod0conj1SS'}
\end{equation} 
in $\ol{CM}(\Omega).$
          Note that the inclusion is independent of the choice of  the  isomorphism  $L_{\wt \pi^S}\cong L_{\wt \pi^{S'}}$.
               Let
      $$  \ol{CM} (\pi)=\vil_S L_{\wt \pi^S}\otimes \ol{CM}[{\pi^S}]\subset \ol{CM}(\Omega),$$ which is stable by the $\GL_2(\BA^\infty)$-action. 
           Define $  \ol{CM} (\wt\pi)\subset \ol{CM}(\Omega^{-1}) $ similarly. 
                 As   $\ol{CM}(\Omega)=\vil_S  \ol{CM}(\Omega  ,\cH^S)$   by definition,    we have 
\begin{equation} \ol{CM}(\Omega)= \bigoplus_{\pi\in \cA} \ol{CM} (\pi) \label{omelim}\end{equation} 
by Theorem \ref{strongmodularity}.
  So     Conjecture \ref{modconj1} is implied by  the truth of the following conjecture for all $\pi\in \cA$.

       \begin{conj}\label{modconj}Under the embedding \eqref{mod0conj1}, we have      $$  \ol{CM} (\pi)=\wt\pi^\infty\otimes \ol{CM} [\pi]$$
        in  $\ol{CM}(\Omega)$.
The corresponding equality for $  \ol{CM} (\wt\pi)$ also holds.
                 \end{conj}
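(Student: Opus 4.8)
The plan is to derive Conjecture \ref{modconj} from the conjectured injectivity of the Abel--Jacobi map, using \cite[Lemma 5.6]{Bei} together with the two inputs already available to us: the strong multiplicity one decomposition $\ol{CM}(\Omega)=\bigoplus_{\pi\in\cA}\ol{CM}(\pi)$, and the unramified modularity of Theorem \ref{strongmodularity}. The assertion for $\ol{CM}(\wt\pi)$ is obtained by running the same argument with $\Omega$ replaced by $\Omega^{-1}$, so I will only discuss $\ol{CM}(\pi)$.

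First I would reduce to a representation-theoretic claim: it is enough to show that, for each $\pi\in\cA$, the smooth $\GL_2(\BA^\infty)$-module $\ol{CM}(\pi)$ is $\wt\pi^\infty$-isotypic. Granting that, write $\ol{CM}(\pi)\cong\wt\pi^\infty\otimes V_\pi$ for a $\BC$-vector space $V_\pi$. Since distinct members of $\cA$ have non-isomorphic finite parts, $\Hom_{\GL_2(\BA^\infty)}(\wt\pi^\infty,\ol{CM}(\Omega))=\Hom_{\GL_2(\BA^\infty)}(\wt\pi^\infty,\ol{CM}(\pi))=V_\pi$, i.e. $\ol{CM}[\pi]=V_\pi$, so the tautological inclusion $\wt\pi^\infty\otimes\ol{CM}[\pi]\hookrightarrow\ol{CM}(\pi)$ from \eqref{mod0conj1} is an isomorphism --- which is the statement of the conjecture.

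To obtain the isotypicity I would realize $CM(\Omega)$ inside a Galois-cohomological target. The CM cycles $Z_g$ are cohomologically trivial on $Y(N)$ and this is preserved by the Hecke correspondences, so the normalized classes lie in the domain of the $\ell$-adic Abel--Jacobi map $\mathrm{AJ}\colon\Ch^k(Y(N))_{\mathrm{hom}}\to H^1_f\bigl(\BQ,\,H^{2k-1}_{\et}(Y(N)_{\ov\BQ},\BQ_\ell)(k)\bigr)$, which is Hecke-equivariant. By the Deligne--Scholl description of the cohomology of Kuga--Sato varieties, $\vil_N H^{2k-1}_{\et}(Y(N)_{\ov\BQ},\BQ_\ell)$ decomposes Hecke- and Galois-equivariantly as $\bigoplus_{\pi'}\wt{\pi'}^\infty\otimes\rho_{\pi',\ell}$ plus a Tate-type part on which the Hecke algebra acts through characters; here $\pi'$ runs over the cuspidal automorphic representations of $\GL_{2,\BQ}$ with $\pi'_\infty$ holomorphic discrete series of weight $2k$, and $\rho_{\pi',\ell}$ is the associated two-dimensional Galois representation. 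Applying $H^1_f(\BQ,-(k))$ to this decomposition and passing to the colimit over $N$, we obtain a Hecke-equivariant map from $CM(\Omega)$ to a module which is an arbitrary direct sum of irreducible smooth representations --- copies of the $\wt{\pi'}^\infty$ and of the characters occurring on the Tate-type part --- hence a semisimple module. Here \cite[Lemma 5.6]{Bei}, i.e. the conjectured injectivity of $\mathrm{AJ}$ after $\otimes\,\BQ$, guarantees that this map is injective. Consequently $CM(\Omega)$, and therefore its Hecke-equivariant quotient $\ol{CM}(\Omega)$ by the kernel of the normalized height pairing, is a subquotient of a semisimple module, hence semisimple. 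Theorem \ref{strongmodularity} identifies the $\cH^S$-eigensystems appearing in $\ol{CM}(\Omega)$ with those of the $\wt\pi^S$ for $\pi\in\cA$; since the Tate-type characters have Eisenstein (hence non-cuspidal) $\cH^S$-eigensystems, they cannot occur, and since each $\pi'$ as above with eigensystem matching some $\pi\in\cA$ must satisfy $\pi'\cong\pi$ by strong multiplicity one, we conclude $\ol{CM}(\Omega)=\bigoplus_{\pi\in\cA}W'_\pi\otimes\wt\pi^\infty$ for vector spaces $W'_\pi$. Comparing with the strong multiplicity one decomposition $\ol{CM}(\Omega)=\bigoplus_\pi\ol{CM}(\pi)$ (which separates the $\pi$ by $\cH^S$-eigensystems) forces $\ol{CM}(\pi)=W'_\pi\otimes\wt\pi^\infty$, which is $\wt\pi^\infty$-isotypic, as required.

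The main obstacle is of course unavoidable: the injectivity of the Abel--Jacobi map is a deep open conjecture, and it is the only non-formal input above --- the Deligne--Scholl decomposition, the semisimplicity and isotypic-component bookkeeping, and Theorem \ref{strongmodularity} are all unconditional. This is precisely why Conjecture \ref{modconj} is stated as a conjecture rather than a theorem. I would emphasize that the two special cases proved unconditionally in this paper, namely the restriction to the unramified Hecke algebra (Theorem \ref{strongmodularity}) and the full statement for $\ol{CM}(\pi)$ when $L(1/2,\pi_K)=0$ (Theorem \ref{strongermodularity}), are reached by entirely different, automorphic means --- the comparison of the height pairing $H(f)$ with the Whittaker value of a mixed theta--Eisenstein series and the arithmetic relative trace formula --- and that the genuine gap in the general case is exactly the rigidity at the ramified places that injectivity of $\mathrm{AJ}$ would provide: without it one cannot rule out that the local component of $\ol{CM}(\pi)$ at a bad place is strictly larger than the irreducible $\wt\pi_v$.
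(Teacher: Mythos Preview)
Your proposal is correct and matches the paper's own justification: the paper does not prove Conjecture~\ref{modconj} (it is stated as a conjecture), but immediately after stating the equivalent Conjecture~\ref{modconj1} it records in one line that it ``is a consequence of the conjecture of Beilinson and Bloch on the injectivity of Abel--Jacobi map \cite[Lemma 5.6]{Bei}''. You have simply unpacked that sentence --- the Deligne--Scholl decomposition of $H^{2k-1}_{\et}$ of Kuga--Sato varieties, the resulting semisimplicity of the image under an injective Abel--Jacobi map, and the eigensystem identification --- and correctly flagged that the only conditional input is the Abel--Jacobi injectivity itself.
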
 
        Moreover,    if Conjecture \ref{modconj1} holds,  by     the strong multiplicity one theorem \cite{PS}, $L_{\wt \pi^S}\otimes \ol{CM}[{\pi^S}]$ sits in $ \wt\pi^\infty\otimes \ol{CM} [\pi]$.  Thus  Conjecture \ref{modconj1} is in fact equivalent to         
           Conjecture \ref{modconj} for all $\pi\in \cA$.

Our main result  on Conjecture \ref{modconj} is as follows.
                    \begin{thm}  \label{strongermodularity}  
         Assume that   $L (1/2 ,\pi_K\otimes \Omega) =0$.  Then   Conjecture \ref{modconj} holds.
        In particular, if  
         $\ep (1/2 ,\pi_K\otimes \Omega) =-1, $ then   Conjecture \ref{modconj} holds.
 
              \end{thm}
                 The theorem is proved in \S \ref          {removeer}.
              \subsubsection{Rank} 
              To motivate the next result, recall that
              the theorem of Tunnell \cite{Tun} and  Saito \cite{Sai}  implies the following result.
 \begin{prop}  \label{stupid}Assume      Conjecture \ref{modconj}. Then         $ \ol{CM}(\pi)=\{0\}$ if    $\ep (1/2 ,\pi_{K_p}\otimes \Omega_p) \Omega_p(-1)= -1$ for some  $p<\infty$.
\end{prop}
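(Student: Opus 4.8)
The plan is to derive the vanishing from Conjecture \ref{modconj} and the local dichotomy of Tunnell \cite{Tun} and Saito \cite{Sai}. By Conjecture \ref{modconj}, $\ol{CM}(\pi)=\wt\pi^\infty\otimes\ol{CM}[\pi]$, so it suffices to show $\ol{CM}[\pi]=\{0\}$; equivalently, that the $\pi$-isotypic summand $\ol{CM}(\pi)\subset\ol{CM}(\Omega)$ is $\{0\}$. Suppose it is not. Two structural facts about the basic class $[Z_\Omega]=[Z_{\Omega,\mathbf 1_{U(N)}}]$ drive the argument. First, $[Z_\Omega]$ is a cyclic vector for the right action $R$ on $\ol{CM}(\Omega)$: by \eqref{ZTZ0} one has $R_f[Z_\Omega]=[Z_{\Omega,f}]$ for $f\in\cS(\GL_2(\BA^\infty))$ (absorbing volumes in the colimit), and these span $\ol{CM}(\Omega)$. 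Second, $[Z_\Omega]$ is an $R$-eigenvector for the subgroup $\BA_K^{\infty,\times}$: writing $[Z_\Omega]=\int_{K^\times\bsl\BA_K^{\infty,\times}}[Z_t]\,\Omega(t)\,dt$ and using $R_{t_0}[Z_t]=[\det t_0]^{-(k-1)}[Z_{tt_0}]$ for $t_0\in\BA_K^{\infty,\times}$ (the $[Z_t]$ are transported along the commuting torus, up to the normalization in \ref{ion on cohomological Chow cycles}), the substitution $t\mapsto tt_0^{-1}$ gives $R_{t_0}[Z_\Omega]=\chi(t_0)[Z_\Omega]$ for the character $\chi=[\det\,\cdot\,]^{-(k-1)}\Omega^{-1}$ of $\BA_K^{\infty,\times}$, which restricts to $\Omega^{-1}$ on the centre and differs from $\Omega^{-1}$ by an unramified character.

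Let $v$ be the image of $[Z_\Omega]$ under the $\GL_2(\BA^\infty)$-equivariant projection $\ol{CM}(\Omega)\to\ol{CM}(\pi)$ coming from $\ol{CM}(\Omega)=\bigoplus_{\pi'\in\cA}\ol{CM}(\pi')$. Since $[Z_\Omega]$ is $R$-cyclic and $\ol{CM}(\pi)\neq\{0\}$, $v\neq0$, and $v$ remains an $R$-eigenvector for $\BA_K^{\infty,\times}$ with character $\chi$. Writing $v=\sum_i w_i\otimes m_i$ in $\wt\pi^\infty\otimes\ol{CM}[\pi]$ with the $m_i$ linearly independent — $R$ acting only on the first factor — each $w_i\in\wt\pi^\infty$ is a $\BA_K^{\infty,\times}$-eigenvector of character $\chi$, and some $w_{i_0}\neq0$. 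Thus $\Hom_{\BA_K^{\infty,\times}}(\wt\pi^\infty,\chi)\neq\{0\}$, and pairing against $\pi^\infty$ yields a nonzero $\BA_K^{\infty,\times}$-equivariant functional on $\pi^\infty$ of some character $\mu$ (equal to $\chi^{-1}$ up to a sign convention). By the standard factorization of torus-equivariant functionals, $\Hom_{\BA_K^{\infty,\times}}(\pi^\infty,\mu)=\bigotimes'_{p<\infty}\Hom_{K_p^\times}(\pi_p,\mu_p)$, so $\Hom_{K_p^\times}(\pi_p,\mu_p)\neq\{0\}$ for \emph{every} finite place $p$, where $\mu_p$ agrees with $\Omega_p^{\pm1}$ up to an unramified twist.

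Finally, apply Tunnell--Saito place by place: $\dim\Hom_{K_p^\times}(\pi_p,\mu_p)\le1$, and this space is nonzero exactly when the relevant local root number equals $+1$. Because $\mu_p$ differs from $\Omega_p^{\pm1}$ only by an unramified character — which, in the normalization that puts the centre of $L(s,\pi_K\otimes\Omega)$ at $s=1/2$, is exactly the shift absorbed in writing $\ep(1/2,\pi_{K_p}\otimes\Omega_p)$ — and because $\pi_{K_p}\otimes\Omega_p$ is self-dual (so the condition is insensitive both to $\Omega_p\leftrightarrow\Omega_p^{-1}$ and to $\pi_p\leftrightarrow\wt\pi_p$), the nonvanishing at $p$ amounts to $\ep(1/2,\pi_{K_p}\otimes\Omega_p)\,\Omega_p(-1)=1$. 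Taking $p$ to be the place in the hypothesis, where $\ep(1/2,\pi_{K_p}\otimes\Omega_p)\,\Omega_p(-1)=-1$, gives a contradiction; hence $\ol{CM}(\pi)=\{0\}$. The point needing care is the normalization bookkeeping used twice: carrying $[\det\,\cdot\,]^{-(k-1)}$ through the $\BA_K^{\infty,\times}$-equivariance of $[Z_\Omega]$ and matching the resulting local character with the one whose Tunnell--Saito root number is literally $\ep(1/2,\pi_{K_p}\otimes\Omega_p)\,\Omega_p(-1)$ in the paper's conventions. Granting that, the rest—cyclicity, local multiplicity one, and a single vanishing local factor forcing the whole global space to vanish—is formal.
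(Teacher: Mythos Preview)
Your argument has a genuine gap at the eigenvector step. The module $\ol{CM}(\Omega)$ is a \emph{smooth} $\GL_2(\BA^\infty)$-representation, so every vector is fixed by some $U(N)$; the only candidate for ``$[Z_\Omega]$'' at level $N$ is $[Z_{\Omega,1_{U(N)}}]$, and by \eqref{ZTZ00} one has $R_{t_0}[Z_{\Omega,1_{U(N)}}]=[Z_{\Omega,1_{U(N)t_0}}]$, which is not a scalar multiple of $[Z_{\Omega,1_{U(N)}}]$ unless $t_0$ normalizes $U(N)$. The substitution $t\mapsto tt_0^{-1}$ you invoke corresponds to \emph{left} translation of the test function --- it gives $Z_{\Omega,L_{t_0}f}=\Omega^{-1}(t_0)Z_{\Omega,f}$, which is a different (and correct, and useful) statement. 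More decisively: if your argument did produce a nonzero $\BA_K^{\infty,\times}$-eigenvector $w\in\wt\pi^\infty$, then at any place $p$ split in $K$ the split torus $K_p^\times$ would have an eigenvector in the smooth $\GL_2(\BQ_p)$-representation $\wt\pi_p$; but an infinite-dimensional such representation has none (in the Kirillov model a torus-eigenvector would have to be a multiplicative character of $\BQ_p^\times$, which never lies in the model). Thus your argument --- which never touches the hypothesis on $\ep$ --- would force $\ol{CM}(\pi)=0$ for \emph{every} $\pi$.

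The repair is to replace eigenvectors by coinvariants, and this is what the paper's one-line appeal to Tunnell--Saito unpacks to. From $Z_{\Omega,L_{t_0}f}=\Omega^{-1}(t_0)Z_{\Omega,f}$ and the construction of $z_{\phi\otimes\wt\phi}$ in \eqref{fff}, one checks (tracing $f\mapsto f^\vee\mapsto\phi\otimes\wt\phi$) that the surjection $\pi^\infty\twoheadrightarrow\ol{CM}[\pi]$, $\phi\mapsto l_\phi$, satisfies $l_{\pi(t_0)\phi}=\Omega^{-1}(t_0)\,l_\phi$ for $t_0\in\BA_K^{\infty,\times}$. Hence it factors through the coinvariants $(\pi^\infty\otimes\Omega)_{\BA_K^{\infty,\times}}=\bigotimes_{p<\infty}(\pi_p\otimes\Omega_p)_{K_p^\times}$. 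By Tunnell--Saito the local factor at the given $p$ vanishes when $\ep(1/2,\pi_{K_p}\otimes\Omega_p)\Omega_p(-1)=-1$, so $\ol{CM}[\pi]=0$ and $\ol{CM}(\pi)=0$ by Conjecture~\ref{modconj}. Your last two steps (factorization of the global $\Hom$ space and the local dichotomy) are fine; only the way you manufactured the global functional needs to change.
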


Wihtout Conjecture \ref{modconj}, 
we prove the following unconditional result.
     \begin{thm}      \label{Vani}     
  Assume  that $\ep (1/2 ,\pi_{K_p}\otimes \Omega_p)\Omega_p(-1) = -1$ for more than one  $p<\infty$. Then   $ \ol{CM}(\pi)=\{0\}$ and  $ \ol{CM}(\wt\pi)=\{0\}$.  In particular, Conjecture \ref{modconj} holds.
                                 \end{thm}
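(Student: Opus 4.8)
\textbf{Proof proposal for Theorem \ref{Vani}.}

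The plan is to show that the cycle $Z_\Omega$ itself—before taking any Hecke translate—already dies in $\ol{CM}(\Omega)$, i.e.\ that it pairs to zero against all of $CM(\Omega^{-1})$; since $\ol{CM}(\pi)$ is the $\pi$-isotypic piece and the vanishing of the whole space $\ol{CM}(\Omega)$ under the hypothesis would be too strong (indeed false), the right statement to prove is that $L_{\wt\pi^S}\otimes\ol{CM}[\pi^S]=\{0\}$ for the relevant $\pi$, which by Theorem \ref{strongmodularity} and the definition of $\ol{CM}(\pi)$ via $\vil_S$ is equivalent to $\ol{CM}(\pi)=\{0\}$. The mechanism is a local sign obstruction: the height pairing $H(f)=\pair{Z_{\Omega,f},Z_{\Omega^{-1}}}$ is, after the comparison with the analytic side sketched in the introduction, governed by a product of local terms, and at each place $p$ where $\ep(1/2,\pi_{K_p}\otimes\Omega_p)\Omega_p(-1)=-1$ the corresponding local distribution vanishes identically (this is the local manifestation of Tunnell--Saito: the relevant local trilinear/toric functional is zero, so \emph{every} local test function integrates to zero there, not merely the unramified one). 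With two such bad places, one has ``room to spare'': one can use one bad place to force vanishing of the spectral contribution of $\pi$ on the automorphic side, and—crucially—use the \emph{second} bad place to handle the non-cuspidal (Eisenstein/residual and the ``constant term'' $T_0$-type) contributions as well, which a single bad place would not kill.

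Concretely I would proceed as follows. First, fix $\pi\in\cA$ satisfying the hypothesis, and choose $S$ large enough (containing $2$, the ramified primes of $K$ and $\Omega$, and the two bad primes $p_1,p_2$) so that Theorem \ref{strongmodularity} applies; it suffices to show that for every $f=f_S\otimes f^S$ with $f^S\in\cH^S$ supported on the Satake parameters of $\pi$ (i.e.\ $f^S$ acting on $L_{\wt\pi^S}$) and every choice of $f_S\in\cS(\GL_2(\BA_S))$, the pairing $\pair{Z_{\Omega,f},Z_{\Omega^{-1},f'}}^\sharp$ vanishes for all $f'$; by \eqref{ydy} this reduces to showing $H(f*f'^\vee)=\pair{Z_{\Omega,f*f'^\vee},Z_{\Omega^{-1}}}=0$. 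Second, invoke the comparison (the arithmetic RTF identity together with the theta--Eisenstein computation referenced in the introduction) expressing $H(\cdot)$ as a sum of the $\pi$-spectral term plus Eisenstein and constant terms; the $\pi$-spectral term factors as $L'(1/2,\pi_K\otimes\Omega)$ times a product of local orbital-type integrals $\prod_v \alpha_v$, and at $v=p_1$ the factor $\alpha_{p_1}\equiv 0$ by Tunnell--Saito applied locally (this is exactly the local input behind Proposition \ref{stupid}, but now used to kill the term for \emph{all} test functions, which is what a genuine local vanishing of the functional gives). Third—the step that genuinely uses two bad places—show that the Eisenstein and constant-term contributions, which are not controlled by the $\pi$-packet sign alone, also vanish: after the theta lift these are sums over the quadratic space (or over $\GL_1$-data) whose local component at $p_2$ is again a toric integral against $\Omega_{p_2}$ of a function in the image of the local Weil representation, and the same local sign $=-1$ forces that integral to vanish identically. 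Thus $H(f*f'^\vee)=0$ for all admissible $f,f'$, whence $Z_{\Omega,f}$ lies in the kernel of the normalized height pairing with $CM(\Omega^{-1})$, i.e.\ maps to $0$ in $\ol{CM}(\Omega)$; this gives $\ol{CM}(\pi)=\{0\}$, and the same argument with $\Omega$ replaced by $\Omega^{-1}$ (the two bad-place conditions are symmetric in $\pi\leftrightarrow\wt\pi$) gives $\ol{CM}(\wt\pi)=\{0\}$. Finally, Conjecture \ref{modconj} for this $\pi$ is the identity $\{0\}=\wt\pi^\infty\otimes\{0\}$, which holds trivially.

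The main obstacle is the third step: isolating and killing the \emph{non-cuspidal} part of $H(f)$. The cuspidal $\pi$-term is the ``expected'' place where Tunnell--Saito enters and one bad place suffices for it; the real content of needing a second bad place is that the arithmetic relative trace, before spectral decomposition, is a height pairing of honest cycles and one must argue that the \emph{entire} geometric/automorphic distribution—Eisenstein series contributions, the residual spectrum, and the constant term $T_0$ coming from the modularity of the generating series—evaluates to zero, not just its projection to $\pi$. I expect this to require a careful bookkeeping of which local functionals appear in each piece of the analytic side after the theta--Eisenstein comparison, and verifying that each such functional has $p_2$-component a local toric integral of a Weil-representation vector against $\Omega_{p_2}$, so that the local sign hypothesis at $p_2$ annihilates it; here the inputs from \cite{YZZ} and \cite{YuanZ} in the weight-$2$ case, used to pin down the relevant local results, will do the heavy lifting. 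A secondary technical point is ensuring the reduction via \eqref{ydy} and Theorem \ref{strongmodularity} is legitimate at the ramified places in $S$—i.e.\ that restricting to $\cH^S$-eigencomponents of $\pi$ and letting $f_S$ range freely does capture all of $\ol{CM}(\pi)$—but this is essentially the content of the strong multiplicity one argument already in place.
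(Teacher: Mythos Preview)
Your reduction via \eqref{ydy} and Theorem \ref{strongmodularity}, and your intuition that Tunnell--Saito local vanishing is the engine with two bad places providing ``room to spare'', are correct and match the paper's strategy. But your identification of what the second bad place must kill is wrong, and this is a genuine gap.

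The comparison the paper actually uses (equation \eqref{concl}) expresses $H(f)$ not as ``cuspidal $\pi$-term plus Eisenstein and constant terms'' but as the value at $g=1$ of the $\psi$-Whittaker function of $I(g,\Phi)+\sum_{p\in S_\fn}(\theta_{d,p}(g,\Phi)-\theta_{l,p}(g,\Phi))$, where $I$ is the derivative of a mixed theta--Eisenstein series and the $\theta_{d,p},\theta_{l,p}$ are \emph{coherent} theta series built from Schwartz functions on $B(p)$. After holomorphic projection to weight $2k$ every one of these is a cusp form; there are no Eisenstein, residual, or constant-term pieces left to isolate. The argument (Proposition \ref{strongmodularity100}(2)) proceeds by showing each of these cusp forms has zero Petersson pairing with every $\phi\in\sigma$, where $\sigma$ corresponds to $\pi$ under Jacquet--Langlands (the choice of $f^S$ forces this).

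The reason two bad places are needed is specific to the indexing of the $\theta$-terms. The pairing $\pair{\theta_{d,p}(\cdot,\Phi),\phi}$ lands in the Shimizu lift $\pi'\boxtimes\wt\pi'$ on $B(p)^\times\times B(p)^\times$. At any place $v\neq p$ one has $B(p)_v\cong\BB_v$, hence $\pi'_v\cong\pi_v$, so if $v$ is a bad place Tunnell--Saito gives $\Hom_{K_v^\times}(\pi'_v\otimes\Omega_v,\BC)=0$ and the toric integral defining $\theta_{d,p}$ vanishes. But at $v=p$ the local invariant of $B(p)$ flips, so if $p$ itself is bad then $\Hom_{K_p^\times}(\pi'_p\otimes\Omega_p,\BC)\neq 0$ and you \emph{cannot} use $p$ to kill its own theta term. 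Two bad places $p_1,p_2$ are therefore needed so that the $\theta$-term at $p_1$ is killed by $p_2$ and vice versa (and any $\theta$-term at some other $p$ is killed by either). The $I$-term is handled by \eqref{use2} and \eqref{use1}: the local factor $\alpha^\sharp_{\pi_v}$ appearing in those products vanishes at any bad place $v$ distinct from the distinguished $p$ in \eqref{use1}, which again requires a second bad place. So the role of the second bad place is not to kill non-cuspidal contributions, which do not appear, but to handle the sign-flip at the index of each coherent theta correction.
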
 
                            The theorem is proved in \S \ref          {removeer}.

                                    By Proposition \ref{stupid}, if  $ \ol{CM}(\pi)\neq\{0\}$, we should expect  $\ep (1/2 ,\pi_{K_p}\otimes \Omega_p) \Omega_p(-1)= 1$ for  every  $p<\infty$. As $\ep(1/2,\pi_{K_\infty}\otimes\Omega_\infty)\Omega_\infty(-1)=-1$,          $\ep (1/2 ,\pi_K\otimes \Omega) =-1$ and thus $L (1/2 ,\pi_K\otimes \Omega) =0$.  
           Motivated by  the analog in the weight 2 case (see Remark \ref{moW}), we predict the following.
         
  \begin{conj} \label{rankconj1}  
   For $\pi\in \cA$,  $\dim\ol{CM}[\pi]\leq 1$.   The equality holds  if and only
   if 
   $\ep (1/2 ,\pi_{K_p}\otimes \Omega_p) \Omega_p(-1)= 1$ for  every  $p<\infty$ and 
    $L' (1/2 ,\pi_K\otimes \Omega) \neq 0$.

\end{conj}
The ``if" part is proved in Corollary \ref{BBB}. The ``only if" part may be proved similarly assuming Conjecture \ref{modconj}, see Proposition \ref{stupid}. However, the claim $\dim\ol{CM}[\pi]\leq 1$ seems out of reach. Assuming  injectivity  of the Abel--Jacobi map, the subspace of $\dim\ol{CM}[\pi]$ coming from  CM cycles on 
Kuga-Sato varieties over $X_0(N)$'s have dimension at most 1, by Nekov\'a\v{r}'s theorem \cite{Nek}.
  
 \subsection{Height  pairing and derivatives}\label{Height and derivatives}

Now we  state the higher weight analog of   the general Gross--Zagier formula  of Yuan, S. Zhang, and W. Zhang.
We need to fix some notations first. 

\subsubsection{Local periods}\label{Local periods}
Recall that $\eta$ is the Hecke character of $\BQ^\times $ associated to $K$ by the class field theory.

Fix  the following measures. For a finite place $p$, endow $\BQ_p^\times$ with the Haar measure such that $\Vol(\BZ_p^\times)=1$. This gives the measure on $K_p^\times/\BQ_p^\times$ for $p$ split in $K$.  
For $p$ unramified in $K$, endow $K_p^\times/\BQ_p^\times$  with the Haar measure such that $\Vol(K_p^\times/\BQ_p^\times)=1$.
For $p$ ramified in $K$, endow $K_p^\times/\BQ_p^\times$  with the Haar measure such that $\Vol(K_p^\times/\BQ_p^\times)=2|D_p|_p^{1/2}$ where $D_p$ is the local discriminant. See also \cite[1.6.2]{YZZ}.

            Let $\pi_p$ be an infinite-dimensional irreducible unitary admissible representation of $\GL_2(\BQ_p)$.   For $  \phi_p\in   \pi_p$ and $   {\wt\phi}_p\in  \wt \pi_p$,  
let    \begin{equation*}\alpha^\sharp _{\pi_p} ( \phi_p\otimes {\wt\phi}_p)=\alpha^\sharp _{\pi_p} ( \phi_p, {\wt\phi}_p) =\frac{L(1,\eta_p)L(1,\pi_p,\ad)}{\zeta_{\BQ_p}(2 )L(1/2,\pi_{p,K_p}\otimes\Omega_p)} \int_{K_p^\times/\BQ_p^\times} \lb \pi_p(t) \phi_p,{\wt\phi}_p\rb \Omega_p(t)d t  . \end{equation*} 
    Then 
   $ \alpha^\sharp _{\pi_p}( \phi_p\otimes {\wt\phi}_p) =1$ if  $K_p,\Omega_p,\pi_p$ are unramified, $\phi_p,{\wt\phi}_p$ are $\GL_2(\BZ_p)$-invariant, and $(\phi_p,{\wt\phi}_p)=1$.
Let $$\alpha _{\pi^\infty}^\sharp= \prod_{p <\infty }    \alpha_{\pi_p}^\sharp.$$

 \subsubsection{Yuan-Zhang-Zhang type formula I}\label{Height and derivatives1}

For $f \in  \cS(\GL_2(\BA^\infty)) $, let $z_{\Omega,f }$ be      the image of $Z_{\Omega,f}$ in $\ol {CM}(\Omega)$.   
 For $\pi\in \cA$, let $z_{\Omega,f,\pi}\in   \ol{CM} (\pi)$ be the   $ \pi $-component of $z_{\Omega,f}$, defined  by applying  \eqref{omelim}. 
 
We give an explicit    description of    $z_{\Omega ,f  , \pi}$ and  a consequential lemma that   will only be used in later sections. The reader may skip them and go to  Lemma \ref{modconjlem}.
  Choose $S$ large enough such that  the conditions in Theorem \ref{strongmodularity}    hold and 
  $$f=f_S\otimes f^S \in \cS(\GL_2(\BA_S))\otimes \cH^S.$$ Then  $z_{\Omega ,f  , \pi}$  
   is the projection of   $z_{\Omega ,f }$ to  $ L_{\wt \pi^S}\otimes \ol{CM}[{\pi^S}]$ under Theorem \ref{strongmodularity}.   
  The projection is $\cH^S$-equivariant. 
   In particular,    as 
   $z_{\Omega ,f }=L_{f^{S,\vee}}^\coh (z_{\Omega ,f_S })$ (see \eqref{ZTZ0}), we have
    \begin{equation} z_{\Omega ,f  , \pi} =L_{\wt \pi^S}(f^{S,\vee}) z_{\Omega,f_S ,\pi}= L_{  \pi^S}(f^{S }) z_{\Omega,f_S ,\pi}.
    \label{zpif}\end{equation}
 More explicitly, 
 choose a finite subset $\cA_1$ of $\cA $ containing $\pi$  such that  of $z_{ \Omega  , f }$    lies in
 the sum of the $L_{ \wt \pi_1^S}$-components of $\ol {CM}(\Omega ,\cH^S)$ over all $\pi_1\in \cA_1$. The existence of
 $\cA_1$ is assured by
 Theorem \ref{strongmodularity}.
   Choose $f_1 \in \cH^S$ which acts   as identity  on   $ \pi_1$   if $\pi_1=  \wt\pi$,
 and as 0  otherwise.  The existence of $f_1$  follows from the strong multiplicity one theorem and the density theorem of Jacobson and Chevalley for semisimple modules, see for example \cite[p 73]{YZZ}.
 Then 
     \begin{equation} 
     z_{\Omega,f,\pi} =L_{f_1 }^\coh (z_{\Omega,f}) =z_{\Omega,f* f_1^{\vee}  }  .
     \label{zpif1}\end{equation}
 Here the second ``=" is \eqref{ZTZ0}.
 A consequential lemma  is as follows.
 \begin{lem} 
\label{lem:eta2}
 If  $\det  (\supp f )\subset  \Nm(\BA_K^{\infty,\times})$, then   $$ \pair{ z_{\Omega,f,\pi},z_{\Omega^{-1} }}= \pair{ z_{\Omega,f,\pi\otimes \eta},z_{\Omega^{-1} }}.$$

 \end{lem}
\begin{proof}  
 Let $f_1$ be as above the lemma. Then 
 $z_{\Omega,f,\pi\otimes \eta}=z_{\Omega,f* (f_1^{\vee}  \eta)}$. 
  As $\det  (\supp f )\subset  \Nm(\BA_K^{\infty,\times})$, it is easy to see that 
$f* (f_1^{\vee}  \eta)=(f* f_1^{\vee} ) \eta$. Now the
lemma follows from Lemma \ref{lem:eta}.
\end{proof}

Note that \eqref{omelim} is defined   independent of any conjecture. 
 Now we impose  Conjecture \ref{modconj}.
\begin{lem} 
\label{modconjlem}
 Assume that  Conjecture \ref{modconj} holds for $\pi$.
If the  image of $f^\vee$ in  $ \pi^\infty \otimes    \wt\pi^\infty\subset \Hom( \wt\pi^\infty, \wt\pi^\infty) $  via  the usual (left) Hecke action is $ 0$ (i.e.,  $f^\vee$ acts on $\wt\pi^\infty$ as 0), then $z_{\Omega,f,\pi}=0$.  
\end{lem}
\begin{proof}  
Choose $f_1\in\cS(\GL_2(\BA^\infty)) $ such that $f_1*f =f$. By \eqref 
{ZTZ0}, $z_{\Omega,f}=L_{f^\vee}^\coh (z_{\Omega,f_1})$. As each direct summand of \eqref{omelim} is  $\GL_2(\BA^\infty)$-stable (and thus Hecke stable),
$z_{\Omega,f,\pi}=L_{f^\vee}^\coh (z_{\Omega,f_1,\pi})$. The right hand side is 0 by Conjecture \ref{modconj} and the condition on 
$f^\vee$.
\end{proof}

 For  $\phi \in\pi ^\infty$ and ${\wt\phi} \in \wt \pi ^\infty$, let $f \in  \cS(\GL_2(\BA^\infty)) $  such that the  image of $f^\vee$ in  $ \pi^\infty \otimes    \wt\pi^\infty\subset \End( \wt\pi^\infty) $  via  the usual (left) Hecke action is $ \phi \otimes{\wt\phi}$.   Define 
 \begin{equation} z_{  \phi \otimes{\wt\phi}}=z_{\Omega,f ,\pi}\in  \ol{CM} (\pi),\label{fff}\end{equation}
 which does not depend on the choice of $f$ under Conjecture \ref{modconj} by Lemma \ref{modconjlem} (but depends on the measure on $\GL_2(\BA^\infty)$). 
  Similarly, we define  $z_{   {\wt\phi} \otimes { \phi}  } \in  \ol{CM} (\wt\pi)$.
    
  The following formula  is the counterpart  of the projector version of  the general Gross--Zagier formula  in weight 2 \cite[Theorem 3.15]{YZZ}.
          \begin{thm}      \label{stronghtder}   
            Assume that  Conjecture \ref{modconj} holds for $\pi$.
For $\phi \in\pi^\infty$ and ${\wt\phi} \in \wt \pi^\infty$,  we have    \begin{equation}   \label{stronghtder1}
\frac{ \pair{z_{   \phi \otimes{\wt\phi}},z_{\Omega^{-1} }}}{\Vol(\GL_2(\wh \BZ))}= \frac{   L' (1/2 ,\pi_K\otimes \Omega) }{2 L(1,\eta) ^2L (1,\pi,\ad)}  \frac{(2k-2)!  }{  (k-1)!\cdot   (k-1)!}      \alpha_{\pi^\infty}^\sharp( \phi \otimes{\wt\phi} ).
          \end{equation}  
                   \end{thm}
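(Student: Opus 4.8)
The plan is to reduce the theorem to the ``arithmetic relative trace'' identity $H(f) = \pair{Z_{\Omega,f},Z_{\Omega^{-1}}}$ and its comparison with the central derivative of the quadratic base-change $L$-function, then extract the $\pi$-isotypic component using the spectral decomposition supplied by the modularity results. First I would fix $\phi\in\pi^\infty$, $\wt\phi\in\wt\pi^\infty$ and choose $f\in\cS(\GL_2(\BA^\infty))$ so that, under the left Hecke action, the image of $f^\vee$ in $\pi^\infty\otimes\wt\pi^\infty$ is exactly $\phi\otimes\wt\phi$; by Theorem \ref{strongmodularity} and Conjecture \ref{modconj} (assumed for $\pi$), the $\pi$-component $z_{\Omega,f,\pi}$ is then independent of the choice of $f$ and equals $z_{\phi\otimes\wt\phi}$. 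Using \eqref{ZTZ}, \eqref{ZTZ0}, \eqref{ydy} and the projection formula, the pairing $\pair{z_{\phi\otimes\wt\phi},z_{\Omega^{-1}}}$ is the $\pi$-isotypic piece of $H(f)$, so it suffices to establish the spectral identity for $H(f)$ and match the $\pi$-term.

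Next I would invoke the core of the paper's arithmetic RTF machinery, developed in Section \ref{The automorphic distributions}: the height pairing $H(f)$ is compared, Hecke-equivariantly, to the first-order term at $s=1/2$ of a mixed theta--Eisenstein series on $\GL_{2,\BQ}$ (viewed as $\GSp_2$ in the theta-lifting picture). Concretely, one truncates the Green function; the part of $H(f)$ coming from the truncated Green function is handled exactly as in the author's function-field work \cite{Qiu}, and the remaining term is an integral over $K^\times\bsl\BA_K^{\infty,\times}$ of a function on $X(N)$, which is identified with the geometric side of Jacquet's relative trace formula for $(\GL_{2,\BQ},\BA_K^\times)$. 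Applying the arithmetic RTF comparison between the $\GL_{2,\BQ}$ and $\GL_{2,K}$ sides — with the new archimedean ingredient replacing the function-field local computation at $\infty$ — one obtains, after spectral expansion and use of the Waldspurger/Tunnell--Saito local theory (Theorem \ref{strongmodularity} lets one restrict to very nice $f$), that the $\pi$-component of $H(f)$ equals a product of $L'(1/2,\pi_K\otimes\Omega)$, the normalizing ratio $L(1,\eta)^{-2}L(1,\pi,\ad)^{-1}$, the archimedean constant $\tfrac{(2k-2)!}{2(k-1)!\,(k-1)!}$ coming from the explicit form of the Kuga--Sato CM cycle and the holomorphic discrete series of weight $2k$, and the product of local periods $\alpha^\sharp_{\pi_p}(\phi_p\otimes\wt\phi_p)$, i.e.\ $\alpha^\sharp_{\pi^\infty}(\phi\otimes\wt\phi)$. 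Assembling these gives precisely \eqref{stronghtder1}.

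I expect the archimedean place to be the main obstacle. Two things must be controlled there: the local arithmetic intersection / Green-current computation for the admissible Green current of the higher-weight CM cycle on $Y(N)$ at the complex place, which is what produces the combinatorial factor $\tfrac{(2k-2)!}{2(k-1)!(k-1)!}$ and fixes the normalization of the height; and the matching of the truncation of the Green function with the archimedean orbital-integral side of the RTF, so that the ``remaining term'' integrates to the correct derivative contribution rather than the value. The finite-place orbital integral comparisons are essentially those of \cite{Qiu} (adapted from characteristic $p$), and the passage from $H(f)$ to its $\pi$-component is formal once the modularity theorems are in hand; the delicate new analysis is entirely at $\infty$, where one must show the truncated-Green-function contribution reproduces the function-field-style identity and the leftover piece, integrated against $\Omega$ over $K^\times\bsl\BA_K^{\infty,\times}$, contributes only to lower-order terms after the spectral decomposition. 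Once that is settled, comparing constant terms (the $T_0$-type terms and the choice of measures fixed in \ref{Local periods} and \ref{Height and derivatives1}) pins down the explicit constant and completes the proof.
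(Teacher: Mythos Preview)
Your broad strategy—arithmetic RTF, spectral decomposition via modularity, local period matching—is the right one, but you have conflated two distinct comparisons and omitted the step that actually makes the argument close.  The comparison driving the proof is \eqref{ATI} in Section~\ref{Height distribution}: $2H(f)=\frac{1}{c_K}O'(0,f')+\text{(three error terms)}$, where $f'\in\cS(\GL_2(\BA_K))$ is a transfer of $f\otimes f_\infty$ and $O'(0,f')$ is the derivative of the Jacquet-type distribution on $\GL_{2,K}$.  The mixed theta--Eisenstein series you invoke is the tool of Section~\ref{Weak}, used only to prove the modularity theorems; it does not appear in the proof of Theorem~\ref{stronghtder}.

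The genuine gap is the Tunnell--Saito reduction, which the paper uses twice.  First, under Conjecture~\ref{modconj} both sides of \eqref{stronghtder1} lie in $P_{\pi^\infty}=\Hom_{\BA_K^{\infty,\times}\times\BA_K^{\infty,\times}}\bigl((\pi^\infty\otimes\Omega^\infty)\otimes(\wt\pi^\infty\otimes\Omega^{-1,\infty}),\BC\bigr)$, which has dimension $\leq 1$; hence it suffices to check \eqref{stronghtder1} for \emph{one} $f=\otimes f_p$ with $\alpha_{\pi_p}^\sharp(f_p)\neq 0$ and $\supp f_p\subset\GL_2(\BQ_p)_\reg$ (Lemma~\ref{anonvan}).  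This regularity is what makes Assumption~\ref{asmp2} hold, without which the local-height decomposition underlying \eqref{ATI} is unavailable—so your plan of starting from an arbitrary $f$ projecting to $\phi\otimes\wt\phi$ does not get off the ground.  Second, Tunnell--Saito kills the three error terms in \eqref{ATI} after projection to $\pi$: the coherent pieces $O(f_{S-\{p\}}\otimes f_{(p)}\otimes\cdots)$ live on the nearby quaternion algebra $B(p)$, whose local $K_p^\times$-period vanishes exactly because it is nonzero on $\BB_p$; the term $O(f_S\otimes f^S\otimes f_{(\infty)})$ and the residual Green-function integral vanish because $\Hom_{K_\infty^\times}(\pi_\infty\otimes\Omega_\infty,\BC)=0$ for the weight-$2k$ discrete series.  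Only $\frac{1}{c_K}O'(0,f')$ survives; its $\pi$-component is then read off via Proposition~\ref{the proof is more important for us} and \eqref{oinf2}, with $c_K=2L(1,\eta)^2$ supplying the denominator.  The archimedean constant $\frac{(2k-2)!}{(k-1)!\,(k-1)!}$ arises not from the CM-cycle Green current directly but from $\alpha_{\rho_{2k}}^\sharp(f_\infty)$, the normalized matrix-coefficient integral on the Hamilton quaternions.
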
 
                   All global $L$-functions (or zeta functions) in this paper are the complete ones containing the archimedean parts.
               The theorem is proved in \S \ref    {proofs}.

                   \begin{rmk}    \label{still}We will explain the following assertions, which one may expect, in \S \ref {proofs}
before the proof of the theorem.
                   \begin{itemize}
         \item[(1)]            The number  $ \frac{(2k-2)! }{  (k-1)!\cdot   (k-1)!}$ is part of a local factor   \eqref{oinf2}          at $\infty$.

        \item[(2)]   If we pretend $k=1$, the    formula should (formally) be the same with \cite[Theorem   3.15]{YZZ} (up to the choices of measures), see Remark \ref{k=1}.        
              \end{itemize}
                   
                          \end{rmk}
             
              \subsubsection{Yuan-Zhang-Zhang type formula II}\label{Height and derivatives2}

      We define a height pairing between $\ol{CM}[\pi]$ and $  \ol{CM}[\wt\pi]$, following S. Zhang \cite{Zha19}. 
   For $l\in \ol{CM}[\pi]$ and $\wt l\in \ol{CM}[\wt\pi]$, $\phi\in \pi^\infty$ and $\wt\phi\in \wt \pi^\infty$ with $(\phi,{\wt\phi})\neq 0$, let 
   \begin{equation}\label{pipair}\pair{l,\wt l}=\frac{\pair{l(\wt \phi),\wt l({\phi})}^\sharp}{(\phi,{\wt\phi})}.\end{equation}
 By Schur's lemma,  the pairing  \eqref{pipair} does not depend on the choices of $\phi , {\wt\phi}$.
 By definition, the pairing   \eqref{pipair} is non-degenerate.

Assume that  Conjecture \ref{modconj} holds for $\pi$.
For ${ \phi}\in  \pi$, let $$l_{ \phi} \in  \ol{CM}[\pi]=\Hom_{\GL_2(\BA^\infty)}(\wt\pi^\infty,\ol{CM}(\Omega)) 
$$ be defined by  $\wt \phi\mapsto z_{ \phi \otimes{\wt\phi}}$. 
Since $z_{  \phi \otimes{\wt\phi} }$'s span $\ol{CM} (\pi)$, every element in $ \ol{CM}[\pi]$ equals some $l_{\phi}$. 
Similarly, every element in $ \ol{CM}[\wt \pi]$ equals $l_{ \wt \phi}:\phi\mapsto z_{  {\wt\phi} \otimes { \phi} } $ for some $\wt \phi\in\wt \pi$.
 

    The following  theorem   is  
    a consequence of Theorem \ref{stronghtder}. 
  It is  in the form of   the general Gross--Zagier formula   \cite[Theorem 1.2]{YZZ}.  This is suggested by S. Zhang. 

    \begin{thm}      \label{stronghtderthm}     
Assume that  Conjecture \ref{modconj} holds for $\pi$.        
For $\phi \in\pi^\infty$ and ${\wt\phi} \in \wt \pi^\infty$,  we have    \begin{equation}   
\frac{ \pair{l_{  { \phi} },l_{\wt\phi }}}{\Vol(\GL_2(\wh \BZ))} = \frac{L' (1/2 ,\pi_K\otimes \Omega) }{L(1,\eta) ^2L (1,\pi,\ad)}  \frac{(2k-2)!  }{ 2 (k-1)!\cdot   (k-1)!}      \alpha_{\pi^\infty}^\sharp( \phi ,{\wt\phi} ).\label{eq:stronghtderthm}     
          \end{equation}  
                   \end{thm} \begin{rmk} 
              (1)   Remark \ref  {still} still applies, replacing \cite[Theorem   3.15]{YZZ}  in loc. cit.
    by  \cite[Theorem  1.2, 3.13]{YZZ}.
                 
 (2) Theorem \ref{stronghtderthm}  can be viewed as an evidence toward the conjecture of Beilinson and Bloch \cite[Conjecture 5.4]{Bei}, which is a higher dimensional generalization of the Birch--Swinnerton-Dyer conjecture.  Compare with Remark \ref{moW}.
         \end{rmk}
           
        Let us draw some unconditional corollaries  from Theorem \ref{stronghtderthm}   by using Theorem \ref{strongermodularity} to remove  the dependence on Conjecture \ref{modconj} in  Theorem \ref{stronghtderthm}.   
             \begin{cor}      \label{uncond}     
    If  $L (1/2 ,\pi_K\otimes \Omega) =0$, then   \eqref{eq:stronghtderthm}     holds. 
                                 \end{cor}

               \begin{cor}      \label{Vani2}     
   If $L (1/2 ,\pi_K\otimes \Omega) =0$ and $L '(1/2 ,\pi_K\otimes \Omega) =0$, then   $ \ol{CM}(\pi)=\{0\}$.  
                                 \end{cor}

                           Concerning   Conjecture \ref    {rankconj1}, we have the following.     
                                   \begin{cor}     \label{BBB}
  Assume  that  $L' (1/2 ,\pi_K\otimes \Omega) \neq 0$. If $\ep (1/2 ,\pi_{K_p}\otimes \Omega_p) \Omega_p(-1)= 1$ for all  $p<\infty$, then $\dim\ol{CM}[\pi]=1$.  Otherwise, $\ol{CM}[\pi]=\{0\}$.
                     \end{cor}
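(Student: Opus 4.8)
The plan is to distinguish cases according to the finite local root numbers $\epsilon(1/2,\pi_{K_p}\otimes\Omega_p)\Omega_p(-1)$, $p<\infty$, and in each case to compare the height pairing on $\ol{CM}[\pi]\times\ol{CM}[\wt\pi]$ --- which is nondegenerate by the construction of $\ol{CM}$ together with the spectral decomposition of Theorem \ref{strongmodularity} --- with the local period $\alpha^\sharp_{\pi^\infty}$. Throughout I will use the local multiplicity-one theorem of Tunnell and Saito and the relation between the nonvanishing of $\alpha^\sharp_{\pi_p}$ and the local sign.

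\textbf{Case 1: all the finite signs equal $+1$.} Then at every finite $p$ one has $\dim\Hom_{K_p^\times}(\pi_p\otimes\Omega_p,\BC)=1$, and unfolding the integral defining $\alpha^\sharp_{\pi_p}$ shows it is, as a bilinear form on $\pi_p\otimes\wt\pi_p$, of rank exactly one; hence $\alpha^\sharp_{\pi^\infty}=\prod_{p<\infty}\alpha^\sharp_{\pi_p}$ is a nonzero bilinear form of rank one on $\pi^\infty\otimes\wt\pi^\infty$. Next I would compute the archimedean local root number of the base change to $\GL_2(\BC)$ of the weight-$2k$ holomorphic discrete series twisted by $\Omega_\infty$ (trivial on $K_\infty^\times$): it equals $-1$. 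Combined with $\prod_v\Omega_v(-1)=1$ and the product formula for $\epsilon$-factors, this forces $\epsilon(1/2,\pi_K\otimes\Omega)=-1$, hence $L(1/2,\pi_K\otimes\Omega)=0$ by the functional equation, so Conjecture \ref{modconj} holds for $\pi$ by Theorem \ref{strongermodularity}.

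I may then apply Theorem \ref{stronghtderthm}, which gives $\pair{l_\phi,l_{\wt\phi}}=c\,L'(1/2,\pi_K\otimes\Omega)\,\alpha^\sharp_{\pi^\infty}(\phi,\wt\phi)$ with $c\neq0$ (the explicit constant is manifestly nonzero). Under Conjecture \ref{modconj} the height pairing on $\ol{CM}(\pi)\times\ol{CM}(\wt\pi)$ is nondegenerate and, by Schur's lemma, factors through a nondegenerate pairing on $\ol{CM}[\pi]\times\ol{CM}[\wt\pi]$; since $L'(1/2,\pi_K\otimes\Omega)\neq0$, it follows that $l_\phi=0$ if and only if $\phi$ lies in the radical of the rank-one form $\alpha^\sharp_{\pi^\infty}$. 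As every element of $\ol{CM}[\pi]$ is some $l_\phi$, the assignment $\phi\mapsto l_\phi$ induces an isomorphism of $\pi^\infty$ modulo that radical onto $\ol{CM}[\pi]$; the source is one-dimensional, so $\dim\ol{CM}[\pi]=1$.

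\textbf{Case 2: some finite sign, at $p_0$ say, equals $-1$.} Then $\Hom_{K_{p_0}^\times}(\pi_{p_0}\otimes\Omega_{p_0},\BC)=0$, so $\alpha^\sharp_{\pi_{p_0}}\equiv0$ and therefore $\alpha^\sharp_{\pi^\infty}\equiv0$. If more than one finite sign is $-1$, then $\ol{CM}(\pi)=\{0\}$ already by Theorem \ref{Vani}, hence $\ol{CM}[\pi]=\{0\}$. In general I would appeal to the arithmetic relative trace formula identity of Section \ref{The automorphic distributions}: its spectral side expresses the $\pi$-isotypic part of the height distribution $f\mapsto H(f)$ as a nonzero multiple of $L'(1/2,\pi_K\otimes\Omega)$ times a product of local functionals whose factor at $p_0$ is $\alpha^\sharp_{\pi_{p_0}}\equiv0$; this part therefore vanishes identically, and for this conclusion one only needs the spectral decomposition of Theorem \ref{strongmodularity} on the geometric side, not Conjecture \ref{modconj}. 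Hence $\ol{CM}(\pi)$ is orthogonal to all of $\ol{CM}(\Omega^{-1})$ under the height pairing, and nondegeneracy gives $\ol{CM}(\pi)=\{0\}$, so $\ol{CM}[\pi]=\{0\}$. I expect the two delicate steps to be: the archimedean local root number computation in Case 1, on which the whole case rests since it is what puts us in the range $L(1/2,\pi_K\otimes\Omega)=0$ where Theorem \ref{strongermodularity} applies; and, in Case 2, the single-bad-place subcase, where Theorem \ref{Vani} is silent and one must instead read the vanishing of the $\pi$-component of $H$ off the arithmetic relative trace formula --- that is, re-examine the proof of Theorem \ref{stronghtder} to verify that its vanishing conclusion, as opposed to the reformulation in terms of $l_\phi$, does not require Conjecture \ref{modconj}.
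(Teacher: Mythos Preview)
Your Case~1 is essentially the paper's argument, with the archimedean root-number computation made explicit; the paper simply writes ``By Theorem~\ref{strongermodularity}, Conjecture~\ref{modconj} holds for~$\pi$'' and then proceeds exactly as you do via Tunnell--Saito and Theorem~\ref{stronghtderthm}.

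For the ``otherwise'' part you take a detour that is unnecessary and, in the single-bad-place subcase, has a real gap. The paper never returns to the trace formula: having Conjecture~\ref{modconj} from Theorem~\ref{strongermodularity}, it applies Proposition~\ref{stupid} and is done. Your proposed re-examination of \S\ref{proofs} does not give what you want. In \eqref{ATI}, besides $O'(0,f')$ there are the coherent contributions on $B(p)^\times(\BA)$ for $p\in S_\fn$; in \S\ref{proofs} their $\pi$-components are killed \emph{using} $P_{\pi^\infty}\neq\{0\}$, because when every finite sign is $+1$ the local $\Hom$ vanishes on the division side $B(p)_p$ and hence so does the $B(p)$-period. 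With a unique bad place $p_0$ this fails at $p=p_0$: the sign $-1$ at $p_0$ puts the nonzero local $\Hom$ exactly on the division side $B(p_0)_{p_0}$, and the $\Hom$ is nonzero on $B(p_0)$ at every other place as well, so by Waldspurger the $\pi$-piece of that coherent term is proportional to $L(1/2,\pi_K\otimes\Omega)$ rather than to $\alpha^\sharp_{\pi_{p_0}}$. The $\pi$-part of $O'(0,f')$ itself is likewise not simply $L'(1/2)\cdot\prod_p\alpha^\sharp_{\pi_p}$ when $L(1/2)\neq0$. So the vanishing you want actually rests on $L(1/2,\pi_K\otimes\Omega)=0$; once that is in hand, Theorem~\ref{strongermodularity} followed by Proposition~\ref{stupid} is the short route the paper takes.
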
 
                 
                 \begin{proof} As $L' (1/2 ,\pi_K\otimes \Omega) \neq 0$, $\ep (1/2 ,\pi_{K}\otimes \Omega) =-1$. Thus $L (1/2 ,\pi_K\otimes \Omega) = 0$.
                 By  Theorem \ref {strongermodularity}, Conjecture \ref{modconj} holds for $\pi$.  Thus the ``otherwise" part  follows from Proposition \ref{stupid}. Now assume  that  $\ep (1/2 ,\pi_{K_p}\otimes \Omega_p) \Omega_p(-1)= 1$ for all  $p<\infty$.
                    By   the theorem of Tunnell \cite{Tun} and  Saito \cite{Sai},   $\dim\Hom_{\BA_{K}^{\infty,\times}}  (\pi^\infty\otimes \Omega^\infty ,\BC)=1$. 
Let $l\neq 0$  be in this $\Hom$ space. Let $\phi_0\in  \pi^\infty$ such that $l(\phi_0)\neq 0$. Then by Theorem \ref{stronghtderthm} and the discussion above it, $l_\phi=\frac{l(\phi)}{l(\phi_0)}l_{\phi_0}$.    \end{proof}

      \section{Relative trace formulas and height pairing}\label{The automorphic distributions}
We use the arithmetic relative trace formula approach  to prove Theorem \ref{stronghtder}. 

There will be a relative trace formula  for each one of  two (types of) groups. One group is   $ \GL_{2,K}$. 
The other is a quaternionic group, i.e., the
unit group of  a quaternion algebra    over $\BQ$ containing $K$. 
We will further specify these data later when necessary.
Each relative trace formula is an identity between  the orbital integral side and the automorphic distribution side of this group. The two relative trace formulas are the two vertical equations in the diagram   \eqref{RTF} below. 
They are related by comparing their orbital integrals.
    \begin{equation} \xymatrix{
\textit{orbital integrals for a quaternionic group}  \ar@{=}[r]^ {\textit{\ \ \ \ \ \ \ comparison}} \ar@{=}[d]^{\textit{after summation}} &\textit{orbital integrals for }\GL_2\ar@{=}[d]^{\textit{after summation}} \\
\textit{automorphic distributions for a quaternionic group}   \ar@{=}[r]   &\textit{automorphic distributions for } \GL_2.}
 \label{RTF} \end{equation}
Call the resulted bottom identity a relative trace identity.

The structure of this section is as follows. We   introduce the trace formulas, including orbital integrals and automorphic distributions (plus local distributions) in the first three subsections.
We do   not  establish the two vertical equations in the diagram  \eqref{RTF}, but will refer to literature when we need them.
 The  arithmetic variants
and the comparison are then done in \S \ref {Height distribution}.  The   conclusion  of the comparison is an arithmetic relative trace identity  \eqref{ATI}. 
Finally in \S \ref{proofs},  we use the comparison to prove  the higher weight general Gross--Zagier formula.

\subsection{Orbital integrals}

We discuss orbits and (derivatives of) orbital integrals on two groups separately, and than compare them. 
The main result is Proposition \ref {transfer}. 

We start with a more general setup.
Let $F$ be a  field,  $E$    a separable  quadratic  field extension   of $F$,   $z\mapsto \bar z$ the Galois conjugation, and   $\Nm:E^\times\to F^\times$   the norm map.      
 (We omit the   case  $E=F\oplus F$, and refer to \cite[4.3]{Qiu}.)

 \subsubsection{Orbits I}   We first deal with the orbits for  $\GL_2$.
 
 Let $G=\GL_2(E)$. 
Let $\cV\subset G $ be the  subset of invertible Hermitian matrices over $F$ with respect to  $E$.  
Let $E^\times\times F^\times$ act on $\cV$ via $$(a,z)\cdot s= \begin{bmatrix}a&0\\
0&1\end{bmatrix}s\begin{bmatrix}\bar a&0\\
0& 1\end{bmatrix}z.$$
 Define  $$\Inv :    \cV\to \BP^1(F)-\{1\}=  (F^\times-\{1\}) \cup\{0,\infty\}    ,$$
which maps $A=\begin{bmatrix}a&b \\
 \bar b&d\end{bmatrix}$ to $$ \frac{ad}{b\bar b}=\frac{\det(A) }{b\bar b}+1.$$
Call $\gamma\in \cV$    regular\footnote{Here, for simplicity, we abuse the notion of ``regular"  for the more standard notion of ``regular semi-simple" for   reductive group actions.}  for this action if $\Inv (\gamma)\in F^\times-\{1\}$.    Let $\cV_{ \reg}\subset \cV$  be the regular locus.
The restriction of $\Inv$ to the regular  orbits $ E^\times\bsl \cV_\reg/ F^\times$ are bijective to  $F^\times-\{1\}$%

  To relate functions on $G$ and $\cV$, consider 
 $g\in G$ acting on $\cV$ by 
$g\cdot s: =gs  \bar g^t$, where $ \bar g^t$ is the Galois conjugate of the transpose of $g$. 
Let $H_0\subset G$ be  the unitary group associated to $$w=\begin{bmatrix}0&1\\
 1&0\end{bmatrix}\in \cV,$$ 
 i.e., $H_0$ is  the stabilizer of $w$ for the above action.
    If $F$ is a local field (archimedean or non-archimedean) so that
    $\cV$ is equipped with the corresponding topology, then $ G\cdot w\subset \cV$ is an open subspace. 
   If $H_0$ is given a Haar measure, for $f\in \cS_c(G )$, let  $ \Phi_{f}\in \cS_c(G\cdot w)$, such that  
    \begin{equation}\label{Phi}\Phi _f(g\cdot w  )=\int_{H_0 } f(gh) dh.\end{equation}   
    The map $f\mapsto \Phi_f$ is surjective  to $\cS_c(G\cdot w)$.
      If $F$ is a global field, the same definition applies to  $f\in \cS_c(G(\BA_F) )$.

 Let  $\lb1- \Nm (E^\times)\rb=\{1-\Nm(a):a\in E^\times\}$.  Then 
 \begin{equation*}
 \Inv(G\cdot w)=\lb1- \Nm (E^\times)\rb\cup \{\infty\}.
\end{equation*}

    We have the following easy lemma.
  \begin{lem} \label{Xue0}(1) The map $\Inv$ realizes $(G\cdot w)\cap \cV_\reg$ as a trivial (topological) $E^\times\times F^\times$-bundle over $\lb1- \Nm (E^\times)\rb-\{0\}$. 
   \end{lem}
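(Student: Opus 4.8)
The plan is to produce an explicit trivialization of the bundle using the section $x\mapsto\gamma(x)$ already introduced, so that the entire argument reduces to a short computation with the $E^\times\times F^\times$-action on $\cV_\reg$.

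First I would record the action in coordinates. Writing $A=\begin{bmatrix}p&q\\\bar q&r\end{bmatrix}\in\cV$ one has $\Inv(A)=pr/\Nm(q)$, so membership in $\cV_\reg$ (i.e. $\Inv(A)\notin\{0,1,\infty\}$) forces $p,r\in F^\times$ and $q\in E^\times$. A direct computation shows that $(a,z)\in E^\times\times F^\times$ sends such an $A$ to the matrix with $p\mapsto\Nm(a)zp$, $q\mapsto azq$, $r\mapsto zr$. From $zr=r$ and $azq=q$ one reads off that the action on $\cV_\reg$ is free. Given $A,A'\in\cV_\reg$ with $\Inv(A)=\Inv(A')$, I would then set $z=r'/r$ and $a=q'r/(r'q)$ and check that $(a,z)\cdot A=A'$; the only nontrivial entry is the $(1,1)$-entry, where one uses $\Nm(r)=r^2$, $\Nm(r')=(r')^2$ together with $\Nm(q)=pr/\Inv(A)$ and $\Nm(q')=p'r'/\Inv(A')$. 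Hence every fiber of $\Inv$ on $\cV_\reg$ is a single free $E^\times\times F^\times$-orbit. This simple transitivity is really the whole content of the lemma.

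Second, I would pin down $(G\cdot w)\cap\cV_\reg$ and the section. The orbit $G\cdot w$ is stable under $E^\times\times F^\times$ (the $E^\times$-part acts by $A\mapsto\begin{bmatrix}a&0\\0&1\end{bmatrix}A\begin{bmatrix}\bar a&0\\0&1\end{bmatrix}$, which is the $G$-action by $\begin{bmatrix}a&0\\0&1\end{bmatrix}$, and $zw\in G\cdot w$ for every $z\in F^\times$), and $\Inv(G\cdot w)=\lb 1-\Nm(E^\times)\rb\cup\{\infty\}$ as already recorded. Intersecting with the regular condition $\Inv\in F^\times-\{1\}$, and using $0\in 1-\Nm(E^\times)$, $1\notin 1-\Nm(E^\times)$, $\infty\notin F^\times$, gives $\Inv\bigl((G\cdot w)\cap\cV_\reg\bigr)=\lb 1-\Nm(E^\times)\rb-\{0\}$. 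For such $x$ the matrix $\gamma(x)=\begin{bmatrix}x&1\\1&1\end{bmatrix}$ is regular with $\Inv(\gamma(x))=x$; since $x$ lies in the image of $G\cdot w$, some element of $(G\cdot w)\cap\cV_\reg$ has invariant $x$ and hence, by the first step, lies in the $E^\times\times F^\times$-orbit of $\gamma(x)$, forcing $\gamma(x)\in G\cdot w$. So $x\mapsto\gamma(x)$ is a section of $\Inv$ on $(G\cdot w)\cap\cV_\reg$.

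Finally I would conclude by checking that
$$\bigl(\lb 1-\Nm(E^\times)\rb-\{0\}\bigr)\times\bigl(E^\times\times F^\times\bigr)\longrightarrow (G\cdot w)\cap\cV_\reg,\qquad (x,g)\longmapsto g\cdot\gamma(x),$$
is $E^\times\times F^\times$-equivariant and lies over the base via $\Inv$ (as $\Inv(g\cdot\gamma(x))=x$); it is injective by freeness of the action and surjective because any $A\in(G\cdot w)\cap\cV_\reg$ has $\Inv(A)\in\lb 1-\Nm(E^\times)\rb-\{0\}$ and then lies in the orbit of $\gamma(\Inv(A))$. Its inverse is visibly algebraic in the entries of $A$, so the identification is as canonical as one wants. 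There is no genuine obstacle here; the one point that takes a little care is book-keeping exactly which $E^\times\times F^\times$-orbits make up $(G\cdot w)\cap\cV_\reg$, which is handled by combining $\Inv(G\cdot w)=\lb 1-\Nm(E^\times)\rb\cup\{\infty\}$ with the orbit–fiber identification from the first step.
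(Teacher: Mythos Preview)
Your proof is correct. The paper does not actually supply a proof of this lemma: it is introduced with the phrase ``We have the following easy lemma and its corollary'' and left to the reader. Your explicit trivialization via the section $x\mapsto\gamma(x)$ and the coordinate verification of free, transitive $E^\times\times F^\times$-action on the $\Inv$-fibers is exactly the kind of direct check the paper has in mind, and all the steps (freeness from $r,q\neq 0$ on $\cV_\reg$, stability of $G\cdot w$ under the action, the image computation $\Inv\bigl((G\cdot w)\cap\cV_\reg\bigr)=\lb 1-\Nm(E^\times)\rb-\{0\}$, and the explicit inverse $A\mapsto(\Inv(A),(q/r,r))$) are sound.
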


 \subsubsection{Orbits II}  \label{matchorb}
 Then we discuss the orbits on a  quaternionic group.
 
 Let $B$ be a quaternion algebra    over $F$ containing $E$.
  There exists $j\in B $   such that $B=E\oplus Ej$,  $j^2=\ep\in F^\times$ and $jz=\bar zj$ for $z\in E$.
  Then the reduced norm on $B$  can be expressed by  $q(a+bj)=\Nm (a)-\Nm (b)\cdot\ep.$
Note that different choices of $j$ give the same $\ep$ in $F^\times/\Nm(E)^\times$. In fact, $F^\times/\Nm(E)^\times$ classifies  quaternion algebras    over $F$ containing $E$   (see  \cite[4.1]{Qiu}).
And $\ep=1$ if and only if $B=\RM_{2,F}$.

 Let $E^\times\times E^\times$ act on $B^\times$ by $$(h_1,h_2)\cdot \gamma=h_1^{-1}\gamma h_2.$$
Define an invariant for this action: $ \inv (a+bj)= \ep  \Nm (b)/\Nm (a).$  
 This  induces a  bijection $$\inv :   E^\times \bsl B^\times/E^\times  \cong  \lb \ep \Nm   (E^\times)-\{1\}\rb \cup\{0,\infty\}     .
 $$    
 Call  $\delta=a+bj$    regular for this action if $\inv\neq 0,\infty$, equivalently   $a  b  \neq 0$.  
  Let $B^\times_\reg\subset B^\times $  be the  regular locus. 
  We have the following easy lemma.
     \begin{lem} \label{qcond}For $g\in B^\times_\reg$, $\inv(g)\in (1- \Nm (E^\times))$ if and only if $q(g)\in \Nm(E^\times).$
     \end{lem}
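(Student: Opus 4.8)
The plan is to deduce the lemma from the explicit relation between the reduced norm $q$ and the invariant $\inv$ on $B^\times$. Writing $g=a+bj$ with $a,b\in E$, the regularity hypothesis ($ab\neq 0$) gives $a,b\in E^\times$, and from the formula $q(a+bj)=\Nm(a)-\Nm(b)\ep$ one obtains
$$q(g)=\Nm(a)-\ep\Nm(b)=\Nm(a)\left(1-\frac{\ep\Nm(b)}{\Nm(a)}\right)=\Nm(a)\bigl(1-\inv(g)\bigr),$$
where $\Nm(a)\in\Nm(E^\times)$ and $q(g)\in F^\times$ since $g\in B^\times$ (so in particular $\inv(g)\neq 1$, consistent with the computed image of $\inv$).

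The only further ingredient is that $\Nm(E^\times)$ is a subgroup of $F^\times$, being the image of the norm homomorphism $\Nm\colon E^\times\to F^\times$. Consequently multiplication by $\Nm(a)$ preserves cosets modulo $\Nm(E^\times)$, so $q(g)\in\Nm(E^\times)$ if and only if $1-\inv(g)\in\Nm(E^\times)$. By the definition $1-\Nm(E^\times)=\{1-\Nm(c):c\in E^\times\}$, the latter condition is exactly $\inv(g)\in 1-\Nm(E^\times)$, which finishes the proof.

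There is no real obstacle here; the one point worth keeping straight is that the assertion concerns membership in a coset of the norm subgroup, which is precisely why the nonzero scalar factor $\Nm(a)$ appearing in the identity above is immaterial.
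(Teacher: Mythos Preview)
Your proof is correct. The paper states this lemma without proof, treating it as an immediate consequence of the definitions; your argument supplies exactly the short computation that justifies it, namely $q(g)=\Nm(a)(1-\inv(g))$ together with the fact that $\Nm(E^\times)$ is a subgroup of $F^\times$.
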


\subsubsection{Orbital integrals I}  Now we deal with the orbital integrals for  $\GL_2$.

 Let $F$ be a local field (archimedean or non-archimedean).
 Let  $\eta$  be the quadratic character of $F^\times$   associated to  $E$ by the class field theory.
Let $\Omega$ be a continuous character of    $E^\times$  and   $\omega=\Omega|_{F^\times}$. 
   
   For  $\Phi\in  \cS_c(\cV)$, $\gamma\in \cV_\reg$, and $s\in \BC$, define the orbital integral  \begin{equation}O(s, \gamma,\Phi)= \int_{E^\times}  \int_{F^\times} \Phi\left(z\begin{bmatrix}a&0\\
0&1\end{bmatrix}\gamma\begin{bmatrix}\bar a&0\\
0&1 \end{bmatrix} \right)\eta\omega^{-1}(z)\Omega^{-1} (a) |a|_E^sd  zd  a. \label{int0}
\end{equation}
  The integral \eqref{int0} converges absolutely and defines a holomorphic  function of $s$.   
   For  $x\in F^\times  -\{1\}$, 
let $$\gamma(x) =  \begin{bmatrix}x&1\\
1&1\end{bmatrix}\in \cV_\reg .$$
Then $\Inv(\gamma(x))=x$.    
Define $$O(s, x,\Phi)=O(s, \gamma(x),\Phi),\ x\in F^\times-\{1\}.$$ 
Let $O'(0,x,\Phi)$ be the $s$-derivative  at $s=0$ (for the legitimacy of taking $s$-derivative, see \cite[Lemma 4.2.2]{Qiu}).
They are all smooth functions on $x$.

   \begin{lem}\label{Xue1}
   (1) There is a neighborhood of $1$ in $F$ on which
 $O(s, x,\Phi)$  (thus $O(0,x,\Phi)$ and $O'(0,x,\Phi)$) vanishes        for all $s$ simultaneously.

 (2)  If   $\Phi$ is further compactly supported on    $\cV_\reg$, 
 there is a compact subset of $F^\times-\{1\}$  on which $O(s, x,\Phi)$   (thus $O(0,x,\Phi)$ and $O'(0,x,\Phi)$) is supported   for all $s$ simultaneously. In particular,  $O(0,x,\Phi)$ and $O'(0,x,\Phi)$ extends to a Schwartz function on $F$ by extension by 0.

 (3)   For $\phi \in \cS_c\lb \lb1- \Nm (E^\times)\rb\rb $ compactly supported, there is   $\Phi\in \cS_c\lb(G\cdot w)\cap \cV_\reg\rb)$ such that $O(0, x,\Phi)=\phi(x)$,  both understood as  Schwartz  functions on $F$ by extension by 0.
  \end{lem}
\begin{proof}
 As $\Phi$ is compactly supported on $\cV$ and  $\Inv:\cV\to \BP^1-\{1\}$ is continuous, $\Inv$ maps the support of $\Phi$  to a compact set. 
  (1) follows.
(2) is similar. (3) is an immediate corollary of Lemma \ref{Xue0}.
\end{proof}

   \begin{lem} \label{lem:GLmatch}  
  (1) Let $ A(x) $ be  a smooth function defined on  a   neighborhood of $0\in F$.
 There is $\Phi\in \cS_c(G\cdot w)$ such that 
  \begin{itemize}
    
\item 
  both  $O(0,x,\Phi) , O'(0,x,\Phi)$
  extend to    smooth functions on $F$  with compact supports, and 
   
\item   $O(0,x,\Phi)=A(x) $  
  for $x\neq 0$ in a (smaller) neighborhood of $0$ in $\lb1- \Nm (E^\times)\rb$. 
    \end {itemize}
       
  (1')     Let $ A(x) $ be  a smooth function defined on  a   neighborhood of $0\in F$.
 There is $\Phi\in \cS_c(G\cdot w)$ such that 
  \begin{itemize}
    
\item   both 
 $O(0,x,\Phi) , O'(0,x,\Phi)$ have compact supports on $F$ and  are smooth outside 0, and 
 \item 
    for $x\neq 0 $ in a neighborhood of $0$ in  $\lb1- \Nm (E^\times)\rb$, 
   we have $O(0,x,\Phi)=\eta(x) A(x) $ and  
      $$O'(0,x,\Phi)=\eta(x) \lb -  \frac{1}{2}A(x) \log |x|_E+C(x) \rb,$$ 
      where $C(x)$ is a  smooth function.
    \end {itemize}

  (2)  Let $ A(x) $ be  a smooth function defined on    a   neighborhood   of $0$ in $E$.
      There is $\Phi\in \cS_c(G\cdot w)$ such that  
       \begin{itemize}
    
\item   both 
 $O(0,x,\Phi) , O'(0,x,\Phi)$ vanish  in a neighborhood of $0$   and  are smooth outside 0, and 
 \item     for  $x\in \lb1- \Nm (E^\times)\rb $  with large enough absolute value so that we can write
   $x$ as $-\Nm(b) $ with $b^{-1}$ in a small enough neighborhood   of $0$ in $E$, we have   $O(0,x,\Phi)=\Omega(b) A(b^{-1})$. 
      \end {itemize}
     \end{lem}
    \begin{proof} (1) Let $\phi\in \cS_c(G\cdot w)$ be  supported  in a small enough neighborhood of $ \begin{bmatrix}0&1\\
1&1\end{bmatrix}$ and nonzero at  $ \begin{bmatrix}0&1\\
1&1\end{bmatrix}$. A direct computation shows that $O(0,x,\phi)$ extends to a  smooth nonvanishing  function on a neighborhood of $0\in F$. Then the pullback of $A(x)/O(0,x,\phi)$, by the map $\Inv$,  to   a neighborhood of $\Inv^{-1}(\{0\})$ in $G\cdot w$ extends to a smooth function on $G\cdot w$.
Let $\Phi$ be the product of $\phi $ with this smooth extension, and we proved the first part of the lemma.
 A direct computation shows that  $O'(0,x,\phi)$ extends to a  smooth function on a neighborhood of $0\in F$. So does  $O'(0,x,\Phi)$

(1') Consider   $\phi\in \cS_c(G\cdot w)$  supported in a small enough neighborhood of$ \begin{bmatrix}1&1\\
1&0\end{bmatrix}$.  The argument is similar to the one in the proof of (1).

(2) Consider $\phi\in \cS_c(G\cdot w)$ supported  in a small enough neighborhood of $ \begin{bmatrix}-1&0\\
0&1\end{bmatrix}$. The argument is similar to the one in the proof of (1).
  \end{proof}
    
\subsubsection{Orbital integrals II} Then we deal with the orbital integrals on the  quaternionic group.

   For $f\in \cS_c (B^\times)$  and 
$\delta\in {B^\times_\reg} $, define the orbital integral  \begin{align*}O(  \delta,f):
=\int_{E^\times/F^\times}\int_{E^\times } f(h_1^{-1}\delta h_2) \Omega(h_1) \Omega^{-1}(h_2)  dh_2dh_1. 
\end{align*}
    For  $x  \in \ep  \Nm (E^\times) -\{1\}$ (where $\ep$ is as in \S \ref{matchorb}),  choose $b\in E^\times$ such that $x=\ep \Nm(b)$.
Let $$\delta(x) =1+bj\in B^\times\in B^\times_\reg.$$ 
 Then   $\inv (\delta(x))=x$.
 Define $$O(x,f)= O(\delta(x),f)  ,\ x  \in \ep  \Nm (E^\times) -\{1\}.$$ Easy to check that $O(x,f)$ does not depend the choice of $b$.

    We have the following characterization of $O (x,f)$  by \cite[p. 322, Proposition]{Jac87}  (which  contains an error: it swapped the behaviors of $O( x,f)$ for $x$ near 0 and near $\infty$).

\begin{prop} \label{Proposition 2.4, Proposition 3.3Jac862} 
Let $\phi$ be a function on $\ep \Nm (E^\times)-\{1\}$.  Then $\phi(x)=O( x,f)$    for some   $f\in \cS_c (B^\times)$ if and only if the following  conditions hold:
\begin{itemize}
    
\item[(1)]the function $\phi$  is smooth   on $\ep  \Nm(E^\times)-\{1\}$;

\item[(2)] the function $\phi$ vanishes  in a   neighborhood of  $1$;

\item[(3)]there exists a smooth function $A_1 $ defined on  a   neighborhood $U_1$ of $0$ in $F$,  such that $\phi(x)=A_1(x) $ 
  for $x\in U\cap \lb \ep \Nm (E^\times)-\{1\}\rb $.
  
\item[(4)] 
 there exists a smooth function $A_2 $  defined on  a   neighborhood $U_2$ of $0$ in $E$, such that $\phi(x)=\Omega(b)A_2(b^{-1}) $ for $x =\ep \Nm(b)$  with $b^{-1}\in U_2$; 

 \end{itemize}

      \end{prop}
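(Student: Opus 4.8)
The plan is to prove the two implications separately, writing $E^{1}=\ker(\Nm\colon E^\times\to F^\times)$. For the ``only if'' direction, fix $f\in\cS(B^\times)$ and study $x\mapsto O(x,f)=O(\delta(x),f)$ near $1$, near $0$, near $\infty$, and on compact subsets of $F^\times-\{1\}$, using that $\supp f$ is compact in $B^\times$, hence bounded in $B$ and bounded away from the reduced-norm-zero locus. For $g=g_0+g_1j$ one has $\inv(g)=1-q(g)/\Nm(g_0)$ while $q(\delta(x))=1-x$, so a size estimate shows that for $x$ near $1$ no translate $h_1^{-1}\delta(x)h_2$ lies in $\supp f$, which is $(2)$. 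For $x$ in a compact subset of $F^\times-\{1\}$ the effective domain of integration is compact modulo the diagonal $F^\times$ (here $E^\times/F^\times$ is itself compact) and the integrand is locally constant jointly in $(h_1,h_2,x)$, so $O(\cdot,f)$ is locally constant there, which is $(1)$. For $(3)$ and $(4)$ I unfold at the two singular orbits $\inv^{-1}(0)=E^\times$ and $\inv^{-1}(\infty)=E^\times j$. Near $0$, writing $\delta(x)=1+bj$ with $\ep\Nm(b)=x$ and substituting $u=h_1^{-1}h_2$, $v=\bar h_1/h_1\in E^{1}$, the $\Omega$-factors collapse the $h_1$-integration and
\begin{equation*}
O(x,f)=\vol(E^{1})\int_{E^\times}\Omega^{-1}(u)\left(\int_{E^{1}}f\!\left(u+v\,b\,\bar u\,j\right)dv\right)du ,
\end{equation*}
with the inner integral depending only on $\Nm(b)=x/\ep$; once $|x|$ is small relative to the conductor and support of $f$ it equals $\vol(E^{1})f(u)$, so $O(\cdot,f)$ agrees near $0$ with a smooth function of $x$ (eventually constant when $F$ is non-archimedean), which is $(3)$. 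Near $\infty$, the identity $\delta(x)=(\bar b^{-1}+j)\bar b$ and the substitution $h_2\mapsto\bar b^{-1}h_2$ give $O(\delta(x),f)=\Omega(\bar b)\,O(\bar b^{-1}+j,f)$; unfolding this last integral the mirror way introduces — because of a conjugation absent at the $0$-end — a residual factor $\int_{E^{1}}\Omega^{-1}(v)\,dv$ times a function that is smooth (eventually constant) in $\bar b^{-1}$, so $O(\cdot,f)$ vanishes near $\infty$ unless $\Omega|_{E^{1}}$ is trivial, in which case $\Omega(\bar b)$ descends to a function of $x$ and one obtains $(4)$.

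For the ``if'' direction, given $\phi$ with $(1)$--$(4)$ I would construct $f=f_\infty+f_0+f_1$ with $O(\cdot,f)=\phi$, arranged so that the three orbital integrals have essentially disjoint support. Take $f_\infty$ supported in a small neighborhood of $E^\times j$ in $B^\times$: its orbital integral is supported near $\infty$ with exactly the shape permitted by $(4)$, so $f_\infty$ can be chosen to match $\phi$ near $\infty$ (splitting the region where $A_2$ is non-constant into finitely many balls on which it is constant). Subtract and assume $\phi$ vanishes near $\infty$; next take $f_0$ supported near $E^\times$, whose orbital integral near $0$ is $\vol(E^{1})\int_{E^\times}\Omega^{-1}(u)f_0(u)\,du$, and choose $f_0|_{E^\times}$ to hit the prescribed value. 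Subtract again: $\phi$ is now locally constant with $\inv$-support a compact subset of the regular locus, where $\inv$ identifies $E^\times\backslash B^\times/E^\times$ with its image, so each regular fibre is a single orbit; then a standard spreading-out argument (take $f_1$ a combination of characteristic functions of small neighborhoods of points of a continuous section of $\inv$, with suitable constants) produces $f_1$ with $O(\cdot,f_1)=\phi$. Summing the three pieces finishes.

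The hard part is the harmonic analysis at the singular orbit $\inv^{-1}(\infty)$, in both directions: pinning down the residual $E^{1}$-integral and the resulting $\Omega$-twist in $(4)$, and matching it correctly when building $f_\infty$. This asymmetry between the behaviours at $0$ and at $\infty$ is precisely the point at which \cite[p.~332]{Jac87} errs; the remaining ingredients are routine — absolute convergence of the orbital integrals, local constancy on compacta, and the standard surjectivity of $f\mapsto O(\cdot,f)$ over the regular locus.
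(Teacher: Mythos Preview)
Your proof sketch is essentially correct and follows the standard approach: unfold the orbital integral near the two degenerate orbits $E^\times$ and $E^\times j$ via the substitutions $u=h_1^{-1}h_2$ (near $0$) and $s=h_1^{-1}\bar h_2$ (near $\infty$), observe the residual $E^{1}$--integral carries an $\Omega^{-1}$ twist only at the $\infty$--end, and then build $f$ piecewise for the converse. The paper itself does not reprove this proposition but simply quotes \cite[p.~332, Proposition]{Jac87}, noting that Jacquet's statement has the behaviours at $0$ and at $\infty$ interchanged --- exactly the asymmetry you correctly identify. Two small points: your constant $\vol(E^{1})$ in the displayed formula is really the Jacobian of the isomorphism $E^\times/F^\times\xrightarrow{\sim}E^{1}$, $h_1\mapsto\bar h_1/h_1$, so the notation is slightly misleading; and in condition~(4) the function $A_2$ should be understood as smooth in $x^{-1}$ near $0$ (compare the use in the lemma following Proposition~\ref{transfer}), which is what your argument actually produces.
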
 
  
\subsubsection{Transfer} Now we compare  the orbital integrals on the two groups.

  \begin{defn} \label{matchingdef}  For $f\in \cS_c(B^\times)$,  $f'\in \cS_c(G )$ is a transfer of    $f $   if   $   O(0, x,\Phi_{f'})=O(x,f)$  for     $x\in \ep\Nm(E^\times)-\{1\}$,   and  $   O(0, x,\Phi)=0$ for $x  \in  F^\times  -\{1\}-\ep\Nm(E^\times).$

 \end{defn}  
 
We also have the following  sufficient condition for  a transfer  to exist.

     \begin{lem}\label{Xue} (1)  Let  $f\in \cS_c(B^\times_\reg)$   such that $O(x,f)$ vanishes outside  $\lb 1- \Nm (E^\times)\rb $, there is a    transfer   $\Phi\in \cS_c\lb(G\cdot w)\cap \cV_\reg\rb)$. 
     
     (2)
For $\Phi\in \cS_c\lb(G\cdot w)\cap \cV_\reg\rb)$, there is $e\in \cS_c(B^\times_\reg)$   such that $O(x,e)=O'(0,x,\Phi)$.

  \end{lem}
     \begin{proof} (1) Similar to 
     Lemma \ref{Xue1} (1) (2), $O(x,f)$ has compact support contained in $\lb 1- \Nm (E^\times)\rb $ (see also 
     Proposition \ref{Proposition 2.4, Proposition 3.3Jac862}  (2)).     The lemma follows from Lemma \ref{Xue1} (3) and   Proposition \ref{Proposition 2.4, Proposition 3.3Jac862} (1).
     
     (2)  By
     Lemma \ref{Xue1}   (2),  $O'(0,x,\Phi)$ has compact support contained in $\lb 1- \Nm (E^\times)\rb $.
     The rest of the proof is similar to (1) and omitted.
  \end{proof}

As $|F^\times/ \Nm (E^\times)|=2$, by our discussion in the beginning of \ref{matchorb}, there is a unique
    quaternion algebra $B^\dag$ over $F$ and containing $E$ that is non-isomorphic to $B$. Let $\ep^\dag\neq \ep\in F^\times/ \Nm (E^\times) $ be associated to $B^\dag$ as in \S \ref{matchorb}.
  
     \begin{prop}\label{transfer} 
     Let $f\in \cS_c(B^\times)$ such that $O(x,f)$ vanishes outside $\lb 1- \Nm (E^\times)\rb $. 
            Let $A_1(x) $ be as in Proposition \ref{Proposition 2.4, Proposition 3.3Jac862}  (3).
           
     (1) There is a transfer  $f'\in \cS_c(G )$  of    $f $.   
     
          (2) 
  We can choose a  transfer   $f'$ as in (1),
   $f^\dag\in \cS_c(B^{\dag,\times})$   and a neighborhood $U$ of $0$ in $F$ 
     on which $A_1$ is defined, 
      such that for  $x\in U \cap \ep^\dag   \Nm (E^\times) $,   
          we  have \begin{equation} \frac{1}{2}A_1(x) \log |x|_E = -O'(0,x,\Phi_{f'})+O(x,f^\dag) .\label{cnm2}\end{equation}

(3)   Assume that 
$A_1$ extends to
a smooth     function   on $F$  such that
 \begin{itemize}
 
  \item[(a)] $\supp (A_1)\cap \ep^\dag   \Nm (E^\times)$ is bounded from above;
   \item[(b)] $1\not\in \supp (A_1)\cap \ep^\dag   \Nm (E^\times)$.

\end{itemize}
If   $-1\not\in \ep^\dag   \Nm (E^\times)  $, then \eqref{cnm2} holds for  all $x\in   \ep^\dag   \Nm (E^\times) $, i.e., we can remove $U$.

\end{prop}
       \begin{proof} (1) Apply 
       Lemma \ref{lem:GLmatch} (1)  resp. (1') to $A(x):=A_1(x)$ and get $\Phi_1$ resp. $\Phi'_1$. 
       Let $A_2(x)$ be as in Proposition \ref{Proposition 2.4, Proposition 3.3Jac862}  (4).
       Apply 
       Lemma \ref{lem:GLmatch} (2) to $A(x):=A_2(x)$ and get $\Phi_2$. 
       Let $\Phi=\Phi_1-\eta(\ep^\dag)\Phi'_1+\Phi_2$.          By Lemma \ref{lem:GLmatch} (1) (2) and Proposition \ref{Proposition 2.4, Proposition 3.3Jac862}, we have 
       $$\phi:=O(x,f)-O(0,x,\Phi)\in \cS_c\lb \lb1- \Nm (E^\times)\rb\rb.$$
       Apply  Lemma \ref{Xue1} (3) to $\phi$ to get $\Phi_3$. Let  
        $f'\in \cS_c(G )$  such that $\Phi_{f'}=\Phi+\Phi_3$ (recall  the map $f\mapsto \Phi_f$ is surjective  to $\cS_c(G\cdot w)$).
Then $f'$ is a transfer   of    $f $.  

(2) By Lemma \ref{lem:GLmatch}  (1)(2),
$O'(0,x,\Phi_{f'})+2A_1(x) \log |x|$ is a smooth function for $x$ in  a neighborhood $U$ of $0$ in $\lb 1- \Nm (E^\times)\rb$. Choosing $U$ small enough, by Proposition \ref{Proposition 2.4, Proposition 3.3Jac862}, for  $x\in U\cap  \ep^\dag   \Nm (E^\times) $, we have $O'(0,x,\Phi_{f'})+2A_1(x) \log |x|$ equals $O(x,f^\dag)$ for some $f^\dag\in \cS_c(B^{\dag,\times})$.

(3) Let us prove (3)  by modifying $f^\dag$ in (2). 
In (2), we may assume that 
the support of $O(x,f^\dag)$ is  bounded from above. Moreover,  it does not contain $1$ by Proposition \ref{Proposition 2.4, Proposition 3.3Jac862}. If    $-1\not\in \ep^\dag   \Nm (E^\times)  $,  then  $ \lb 1- \Nm (E^\times)\rb\cap \ep^\dag   \Nm (E^\times) $ 
is bounded.
Then by   Lemma \ref{Xue1} (1) and conditions (a) and (b), the  support of $ O'(0,x,\Phi_{f'})+2A_1(x) \log |x|-O(x,f^\dag) $, as a function on $\ep^\dag \Nm (E^\times) $
is a compact  subset of  $\ep^\dag \Nm (E^\times)  $ not containing $1$. 
By Proposition \ref{Proposition 2.4, Proposition 3.3Jac862}, there is $f^{\dag\dag}\in \cS_c(B^{\dag,\times})$ such
 that  
$$O'(0,x,\Phi_{f'})+2A_1(x) \log |x|-O(x,f^\dag)=O(x,f^{\dag\dag})$$ on 
$\ep^\dag \Nm (E^\times) $.
Replace $f^\dag$ by $f^\dag+f^{\dag\dag}$.
  \end{proof}

          \subsection{Automorphic distributions}\label{Automorphic distributions}
  Let $F$ be a global   field, and let $E$ be a separable quadratic  field  extension.
            Let $\Omega$ be a  character     $E^\times\bsl \BA_E^\times$,   $\omega=\Omega|_{\BA_{F}^\times}$, and  $\omega_E=\omega\circ\Nm$. 
       
   \subsubsection{General linear side}    Let $G=\GL_{2,E}$.
Let $A$ be the diagonal torus   of $G$ and   let $Z$  be the center. Let $H\subset G$ be the  similitude unitary group associated to $w$, and 
 let $\kappa$ be the  similitude character. 
 
   For
$f'\in  \cS_c(G(\BA_E))$, define a  kernel function on $G(\BA_E)\times G(\BA_E)$:
\begin{equation*}\label{autker}K(x,y) =\int_{Z(E)\bsl Z(\BA_E) }\left(\sum_{g\in G(E)} f'(x^{-1}gzy)  \right)\omega_E^{-1}(z)dz.\end{equation*}

 For  $a= \begin{bmatrix}a_1&0\\ 0&a_2\end{bmatrix}\in A(\BA_E) $, 
 let  $\Omega(a)=\Omega(a_1 \ol {a_2})$
  and  let 
 $|a_E=|a_1/a_2|_E$.  (Compare with \cite[4.4.1]{Qiu}, where there is a typo.)
 For $s\in \BC$,
formally define the   distribution $O(s,\cdot)$ on $G(\BA_E)$ by assigning to $f' \in \cS_c( G(\BA_E))$ the integral 
\begin{align}O( s, f')=  \int_{Z(\BA_E)A(E)\bsl A(\BA_E)} \int_{Z(\BA_E)H(F)\bsl   H(\BA_F)} K (a,h)\Omega(a) \eta\omega^{-1}(\kappa(h) ) |a|_E^sdh da\label{45} .
\end{align} 
The invariance of the integrand under the inner (resp. outer) $Z(\BA_E)$ follows from the definition of $K(x,y) $ and that the restriction of $\omega\circ\kappa$ (resp. $\Omega$) to $Z(\BA_E)\cong \BA_E^\times$ is $\omega_E$.

We always assume  the following assumption.
 \begin{asmp}\label{freg} Assume that $\Phi_{f'}(g)=0$ for $g\in   \BA_E^\times (\cV-\cV_\reg)  \BA_F^\times  .$
\end{asmp}
\begin{lem}
 The integral $O( s, f')$ in \eqref{45}    converges absolutely
 under Assumption \ref{freg}.  And 
 we have a decomposition
  \begin{align*}
  O'( 0, f')&=\sum_{x\in F^\times-\{1\}}  O'(0,x,\Phi_{f' }).
   \end{align*}    
   \end{lem}
   \begin{proof}

Let us establish the equation in the lemma formally, and refer the convergence to   \cite[Lemma 4.4.2]{Qiu}.
Unfolding $K(a,h)$, applying the definition of $\Phi_{f'}$ and then using $\kappa:H(\BA_F)/H_0(\BA_F)\cong\BA_F^\times $, we have 
\begin{align*}
O( s, f')&=  \int_{Z(\BA_E)A(E)\bsl A(\BA_E)} \int_{ H(F)\bsl   H(\BA_F)} \left(\sum_{g\in G(E)} f'(a^{-1}gh)  \right) \Omega(a)
 \eta\omega^{-1}(\kappa(h)  )|a|^sdh da\\
 &=  \int_{Z(\BA_E)A(E)\bsl A(\BA_E)}\int_{  F\bsl   \BA_F^\times}\sum_{\gamma\in \cV} \Phi_{f'}\lb(a^{-1}z)\cdot \gamma\rb
 \Omega(a)
 \eta\omega^{-1}(z  )|a|^sdz da.
\end{align*}
By Assumption \ref{freg},  only regular $\gamma$ contributes to the  inner sum. The inner sum is now 
$$ \sum_{\gamma\in E^\times\bsl  \cV_\reg/F^\times}\sum_{(a',z')\in E^\times\times F^\times}\Phi_{f'}\lb(a^{-1}z)\cdot (a'^{-1},z')\cdot \gamma\rb
$$
Identifying 
$Z(\BA_E)A(E)\bsl A(\BA_E)\cong E^\times\bsl \BA_E^\times$ via $\begin{bmatrix}a_1&0\\ 0&a_2\end{bmatrix}\mapsto a_1/a_2$, and change the order of the summation,  the equation in the lemma  follows. 
\end{proof}

 Further 
 assume that $f'$ is a pure tensor (so is $\Phi_{f'}$).
Then we have a decomposition
  \begin{align}O'( 0, f')&=\sum_{x\in F^\times-\{1\}} \sum_{v}O'(0,x,\Phi_{f',v})O(x ,\Phi_{f'}^v) , \label{decder} \end{align}    
 where the sum is over the set of places of $F$. The sum \eqref{decder} converges absolutely.
  \subsubsection{Quaternion side} \label{quaternion side}
Let  $B$ be  a quaternion algebra  over $F$.
For  $f\in  \cS_c(B^\times(\BA_F))$, define a kernel function on $B^\times(\BA_F)\times B^\times(\BA_F)$: 
\begin{equation} k (x,y)=\sum_{g\in B^\times} f(x^{-1}gy).\label{kxy}\end{equation}
 Define  a distribution  $O(\cdot)$ on $B^\times(\BA_F)$ by  assigning to $f\in \cS_c( B^\times(\BA_F))$ the integral $$O(  f)=\int_{E^\times  \bsl \BA_E^\times / \BA_F^\times} \int_{E^\times  \bsl \BA_E^\times  }
k(h_1,  h_2) \Omega(h_1)\Omega^{-1}(h_2) dh_2dh_1.$$
 This integral  converges absolutely under the following assumption.
 \begin{asmp}\label{freg'} 
Assume that  $f$ vanishes on $\BA_E^\times (B^\times-B^\times_\reg) \BA_E^\times $.
\end{asmp}
We always assume Assumption \ref{freg'}.
Then we have a decomposition \begin{align*} 
O(   f) =\sum_{x\in\ep  \Nm ( E^\times)-\{1\}}O(x,f ),
\end{align*}
where $\ep$ is as in \S \ref{matchorb}, and $$O( x,f)=\int_{  \BA_E^\times / \BA_F^\times}\int_{  \BA_E^\times  }  f(h_1^{-1}\delta(x) h_2)\Omega(h_1)\Omega^{-1}(h_2) dh_2dh_1 .$$

  \subsubsection{The case of  an incoherent transfer}
   \label{Decomposition under the pure  matching condition}
   
  Let $\BB$ be  a quaternion algebra   over $\BA_F$ such that for an odd number of places $v$ of $F$,    $\BB_v$  is the division quaternion algebra over $F_v$ (such $\BB$ is called   incoherent). 
Assume that $f\in \cS_c(\BB^\times)$ is  a pure tensor. 
Assume that   $f'\in \cS_c(G(\BA_E))$ is a pure tensor and  a transfer of $f$ (at every place), and satisfies Assumption \ref{freg}.
Let $\Phi=\Phi_{f'}$.

We  rearrange the decomposition of  $O'( 0, f')$ in \eqref{decder} according to the decomposition  $$F^\times-\{1\}=\coprod 
 \inv( B ^\times_\reg) ,$$
 where the union is  over all   quaternion algebras over $F$ containing $E$ as an $F$-subalgebra.
Let $B$ be such a quaternion  algebra, and  $x\in  \inv( B ^\times_\reg) $.  
 If $B $ and $ \BB$ are not isomorphic at more than one place, then by the transfer condition,    for every place $u$, the   product
$$O(0,x ,\Phi^u):=\prod_{v\neq u} O(0,x ,\Phi_v) $$ 
contains at least one  local component 
 with value 0 so that  $O(0,x ,\Phi^u)=0$.
 Otherwise, $B$ is as follows.
   For a place $v $ of $F$, let 
  $ B(v)$ be the unique   quaternion algebra over $F$ such that $\BB^v\cong B(v)(\BA_F^v)$ (so that $\BB_v\not\cong B(v)_v$), whose existence is assured by the Hasse principle.
   Moreover, if $B(v)$ contains $E$ as an $F$-subalgebra, then  $v$  is  nonsplit in $E$. (Indeed, otherwise, $E_v=F_v\oplus F_v$ and can not be contained in both $\BB_v$ and $B(v)_v$.)
Let   $O(x,f^v) $  be the orbital integral defined by 
regarding   $f^v$ as a function on $B^\times(\BA_F^v)$.
Then by the transfer condition, for every  place $u$ of $F$, $O(0,x,\Phi^u)\neq 0 $ only if $u=v$.   In this case, $ O(0,x,\Phi^v)=O(x,f^v)$.

To sum up, we have the following lemma.
\begin{lem}\label{decpure} Let $\Xi_\nspl $ be the  set of places of $F$ nonsplit in $E$.   There is a decomposition  
$$O'( 0,f')=\sum_{v\in \Xi_\nspl}\sum_{x\in\inv ( B(v)^\times_\reg) }  O'(0,x,\Phi_v) O(x,f^v).$$
 \end{lem}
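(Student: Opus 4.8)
The plan is to rearrange the absolutely convergent sum \eqref{decder} for $O'(0,f')$ by partitioning the index set $F^\times - \{1\}$ according to which quaternion algebra over $F$ containing $E$ realizes a given invariant. Concretely, I would start from the bijections $\inv : E^\times \bsl B^\times / E^\times \cong (\ep_B \Nm(E^\times) - \{1\}) \cup \{0,\infty\}$ of \ref{matchorb}, one for each quaternion algebra $B/F$ containing $E$, and observe that the regular invariants $\inv(B^\times_\reg) = \ep_B \Nm(E^\times) - \{1\}$ (where $\ep_B \in F^\times/\Nm(E^\times)$ is the class attached to $B$) give, as $B$ ranges over the two quaternion algebras, exactly the two cosets of $\Nm(E^\times)$ in $F^\times$ other than the one containing $1$; intersecting with $F^\times - \{1\}$ and using Lemma \ref{qcond} this yields the disjoint decomposition $F^\times - \{1\} = \coprod_B \inv(B^\times_\reg)$ cited just before the lemma. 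Then I would plug this partition into \eqref{decder}: $O'(0,f') = \sum_B \sum_{x \in \inv(B^\times_\reg)} \sum_{v \in \Xi} O'(0,x,\Phi_{f',v}) O(x,\Phi_{f'}^v)$.

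The next step is to kill almost all the terms using the transfer (matching) hypothesis together with the incoherence of $\BB$. Fix $B$ and $x \in \inv(B^\times_\reg)$. Since $f'$ is a transfer of $f \in \cS(\BB^\times)$ at every place, Definition \ref{matchingdef} gives $O(0,x,\Phi_{f',v}) = O(x,f_v)$ whenever $x \in \ep_{\BB_v}\Nm(E_v^\times) - \{1\}$ and $O(0,x,\Phi_{f',v}) = 0$ when $x$ lies in the ``wrong'' coset locally, i.e. when $\BB_v$ is the quaternion algebra over $F_v$ whose attached class is \emph{not} $\ep_B$; equivalently $O(0,x,\Phi_{f',v})=0$ exactly at those places $v$ where $\BB_v \not\cong B_v$. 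Now if $B$ and $\BB$ differ at more than one place, then for every fixed place $u$ the product $O(0,x,\Phi_{f'}^u) = \prod_{v \neq u} O(0,x,\Phi_{f',v})$ still contains at least one vanishing local factor, so $O(0,x,\Phi_{f'}^u) = 0$; hence the entire $x$-summand vanishes. Since $\BB$ is incoherent it differs from any (globally defined) $B$ at an \emph{odd} number of places, so the only surviving $B$'s are those differing from $\BB$ at exactly one place; such $B$ exist and are indexed precisely by $v \in \Xi_\fn$, with $B = B(v)$ defined by $\BB^v \cong B(v)(\BA_F^v)$ as in the paragraph preceding the lemma.

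Finally, for such a $B = B(v)$ and $x \in \inv(B(v)^\times_\reg)$, I would analyze the surviving inner sum $\sum_{u \in \Xi} O'(0,x,\Phi_{f',u}) O(x,\Phi_{f'}^u)$. For $u \neq v$ we have $\BB_v \not\cong B_v$, so the factor $O(0,x,\Phi_{f',v})$ appearing inside $O(x,\Phi_{f'}^u)$ vanishes, killing that term; thus only $u = v$ contributes, and the term is $O'(0,x,\Phi_{f',v}) \cdot O(x,\Phi_{f'}^v)$. At the remaining places $v' \neq v$ the transfer identity gives $O(0,x,\Phi_{f',v'}) = O(x,f_{v'})$, and multiplying these gives $O(x,\Phi_{f'}^v) = O(x,f^v)$, the orbital integral of $f^v$ viewed on $B(v)^\times(\BA_F^v)$. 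Assembling, $O'(0,f') = \sum_{v \in \Xi_\fn} \sum_{x \in \inv(B(v)^\times_\reg)} O'(0,x,\Phi_{f',v}) O(x,f^v)$, which is the asserted formula (with $\Phi = \Phi_{f'}$). The main obstacle I anticipate is the bookkeeping of which local quaternion algebra corresponds to which coset $\ep_B \Nm(E_v^\times)$ and making the vanishing statements about products $O(0,x,\Phi^u)$ fully rigorous uniformly in $u$ and $x$ — in particular checking that ``$B$ and $\BB$ isomorphic at all but one place'' is genuinely forced by incoherence and correctly matches the parametrization by $\Xi_\fn$; the convergence of the rearranged sum is inherited from the absolute convergence of \eqref{decder} already granted in \ref{Automorphic distributions}.
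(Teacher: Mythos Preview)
Your proposal is correct and follows essentially the same argument the paper gives in the paragraphs preceding the lemma: partition $F^\times-\{1\}$ by $\inv(B^\times_\reg)$, use the transfer condition to kill $O(0,x,\Phi^u)$ whenever $B$ and $\BB$ differ at more than one place, and observe that incoherence forces the surviving $B$'s to be exactly the $B(v)$ for $v\in\Xi_\fn$. One small slip: your aside that ``$B$ ranges over the two quaternion algebras'' giving ``the two cosets of $\Nm(E^\times)$ other than the one containing $1$'' is wrong globally---$F^\times/\Nm(E^\times)$ is generally infinite, there is one quaternion algebra containing $E$ for each coset, and the matrix algebra corresponds precisely to the coset $\Nm(E^\times)$ which \emph{does} contain $1$; the partition $F^\times-\{1\}=\coprod_B\inv(B^\times_\reg)$ still holds and nothing in your subsequent argument depends on the erroneous count.
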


  \begin{defn}\label{decpuredef}Let  $O'( 0,f')_v$ be the summand corresponding to $v$   in the equation in Lemma \ref{decpure}.
   \end{defn}

        \subsubsection{Globalization}
        Let  $B$ be  a quaternion algebra  over $F$. 
         \begin{lem} \label{Globalization} Let $v_1,...,v_n$ be distinct  places of $F$ such that $B_{v_i}$ is a division algebra, and  for  $i=1,...,n$, let  $ \pi_i$ be  an irreducible representation of  $B_{v_i}^\times$. 
         Assume that $\Hom_{E_{v_i}^\times} (\pi_i\otimes \Omega_{v_i},\BC)\neq 0$ (in particular, $\pi_i$ has central character $\omega_{v_i}^{-1}$). Then
         there is an automorphic representation $\pi$ of $B ^\times$ with central character $\omega^{-1},$
          and $\phi\in \pi$ such that  
         $\pi_{v_i}\cong \pi_i$ for $i=1,...,n$, and 
         $$\int_{E^\times  \bsl \BA_E^\times / \BA_F^\times}\phi(h) \Omega(h ) dh\neq 0.$$ 
 \end{lem}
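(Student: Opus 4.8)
The plan is to prove Lemma \ref{Globalization} by a standard globalization argument combining the Jacquet--Langlands correspondence, Tunnell--Saito, and the nonvanishing of central $L$-values, but packaged so that the local behaviour at $v_1,\dots,v_n$ is prescribed and the toric period is nonzero. First I would use the theorem of Tunnell \cite{Tun} and Saito \cite{Sai} in reverse: since $\Hom_{E_{v_i}^\times}(\pi_i\otimes\Omega,\BC)\neq 0$ for each $i$, the local root numbers $\ep(1/2,\pi_{i,E_{v_i}}\otimes\Omega_{v_i})\Omega_{v_i}(-1)$ equal $+1$ if $B_{v_i}$ is split and $-1$ if $B_{v_i}$ is division; since $B_{v_i}$ is division at each $v_i$ by hypothesis, these local signs are all $-1$. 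I would then choose an auxiliary finite set $S$ of places of $F$, disjoint from $\{v_1,\dots,v_n\}$, large enough to contain all archimedean places, all places ramified in $E$ or for $\Omega$, and a further even or odd number of finite places (chosen so the global product of local signs is $+1$), and at the remaining places put the unramified line. This prescribes a collection of local representations $\{\sigma_v\}$ of $\GL_2(F_v)$ (via Jacquet--Langlands at $v_i$ and at the places where $B$ is ramified) whose local root numbers multiply to $+1$.

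The second step is to produce a global cuspidal automorphic representation $\sigma$ of $\GL_{2,F}$ with $\sigma_v\cong$ the prescribed local component for $v\in\{v_1,\dots,v_n\}$ and at the ramified places of $B$, with central character $\omega$ (matching $\Omega|_{\BA_F^\times}$), and with $L(1/2,\sigma_E\otimes\Omega)\neq 0$. Here I would invoke a simultaneous solution of the ``existence with prescribed local components and nonvanishing central value'' problem; this is available from the literature (e.g.\ results obtained via the relative trace formula or via the work underlying \cite{YZZ}, \cite{YuanZ}, or by a soft argument combining a Rankin--Selberg nonvanishing theorem of the Waldspurger circle with a globalization of local types as in the ``simple'' trace formula with two elliptic/supercuspidal places). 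With such $\sigma$ in hand, the Jacquet--Langlands correspondence transfers $\sigma$ to an automorphic representation $\pi$ of $B^\times$ (the transfer exists precisely because at every place the local component of $\sigma$ is in the image of $\mathrm{JL}$ from $B_v^\times$, which holds by our choice of $S$ and the matching of local signs), with $\pi_{v_i}\cong\pi_i$ for all $i$.

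The third step converts the $L$-value nonvanishing into nonvanishing of the toric period. By Waldspurger's theorem, the global period $\int_{E^\times\bsl\BA_E^\times/\BA_F^\times}\phi(h)\Omega(h)\,dh$ on $B^\times$ is, up to the product of the local periods $\alpha^\sharp_{\pi_v}$, proportional to $L(1/2,\pi_E\otimes\Omega)=L(1/2,\sigma_E\otimes\Omega)\neq 0$; and each local period $\alpha^\sharp_{\pi_v}$ is nonzero on $\pi_v\otimes\wt\pi_v$ exactly because $\Hom_{E_v^\times}(\pi_v\otimes\Omega_v,\BC)\neq 0$ at every place --- which is guaranteed at $v_i$ by hypothesis, at the ramified places of $B$ by the sign matching, and at the remaining places by the unramified computation. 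Choosing $\phi\in\pi$ to be a pure tensor with each local component in the support of the local period functional then yields a $\phi$ with nonvanishing global period, as required.

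The main obstacle I expect is the simultaneous globalization: guaranteeing an automorphic $\sigma$ on $\GL_{2,F}$ with \emph{both} the prescribed supercuspidal (or discrete series) local components at $v_1,\dots,v_n$ \emph{and} nonvanishing $L(1/2,\sigma_E\otimes\Omega)$. Pinning down local components forces one into a trace-formula or converse-theorem argument, while the nonvanishing of the central value is a separate analytic input; reconciling the two (for instance, ensuring the auxiliary set $S$ can be chosen with the correct parity of ramified places \emph{and} leaving enough freedom for the nonvanishing statement to apply) is the delicate point. One clean route, which I would pursue, is to appeal to the known cases of existence of automorphic forms with prescribed local behaviour at finitely many places together with a positivity/nonvanishing result for the relevant toric period integral proved directly on the quaternion side (bypassing $\GL_2$ entirely), using a relative trace formula with $n$ fixed elliptic places, so that the spectral side is forced to be supported on representations with $\pi_{v_i}\cong\pi_i$ and the geometric side can be arranged to be nonzero by a suitable choice of test function --- mirroring the arithmetic RTF philosophy used elsewhere in the paper.
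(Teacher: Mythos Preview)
Your main line of argument---pass to $\GL_2$, globalize with prescribed local components \emph{and} nonvanishing $L(1/2,\sigma_E\otimes\Omega)$, then return via Jacquet--Langlands and Waldspurger---contains exactly the gap you flag yourself: a black-box ``simultaneous globalization with nonvanishing central value'' result of the required shape is not in the literature at the level of generality you need, and the references you gesture at (\cite{YZZ}, \cite{YuanZ}) do not supply it. Worse, the standard way to \emph{prove} such a result is precisely a relative trace formula argument of the type in your final paragraph, so the main proposal is circular. There is also a minor wrinkle you pass over: arranging the parity of signs at auxiliary places $S$ so that the global root number is $+1$ while keeping $\sigma_v$ unramified (hence sign $+1$) outside $\{v_1,\dots,v_n\}\cup S$ interacts with the Hasse invariants of $B$ in a way you do not fully spell out.

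The paper takes exactly the route you sketch in your last paragraph, and does so in three lines: apply the relative trace formula for the distribution $O(f)$ on $B^\times(\BA_F)$ (the quaternion side of \S\ref{quaternion side}); at each $v_i$ take $f_{v_i}$ so that its $\omega_{v_i}$-twisted average along the center is a matrix coefficient of $\pi_i$ (this forces the spectral side to be supported on $\pi$ with $\pi_{v_i}\cong\pi_i$, since $B_{v_i}$ is division and $\pi_i$ finite-dimensional); take one auxiliary place $v_0$ with $\supp f_{v_0}\subset B^\times_{v_0,\reg}$ to guarantee absolute convergence of the geometric side; then argue as in Hakim--Murnaghan \cite{HM} that the geometric side can be made nonzero, whence some spectral term---a product of toric periods---is nonzero. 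The hypothesis $\Hom_{E_{v_i}^\times}(\pi_i\otimes\Omega,\BC)\neq 0$ is used to ensure that the local period $\alpha_{\pi_i}(f_{v_i})$ does not vanish for a good choice of matrix coefficient, so the $v_i$-factors do not kill the geometric side. This bypasses both Tunnell--Saito and Waldspurger entirely; your detour through $\GL_2$ and central $L$-values is unnecessary.
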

 \begin{proof}
 The proof mimics the argument  in \cite{HM}. 
  We apply the relative trace formula associated to the automorphic distribution in \S \ref{quaternion side}.
  In particular, we choose a pure tensor $f$ in \S \ref{quaternion side} such that the   twisted average of $f_{v_i}$   by $\omega_{v_i}$ along the center
  is a  matrix coefficient of  $\pi_{v_i}$.  Moreover, we choose another place $v_0$ such that $\supp f_{v_0}\subset B^\times_{v_0,\reg}$.
  The rest of the proof is the same as in \cite{HM}. 
 \end{proof}
       
  \subsection{Local distributions}\label{local relative trace formula}
The   automorphic distributions $O( s, f')$ (and thus $O( 0, f'),$ $ O'( 0, f')$) and $O( f)$ above can be decomposed into  sums over automorphic representations. Each summand is further a product (when $f,f'$ are pure tensors) of local distributions. For transfers $f,f'$, the local distributions for $O( 0, f')$ and $O( f)$ can be compared. 
 We will not dive into the decomposition, but will refer to previous works. Here we only introduce the local distributions.

Let $F$ be a local field and 
 $E/F$   a separable  quadratic field extension.
Let $\Omega$ be a unitary character of 
$E^\times$, $\omega=\Omega|_{F^\times}$, and $\omega_E=\omega\circ\Nm$.
Let $\psi$ be a nontrivial additive  character of $F$, and $\psi_E=\psi\circ \Tr$ where $\Tr:E\to F$ is the trace map.
(We omit the   case  $E=F\oplus F$, and refer to \cite[6.4]{Qiu}.)

\subsubsection{Local distributions on $\GL_{2,E}$}\label{Local distributions on}
Let $\sigma $ be an infinite dimensional  irreducible unitary     representation of $G=\GL_{2,E}$ with   central character $\omega_E^{-1}$. 
Let $W(\sigma,\psi_E)$ be the $\psi_E$-Whittaker model of $\sigma$.
For $W\in W(\sigma,\psi_E)$, define the local  Rankin-Selberg period
 \begin{equation}\lambda (s,W)=\int_{E^\times}W \left(\begin{bmatrix}x&0\\ 0&1\end{bmatrix}\right )|x|_E^s\Omega(x)d  x\label{lambda}\end{equation}  
 and  the local base change period 
 \begin{equation}\cP (W)=\int_{F^\times}W \left(\begin{bmatrix}x&0\\ 0&1\end{bmatrix}\right ) \eta\omega(x)d  x\label{cp}. \end{equation}  
Then  the integral \eqref {cp}  converges (see for example \cite[p.52 Remark]{JN}).   
 Assume that $\sigma$ is tempered, then the local  Rankin-Selberg integral \eqref {lambda}  converges for $\Re(s)>-1/2$.   
Moreover, when the data are ``unramified", $\lambda (s,W)=L(1/2,\pi_E\otimes \Omega)$, see \cite[(8.1)]{Qiu}.

  Fix some (unique up to scalar)  $G$-invariant inner product (see the remark after   Proposition \ref{the proof is more important for us} below). For $f\in \cS_c(G)$, define 
$$ I_\sigma(s, f)= \sum_W \lambda (s,\pi(f)W) \overline {\cP(   W)}$$
where the sum is over an orthonormal basis of $W(\sigma,\psi_E)$


 \subsubsection{Quaternion side}
 
Let $B$ be a quaternion algebra over $F$ containing $E$.
 Let  
  $\pi $ be an irreducible unitary     representation of $B^\times$  with   central character $\omega^{-1}$.   
   For $u\in \pi$ and $v\in \wt \pi$, define    $$\alpha_\pi(u,  v)=\int_{E^\times/F^\times} (\pi(t) u,  v) \Omega(t)dt .$$  
 By abuse of notation, for $f\in \cS_c(B^\times)$, let    \begin{equation}\alpha  _{\pi } (f ) = \sum_{u} \alpha _{\pi } \lb \pi (f )u,\wt u\rb.\label{asharp}\end{equation} 
where 
  the sum is over an orthonormal   basis $\{u\}$ of $\pi $, and $\{  \wt u \}$ is the dual basis of $\wt \pi$.

\begin{prop}   \label{the proof is more important for us}
Assume that  $F$ has characteristic 0, and $\alpha_\pi\neq 0$. Then 
        for a transfer $f'\in \cS_c(G')$ of $f $, we have  \begin{align*} I_{\pi_E}(0,f')=c_\pi  \alpha_{\pi}(f ),\end{align*} 
        where $c_\pi$ is an explicit nonzero constant.\end{prop}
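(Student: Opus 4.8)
\textbf{Proof proposal for Proposition \ref{the proof is more important for us}.}
The plan is to compare the two global relative trace formulas set up in \ref{Automorphic distributions}, and then spectrally decompose and localize. First I would choose an auxiliary incoherent quaternion algebra $\BB$ over $\BA_F$ so that $B$ appears as one of the nearby ``coherent'' algebras and so that the given local representation $\pi$ occurs (locally) as $\BB_v$-data at the place where $\BB$ is ramified; the nonvanishing hypothesis $\alpha_\pi\neq 0$ is exactly the local condition $\Hom_{E_v^\times}(\pi\otimes\Omega,\BC)\neq 0$ needed to invoke the globalization Lemma \ref{Globalization}. This produces a global automorphic representation $\Pi$ on a quaternion algebra whose $v$-component is $\pi$, with nonvanishing global toric period, and correspondingly a global automorphic representation $\Pi_E$ of $\GL_{2,E}$ (the base change) whose $v$-component is $\pi_E$.

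Next I would run the identity of global distributions: for matching pure tensors $f\in\cS(\BB^\times)$ and $f'\in\cS(G(\BA_E))$ (transfer at every place, Assumption \ref{freg} in force), the geometric expansions agree term by term after the rearrangement in Lemma \ref{decpure}, giving an equality of the spectral sides. On the quaternion side the relevant spectral contribution is $\prod_v \alpha_{\pi_v}(f_v)$ (up to global $L$-value and measure constants coming from Waldspurger/the Ichino--Ikeda type factorization), and on the $\GL_{2,E}$ side it is $\prod_v I_{\pi_{E,v}}(0,f'_v)$ times the analogous global constants. Since the equality holds for \emph{all} matching test functions, and since at all places $w\neq v$ I can choose $f_w, f'_w$ matching with $\alpha_{\pi_w}(f_w)\neq 0$ and $I_{\pi_{E,w}}(0,f'_w)\neq 0$ (using the converse-type freedom in choosing transfers, together with the fact that the local $L$-value ratios are nonzero), I can cancel the contributions away from $v$ and from all automorphic representations other than $\Pi$. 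What remains is precisely a proportionality $I_{\pi_E}(0,f')=c_\pi\,\alpha_\pi(f)$ at the place $v$, with $c_\pi$ the ratio of the global and non-$v$ local constants; these are the standard explicit $L$-factor and measure constants, hence $c_\pi\neq 0$ and explicit.

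To make the cancellation rigorous I would isolate the $\Pi$-part of the spectral side by a standard argument: the global spectral side is a sum over automorphic representations, each term factoring over places; by linear independence of characters of the Hecke algebra (strong multiplicity one, as used in \ref{Unramified Hecke action}) one may project onto $\Pi$ by choosing $f$, $f'$ unramified and Hecke-eigen outside a large finite set. This reduces the identity to the single representation $\Pi$, after which varying $f_v, f'_v$ while holding the other components fixed yields the local identity. One should also check the archimedean place fits this framework — here the matrix coefficient computation of \ref{oinfsec} (formulas \eqref{oinf1}, \eqref{oinf}) supplies an explicit matching test function with nonzero $\alpha$ and nonzero $I$, so the archimedean place causes no trouble.

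The main obstacle I anticipate is \emph{not} the formal cancellation but controlling the local constants precisely enough to conclude $c_\pi\neq 0$ and to write it down explicitly: this requires the exact local Waldspurger/Ichino--Ikeda normalizations at every place outside $v$ to telescope against the global constant (global $L(1/2,\Pi_E\otimes\Omega)$-type factor, global Petersson norm, global Tamagawa measures), so that the product genuinely collapses to a single nonzero scalar independent of $f$. A secondary subtlety is ensuring there is \emph{some} matching pair $(f,f')$ with $\alpha_\pi(f)\neq 0$ at $v$ — this is where the hypothesis $\alpha_\pi\neq 0$ is used a second time, now to guarantee the local distribution $\alpha_\pi$ is not identically zero as a functional on $\cS(B^\times)$, so that the identity has nontrivial content and $I_{\pi_E}(0,\cdot)$ is forced to be the asserted multiple rather than vacuously zero. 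The characteristic-$0$ hypothesis enters through the availability of the transfer results (Proposition \ref{transfer} and the lemma following it) and the smooth-matching lemmas of \ref{local orbital integrals0}, which underlie the global comparison.
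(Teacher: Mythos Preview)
Your overall strategy—globalize the local data, compare the two global relative trace formulas, spectrally decompose, and isolate a single automorphic representation—is exactly what the paper does (via the references to \cite{Beu} for globalization and \cite{JN} for the comparison). But there is a genuine confusion in your setup that would derail the argument if followed literally: you invoke an \emph{incoherent} quaternion algebra and Lemma~\ref{decpure}, which concerns the \emph{derivative} $O'(0,f')$. That is the wrong global identity for this proposition. The statement compares \emph{central-value} local distributions, so the correct global framework is the \emph{coherent} one: choose a global field $\tilde F$ with $\tilde F_v=F$, a global quadratic extension $\tilde E/\tilde F$ localizing to $E/F$, and a \emph{global} quaternion algebra $\tilde B$ over $\tilde F$ with $\tilde B_v\cong B$; then the global identity to use is $O(f)=O(0,f')$ for matching $f\in\cS(\tilde B^\times(\BA_{\tilde F}))$ and $f'\in\cS(G(\BA_{\tilde E}))$, which is Jacquet's relative trace formula (the content of \cite{JN}). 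In the incoherent setup there is no global quaternion algebra and hence no quaternion-side spectral expansion to compare, so Lemma~\ref{decpure} gives you nothing to work with here. Once you switch to the coherent identity, the spectral sides factor as products of the local periods $\alpha_{\Pi_w}$ and $I_{\Pi_{\tilde E,w}}$ respectively, weighted by the global constants, and your cancellation-and-isolation argument goes through as you describe.

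Two smaller points. First, Lemma~\ref{Globalization} as stated only treats places where the local algebra is division; for the general case (or to avoid splitting into cases) one needs the more flexible globalization of \cite[Proposition 5.7.1]{Beu}, which is what the paper actually cites. Second, your attribution of the characteristic-$0$ hypothesis to Proposition~\ref{transfer} is misplaced—that proposition is about the derivative transfer used on the arithmetic side and plays no role here; the hypothesis enters through the globalization step, since the cited results are over number fields.
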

        \begin{proof}  
      The proposition follows from the globalization argument in  \cite[Proposition 5.7.1]{Beu}   and the proof of \cite[Proposition 5]{JN} (see also \cite[Proposition 6.3.1]{Qiu}).   
                \end{proof}
The constant $c_\pi$ in Proposition \ref{the proof is more important for us} 
depends on the choices of  the $G$-invariant inner product on $W(\sigma,\psi_E)$ and 
 the measures. In the non-archimedean case, it  can be read from   the equation in \cite[Proposition 6.3.3]{Qiu}.
It has the same shape in the  archimedean case by the same proof.
  We will not need the constant $c_\pi$ explicitly, rather refer to \cite{Qiu} later.
 
 
     \subsubsection{A quaternionic test function at $\infty$}\label{oinfsec}
       Let $\BD$ be the unique division quaternion algebra over $\BR$, and fix  an embedding  $\BC\incl \BD$. 
In the notation of \ref{matchorb}, we choose
 $\ep=-1$. (Then $\BD=\BC\oplus\BC j$ is the usual way of expressing the Hamilton quaternion algebra.)
       
       Let $\rho_{2k}$ be the irreducible $(2k-1)$-dimensional representation of $\BD^\times $ with trivial central character whose Jacquet-Langlands correspondence to $\GL_2(\BR)$ is the holomorphic discrete  series of weight $2k$. Fix an invariant inner product  $(\cdot,\cdot)$ on $\rho_{2k}$, and let $e$ be the unique unit vector which is $\BC^\times$-invariant (see \cite{Gro1}). 
  Let $f_\infty\in \cS_c(\BD^\times)$ be such that    \begin{equation} 
         \label{oinf1}\int_{\BR^\times} f_\infty(zg) d  z= (\rho_{2k}(g)e,e).   \end{equation}


 Then by the discussion in \cite[6.2]{Qiu}, we have the first equality of the following
       \begin{align*} \begin{split} 
  \alpha_{\rho_{2k}}(f_\infty)=\Vol(\BC^\times/\BR^\times)\int_{\BD^\times} f_\infty(g) (\rho_{2k}(g)e,e) dg    =\frac{\Vol(\BC^\times/\BR^\times)}{d_{\rho_{2k}}}.
          \end{split}     \end{align*}
        Here $d_{\rho_{2k}}$ is the formal degree of ${\rho_{2k}}$ and the second equality is by definition. It is a standard fact that 
        $  d_{\rho}\Vol(\BD^\times/\BR^\times)=\dim \rho$ for any    irreducible  representation $\rho$ of the compact group $\BD^\times/\BR^\times$.
      So   \begin{align}\label{eq:alinf} \begin{split} 
  \alpha_{\rho_{2k}}(f_\infty)=\frac{\Vol(\BC^\times/\BR^\times)\Vol(\BD^\times/\BR^\times) }{2k-1}.
          \end{split}     \end{align}

        Let  $P_{k-1}(t)$ be the $(k-1)$-th Legendre polynomial, i.e. the multiple of 
$\frac{d^{k-1}}{dt^{k-1}}(t^2-1)^{k-1}$ whose value at 1 is 1.
Let $\Omega=1$.
       By \cite[Lemma 4.14]{FW} and a direct computation, we have 
              \begin{equation} 
         \label{oinf}O(\delta, f_{\infty})=   P_{k-1}\lb\frac{1+\inv (\delta)}{1-\inv(\delta)}\rb  {\Vol(\BC^\times/\BR^\times)^2} 
       \end{equation}
       for $\delta\in \BD^\times_\reg$.  We remind the reader that  there is a sign mistake in the statement of \cite[Lemma 4.14]{FW}, which is easy to   spot by checking \cite[Lemma 4.8]{FW} (the proof of \cite[Lemma 4.14]{FW} is based on  \cite[Lemma 4.8]{FW}).
       
                     \subsection{Local comparison I}\label{Height distribution}    
          
 We compare the    height  distribution  $$H(f )  = \pair{  Z_{\Omega,f },z_{\Omega^{-1}}} $$
 on $\GL_2(\BA^\infty)$ with the automorphic distribution on $\GL_{2}(\BA_K)$.   The conclusion is  an arithmetic relative trace identity  \eqref{ATI}.
 
 The proof of \eqref{ATI} consists of 10 steps, each is a subsubsection. 
We briefly sketch them as follows:
 \begin{itemize}
 
  \item[(1)] recall/set up notations;
   \item[(2)] decompose $H(f ) $ into a sum of local heights $H(f )_v $'s over places of $\BQ$; 
   \item[(3,4)]  express the local height  $H(f )_p ,p<\infty,$ into a sum of  intersection numbers using a description of the integral model;  
   \item[(5$\&$6)]  for $p$ nonsplit in $K$, decompose the intersection numbers using formal uniformization of the integral model, and rewrite $H(f )_p$ in a  form like a sum of orbital integrals (with the local orbital integrals at $p$ replaced by intersection numbers);
     \item[(7$\&$8$\&$9)] 
     choose test functions and finish the comparison for $p<\infty$;
         \item[(10)]   deal with the infinite place.

\end{itemize}

  \subsubsection{Notations} \label{Notations}
   
We will need the following   notations for quaternion algebras.  
 For a quaternion algebra $B$ over a field $F$, we always use $q$ to denote the  
 reduced norm map. (If $B$ is the matrix algebra, then $q $ is the determinant, and we will consistently use $q$ to replace $\det$ from now on.) 
 Let $E$ be a separable quadratic field extension of $F$.
 For an embedding of $E\incl B$, 
 define an invariant  on $B^\times$ for the bi-$E^\times$-action
        \begin{align*}
        \lambda: B^\times&\to \BQ,\\
        \lambda(a+bj)&=\frac{q(bj)}{q(a+bj)},\nonumber\end{align*}
        where the notation is the same as in \S \ref{matchorb}.
        The relation with the invariant  defined in \S \ref{matchorb} is 
  $$ \lambda(\delta )=\frac{ -\inv (\delta)}{1-\inv(\delta)}  .$$

  We will use quaternion algebras over $\BQ$. For a  finite place $p$ of $\BQ$, let $B(p)$ be the unique   quaternion algebra over $\BQ$ such that $B(p)_v$ is division only for $v=p$ and $v=\infty$.
  Let $B(\infty) $ be the matrix algebra $ \RM_{2,\BQ}$.

Also need Legendre functions.
 Let $P_{k-1}(t)$ be the $(k-1)$-th Legendre polynomial, i.e. the multiple of 
$\frac{d^{k-1}}{dt^{k-1}}(t^2-1)^{k-1}$ whose value at 1 is 1.   
Let  $Q_{k-1}$ be the Legendre function of the second kind, i.e.,
$$Q_{k-1}(t)=\int_{u=0}^\infty (t+\sqrt{t^2-1}\cosh u)^{-k} du,\  t>1.$$
Then   (see \cite[p. 294, (5.7)]{GZ}) \begin{equation}\label{PQrelation}Q_{k-1}(t)=\frac{1}{2}P_{k-1}(t)\log\frac{t+1}{t-1}+(\mbox{polynomial in } t),\end{equation}
and \begin{equation}\label{Qinfty} Q_{k-1}(t)=O(t^{-k}),\ \text{for }t\to \infty.\end{equation}

Finally,
 let  $N$ be a positive integer  as in \S \ref{FixN}  and let $ U=U(N)$, the corresponding principal congruence subgroup of $\GL_2(\BA^\infty)$. Let us rewrite $H(f)$.
Let $$c_K=  \Vol(K^\times\bsl \BA_K^{ \times } /\BA^{ \times } )\Vol(K^\times\bsl \BA_K^{ \times }/\BR^\times  ).
$$ 
 By definition and   a simple change  of variable $h=t_1^{-1}t_2g$, we have   
  \begin{equation}    \begin{split}  \label{eq:H1}
H(f)_ p
=   &  \frac{ 1}{c_K}     \int_{ K^\times\bsl \BA_K^{\times}/\BA^{ \times}}\int_{  K^\times\bsl  \BA_K^{ \times}/\BR^\times}       \sum _{h\in  \GL_2(\BA^\infty)/U}    f   (h )     \\
 &     \pair{ {Z_{t_1 h }} ,  {Z_{t_2}}}_{Y(N)}\Omega^{-1}( t_{2 })\Omega(t_{1 })   dt_{2 }d t_{1 }\\
 =   &  \frac{ 1}{c_K}     \int_{ K^\times\bsl \BA_K^{\times}/\BA^{ \times}}\int_{  K^\times\bsl  \BA_K^{ \times}/\BR^\times}       \sum _{g\in  \GL_2(\BA^\infty)/U}    f   (t_1^{-1}t_2g )      \\
 &     \pair{ {Z_{t_2g }} ,  {Z_{t_2}}}_{Y(N)}\Omega^{-1}( t_{2 })\Omega(t_{1 })   dt_{2 }d t_{1 }.
  \end{split}     \end{equation}

\subsubsection{Local-global decomposition}

 Let $S$ be a finite set of finite places of $\BQ$ which contains all places of $\BQ$   ramified in $K$ or  ramified for $\Omega$, and all prime factors of $N$. 
   Fix $f_S=\otimes_{p\in S} f_p \in \cS_c(\GL_2(\BA_S))$ where each $f_p \in \cS_c(\GL_2(\BQ_p))$ is right $U_p$-invariant.
Let   $f=f_S \otimes f^S$ where   $f^S\in \cH^S$.  Such test functions will be enough for our   proof of Theorem    \ref{stronghtder} in \S \ref{proofs}.

For a given $f_S$, let us consider the defining fields  and height pairings of the involved CM cycles as $f^S\in \cH^S$ varies.
Let $F$ be   the finite abelian extension of $K$ corresponding to 
\begin{equation*}\BA_K^{\infty,\times}\bigcap \bigcap_{g_S\in \supp f_S} g_SUg_S^{-1}\label{CFT}\end{equation*}
by the class field theory.
Then by the CM theory, 
all CM cycles
 $Z_{tg_S}$'s, $t\in \BA_K^{\infty,\times}$ and $g_S\in \supp f_S$, are defined over   $F $.  
Moreover,  for $g^S\in  \GL_2(\BA^{S,\infty})$, $Z_{tg_Sg^S}$ is defined over a finite extension $F'$ of $F$ unramified at 
$S$.
So we can define height pairing  $\pair {Z_{tg_Sg^S}, Z_{t'}}$, $t'\in \BA_K^{\infty,\times}$,
as in \S \ref{ and the height pairing}  using the integral model  $\cY'$ of $Y(N)$ over $\Spec \cO_F$ which is smooth  outside $S$.

Fix an  $f_S$ satisfying  the following  assumption (we will  further specify such an $f_S$ in \S \ref{proofs}).     

  \begin{asmp} \label{asmp2}   For every $p\in S$,   we have 
   $\supp f_p\subset \GL_2(\BQ_p)_\reg$, the regular locus for the $K_p^\times\times K_p^\times$-action.  
  
\end{asmp} 

Then   $Z_{\Omega,f_S} $ and $Z_{\Omega^{-1}} $ do not intersect.
So we have the decomposition   of the height pairing  into local heights over places of $\BQ$: 
 \begin{align}  H(f) =\sum_{v  }H(f)_v .\label{Hdec}\end{align}     
 Here the local height at $v$ comes from all places of $F$ over $v$.   
  
    \subsubsection{Integral CM cycles and desingularization}
   Let $\cY'\to \cX'$ be the  integral model of $Y(N)\to X(N)$ as in \S \ref{Kintmodel}. 
   Recall that a cycle on   $\cY'$ or $ \cX'$   is called    vertical if it is supported on the special fibers. It is called
   horizontal if it has no vertical component, equivalently, it is the Zariski closure of its generic fiber.
 For $t\in \BA_K^{\infty,\times}$,  
  let $ \cZ_{t}'$ be  the  integral CM cycle  on $\cY'$ as in
 \ref{ and the height pairing} and we   describe it as follows. 
 
  Let $p$ a prime numer and $F_p=F\otimes_{\BQ}\BQ_p$, which is a product of finite extensions of $\BQ_p$. Let $\cO_{F_p^\ur}$ be the product of the integer rings of the completed maximal unramified extensions of the components of $F_p$. We work on $\cO_{F_p^\ur}$ for later convenience.
  Let     $P_{t,0}$ (resp. $P'_{t,0}$)  be a reduction of the CM point  $P_{t}$   in    $\cX_{\cO_{F_p^\ur}}$ (resp., $\cX'_{\cO_{F_p^\ur}}$) .  Let $\cY'_{P'_{t,0}}$ be the fiber of $\cY'$ over $P'_{t,0}\in \cX'_{\cO_{F_p^\ur}}$. Then as we have seen in \S \ref{Kintmodel},
  the natural map $\cY'\to \cY $ gives the identification 
  \begin{equation}\label{CY'CE}
  \cY'_{P'_{t,0}} =\cE|_{P_{t,0}}^{2k-2} .
\end{equation}
  Let $Z_{t,0}=Z\lb \cE|_{P_{t,0}} \rb$ be the CM cycle as in
  \ref{CM cycles}.
 Then 
\begin{equation}\label{Local structure at finite places}
  \cZ_{t}'|_{\cY'_{\cO_{F_p^\ur}}}=\ol {Z_{t}}+Z_{t,0}\times D_t,
  \end{equation}  where $\ol {Z_{t}}$ is  the  horizontal part of $ \cZ_{t}'$   and $D_t $ is the base change to $\cO_{F_p^\ur}$ of a divisor  of $\cX'$
 supported on  the exceptional divisor of the desingularization $\cX'\to \cX_{\cO_F}$ at  $p$. 
 The special fiber  of $\ol {Z_{t}}$ sits in $\cY'_{P'_{t,0}} =\cE|_{P_{t,0}}^{2k-2}$, and  is identified as  $Z_{t,0}$.
 We have a similar description for $Z_{tg}$.
Then we have   \begin{align}     H(f)_ p
=   i(f)_p+j(f)_p ,  \label{Hdec2},
    \end{align}
 where  $i(f)_p$ comes from the intersections of horizontal cycles,
  $j(f)_p$ comes from the intersections between horizontal and vertical cycles.

 As   $\cX$ is smooth over $\BZ$ at  ordinary loci (i.e., the loci of points representing ordinary elliptic curves over finite fields,
 see \cite{KM} or \cite{Zha01}),
the desingularization of $\cX_{\cO_F}$ (to get $\cX'$) only happens at supersingular locus.  
   So vertical parts of integral CM cycles are supported on   supersingular loci.

  \subsubsection{Split $p$}\label{split0}
   Let $p$ be split in $K$. 
   Under Assumption \ref{asmp2},  the underlying CM points of $Z_{\Omega,f_S} $ and $Z_{\Omega^{-1}} $  do not have  any  common reduction (see  \cite[8.4]{YZZ} or \cite[9.5.1]{Qiu}). So  $i(f) _p=0$.
 Since  the CM points have ordinary reductions at such $p$ (see for example \cite{GZ}) while  the   vertical parts  of $\cZ_g'$
is supported over supersingular loci, 
  $j(f)_p=0$.

\subsubsection{Nonsplit $p$, $i$-part}\label{ipartm} 
 Let $p$ be nonsplit in $K$. 
 By definition and \eqref{eq:H1}, we have   
  \begin{align*}  
   i(f)_ p
=   &  \frac{ 1}{c_K}     \int_{ K^\times\bsl \BA_K^{\times}/\BA^{ \times}}\int_{  K^\times\bsl  \BA_K^{ \times}/\BR^\times}       \sum _{g\in  \GL_2(\BA^\infty)/U}    f   (t_1^{-1}t_2g )         (\ol {Z_{t_2g }} \cdot\ol {Z_{t_2}}) _p\Omega^{-1}( t_{2 })\Omega(t_{1 })   dt_{2 }d t_{1 }\cdot \log p
      \end{align*}
Here $\ol {Z_{t_2g }} $ and $\ol {Z_{t_2}}$
are horizontal parts of $ {\cZ'_{t_2g }} $  and ${\cZ'_{t_2}}$ in $\cY'_{\cO_{F_p^\ur}}$ and $ (\ol {Z_{t_2g }} \cdot\ol {Z_{t_2}}) _p$  is $1/[F:\BQ]$ times their intersection number.

 To proceed, we need some preparations for computing  $ (\ol {Z_{t_2g }} \cdot\ol {Z_{t_2}}) _p$.  
 Note that  the CM points have supersingular reductions at such $p$ (see for example \cite{GZ})
  Let $\cM_{U_p}$ be the supersingular Lubin-Tate formal deformation space of level $U_p$ \cite[9.3.2]{Qiu}, which is defined over the integer rings of the completed maximal unramified extension of $\BQ_p$. Then  the corresponding base change of the formal  neighborhood of the supersingular locus in $\cX$ is given as follows:
 $$  B(p)^\times \bsl \cM_{U_p}\times B(p)^\times(\BA ^{p,\infty}) /U^p.$$ 
  Let $\cM_{U_p}'$ be  the minimal desingularization of the base change of $\cM_{U_p}$ to   $\cO_{F_p^\ur}$.

Let $t\in K_p^\times $ act on $B(p)_p^\times$  by  right multiplication  by $t^{-1}$, and act on $\GL_2(\BQ_p)$  via  left multiplication by $t$.
 Then there is a natural map from the contracted product  $B(p)_p^\times\times ^{K_p^\times}\GL_2(\BQ_p)$ to  $\cM_{U_p}$ parametrizing CM liftings \cite[5.5]{Zha01}. 
So, we have a map from $B(p)_p^\times\times ^{K_p^\times}\GL_2(\BQ_p)$ to  $\cM'_{U_p}$ by strict transform.
Abusing notation, we  use $(\delta,g)\in B(p)_p^\times\times ^{K_p^\times}\GL_2(\BQ_p)$    to denote the corresponding CM point  in   $\cM'_{U_p}$.
The   multiplicity function $m_p(\delta,g)$    on $B(p)_p^\times\times ^{K_p^\times}\GL_2(\BQ_p)-\{(1,1)\}$  is defined as $1/[F:\BQ]$ times  the intersection multiplicity between $(\delta,g)$ and $(1,1)$
 (see   \cite[Definition 9.3.6]{Qiu} or   \cite[8.2.1]{YZZ}).   It is a smooth function.
 Note that $m_p$ in fact depends on $U_p$ though we do not indicate this dependence in the notation.

  We recall the following properties of the multiplicity function (see    \cite[9.3.3]{Qiu}).
  \begin{lem}\label{mnonv}
 (1) If $m_p(\delta,g)\neq0$, then $q(\delta)q( g)\in q(U_p)$.
 
 (2) We have $  m_p(\delta^{-1} ,g^{-1} )= m_p(\delta,g ).  $
 
 \end{lem}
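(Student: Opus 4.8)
The plan is to unwind the definition of the multiplicity function $m_p$ and reduce both assertions to structural facts about the supersingular Lubin--Tate space $\cS'_{U_p}$ and the CM-point parametrization $B(p)_p^\times\times_{K_p^\times}\GL_2(\BQ_p)\to\cS'_{U_p}$; these facts are the content of \cite[9.3]{Qiu} (compare \cite[8.2]{YZZ}), so the argument amounts to assembling them, and below I only indicate the geometric reasons.

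For (1), I would argue as follows. By definition $m_p(\delta,g)$ is $1/[F:\BQ]$ times the local intersection multiplicity of the horizontal divisors cut out by the CM points $(\delta,g)$ and $(1,1)$ on the regular formal scheme $\cS'_{U_p}$. Hence $m_p(\delta,g)\neq 0$ forces the two points to have the same reduction, in particular to lie on the same connected component of $\cS'_{U_p}$; since $\cS'_{U_p}$ is the strict transform of $\cS_{U_p}$, this is the same component of $\cS_{U_p}$. The set of geometrically connected components of the Lubin--Tate space at level $U_p$ is a torsor under $\BQ_p^\times/q(U_p)$ via the reduced-norm (determinant) map, and under the parametrization the component of $(\delta,g)$ is the class of $q(\delta)q(g)$ --- this class is well defined on the quotient by $K_p^\times$ since the norm of $t\in K_p^\times$ cancels between the two factors. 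As $(1,1)$ lies in the trivial class, nonvanishing forces $q(\delta)q(g)\in q(U_p)$. Equivalently, a quasi-isogeny between the two CM $p$-divisible groups compatible with polarizations and level structures can exist only under this norm condition.

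For (2), the key input is that the local intersection multiplicity of two horizontal divisors on a regular scheme is symmetric in the two divisors, so $m_p$ viewed as a function of the unordered pair of CM points is symmetric; moreover, by construction $m_p(\delta,g)$ depends only on the relative position of the pair $\bigl((\delta,g),(1,1)\bigr)$, because the left action of $B(p)_p^\times$ by quasi-isogenies together with the action of $\GL_2(\BQ_p)$ on the level data carries this pair to $\bigl((1,1),(\delta^{-1},g^{-1})\bigr)$ without changing the intersection number. Combining the two, $m_p(\delta,g)=m_p\bigl((1,1),(\delta^{-1},g^{-1})\bigr)=m_p(\delta^{-1},g^{-1})$.

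The hard part, and the part genuinely carried out in \cite[9.3]{Qiu} rather than reproved here, is making the $\GL_2(\BQ_p)$-translation precise: it does not preserve a fixed $\cS'_{U_p}$ but only relates the spaces for $U_p$ and a conjugate subgroup, so one must either pass to a cofinal tower of levels or verify directly that the conjugated level computes the same multiplicity; controlling the minimal desingularization $\cS_{U_p}\rightsquigarrow\cS'_{U_p}$ along this translation is the main technical obstacle. Granting the results of \cite[9.3]{Qiu}, both (1) and (2) follow as sketched.
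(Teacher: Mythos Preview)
Your proposal is correct and aligned with the paper's treatment: the paper does not give an independent proof here but simply records these properties with a citation to \cite[9.3.3]{Qiu}, and your sketch supplies the geometric reasons (component structure via reduced norm for (1), symmetry of intersection plus the $B(p)_p^\times\times\GL_2(\BQ_p)$-translation for (2)) that underlie that reference. Your caveat about the level-change under $\GL_2(\BQ_p)$-translation is exactly the technical point handled in \cite[9.3]{Qiu}, so deferring to it is appropriate.
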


As in \S \ref{Hecke action}, first consider  $g\in\BG$. 
         Let $\psi_g:\cE|_{P_{t_2g}}=E|_{P_{t_2g}}\to \cE|_{P_{t_2}}=E|_{P_{t_2}}$  be the unique morphism that fits into the right downward arrow in \eqref{gisog}  with $A=\cE|_{P_{t_2g}}$ there.
(Here we use $g$ rather than $g^{-1}$ due to that $P_{t_2g}$ is defined ``via the left action by $g^{-1}$", see \S \ref{CM cycles}.) Let $\psi_{g,0}$ be its reduction. 
Consider
$$\Hom(\cE|_{P_{t_2,0}},\cE|_{P_{t_2g,0}})_\BQ \cong \End(\cE|_{P_{t_2,0}} )_\BQ \cong B(p)$$
where the first isomorphism is $\phi\mapsto \psi_{g,0} \circ \phi$ and the second is well-known.
 For $\delta\in   B(p)^\times$, let $\phi(\delta)\in \Hom(\cE|_{P_{t_2,0}},\cE|_{P_{t_2g,0}})_\BQ$ be its preimage.  
  By    \cite[Proposition 3.3.1]{Zha97} and \cite[Lemma 9.3.9]{Qiu}, if  $P_{t_2g}\neq P_{t_2}$, then 
 \begin{align} (\ol {Z_{t_2g }} \cdot\ol {Z_{t_2}})_p=(-1)^{k} \sum_{\delta\in   B(p)^\times} m_p(t_{2,p}^{-1}\delta t_{2,p},g_p ^{-1}) 1_{ U^p}((t_2^{p,\infty} g ^p)^{-1}  \delta t_2 ^{p,\infty}) \lb \phi(\delta)^{k-1,*} Z_{t_2g,0}\cdot Z_{t_2,0}\rb_{\cY'_{P_{t_2,0}}}. \label{011181}
    \end{align}  Here   $\lb \phi(\delta)^{k-1,*} Z_{t_2g,0}\cdot Z_{t_2,0}\rb_{\cY'_{P_{t_2,0}}}$ is the intersection number on $\cY'_{P_{t_2,0}}$ (understood by the isomorphism \eqref {CY'CE}).
 By  \cite[4.5.3]{Zha97} (for  the CM cycle $Z_{t_2,0}$, $\sigma=\id$ and $\fa=(1)$ in the notations in loc. cit., and the proof is purely over $\ol{\BF_p}$),
we have 
\begin{equation}(\delta^{k-1,*} Z_{t_2,0}\cdot Z_{t_2,0})_{\cY'_{P_{t_2,0}}}= \lb-q(\delta)\rb ^{k-1}P_{k-1}(1-2\lambda(\delta )).\label{ZZtt}\end{equation}
Then since $\psi_{g,0}^{k-1,*} Z_{t_2,0}=|q(g)|^{-(k-1)}Z_{t_2g,0}$ (the reduction of the corresponding equation on the generic fiber), 
$$(\phi(\delta)^{k-1,*} Z_{t_2g,0}\cdot Z_{t_2,0})_{\cY'_{P_{t_2,0}}}= \lb-q(\delta)\cdot |q(g)| \rb ^{k-1}P_{k-1}(1-2\lambda(\delta )) $$

By Lemma \ref{mnonv},  the nonzero contribution  in \eqref{011181} is only from $\delta $ such that  $q(\delta)\in q(t_2^{ \infty} g    t_2 ^{\infty,-1})q(U)$.   Since $B(p)_\infty$ is division,   $q(\delta)>0$. So $q(\delta)=|q(g)|^{-1}$.
Thus, we have
   \begin{align} \begin{split} 
   i(f)_ p
=   &  \frac{-1}{  c_K}     \int_{ K^\times\bsl \BA_K^{\times}/\BA^{ \times}}\int_{  K^\times\bsl  \BA_K^{ \times}/\BR^\times}       \sum _{g\in  \GL_2(\BA^\infty)/U}    f   (g )  \sum_{\delta\in   B( p) ^\times }
     \\   &        P_{k-1}(1-2\lambda(\delta ))  m_p( t_{1,p } ^{-1}  \delta t_{2,p }  ,   g_p^{-1} )  1_{ U^p}((t_1^{p,\infty} g ^p)^{-1}  \delta t_2 ^{p,\infty})   \Omega^{-1}( t_{2 })\Omega(t_{1 })   dt_{2 }d t_{1 }\cdot \log p
 \label{11181}
  \end{split}     \end{align}
  Note that  $m_p( \cdot   ,   g_p^{-1} ) $  only depends on $g_p U_p$ by  Lemma \ref{mnonv} (2). So the sum  over $g$ is well-defined.
  

 
Let $f_\infty$ be  as in \eqref{oinf1}. Changing the order of summation in \eqref{11181}, we formally have 
 \begin{align} \begin{split}
   i(f)_ p
     =&  \frac{-1}{c_K} \sum_{\delta\in K^\times\bsl B( p)_\reg^\times/K^\times} O(\delta,f^p\otimes f_\infty)  \\ 
      &     \int_{ K^\times_ p/\BQ_ p^\times}\int_{ K^\times_ p}    \sum _{g\in  \GL_2(\BQ_p)/U_ p} f_{ p}(g )    m_p( t_{1 } ^{-1}  \delta t_{2 }  ,   g^{-1} )     \Omega_ p^{-1}( t_{2 })\Omega_ p(t_{1 })   dt_{2 }d t_{1 }\cdot \log p\label{1120}.
  \end{split}     \end{align}
Here we use the orbital integrals of $ f_\infty$ computed in \eqref{oinf}. Moreover, the equality  is verified by   Assumption \ref{asmp2} and 
 Fubini's theorem, see \cite[Lemma 9.3.10]{Qiu}

    \subsubsection{Nonsplit $p$, $j$-part}\label{nonj}
Let $p\in S$ which is nonsplit in $K$.
Let  $\cP$ be   the union of the exceptional divisors of $\cX'$ over $p$.
Let $\cV$ be 
the exceptional divisor of $  \cM_{U_p}'$, which  inherits  a $B(p)_p^\times$-action from the $B(p)_p^\times$-action on $ \cM_{U_p}'$. Then      \begin{equation} \cP \cong B(p)^\times\bsl   \cV \times \GL_2(\BA^{p,\infty} )/ U^p.\label{vsing}\end{equation} 
For a  vertical divisor  $ C $ of  $  \cM_{U_p}'$ supported  on $ \cV$ and $g\in \GL_2(\BA^{ p,\infty })$, let $[C,g]$ be the corresponding divisor of $\cP$   via \eqref{vsing}. 
Define a function $l_{ C}$  on $ B(p)_p^\times\times ^{K_p^\times}\GL_2(\BQ_p)$ 
  by  letting $l_{ C}(\delta,g)$ be the  intersection number  of $C$ and   $(\delta,g)$   in $  \cM_{U_p}'$.  
 
 There is a natural map  $  \cM_{U_p}\to \BZ$ given by the degree of the quasi-isogeny in the definition of  the deformation space $\cM_{U_p}$ \cite[9.3.2]{Qiu}. 
 Thus we have a map $  \cM'_{U_p}\to \BZ$.
It induces    a   map    $ \cV\to \BZ $.
  \begin{lem}[{\cite[Lemma 9.3.19]{Qiu}}]  \label{9123}The function $l_C$ satisfies the following properties:
  \begin{itemize}
 
  \item[(1)] for $h \in B(p)_p^\times\times ^{K_p^\times}\GL_2(\BQ_p)$, $l_{C}(h)\neq 0$ 
  only if  the image of the support of
  $  C$ in $\BZ$  by the map $       \cV\to \BZ$  contains 
  the image of $h$  by $B(p)_p^\times\times ^{K_p^\times}\GL_2(\BQ_p) \to \BZ$;
  
 \item[(2)]  for $b\in B(p)_p^\times $ and $h \in B(p)_p^\times\times ^{K_p^\times}\GL_2(\BQ_p) $, $l_{bC}(bh)=l_C(h)$; 
  
 \item[(3)]  $l_C$    is smooth.
\end{itemize}
     \end{lem}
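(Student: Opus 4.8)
The plan is to deduce all three properties from the geometry of the desingularized supersingular Lubin--Tate space $\cS'_{U_p}$, as in \cite[Lemma 9.3.15]{Qiu}; here is the outline. The key structure is the degree map $\cS_{U_p}\to\BZ$ recording the degree of the universal quasi-isogeny: it is locally constant, it induces the maps $\cS'_{U_p}\to\BZ$ and $\cV\to\BZ$ used in the statement, and the map $B(p)_p^\times\times_{K_p^\times}\GL_2(\BQ_p)\to\BZ$ appearing in (1) is by definition the composite $B(p)_p^\times\times_{K_p^\times}\GL_2(\BQ_p)\to\cS'_{U_p}\to\BZ$ sending a CM point to its degree.

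Property (1) is then immediate: if the image of $h$ in $\BZ$ does not lie in the image of $\supp C$ under $\cV\to\BZ$, then the Zariski closure of the CM point $h$ and the divisor $C$ sit in disjoint open-and-closed subschemes of $\cS'_{U_p}$, namely distinct fibers of the degree map, so they cannot meet and $l_C(h)=0$. For (2), the $B(p)_p^\times$-action on $\cS_{U_p}$ is by automorphisms of the formal scheme, and since the minimal desingularization is canonical it lifts to an action on $\cS'_{U_p}$ that preserves $\cV$ and carries the CM point labelled $h$ to the one labelled $bh$; an automorphism of a scheme preserves intersection numbers of cycles, so $(bC\cdot bh)=(C\cdot h)$ in $\cS'_{U_p}$, which after unwinding the definition of $l$ is exactly $l_{bC}(bh)=l_C(h)$.

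For (3) I would argue local constancy directly: fix $h_0=(\delta_0,g_0)$, and let $h$ range over a small $p$-adic neighbourhood of $h_0$ in $B(p)_p^\times\times_{K_p^\times}\GL_2(\BQ_p)$. The Zariski closure of the CM point $h$ in $\cS'_{U_p}$ is a regular horizontal curve meeting the special fiber, and in particular meeting $\cV$ properly; as $h$ deforms $p$-adically these closures vary continuously, and since $C$ is a fixed divisor with compact support, the intersection number $(C\cdot h)$ --- a sum of local multiplicities at the finitely many points of $(\supp C)\cap\cV$ through which the curve passes --- takes discrete values that vary continuously with $h$, hence is constant near $h_0$, so $l_C$ is smooth. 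The real work here is (3): one must know precisely that the closure of a CM point is a regular horizontal curve in $\cS'_{U_p}$ and that the way it cuts the exceptional divisor varies continuously with the CM data, which rests on the explicit description of the supersingular Lubin--Tate space and of its minimal desingularization carried out in \cite{Qiu}; once that geometric picture is in place, (1) and (2) are formal.
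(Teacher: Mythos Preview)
The paper does not give its own proof of this lemma: it is quoted directly from \cite[Lemma 9.3.15]{Qiu} and used as a black box. Your outline is therefore not being compared against anything in this paper, but it is a reasonable sketch of how the proof in \cite{Qiu} goes. Parts (1) and (2) are indeed formal, exactly as you say: (1) because the degree map decomposes $\cS'_{U_p}$ into disjoint open pieces, and (2) because the $B(p)_p^\times$-action lifts to the minimal desingularization by functoriality and automorphisms preserve intersection numbers.

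For (3), your heuristic ``discrete-valued and varies continuously, hence locally constant'' is the right intuition but, as you acknowledge, is not a proof on its own. One concrete way to make it precise: on the $\GL_2(\BQ_p)$-side the map to $\cS'_{U_p}$ already factors through $\GL_2(\BQ_p)/U_p$, so local constancy in $g$ is automatic; on the $B(p)_p^\times$-side one uses that the strict transform of a CM lifting meets the exceptional divisor $\cV$ at a point determined by finitely many Taylor coefficients of the corresponding formal deformation, and those coefficients depend on $\delta$ through a locally constant recipe. This is what the explicit local description in \cite{Qiu} supplies. So your assessment that (1) and (2) are formal once the geometry is set up, and that (3) is where the content lies, is accurate.
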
 
    
     Let $D_t$ be as in \eqref{Local structure at finite places}.
Write $D_1$ as $[C,1]$ for a    divisor  $ C $ of  $  \cM_{U_p}'$ supported  on $ \cV$. Then $D_{t_2}=[t_{2_p}C,t_2^p] $.
  By Lemma \ref{9123} (2),  a similar process of unfolding  as in \eqref{11181}
 and refolding
as in \eqref {1120}, we have
  \begin{align} \begin{split}
   j(f)_ p
      = & \frac{-1}{c_K} \sum_{\delta\in K^\times\bsl B( p)_\reg^\times/K^\times} O(\delta,f^p f_\infty)  \\ 
      &     \int_{ K^\times_ p/\BQ_ p^\times}\int_{ K^\times_ p}    \sum _{g\in  \GL_2(\BQ_p)/U_ p}f_{ p}(g )    l_C( t_{1 } ^{-1}  \delta t_{2 }  ,   g^{-1} )     \Omega_ p^{-1}( t_{2 })\Omega_ p(t_{1 })   dt_{2 }d t_{1 }\cdot \log p\label{1120j}.
  \end{split}     \end{align}

     \subsubsection{Test functions}\label{Mps} 
  For $p\in S$,  besides  Assumption \ref
  {asmp2}, further 
  assume  that  
  $q(\supp f_p )\subset \Nm(  K_p^{ \times}),p\in S$. Then by Lemma \ref{qcond}, we can use Lemma \ref{Xue} (1).
     Let $f'_p $  be a transfer of $f_p$ given by Lemma \ref{Xue} (1).
     
     For $v=\infty$, $q(\supp f_\infty )\subset \Nm(  K_\infty^{ \times}) $ holds.
   So  we can use Proposition   \ref{transfer} (for now Proposition   \ref{transfer} (1) is enough, and we will specify $f'_\infty$ in \S \ref{locinf}).
 
 Assume that $f^S$ has an unramified transfer  $ f'^S\in \cS_c\lb\GL_2\lb\BA_K^{S,\infty}\rb\rb$  given by   the fundamental lemma \cite[Proposition 7.3.1]{Qiu} (see also   \cite[Section 4]{Jac87}  \cite[Proposition 3]{JN}).
Explicitly, let $\cH^S_E$ be unramified-outside-$S$ Hecke algebra of $\GL_{2,E}$ (defined in the same way as $\cH^S$).
     So we have the base change homomorphism $\bc:\cH_E^S\to \cH^S$
 (see \cite[Section 1]{Lan}).  
  Then $f^S$ is a multiple of  $\bc(f'^S)$, depending on the measures.
  Moreover, as  $q(\supp \bc(f'^S )\subset \Nm(  \BA_K^{S\cup{\infty}, \times}) ,$ 
  $q(\supp  (f  )\subset \Nm(  \BA_K^{ {\infty}, \times}) .$ 
  
   Let $$f'=\bigotimes_{p\in S} f'_p\bigotimes f'_\infty\bigotimes f'^S.$$
   We will further specify the consequences of this choice of test function in the later paragraphs.
     \subsubsection{Matching for $p\not\in S$} 

  The full arithmetic fundamental lemma (proved in \cite[Proposition 10.2.3]{Qiu})
equates  $$-2\int_{ K^\times_ p/\BQ_ p^\times}\int_{ K^\times_ p}    \sum _{g\in  \GL_2(\BQ_p)/U_ p} f_{ p}(g )    m_p( t_{1 } ^{-1}  \delta t_{2 }  ,   g^{-1} )     \Omega_ p^{-1}( t_{2 })\Omega_ p(t_{1 })   dt_{2 }d t_{1 }\cdot \log p$$
  with the derivative $O'(0,\Phi_{f'_p})$ of  the local orbital integral  for ${f'_p}$. 
Combining it with \eqref{1120}, we have the following lemma. (Note that   the $j(f)_p=0$  here.)
 \begin{lem} \label{111}  For a finite place $p\not \in S$ and nonsplit, we have 
$$2  H(f)_ p=\frac{1}{c_K}O'(0, f')_p,$$
where the right hand side is defined  in Definition \ref{decpuredef}.
  \end{lem}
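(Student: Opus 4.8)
The plan is to combine the explicit formula \eqref{1120} for the local height at $p$ with the full arithmetic fundamental lemma \cite[Proposition 10.2.1]{Qiu}, matching the two sides orbit by orbit.

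First I would dispose of the vertical contribution. Since $p\notin S$, the prime $p$ does not divide $N$ and $F/\BQ$ is unramified above $p$, so $\cX_{\cO_F}$ is already regular above $p$ and no blow-up occurs there when forming $\cX'$ (see \ref{Kintmodel}); hence the divisor $D_t$ of \ref{Local structure at finite places} is trivial above $p$, the vertical cycles play no role, and $H(f)_p=i(f)_p$. Writing $\Theta_p(\delta)$ for the $p$-local integral
\[
\Theta_p(\delta)=\int_{K_p^\times/\BQ_p^\times}\int_{K_p^\times}\ \sum_{g\in\GL_2(\BQ_p)/U_p}f_p(g)\,m_p(t_1^{-1}\delta t_2,g^{-1})\,\Omega_p^{-1}(t_2)\,\Omega_p(t_1)\,dt_2\,dt_1
\]
appearing in \eqref{1120}, that formula reads
\[
2H(f)_p=\frac{-2\log p}{c_K}\sum_{\delta\in K^\times\bsl B(p)^\times_\reg/K^\times}O(\delta,f^p\otimes f_\infty)\,\Theta_p(\delta).
\]

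Next I would unwind the right-hand side. By Definition \ref{decpuredef} and Lemma \ref{decpure},
\[
\frac1{c_K}O'(0,f')_p=\frac1{c_K}\sum_{x\in\inv(B(p)^\times_\reg)}O'(0,x,\Phi_p)\,O(x,f^p),
\]
where $\Phi_p=\Phi_{f'_p}$, with $f'_p$ the spherical transfer of $f_p$ supplied by the fundamental lemma \cite[Proposition 7.3.1]{Qiu}, and $O(x,f^p)$ is the orbital integral of the away-from-$p$ part of $f$ regarded on $B(p)^\times(\BA^p)$; because $\BB$ and $B(p)$ agree away from $p$, this away-from-$p$ part is literally the function $f^p\otimes f_\infty$ occurring in \eqref{1120}. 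The bijection $\inv$ of \ref{matchorb} identifies the index set $K^\times\bsl B(p)^\times_\reg/K^\times$ of the first sum with $\inv(B(p)^\times_\reg)$, and $O(\delta,f^p\otimes f_\infty)=O(\inv(\delta),f^p)$ by the definition of $O(x,\cdot)$; in particular the ramified factors $f_S$ and the archimedean factor $f_\infty$ enter both sides only through this single orbital integral, and need no further comparison.

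It therefore remains to establish, for each regular orbit and with $x=\inv(\delta)$, the purely local identity
\[
O'(0,x,\Phi_p)=-2\log p\cdot\Theta_p(\delta).
\]
This is precisely the content of the full arithmetic fundamental lemma \cite[Proposition 10.2.1]{Qiu}: the left-hand side is the first-order term of the $\GL_2(\BQ_p)\times\GL_2(K_p)$ orbital integral attached to $\Phi_p$ (it vanishes at $s=0$ because this orbit does not arise from the split quaternion algebra at $p$), while the right-hand side is the $f_p$-weighted arithmetic intersection number of CM points on the supersingular Lubin--Tate deformation space $\cS_{U_p}$, the factor $\log p$ turning an intersection length over $\cO_{F_p^\ur}$ into a local height and the constant $2$ together with the sign falling out of that computation; here one uses the measure normalization $\Vol(\GL_2(\BZ_p))=1$ fixed in \ref{Height and derivatives1}. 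Substituting this identity into the two displays above yields $2H(f)_p=\frac1{c_K}O'(0,f')_p$.

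The genuine input is the cited arithmetic fundamental lemma; the rest is bookkeeping. I expect the part needing the most care to be the matching of constants — the factor $2$, the sign, the power of $\log p$, the constant $c_K$, and the measure on $\GL_2(\BQ_p)$ — together with the check, already built into the derivation of \eqref{1120}, that the double-coset sum over $\delta$ and the invariant sum over $x$ line up term for term under $\inv$.
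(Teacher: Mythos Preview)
Your proposal is correct and follows exactly the paper's approach: the paper's own proof is the single sentence that \eqref{1120} together with the full arithmetic fundamental lemma \cite[Proposition 10.2.1]{Qiu} give the result, noting in passing that $j(f)_p=0$. You have simply unpacked this into its constituent steps --- the vanishing of the vertical part, the orbit-by-orbit matching under $\inv$, and the invocation of the arithmetic fundamental lemma for the local identity at $p$.
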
 
  
  \subsubsection{Coherence for $p\in S$ }
   For $p\in S$, we do not need anything like the arithmetic fundamental lemma, as the local terms are ``coherent" (a term borrowed from \cite{YZZ}, whose meaning is indicated in the proof of the lemma below).
  \begin{lem}\label{112}
 For  $p \in S$ and nonsplit,  there exists $f_{(p)} \in \cS_c(B(p)_p)$ such that 
  $$ 2H(f)_ p-\frac{1}{c_K}O'(0, f')_p=  O(f_{S-\{p\}}\otimes f_{(p)} \otimes f^S\otimes f_\infty).$$
  \end{lem}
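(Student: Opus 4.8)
The plan is to add the explicit expressions \eqref{1120} for $i(f)_p$ and \eqref{1120j} for $j(f)_p$, recognize the combined local-at-$p$ factor as a function on $\ep\Nm(K_p^\times)-\{1\}$ (with $\ep$ the invariant attached to $B(p)_p$) of the type characterized in Proposition \ref{Proposition 2.4, Proposition 3.3Jac862}, and then realize $2H(f)_p$ as the global orbital integral on $B(p)^\times$ of a pure tensor having this local component at $p$. Since $H(f)_p=i(f)_p+j(f)_p$, combining the two displays gives
\[2H(f)_p=\frac{-2}{c_K}\sum_{\delta\in K^\times\bsl B(p)_{\reg}^\times/K^\times} O(\delta,f^p\otimes f_\infty)\,\Psi(\inv(\delta)),\]
where $O(\delta,f^p\otimes f_\infty)=\prod_{v\neq p}O(\delta,f_v)$ and $\Psi(x):=\log p\cdot\int_{K_p^\times/\BQ_p^\times}\int_{K_p^\times}\sum_{g\in\GL_2(\BQ_p)/U_p}f_p(g)\,(m_p+l_C)(t_1^{-1}\delta(x)t_2,g^{-1})\,\Omega_p^{-1}(t_2)\Omega_p(t_1)\,dt_2\,dt_1$, a function of $x\in\ep\Nm(K_p^\times)-\{1\}$.

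The key step is to verify that $\phi:=\tfrac{-2}{c_K}\Psi$ satisfies conditions (1)--(4) of Proposition \ref{Proposition 2.4, Proposition 3.3Jac862}, which then produces $f_{(p)}\in\cS(B(p)_p^\times)\subset\cS(B(p)_p)$ with $O(x,f_{(p)})=\phi(x)$ for all $x$. Condition (2) is automatic: $B(p)_p$ being division, $\ep\notin\Nm(K_p^\times)$, so $\ep\Nm(K_p^\times)$ is closed in $\BQ_p^\times$ and bounded away from $1$. For (1), note that by the $q$-support conditions of Lemma \ref{mnonv}(1) and Lemma \ref{9123}(1) together with the compactness of $\supp f_p$, the variables $(t_1,t_2,g)$ in $\Psi$ effectively range over a compact set; combined with the smoothness of $m_p$ and $l_C$ (Lemma \ref{9123}(3)), $\Psi$ is locally constant. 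For (3) and (4) we use Assumption \ref{asmp2}: since $\supp f_p\subset\GL_2(\BQ_p)_{\reg}$ and $1\notin\GL_2(\BQ_p)_{\reg}$ (as $\inv(1)=0$), the element $g^{-1}$ lies in a compact subset of $\GL_2(\BQ_p)$ bounded away from $K_p^\times$, so the argument $(t_1^{-1}\delta(x)t_2,g^{-1})$ stays uniformly away from the point $(1,1)$ at which $m_p$ is singular, and $m_p+l_C$ is smooth and bounded on the relevant region even as $\delta(x)$ degenerates. As $x\to 0$ one has $\delta(x)=1+bj\to 1$, so $\Psi$ agrees near $0$ with a locally constant function, giving (3). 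As $x\to\infty$, write $\delta(x)=1+bj$ with $x=\ep\, b\bar b$ and substitute $t_1\mapsto t_1 b$: the argument becomes $(t_1^{-1}(b^{-1}+j)t_2,g^{-1})$ with $b^{-1}+j\to j$, while the character contributes a factor $\Omega_p(b)$, so $\phi(x)=\Omega_p(b)A_2(x)$ with $A_2$ locally constant, which is (4).

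With $f_{(p)}$ in hand, set $f'':=f_{S-\{p\}}\otimes f_{(p)}\otimes f^S\otimes f_\infty$, a pure tensor on $B(p)^\times(\BA)$. It is supported on the regular locus---it suffices that $f_{S-\{p\}}$ is, which holds by Assumption \ref{asmp2} because $B(p)_q=M_2(\BQ_q)$ for $q\in S-\{p\}$---so Assumption \ref{freg'} holds and the distribution $O$ of \ref{quaternion side} decomposes over orbits. Using the bijection $\inv$ of \ref{matchorb} to identify the two index sets,
\[O(f'')=\sum_{\delta\in K^\times\bsl B(p)_{\reg}^\times/K^\times}\Big(\prod_{v\neq p}O(\delta,f_v)\Big)\,O(\delta_p,f_{(p)})=\sum_{\delta}O(\delta,f^p\otimes f_\infty)\,\phi(\inv(\delta))=2H(f)_p,\]
which is the asserted identity.

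The main obstacle is the verification of conditions (3) and (4): one must pin down the behavior of the combined multiplicity function $m_p+l_C$ near the degenerate loci $\inv=0$ and $\inv=\infty$, where Assumption \ref{asmp2} is essential for keeping the argument clear of the singularity of $m_p$ at $(1,1)$, and the character twist in (4) must be extracted by the change of variables above. The remaining steps follow the pattern of \cite{Qiu}.
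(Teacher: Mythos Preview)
Your argument is correct, but it takes a detour that the paper avoids. The paper simply \emph{writes down} the function
\[
f_{(p)}(\delta)\;=\;c\cdot\log p\sum_{g\in\GL_2(\BQ_p)/U_p} f_p(g)\,(m_p+l_C)(\delta,g^{-1})
\]
on $B(p)_p^\times$ (for a suitable constant $c$), observes that it is well-defined because Assumption~\ref{asmp2} keeps $g^{-1}$ away from $K_p^\times$ and hence $(\delta,g^{-1})$ away from the singular point $(1,1)$, and checks directly that it lies in $\cS(B(p)_p)$ using the support constraints of Lemma~\ref{mnonv}(1) and Lemma~\ref{9123}(1) (which force $q(\delta)$ into a compact subset of $\BQ_p^\times$, hence $\delta$ into a compact subset of the division algebra $B(p)_p^\times$). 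The identity then drops out of \eqref{1120} and \eqref{1120j} by inspection.

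Your route instead forgets the function, retains only its orbital integral $\phi$, and reconstructs some $f_{(p)}$ from $\phi$ via the characterization in Proposition~\ref{Proposition 2.4, Proposition 3.3Jac862}. This works, and your verifications of conditions (1)--(4) are essentially correct (in particular your use of Assumption~\ref{asmp2} to stay away from the singularity of $m_p$, and the substitution $t_1\mapsto t_1 b$ to extract the $\Omega_p(b)$ twist at infinity). But it is extra labor: the four conditions you check are precisely the automatic properties of the orbital integral of the explicit $f_{(p)}$ above, so you are proving existence abstractly when the witness is already in hand. The paper's approach also has the advantage of giving an explicit formula for $f_{(p)}$, which could be useful if one later needs to track constants or compare with other local terms.
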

  \begin{proof} 
  Choose  $e \in  \cS_c(B(p)_p) $  by Lemma \ref{Xue} (2) so that  $O(x,e)=O'(0,x,\Phi_{f'_p})$. 
Let $$f_{p} (\delta)=  -\frac{1}{c_K}\lb 2\sum _{g\in  \GL_2(\BQ_p)/U_ p}f_{ p}(g )  \lb  ( m_p+l_C)( \delta ,   g^{-1} ) \rb
  +e(\delta)\rb$$
  which is  well-defined  on $B(p)_p$  by Assumption \ref{asmp2}. 
    It is automatically in $ \cS_c(B(p)_p)$  (so is ``coherent") by Lemma \ref{mnonv} and Lemma \ref{9123}.
    Then the lemma follows from \eqref{1120} and \eqref{1120j}.
  \end{proof}
 \subsubsection{Local height at $\infty$}\label{locinf}
 The comparison at $\infty$ is  similar to the comparison at $p$ nonsplit in $K$  and not in $S$. Namely, we first write   $H(f)_ \infty $
in a  form like a sum of orbital integrals (with the local orbital integral at $\infty$ replaced by  the Legendre function).
Besides,  we need a truncation process.
 
 Recall the complex uniformization \eqref{cplx} of the noncuspidal locus $X(N)^\circ$ of $X(N)$:
 $$X(N)^\circ(\BC)\cong B( \infty)_{>0} \bsl  \BH\times B(\infty)(\BA^\infty)^\times /U(N)$$
 Here $B(\infty) $ is the matrix algebra  $ \RM_{2,\BQ}$,    $B(\infty)_{>0}\subset B(\infty)$ consists of elements with positive norms (i.e. positive determinants), and $ \BH\subset \BC$ is the upper half plane. 
For $(z_1,g_1),(z_2,g_2)\in X(N)^\circ(\BC)$, let
 \begin{align} G_k((z_1,g_1),(z_2,g_2))=-\sum_{\delta \in B( \infty) _{>0}}Q_{k-1}(d(z_1,\delta z_2))1_{U}(g_1^{-1}\delta g_2),\label{Hdecinf}\end{align}
where $d$ is the hyperbolic distance.  
The sum is absolutely convergent by \eqref{Qinfty}.
If $(z_1,g_1),(z_2,g_2)$ are  distinct CM points, the local height pairing between the CM cycles over them 
 defined   as in \S \ref { and the height pairing}  is $G_k((z_1,g_1),(z_2,zg_2))$, see  \cite[Proposition 3.4.1]{Zha97}.
 
To use  \eqref{Hdecinf}  to compute  $  H(f)_ \infty$,
  we need more details about the terms on the right hand side of \eqref{Hdecinf}. 

 First, the hyperbolic distance has the following formula.  
Recall that $B(\infty) $ is the matrix algebra $ \RM_{2,\BQ}$, and we have fixed an embedding $K\incl   \RM_{2,\BQ}$ of $\BQ$-algebras to define CM points and cycles in \ref{CM cycles}.   Let  $z_0\in \BH$  be the unique  fixed point under the action of $K_\infty^\times\subset B( \infty) _{\infty,>0}$. Let  $\delta\in B( \infty) _{\infty,>0}\cap  B(\infty)_{\infty,\reg}^\times$.
  The hyperbolic distance between   $z_0\in \BH$ and $\delta z_0$, 
 is      $$d(z_0,\delta z_0)=1-2\lambda(\delta),$$
 where $\lambda$ is defined with respect to the embedding  $K_\infty^\times\subset B( \infty) _{\infty}$
 see   \cite[8.1.1]{YZZ}.

Second,  we truncate  $Q_{k-1}$, since $Q _{k-1} $ does not vanish  near infinity (see \eqref{Qinfty}).    Note that $$\lambda\lb B( \infty) _{\infty,>0}\cap  B(\infty)_{\infty,\reg}^\times\rb =(-\infty,0)$$ so that $1-2\lambda$ takes values in $(1,\infty)$.
   Let $\phi$ be a smooth function on $[1,\infty)$  with compact support such that $\phi([1,2])=\{1\}$. Let 
   $Q^\phi_{k-1}=Q_{k-1} \phi$, and  $$G_k^\phi((z_1,g_1),(z_2,g_2)) =-\sum_{\delta \in B( \infty) _{>0}}Q^\phi_{k-1}(d(z_1,\delta z_2))1_{U}(g_1^{-1}\delta g_2).$$
For $Z=\sum _{i=1}^n a_nz_n$ be a divisor on $ X(N)^\circ(\BC)$, let $$D_Z(x)=\sum _{i=1}^n a_n \lb  G_k(x,z_n) -G_k^\phi(x,z_n)\rb.$$
   Then $D_Z$ is a smooth bounded function on $X(N)^\circ(\BC)$ by \eqref{Qinfty}.

   Now we  can use  \eqref{Hdecinf}  to compute  $  H(f)_ \infty$.  
Under the complex uniformization (see  Remark \ref
{opposite}) $$
 X_{\infty}^\circ(\BC)\cong \GL_2(\BQ)_{>0} \bsl  \BH\times \GL_2 (\BA^\infty)^\times,
$$
we may take 
$P_0\in X^\circ_\infty(\ol \BQ)^{K^\times}$ to be $(z_0,1)$. Then 
under the complex uniformization    \eqref{cplx}  of $ X(N)^\circ(\BC)$, we have  $  P_{g}=(z_0,g)$. 
 By     a similar process of unfolding  as in \eqref{11181} 
(now we use the first equation of  \eqref{eq:H1} only)
 and refolding as in \eqref {1120}, we have
  \begin{equation}\label{Hff}
   \begin{split}
  H(f)_ \infty =   &  \frac{ 1}{c_K}     \int_{ K^\times\bsl \BA_K^{\times}/\BA^{ \times}}\int_{  K^\times\bsl  \BA_K^{ \times}/\BR^\times}       \sum _{g\in  \GL_2(\BA^\infty)/U}    f   (g )       G_k( P_{t_1g},P_{t_2})   \Omega^{-1}( t_{2 })\Omega(t_{1 })   dt_{2 }d t_{1 } 
  \\
     = & \frac{-1}{c_K} \sum_{\delta\in K^\times\bsl \lb B( \infty)_\reg^\times\cap B( \infty)_{>0}\rb/K^\times} O(\delta,f )      \Vol(\BC^\times/\BR^\times)^2 Q _{k-1}^\phi\lb d(z_0,\delta (x)z_0)\rb\\
   &+  \frac{1}{ \Vol(K^\times\bsl \BA_K^{ \times } /\BA^{ \times } )\Vol(U) }\int_{ K^\times\bsl \BA_K^{\times}/\BA^{ \times}}     \int _{ \GL_2(\BA^\infty)}    f   (g )      D_{Z_\Omega}  ( P_{t_1g} )  dg 
   \Omega(t_{1 })    d t_{1 } .
  \end{split}     \end{equation}

 Finally, we choose  $f'_\infty $ and compare $   Q _{k-1}^\phi\lb d(z_0,\delta (x)z_0)\rb$ to $O'(0,x,\Phi_{f'_\infty})$.
  Since    $ 1-2\lambda =\frac{ 1+\inv}{1-\inv }   $ 	
 (see \S \ref{Notations}),  
  $\lambda(\delta)\in (-\infty,0)$ corresponds to $\inv(\delta)\in ( 0,1)$.
  So condition (a) in Proposition   \ref{transfer}  (3) holds. 
  By the truncation above,  condition (b) in Proposition   \ref{transfer}  (3) holds. 
  Choose  $f'_\infty $ by Proposition   \ref{transfer} (3), and  let us look at its consequence. 
  Recall that  $f_\infty
  $ satisfies \eqref{oinf}. 
By  \eqref{PQrelation}  and Proposition   \ref{transfer}  (3),
    \begin{align}\label{Hff1}2{\Vol(\BC^\times/\BR^\times)^2}Q _{k-1}^\phi\lb d(z_0,\delta (x)z_0)\rb=-O'(0,x,\Phi_{f'_\infty})+O(x,f_{ (\infty)})   \end{align}
  and  where  $ f_{(\infty)} \in \cS_c(B(\infty)_\infty)$ is     provided by  Proposition   \ref{transfer}  (3).

  By \eqref{Hff} and \eqref{Hff1},  we have  the following lemma.
   \begin{lem}\label{113}
  Let $f$ and $ f' $ be  as in \S \ref{Mps}.  
 Then  there exists $f_{(\infty)} \in \cS_c(B(\infty)_\infty)$ and an automorphic function on $ X_{\infty}^\circ(\BC)$  such that 
  $$  2 H(f )_ \infty=\frac{1}{c_K}O'(0, f')_\infty+  O(f \otimes f_{(\infty)})+\int_{K^\times  \bsl \BA_K^{ \times} / \BA^{ \times}} \rho(f)D(P_t)    \Omega(t)dt.$$ 
  Here, $D$ is obtained from the third line of \eqref{Hff}, i.e.,
  $$D(P)=\frac{2}{ \Vol(K^\times\bsl \BA_K^{ \times } /\BA^{ \times } )\Vol(U) }          D_{Z_\Omega}  ( P )  ,
$$
which is a priori a smooth bounded    function   on $X(N)^\circ(\BC)$,
and understood as an automorphic function on $ X_{\infty}^\circ(\BC)$.
 \end{lem}
 
  \subsubsection{Conclusion} 
By Lemma \ref{decpure},   Lemma \ref{111}, Lemma \ref{112}   and  Lemma \ref{113}, with the notations as in these lemmas, 
 we have
 \begin{align} \begin{split}
 2 H(f)
   &  = \frac{1}{c_K} O'(0, f')  +  \sum_{p\in S_\nspl} O(f_{S-\{p\}}\otimes f_{(p)}\otimes f^S\otimes f_\infty) \\
      &+O(0, f_S  \otimes f^S\otimes f_{(\infty)}) +\int_{K^\times  \bsl \BA_K^{ \times} / \BA^{ \times}} \rho(f)D(t) dt \label{ATI}.    \end{split}     \end{align}
  Here $S_\nspl\subset S$ is the subset of places nonsplit in $K$.
  
\subsection{Proof of Theorem    \ref{stronghtder}}\label{proofs}
Before the proof of Theorem    \ref{stronghtder}, let us first restate it  with Tamagawa measures as in \cite{Qiu,YZZ}. This is also convenient for the proof as we will refer to \cite{Qiu} for some details about local-global  decomposition  of the automorphic distribution $\frac{1}{c_K} O'(0, f')  $, and it is important to keep   the same  measures as loc. cit..

Let  the Hamilton quaternion algebra  $\BD$ and the representation $\rho _{2k}$ of  $\BD^\times/\BR^\times$ be  as in \S \ref{oinfsec}. 
   Choose measures so that   
   \begin{align}  \label{volgl}
   \Vol(\GL_2(\BQ_p)= \zeta_{\BQ_p}(2)^{-1}, 
       \end{align}
and       $$\Vol(\BC^\times/\BR^\times)=2 ,\Vol(\BD^\times/\BR^\times)=  4\varpi ^2.$$ 
Here we use $\varpi$ to denote   the usual pi, 3.1415926....

Let  $\alpha^\sharp_{\rho_{2k}}$ be the normalization of $\alpha_{\rho_{2k}}$ by the local $L$-factors as in \S \ref {Local periods} (with $\BQ_p$ replaced by $\BR$ and so on).
For a local admissible representation $\rho$ of the unit group of a quaternion algebra, at a finite or infinite place, make $\alpha^\sharp_{\rho}$ into a distribution on the local Schwartz functions as in  \eqref{asharp} (with  $\alpha$ replaced by $\alpha^\sharp$).
Then by  \eqref{eq:alinf} and \cite[p 169]{MW} on local $L$-factors, we have
   \begin{align}  
 \zeta_{\BR}(2) \alpha^\sharp_{\rho_{2k}}(f_\infty)=  4  \frac{(2k-2)!  }{  (k-1)!\cdot   (k-1)!}      \label{oinf2}.
    \end{align}
Now by \eqref{volgl} and \eqref{oinf2}, Theorem    \ref{stronghtder} is equivalent to the following.
        \begin{thm}      \label{stronghtderre}     Assume that  Conjecture \ref{modconj} holds for $\pi$.
For $\phi \in\pi^\infty$ and ${\wt\phi} \in \wt \pi^\infty$,  we have    \begin{equation}   \label{stronghtder1re}
 \pair{z_{   \phi \otimes{\wt\phi}},z_{\Omega^{-1} }}=\frac{1}{2} \frac{ \zeta_\BQ(2)  L' (1/2 ,\pi_K\otimes \Omega) }{(2 L(1,\eta) )^2L (1,\pi,\ad)}       \alpha^\sharp_{\rho_{2k}}(f_\infty)\alpha_{\pi^\infty}^\sharp( \phi \otimes{\wt\phi} ).
          \end{equation}  
                   \end{thm} 
                   \begin{rmk}\label{k=1}If we pretend $k=1$,
                \eqref{stronghtder1re} is the same as the formula in \cite[Theorem   3.15]{YZZ}, despite  we have the  extra $\frac{1}{2}$  in \eqref{stronghtder1re}  and $f_\infty$ in local factor  
           at $\infty$ here. The reason is as follows. 
                
  First, here we  take representations $\pi^\infty, \wt\pi^\infty$ of weight $2k$ dual to each other tautologically  while 
we  use Hecke action to define   $\pi^\infty\otimes \wt\pi^\infty\to  \ol{CM} (\Omega)$ by sending $ {  \phi \otimes{\wt\phi}}$ to $z_{  \phi \otimes{\wt\phi}}$.   
However,  in \cite{YZZ}, the authors use a geometric realization of the representations $\pi^\infty, \wt\pi^\infty$ of weight $2$so that
  a similar map  to  $\pi^\infty\otimes \wt\pi^\infty\to \vil  \Jac (X(N)) $   is tautological.
But they have to define a pairing between   the representations. They choose a normalization factor $1/\Vol(X(N))$ (see \cite[3.1.3]{YZZ} for the definition). 
Unwinding the definitions, we see that the difference between the definition of CM points or cycles is by $\frac{1/\Vol(X(N))}{\Vol(U)}=2\varpi^2$.
Second,
  our local factor  
           at $\infty$, if we let $k=1$, is $\Vol(\BD^\times/\BR^\times)=4\varpi^2$ times the one in \cite{YZZ}.
           So  the extra $\frac{1}{2}$    is cancelled by $4\varpi^2/2\varpi^2=2$.

\end{rmk}

\begin{proof}[Proof of Theorem    \ref{stronghtderre}]
Let $$P_{\pi^\infty}: =\Hom_{\BA_{K}^{\infty,\times}\times \BA_{K}^{\infty,\times}}\lb (\pi^\infty \otimes \Omega^\infty)\otimes (\wt \pi^\infty\otimes \Omega^{-1,\infty}),\BC\rb ,$$
and for each $\pi_v$, define $P_{\pi^\infty}$ similarly. Then $$P_{\pi^\infty}=\otimes_{p<\infty} P_{\pi_p}.$$
The theorem of Tunnell \cite{Tun} and  Saito \cite{Sai} implies   that  $\dim P_{\pi_v}\leq 1$ so that   $\dim P_{\pi^\infty}=1$.

Since Conjecture \ref{modconj} holds for $\pi$,
  both sides of \eqref{stronghtder1re} are in $P_{\pi^\infty}.$
We may assume that $P_{\pi^\infty}$ is nonzero, otherwise, \eqref{stronghtder1} becomes trivial.  
So $\dim P_{\pi_p}=1$ for all $p$ so that   $\dim P_{\pi^\infty}=1$. 
  Thus we only need to prove \eqref{stronghtder1} for one element   $\Psi\in \pi^\infty  \otimes \wt \pi^\infty$ that is not annihilated by  $\alpha_{\pi^\infty}^\sharp$.
 (In fact, it is known $\alpha^\sharp _{\pi_p}\neq 0$ in this case, see for example  \cite[6.2]{Qiu}. Moreover,  $\alpha_{\pi^\infty}^\sharp\neq 0$, as its evaluation at a vector is an infinite product with almost all factors $1$.)  
We will choose  this element   $\Psi\in \pi^\infty  \otimes \wt \pi^\infty$ to be  the image of $f^\vee$ in $\pi^\infty\otimes \wt\pi^\infty\subset \End( \wt\pi^\infty)$ for some $f\in \cS_c(\GL_2(\BA^\infty))$. (Then  $\Psi=\sum_{\phi}   \pi (f )\phi\otimes\wt \phi )$ where    the sum is over a   basis $\{\phi\}$ of $\pi ^\infty$, and $\{  \wt \phi \}$ is the dual basis of $\wt \pi^\infty$). Then the  value   $\alpha_{\pi^\infty}^\sharp(\Psi)$  is  
$\prod_{p<\infty} \alpha^\sharp _{\pi_p} (f_p) $ if $f$ is a pure tensor. 
          
                \begin{lem}\label{anonvan} For every finite place $p $,  there exists $f_p$  such that  $\supp f_p\subset \GL_2(\BQ_p)_\reg$,  $q (\supp f_p)\subset  \Nm(K_p^\times)$, and                   $ \alpha_{\pi_p}^\sharp(f_p) \neq 0.$
                
                \end{lem}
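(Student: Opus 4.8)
The plan is to reduce to a statement about the toric average of the Harish-Chandra character of $\pi_p$ near the degenerate orbit, and to control it via a germ expansion.

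First I would dispose of the normalization and of the non-vanishing. The local factors $L(1,\eta_p)$, $L(1,\pi_p,\ad)$, $L(1/2,\pi_{p,K_p}\otimes\Omega_p)$ are finite and nonzero (local $L$-factors of tempered representations, evaluated off their polar locus), so $\alpha^\sharp_{\pi_p}$ is a fixed nonzero multiple of $\alpha_{\pi_p}$ and it is enough to arrange $\alpha_{\pi_p}(f_p)\neq0$. If $p$ splits in $K$ then $\Nm(K_p^\times)=\BQ_p^\times$ and the norm condition is empty, so assume $p$ is nonsplit, whence $K_p^\times/\BQ_p^\times$ is compact. Recall $\alpha_{\pi_p}\not\equiv0$: indeed $\alpha_{\pi_p}\neq0$ as a bilinear form, equivalently $\Hom_{K_p^\times}(\pi_p\otimes\Omega_p,\BC)\neq0$, which holds by the theorem of Tunnell \cite{Tun} and Saito \cite{Sai} together with the standing assumption $P_{\pi^\infty}\neq0$ of \ref{proofs}; and every finite-rank operator on $\pi_p$ is $\pi_p(f)$ for some $f\in\cS(\GL_2(\BQ_p))$, so $\alpha_{\pi_p}$ is nonzero as a functional on $\cS(\GL_2(\BQ_p))$.

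Next I would rewrite $\alpha_{\pi_p}$ as an integral against a density. With $\Theta_{\pi_p}$ the Harish-Chandra distribution character, represented by $\theta_{\pi_p}\in L^1_{\loc}(\GL_2(\BQ_p))$ and locally constant on the regular semisimple locus, one has $\alpha_{\pi_p}(f)=\int_{K_p^\times/\BQ_p^\times}\Theta_{\pi_p}\bigl(g\mapsto f(t^{-1}g)\bigr)\,\Omega_p(t)\,dt$, and Fubini (licit since $K_p^\times/\BQ_p^\times$ is compact and $\theta_{\pi_p}\in L^1_{\loc}$) yields
\[
\alpha_{\pi_p}(f)=\int_{\GL_2(\BQ_p)}f(g)\,\theta^T_{\pi_p}(g)\,dg,\qquad \theta^T_{\pi_p}(g):=\int_{K_p^\times/\BQ_p^\times}\theta_{\pi_p}(tg)\,\Omega_p(t)\,dt .
\]
The function $\theta^T_{\pi_p}$ lies in $L^1_{\loc}$ and is locally constant on the open dense set $\GL_2(\BQ_p)^\circ_\reg\subset\GL_2(\BQ_p)_\reg$ of $g$ such that $tg$ is regular semisimple for every $t\in K_p^\times/\BQ_p^\times$; since $\alpha_{\pi_p}\neq0$ it is not a.e.\ zero. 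It thus suffices to show $\theta^T_{\pi_p}$ is not a.e.\ zero on $\GL_2(\BQ_p)_\reg\cap q^{-1}(\Nm(K_p^\times))$, for then it is a nonzero constant on a small compact open $V$ lying inside the open set $q^{-1}(\Nm(K_p^\times))$ with $V\subset\GL_2(\BQ_p)_\reg$, and $f_p:=1_V$ does the job: $\supp f_p\subset\GL_2(\BQ_p)_\reg$, $q(\supp f_p)\subset\Nm(K_p^\times)$, and $\alpha^\sharp_{\pi_p}(f_p)$ is a nonzero multiple of $\theta^T_{\pi_p}|_V\cdot\vol(V)\neq0$. By Lemma \ref{qcond} the relevant region is $\{g\ \text{regular}:\inv(g)\in 1-\Nm(K_p^\times)\}$; since $\Nm(K_p^\times)\supset 1+p\BZ_p$ ($p$ odd) resp.\ $1+8\BZ_2$, the set $1-\Nm(K_p^\times)$ contains a punctured neighbourhood of $0$ and $\Nm(K_p^\times)\setminus\{0\}$ accumulates at $0$, so it is enough to show $\theta^T_{\pi_p}(\delta(x))\neq0$ for some $x\in\Nm(K_p^\times)$ arbitrarily close to $0$ (for such $x$ one has $q(\delta(x))=1-x\in\Nm(K_p^\times)$).

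The crux is the behaviour of $\theta^T_{\pi_p}$ near the orbit $\inv=0$. As $x\to0$ the regular element $\delta(x)$, with characteristic polynomial $T^2-2T+(1-x)$, degenerates to the scalar $1$, so $\theta_{\pi_p}(t\delta(x))$ is singular only for $t$ near a scalar, where it is controlled by the Harish-Chandra--Howe germ expansion of $\theta_{\pi_p}$ at $1$. By Rodier's theorem the coefficient of $\widehat\mu_{O_\reg}$ in that expansion equals $\dim\mathrm{Wh}(\pi_p)=1$, every infinite-dimensional irreducible representation of $\GL_2(\BQ_p)$ being generic; hence the leading germ is present with a definite nonzero coefficient, and its $\Omega_p$-twisted average over the compact $K_p^\times/\BQ_p^\times$ does not cancel. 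A short explicit computation of this leading term then shows $\theta^T_{\pi_p}(\delta(x))$ is nonzero (indeed unbounded) as $x\to0$, giving the required $x_0$. The main obstacle is exactly this last computation --- controlling the toric average of the character near the degenerate orbit --- and the Rodier input is what reduces it to a finite calculation. An alternative staying within the paper's framework would be to use Proposition \ref{the proof is more important for us}: transfers $\Phi_{f'}$ of such constrained $f$ exhaust $\cS\bigl((G\cdot w)\cap\cV_\reg\bigr)$ by Corollary \ref{Xue}, $\alpha_{\pi_p}(f)$ is a nonzero multiple of $I_{\pi_{p,K_p}}(0,f')$, and the latter is nonzero for suitable such $f'$ because both the local toric period $\lambda(0,\cdot)$ (nonzero under our assumption) and the base-change period $\cP$ (nonzero since $\pi_{p,K_p}$ is generic) are nonzero functionals on $W(\pi_{p,K_p},\psi_{K_p})$, and, as above, dropping the null non-regular locus does not kill $I_{\pi_{p,K_p}}(0,\cdot)$.
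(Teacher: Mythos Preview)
Your overall structure for nonsplit $p$ is the same as the paper's: both write $\alpha_{\pi_p}^\sharp(f_p)=\int f_p(g)\,\omega(g)\,dg$ for a density $\omega$ (your $\theta^T_{\pi_p}$ is exactly the paper's $\omega$, up to the harmless $L$-factor normalization), and both then choose $f_p=1_{V^\circ}$ for a small compact open $V^\circ\subset\GL_2(\BQ_p)_\reg\cap q^{-1}(\Nm K_p^\times)$ on which $\omega$ is a nonzero constant.  The difference is in how the nonvanishing of $\omega$ near $1$ is obtained.  The paper simply invokes \cite[6.2]{Qiu}, which asserts that $\omega$ is smooth on all of $\GL_2(\BQ_p)$ and that $\omega(1)\neq 0$; this makes the argument two lines.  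You instead attempt to derive it from the Harish--Chandra germ expansion and Rodier's theorem.  That route is plausible, but the crucial step --- that the $\Omega_p$-twisted toric average of the leading germ $\hat\mu_{O_\reg}$ does not cancel, and the ``short explicit computation'' yielding nonvanishing --- is asserted rather than carried out, and is not as short as you suggest (one must track whether $tg$ is split or elliptic as $t$ varies, and the answer depends on $\pi_p$).  Note also that your claim that $\theta^T_{\pi_p}$ is \emph{unbounded} near $1$ is in tension with the paper's claim that $\omega$ is smooth there; you only need ``nonzero'', so this does not break your argument, but it signals that the germ computation deserves more care.

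There is, however, a genuine gap in the split case.  You correctly observe that $\Nm(K_p^\times)=\BQ_p^\times$ makes the norm condition vacuous, but you then simply pass to the nonsplit case without returning.  In the split case $K_p^\times/\BQ_p^\times\cong\BQ_p^\times$ is noncompact, so your Fubini argument and the construction of $\theta^T_{\pi_p}$ as a locally integrable function do not go through as written; one still has to produce $f_p$ supported in the regular locus with $\alpha^\sharp_{\pi_p}(f_p)\neq 0$.  The paper handles this by invoking \cite[Theorem~A.2]{Zha14}, which supplies exactly such regular test functions in the split setting.  Your sketched alternative via Proposition~\ref{the proof is more important for us} and Corollary~\ref{Xue} is reasonable in spirit, but note that Proposition~\ref{the proof is more important for us} as stated applies to quadratic \emph{field} extensions, and in any case you would still need to argue that $I_{\pi_{p,K_p}}(0,\cdot)$ does not vanish identically on those $f'$ whose $\Phi_{f'}$ is supported on the regular locus --- essentially the same issue restated on the other side.
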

                \begin{proof} For $p$ nonsplit in $K$, by \cite[6.2]{Qiu}, there exists a smooth function $\omega$ on $\GL_2(\BQ_p)$ nonzero at 1, such that $$\alpha_{\pi_p}^\sharp(f_p)=\int_{\GL_2(\BQ_p)}f_p(g)\omega(g)dg.$$
      Let $V$ be an open neighborhood   of 1 on which $\omega$ is constant. Let $V^\circ\subset V\cap \GL_2(\BQ_p)_\reg\cap q^{-1}(  \Nm(K_p^\times)) $ be open compact and nonempty.
                Let $f_p=1_{V^\circ}$.
                
                For $p$ split in $K$, it always holds that  $q (\supp f_p)\subset  \Nm(K_p^\times)$. Then we apply \cite[Theorem A.2]{Zha14}. 
                \end{proof}
                
  Let $N$ be large enough such that the subspace of 
 $U(N)$-invariants    $\pi^{U(N)}\neq \{0\}.$
 For   $p\in S $, let $f_p$ be as in 
 Lemma \ref{anonvan}. 
 Choose  $f'^S\in \cH_E^S$ and $f\in \cH^S$ as in \S \ref{Mps}.
 For  $f$ being a pure tensor, we need to establish     \begin{equation}   \label{stronghtder1redist}
 \pair{ z_{\Omega,f_S f^S,\pi},z_{\Omega^{-1} }}=\frac{1}{2} \frac{ \zeta_\BQ(2)  L' (1/2 ,\pi_K\otimes \Omega) }{(2 L(1,\eta) )^2L (1,\pi,\ad)}       \alpha^\sharp_{\rho_{2k}}(f_\infty)\prod_{p  <\infty  }   \alpha_{\pi_p}^\sharp (f_p) .
          \end{equation}  
    Then let $f_p=1_{\GL_2(\BZ_p)}$ for $p\not \in S$ so that  $\alpha_{\pi_p}^\sharp (f_p) =1$ and thus $\prod_{p  <\infty  }   \alpha_{\pi_p}^\sharp (f_p)  \neq 0$.
     Theorem \ref{stronghtderre}  follows.

 Recall      that we use the base change homomorphism $\bc:\cH_E^S\to \cH^S$
to choose $f^S$ to be  a multiple of  $\bc(f'^S)$, 
 (see \S \ref{Mps}).
 We regard \eqref{ATI} as an equation between linear forms on $f'^S\in \cH_E^S$
    By Theorem \ref{strongmodularity} and choosing $S$ large enough, we can  perform the   spectral decomposition on  \eqref{ATI}, i.e.,
 decomposed into a sum of  characters   $\cH^S_E$ (see \cite{JN,Qiu}). 
 Let use deal with the case that $\pi_E$ is cuspidal (equivalently, $\pi\not\cong \pi\otimes\eta$)  and the other case is similar, see  \cite[Proof of Theorem 3.1.9]{Qiu}.
  For a cuspidal automorphic representation $\sigma$ of $\cH^S_E$, we have  its $\cH^S_E$-character $L_{\sigma^S}$.
  Below, we write $L_{\sigma }$ for $L_{\sigma^S}$  and $L_{\pi }$ for $L_{\pi^S}$   to ease the notations.
Then $L_\pi$ and $L_{\pi\otimes\eta}$ are the only two characters of $\cH^S$ whose restrictions to  $\cH_E^S$ are  $L_{\pi_E}$.
 Let us consider the   $L_{\pi_E}$-components of both sides of  \eqref{ATI}.
 
For the     left hand side of \eqref{ATI}, by \eqref{zpif}, its $L_\pi$-component  is $2 \pair{ z_{\Omega,f_S f^S,\pi},z_{\Omega^{-1} }}
$.
By Lemma \ref{lem:eta2} and that $q(\supp  (f  )\subset \Nm(  \BA_K^{ {\infty}, \times})  $ (see \S \ref{Mps}), this also equals  the $L_{\pi\otimes \eta}$-component.
So its the   $L_{\pi_E}$-component is 
   \begin{equation}   \label{stronghtder1redist1}
4  \pair{ z_{\Omega,f_S f^S,\pi},z_{\Omega^{-1} }}.
 \end{equation} 
(Note that here we do not need Conjecture \ref{modconj}   for $\pi\otimes \eta$.)

For the right hand side, we claim  that for each of  the second,   third and fourth  term,  the $L_{\pi_E}$-component, i.e., the sum of the $L_\pi$-component and $L_{\pi\otimes\eta}$-component  vanishes.  Let us consider  the $L_\pi$-component first. 
 For $p\in S_\nspl$, the $L_\pi$-component of $ O(f_{S-\{p\}}\otimes f_{(p)}\otimes f^S\otimes f_\infty)=0$.
 Indeed,  $B(p)_p$ is  the unique division quaternion  algebra over $\BQ_p$ and  let $\pi'_p$ be the Jacquet-Langlands correspondence of $\pi_p$ to $B(p)_p^\times$.
By definition,  the $L_\pi$-component of $ O(f_{S-\{p\}}\otimes f_{(p)}\otimes f^S\otimes f_\infty)=0$ only depends on the image of $f_{(p)}$ in $\Hom(\pi'_p,\pi'_p)$, and then defines an element in $P_{\pi'_p}$.
 As $P_{\pi_p}\neq \{0\}$,    by  the theorem of Tunnell \cite{Tun} and  Saito \cite{Sai},  $P_{\pi'_p}= \{0\}$. So    the $L_\pi$-component of the second    term on the right hand side of \eqref{ATI} is 0.
Similarly,  since
$\Hom_{ {K}_{\infty}^{\times}} (\pi_\infty \otimes \Omega_\infty,\BC)= \{0\}$ (see \cite{Gro1} and recall $\Omega_\infty$ is trivial)      the $L_\pi$-component of the    third and fourth  term on the right hand side of \eqref{ATI} is 0. (Surely, the vanishing of the   fourth  term also follows  by  considering   weights.) 
Now as $P_{\pi_p}=P_{\pi_p\otimes \eta_p}$ by definition,   the $L_{\pi\otimes\eta}$-components vanish too.
So  the $L_{\pi_E}$-component of  the right hand side of \eqref{ATI}  only comes from the first term, the automorphic distribution $\frac{1}{c_K} O'(0, f')  $. 

We want to relate the  $L_{\pi_E}$-component of $\frac{1}{c_K} O'(0, f')  $  to the right hand side of 
\eqref{stronghtderre}. We only give a sketch  and refer to \cite[Proof of Theorem 3.1.9]{Qiu} for more details. 
Take  $f$ to be a pure tensor.
By   the  local-global  decomposition of the  $L_{\pi_E}$-component of $\frac{1}{c_K} O'(0, f')  $ (see \cite[Section 8]{Qiu} or \cite{JN}), 
$L_{\pi_E}$-component of  the right hand side of \eqref{ATI}     is  the product of 
$ \frac{\zeta_\BQ(2)L' (1/2 ,\pi_K\otimes \Omega)}{L (1,\pi,\ad)} $ and   local distributions
 $I_{\pi_{E,v}}(0, f_v),v\in S\cup\{\infty\}$  on $\GL_{2,E}$ (see \S \ref{Local distributions on})  divided the  local $L$-factors
 of $ \frac{\zeta_\BQ(2)L (1/2 ,\pi_K\otimes \Omega)}{L (1,\pi,\ad)} $   at $v$ 
  (compare with our normalization of  $\alpha^\sharp$  in \S \ref {Local periods}). 
Then by Proposition \ref{the proof is more important for us} (and the discussion below it), 
the $L_{\pi_E}$-component of  the right hand side of \eqref{ATI}   finally becomes
   \begin{equation}   \label{stronghtder1redist2}
  \frac{1}{c_K}  \frac{\zeta_\BQ(2)L' (1/2 ,\pi_K\otimes \Omega)}{L (1,\pi,\ad)} \prod_{v }   \alpha_{\pi_v}^\sharp (f_v),    
 \end{equation} 
   where the product  is over the set of places of $\BQ.$
    Since $\Vol(K^\times\bsl \BA_K^{ \times } /\BA^{ \times } )=2L(1,\eta)$ (see \cite[1.6.3]{YZZ}), we have  $$c_K=\lb2L(1,\eta)\rb^2 \Vol(\BQ^\times\bsl \BA^\times/\BR^\times)=2L(1,\eta) ^2.$$
    
 Now    the  equality  between the $L_{\pi_E}$-components of both sides of  \eqref{ATI} gives  the equality  between 
 \eqref{stronghtder1redist1} and \eqref{stronghtder1redist2}. This is \eqref{stronghtder1redist}.    \end{proof}

\section{Theta lifting and modularity} \label{Weak}
  In this section, we prove the   modularity of CM cycles using an arithmetic mixed Siegel--Weil formula. 

The classical Siegel--Weil formula 
relates  theta series and  Eisenstein series, and its arithmetic variant is 
 central in Kudla's program. In fact,
an arithmetic   Siegel-Weil formula already implicitly appeared   in the work of Gross and Zagier \cite[p 233, (9.3)]{GZ}, where the authors use a linear combination of products   of 
 theta  series and
Eisenstein series, which we call a mixed theta-Eisenstein series.

In \S \ref{Analytic kernel function}-\S\ref{Local comparison II}, we first recall some basics of theta series and Eisenstein series, and  form   mixed theta-Eisenstein series  of $\GL_{2,\BQ}$  of  weight $2k$ \cite{YZZ}. 
          Then we  study  Whittaker functions of  mixed theta-Eisenstein series, locally and globally.  
          Finally,  we
  compare the height pairing $H(f)$  with the  value of the Whittaker function at $g=1$.    
  The   conclusion is 
 an arithmetic mixed Siegel--Weil formula
 \eqref{concl}. We refer to our another work \cite{Qiu1} for the systematic treatment of such a formula for unitary groups.

 In \S \ref{remove}-\S\ref{removeer}, we prove the modularity 
    of CM cycles based on the arithmetic mixed Siegel--Weil formula. After that, in \S \ref{aid}, we prove some technical local  results used in the proof of the   modularity.

 \subsection{Theta series and Eisenstein series} \label{Analytic kernel function}
\subsubsection{Weil representataion}
Let $F$ be a local field   with an absolute value $|\cdot|$. 
Let $\psi$ be a nontrivial additive character of $F$ and endow $F$ with the self-dual Haar measure.  
Let $(V,q)$  be a nondegenerate quadratic space over $F$ of even dimension and endow $V$ with the self-dual Haar measure.  
Let   $\nu$ be the similitude character of  similitude  orthogonal group $\GO(V)$.
The   Weil representation of $\GL_2(F)\times \GO(V)$ on $\cS(V\times F^\times)$   is defined as follows \cite[2.1.3]{YZZ}:
\begin{itemize}
\item
$r(1,h)\Phi(x,u)=\Phi(h^{-1}x,\nu(h)u)$ for $h\in \GO(V)$;
\item $r\lb\begin{bmatrix}a&0\\
 0&a^{-1}\end{bmatrix},1\rb \Phi(x,u)=\chi_{(V,uq)}(a)|a|^{\dim V/2} \Phi(ax, u)$ for $a\in F^\times$, where $\chi_{(V,uq)}
$ is the quadratic character  associated to  the quadratic space $(V,uq)$\footnote{As $\dim V$ is even, $\chi_{(V,uq)}=\chi_{(V,q)}$.};
\item $r \lb\begin{bmatrix}1&b\\
 0&1\end{bmatrix},1\rb \Phi(x,u)=\psi (buq(x)) \Phi(x,u) $ for $b\in F$;
\item $r \lb\begin{bmatrix}1&0\\
 0&a\end{bmatrix},1\rb \Phi(x,u)= |a|^{-\dim V/4} \Phi(x,a^{-1}u)$  for $a\in F^\times$;
\item $r\lb \begin{bmatrix}0&1\\
 -1&0\end{bmatrix},1\rb \Phi(x,u)= \gamma_{(V,uq)} ( \cF_{(V,uq)}\Phi)(x,u)$  for $a\in F^\times$,  where $\gamma_{(V,uq)}
$ (resp. $\cF_{(V,uq)}$) is the  Weil index (resp. Fourier transform  on the $x$-variable) for the quadratic space $(V,uq)$.
\end{itemize}

\begin{rmk}\label{extweil}
(1) The action of $\GO(V)$ extends to all functions on  $V\times F^\times$.

(2) The action of $\GL_2(F)\times \GO(V)$ extends to all functions    $\Phi $   such that for  $u_0\in F^\times$, $\Phi(x,u_0)\in \cS(V)$.
 
 \end{rmk}
Let  $  \rO(V)$  act on $\cS(V)$ in the usual way: $h\in \rO(V)$ sends $\phi(g)\in \cS(V)$ to $\phi(h^{-1}g)$.
 \begin{lem} \label{not2} Assume that $F$ is non-archimedean of   residue characteristic  not 2.  Assume  that $V$ has a self-dual lattice $L$.  Let $ \rO(L)\subset \rO(V)$ be the stabilizer of $L$. Let $\phi\in \cS(V)$  be $\rO(L)$-invariant.   Then for every $a\in F^\times$, there exists $f\in \cS(\GL_2(F))$, right $\GL_2(\cO_F)$-invariant,  such that  
 \begin{equation}\label{not21}\phi\otimes 1_{a\cO_F^\times}=\int_{\GL_2(F)}f(g) r(g,1) (1_{L}\otimes 1_{\cO_F^\times}) dg.\end{equation}
  \end{lem}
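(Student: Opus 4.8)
The plan is to reduce the identity \eqref{not21} to a computation of the action of suitable elements of $\GL_2(F)$ on the specific Schwartz function $1_L \otimes 1_{\cO_F^\times}$, using the explicit formulas for the Weil representation $r$ recalled above. The left-hand side is a function on $V \times F^\times$ supported in the $u$-variable on $a\cO_F^\times$; since the residue characteristic is odd and $L$ is self-dual, the quadratic space $(V, uq)$ is unramified for every $u \in \cO_F^\times$, so the Weil indices $\gamma_{(V,uq)}$ are trivial and the Fourier transform $\cF_{(V,uq)}$ preserves $1_L$. Thus on the slice $u \in \cO_F^\times$ the group $\GL_2(\cO_F)$ acts on $1_L$ through its standard (unramified) action, and in particular fixes $1_L \otimes 1_{\cO_F^\times}$ up to the twist by $\chi_{(V,uq)}$, which is trivial here. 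This is the mechanism that makes the right-hand side of \eqref{not21} land in the correct $\GL_2(\cO_F)$-isotypic piece.

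**First I would** handle the $u$-variable: using the formula for $r\left(\begin{bmatrix}1&0\\0&a\end{bmatrix},1\right)$, which sends $\Phi(x,u)$ to $|a|^{-\dim V/4}\Phi(x,a^{-1}u)$, I can translate the support in $u$ from $\cO_F^\times$ to $a\cO_F^\times$; so it suffices to produce $f$ realizing $\phi \otimes 1_{\cO_F^\times}$ and then convolve with the appropriate diagonal translate. Next, for the $x$-variable, I would expand an arbitrary $\rO(L)$-invariant $\phi \in \cS(V)$ in the basis of characteristic functions of cosets $x_0 + \varpi^n L$; each such function, restricted to $u \in \cO_F^\times$, is obtained from $1_L$ by the action of an element of the form $\begin{bmatrix}a&0\\0&a^{-1}\end{bmatrix}\begin{bmatrix}1&b\\0&1\end{bmatrix}$ composed with the $\GO(V)$-action (translations in $x$ come from $\rO(V)$ if $x_0 \in L$, and scaling from the torus), because the formulas for $r\left(\begin{bmatrix}a&0\\0&a^{-1}\end{bmatrix},1\right)$ and $r\left(\begin{bmatrix}1&b\\0&1\end{bmatrix},1\right)$ act by dilation and by the character $\psi(buq(x))$ respectively, and averaging $b$ over a lattice in $F$ projects onto the subspace where $q(x) \in \varpi^{-m}\cO_F$. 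Assembling these pieces linearly gives an $f \in \cS(\GL_2(F))$, and right $\GL_2(\cO_F)$-invariance can be arranged by averaging $f$ on the right over $\GL_2(\cO_F)$, which does not disturb \eqref{not21} because $\GL_2(\cO_F)$ fixes $1_L \otimes 1_{\cO_F^\times}$ as noted above.

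**The hard part will be** bookkeeping the normalizations: one must check that the Weil-index and $|a|$-power factors appearing in the formulas for $r$ all specialize to $1$ (or to harmless powers of $q$ that can be absorbed) under the unramified hypotheses, and that the integral defining the $\GL_2(F)$-average genuinely converges, i.e. that one only needs $f$ supported on a compact-modulo-center-translates region — this is where the support condition $u \in a\cO_F^\times$ on the left-hand side is essential, since it forces the relevant $g$ to lie in finitely many $\GL_2(\cO_F)$-double cosets near a fixed diagonal element. A secondary subtlety is ensuring that the decomposition of $\phi$ into lattice-coset functions is compatible with $\rO(L)$-invariance so that no spurious components outside the span of $\GL_2(F)$-translates of $1_L$ appear; but since $\rO(L)$-invariant functions are precisely radial functions of $q(x)$ near $L$ and these are exactly what the torus-and-unipotent action generates from $1_L$, this matching is forced. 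Once these normalization checks are in place, the construction of $f$ is explicit and the lemma follows.
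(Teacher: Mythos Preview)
The paper's proof is a one-line citation: the lemma is a direct corollary of Howe's Theorem 10.2 in \cite{Ho}, which describes the spherical Hecke algebra correspondence for an unramified dual pair and in particular shows that the $\GL_2(\cO_F)$-biinvariant Hecke algebra, acting on the spherical vector $1_L\otimes 1_{\cO_F^\times}$, already generates the full space of $\rO(L)$-invariant Schwartz functions (tensored with the appropriate shifts in $u$). Your approach is more hands-on, attempting to build $f$ explicitly from Borel elements, and would---if it succeeded---amount to reproving this special case of Howe's theorem.

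There are two problems. First, a factual error: $\rO(V)$ acts \emph{linearly} on $V$, not by translations, so the phrase ``translations in $x$ come from $\rO(V)$ if $x_0\in L$'' is incorrect. There is no group in the picture that translates by $x_0$; the Weil representation of $\GL_2(F)\times\GO(V)$ on $\cS(V\times F^\times)$ simply has no translation operators in the $x$-variable. This means your decomposition of $\phi$ into coset functions $1_{x_0+\varpi^n L}$ cannot be realized term-by-term in the way you describe.

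Second, and more seriously, the claim that ``$\rO(L)$-invariant functions\ldots are exactly what the torus-and-unipotent action generates from $1_L$'' is the entire content of the lemma, and you assert it rather than prove it. The torus gives dilations $1_{\varpi^n L}$, and averaging the unipotent action over $b$ projects onto sublevel sets of $q$; but showing that these operations (together with the Fourier transform from the Weyl element) span \emph{all} $\rO(L)$-invariant Schwartz functions requires a genuine argument about the orbit structure of $\rO(L)$ on $V$ and how it interacts with the Cartan decomposition of $\GL_2(F)$. That argument is precisely what Howe supplies. Your sketch identifies the right ingredients but does not close the gap; to finish the direct route you would need to explicitly compute the action of each Hecke double coset $\GL_2(\cO_F)\mathrm{diag}(\varpi^m,\varpi^n)\GL_2(\cO_F)$ on $1_L\otimes 1_{\cO_F^\times}$ and verify that the resulting functions span $\cS(V)^{\rO(L)}\otimes 1_{a\cO_F^\times}$, which is nontrivial.
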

  \begin{proof} The lemma is a simple corollary of \cite[Theorem 10.2]{Ho}.
  \end{proof}
 \subsubsection{Theta series and Eisenstein series}We follow \cite[4.1, 6.1]{YZZ}.
 Fix  the standard additive character  $\psi$ of $\BQ\bsl \BA$ (see \cite[1.6.1]{YZZ})
 for defining the global Weil representations.
  
 Let $V_1 $  be a nondegenerate quadratic space over $\BQ$ of even dimension.
For $\Phi_1=\Phi_1^\infty\otimes \Phi_{1,\infty}$, where $\Phi_1^\infty\in \cS(V_1(\BA^\infty)\times \BA^{ \infty,\times})$ and  
 $\Phi_{1,\infty}$ is  a smooth function   on $V_1(\BR)\times \BR^\times$  such that $\Phi_{1,\infty}(x,u )\in \cS(V_1(\BR))$ for every $u$,
   the theta series
 $$ 
  \theta(g, u,\Phi_1)=\sum_{x \in V_1(\BQ) }  r(g,1 ) \Phi_1(x,u)$$ 
   is absolutely convergent for  every $g\in \GL_2(\BA) ,u\in \BA^\times$. Note that $ \theta(g, u,\Phi_1)$ in only left $\SL_2(\BQ)$-invariant.
(Later, we will specify  a concrete $\Phi_{1,\infty}$.)

For  $g\in \GL_2(\BA), h\in \GO(V_\BA)$, define 
\begin{equation}\label{theta}
  \theta(g,h,\Phi_1)=\sum_{(x,u)\in V_1(\BQ)\times \BQ^\times}  r(g,h) \Phi_1(x,u),\end{equation}
The  sum   \eqref{theta} only involves finitely many $u$ and converges absolutely. Then  \eqref{theta} defines  a smooth $\GL_2(\BQ)\times \GO(V)$-invariant function on $\GL_2(\BA)\times \GO(V_\BA)$. 

Let $\BV_2 $ be a nondegenerate quadratic space over $\BA$ of even dimension.
For $\Phi_2=\Phi_2^\infty\otimes \Phi_{2,\infty}$ where $\Phi_2^\infty\in \cS(\BV_2^\infty \times \BA^{ \infty,\times})$ and  
 $\Phi_{2,\infty}$ is a smooth function   on $\BV_{2,\infty} \times \BR ^\times$  such that $\Phi_{2,\infty}(x,u)\in \cS(\BV_{2,\infty})$ for every $u$,
   the Eisenstein series 
\begin{equation}\label{Eis}E(s,g,u,\Phi_2)=\sum _{\gamma\in P(\BQ)\bsl \SL_2(\BQ)} \delta(\gamma g)^sr(\gamma g,1)\Phi_2(0,u)\end{equation}
 is absolutely convergent for $\Re s$  large enough and  it has a meromorphic continuation to the whole complex plane,  and is holomorphic at $s=0$. Here $P$ is the standard Borel subgroup of $\SL_2$ and $\delta$ the standard modular character \cite[1.6.6]{YZZ}. 
Note that $E(s,g,u,\Phi_2)$ in only $\SL_2(\BQ)$-invariant.

Define  a function on $V_1(\BA)\times \BV_2\times \BA^\times$ by
 $$\Phi_1\otimes \Phi_2((x,y),u)= \Phi_1(x,u)\Phi_2(y,u).$$
Define the mixed  theta-Eisenstein series  \cite[2.3, 6.1]{YZZ}  
\begin{equation}\label{mix}I(s,g,\Phi_1\otimes \Phi_2)=\sum_{u\in \BQ^\times}  \theta(g, u,\Phi_1) E(s,g,u,\Phi_2),\end{equation}
which is a smooth $\GL_2(\BQ) $-invariant function on $\GL_2(\BA) $.

   \subsubsection{Local Whittaker function}\label{Local Whittaker function}

 We will use  the local Whittaker functions $W_a(s,g,u,\phi)$  of 
 Eisenstein series \cite[p. 186]{YZZ}, indeed only its value at $g=1$. We denote its value at $g=1$ to be  $W_a(s,u,\phi)$, which  is given as follows.
 
 Let $v$ be a place of $\BQ$. For $a\in \BQ_v$, $u\in \BQ_v^\times$ and $\phi\in \cS(\BV_{2,v}\times  \BQ_v^\times)$, 
define a (not necessarily convergent) Whittaker integral
  \begin{equation}W_a(s,u,\phi)=\int_{F_v}\delta\lb  \begin{bmatrix}0&1\\
 -1&0\end{bmatrix} \begin{bmatrix}1&b\\
 0&1\end{bmatrix}\rb ^sr\lb  \begin{bmatrix}0&1\\
 -1&0\end{bmatrix} \begin{bmatrix}1&b\\
 0&1\end{bmatrix}\rb\phi(0,u)\psi_v(-ab)db.\label{wasu}
 \end{equation}

\subsubsection{Gaussian}\label{Gaussian}
We need special Schwartz functions at the infinite place.
 Fix  the   additive character of $\BR$  to  be the archimedean component $\psi_\infty$ of $\psi$, that is $\psi_\infty(x)=e^{2\pi i x}$.
  
By the  Iwasawa decomposition,  any element in $ \GL_2(\BR)$  can be uniquely  written as 
\begin{equation}
 \begin{bmatrix}c&0\\
 0&c\end{bmatrix}
 \begin{bmatrix}y&x\\
 0&1\end{bmatrix}\begin{bmatrix}\cos \theta&\sin \theta\\
 -\sin \theta&\cos \theta\end{bmatrix},
 \label{iwad}\end{equation}
 where $c\in \BR_{>0}$ and $y\in \BR^\times$.

For non-negative  integer $l$, let 
\begin{equation*}p_l(t)=\sum  _{j=0}^l \binom{l}{j} \frac{(-t)^j}{j!} .\end{equation*}
For  $V=\BC$ and $q(z)=   z\bar z$, define  the standard  Gaussian of weight $2l+1$ 
\begin{equation*}  \Psi(z,u)= p_l(4\pi u q(z))e^{-2\pi u q(z)}  1_{\BR_{>0}} (u).\end{equation*}
 By   \cite[p 350, A2]{Xue1},  $r(g,1)\Psi(0,u) $  is of weight $2l+1$.
 Then  a direct computation shows that
\begin{equation}r(g,1)\Psi(z,u)=\sgn(y) |y|^{1/2} p_l(4\pi u q(z) y) e^{2\pi i u q(z)  (x+yi)} e^{(2l+1)i\theta} 1_{\BR_{>0}}({u y}), \ z\neq 0, \label{hui1}\end{equation}
\begin{equation}r(g,1)\Psi(0,u)=   \sgn(y)   |y|^{1/2}  e^{(2l+1) i\theta} 1_{\BR_{>0}}({u y}) , \label{hui2}\end{equation}
 where $g$ has Iwasawa decomposition  as in \eqref{iwad}.
 \subsection{Local Whittaker functions}
   
  We specialize  the above definitions using quaternion algebras.
 To study the Whittaker function of  our  mixed theta-Eisenstein  series  $I(s,g,\Phi)$ on $  \GL_2(\BA)$ defined    in \eqref{mix}, we first study (a function which will be) its (non-archimedean) local components.

   \subsubsection{Quaternion algebra as a quadratic space}\label{Quaternion algebra as quadratic space}
Let   $B$ be a quaternion algebra  over a field $F$ with reduced norm $q$.
Let $(b_1,b_2) \in B^\times\times B^\times$ act on $B$ by $$ x\mapsto b_1xb_2^{-1}.$$
This gives an embedding of $B^\times\times B^\times$, modulo the diagonal embedding of $F^\times$, into the similitude orthogonal group $\GO(B)$ of the quadratic space $(B,q)$.
We use $[b_1,b_2]$ to denote the  image of $(b_1,b_2)$ in $\GO(B)$.
Then  $\GO(B)=B^\times\times B^\times/F^\times \rtimes \{1,\iota\}$ with
$\iota([b_1,b_2])=[b_2,b_1].
$  
And $\SO(B)=\{[b_1,b_2]: q(b_1)=q(b_2)\}$, $\rO(B)=\SO(B)\rtimes \{1,\iota\}$.

  Let $\BB$ be  an    quaternion algebra   over $\BA_F$. Fix an embedding $\BA_K\incl \BB$ and  a decomposition $\BB=\BA_K\bigoplus  \BA_Kj$
as in 
\ref {matchorb}. Then clearly, it is an orthogonal decomposition.

  \subsubsection{Nonsplit case}\label{k1yu}

  Let $p$ be a finite  place of $\BQ$. Let $\ord_p$ be the standard discrete valuation on $\BQ_p$.

Assume that $p$ is nonsplit in $K$. Let $K_p^1\subset K_p$ be the subset of norm 1 elements, with the measure as in \cite[1.6.2]{YZZ}.
 Let $\eta_p$ be the quadratic character of $\BQ_p$ associated to  $K_p$ by the class field theory.

  For $a\in \BQ_p$, $u\in \BQ_p^\times$ and $\phi\in \cS(K_pj_p\times \BQ_p^\times)$, let     $$ W^\circ_a(s,u,\phi)= \gamma_{ (K_pj_p,uq)}^{-1}W_a(s,u,\phi)$$ 
where $\gamma_{(K_pj_p,uq)}$
 is the  Weil index  for the quadratic space $(K_pj_p,uq)$, and $W_a$ is   the Whittaker integral \eqref{wasu}.
\begin{rmk}\label{w00p} We remind the reader that in \cite[6.1.1]{YZZ}, the normalization of the Whittaker integral for $a=0$ differs  from the one for $a\neq 0$. Our normalization here is uniform   for $a=0$ and $a\neq 0$.  This uniformity will be useful in \S \ref{lokk}.
\end{rmk}
 By  \cite[Proposition 6.10 (1)]{YZZ}, we have  \begin{equation}\label{Wcirc}W^\circ_{a}(s,u,\phi)=(1-p^{-s})\sum_{n=0}^\infty p^{-ns+n} \int_{D_n(a)} \phi(t,u) d_u t,
 \end{equation}
 where $d_u t$ is the self-dual measure on the quadratic space $(K_pj_p, uq)$ and 
 $$D_{n}(a)=\{t\in K_pj_p: uq(t)\in a+p^n  \BZ_p\}.$$

 \begin{lem}\label{conhol}If 
 $ a\neq 0$ or $\phi(0,u)= 0$, then \eqref{Wcirc} converges absolutely and defines a holomorphic function on $s$. 
 \end{lem}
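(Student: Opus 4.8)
The plan is to reduce everything to the behaviour, as $n\to\infty$, of the integrals
$$I_n:=\int_{D_n(a)}\phi(t,u)\,d_ut,$$
and to establish the following claim: under the hypothesis of the lemma there are a constant $C\in\BC$ and an integer $N_0\geq 0$ such that $I_n=C\,p^{-n}$ for all $n\geq N_0$, with moreover $C=0$ when $a=0$. Granting this, one splits \eqref{Wcirc} at $N_0$: the head $\sum_{n<N_0}p^{-ns+n}I_n$ is a finite sum, while
$$(1-p^{-s})\sum_{n\geq N_0}p^{-ns+n}I_n=C(1-p^{-s})\sum_{n\geq N_0}p^{-ns}=C\,p^{-N_0 s}.$$
Hence $W^\circ_a(s,u,\phi)=(1-p^{-s})\sum_{n<N_0}p^{-ns+n}I_n+C\,p^{-N_0 s}$ is a Laurent polynomial in $p^{s}$, in particular entire; and since the terms of \eqref{Wcirc} are equal to $C\,p^{-ns}$ for $n\geq N_0$, the series converges absolutely for $\Re s>0$ (and for all $s$ when $C=0$). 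This yields both assertions of the lemma.

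It remains to prove the claim, which is a purely local computation with the quadratic space $(K_pj_p,uq)$. The key structural input is that this space is anisotropic: since $p$ is nonsplit in $K$, $K_p$ is a field, its norm form $\Nm_{K_p/\BQ_p}$ is anisotropic, and $q|_{K_pj_p}$ is a nonzero scalar multiple of it. Now $\phi(\cdot,u)\in\cS(K_pj_p)$ is compactly supported and locally constant, so, fixing a lattice $L\subset K_pj_p$ and taking $m$ large, write $\phi(\cdot,u)=\sum_i c_i\,1_{t_i+p^mL}$ with the $t_i$ in a fixed bounded set. Then $I_n=\sum_i c_i\,\Vol\bigl(D_n(a)\cap(t_i+p^mL)\bigr)$, and it suffices to analyze each volume for $n$ large. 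On the ball $t_i+p^mL$ one Taylor-expands: $uq(t_i+s)-uq(t_i)$ is the sum of a linear form in $s\in p^mL$ and the quadratic term $uq(s)$, the latter lying in a strictly deeper ideal once $m$ is large. If $t_i\neq 0$ the linear form is nonzero by nondegeneracy of $q$, so a Hensel-type successive approximation shows that $s\mapsto uq(t_i+s)$ carries $p^mL$ onto a ball $uq(t_i)+p^{m_i}\BZ_p$ (with $m_i=m+O(1)$ depending only on $m$ and $i$) and pushes Haar measure to a multiple of Haar measure on it; moreover $v(uq(t_i))<m_i$ for $m$ large, by anisotropy and $t_i\neq 0$, so $0\notin uq(t_i)+p^{m_i}\BZ_p$. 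Consequently, for $n$ large, $\Vol(D_n(a)\cap(t_i+p^mL))$ equals $\kappa_i\,p^{-n}$ for a constant $\kappa_i$ if $a\in uq(t_i)+p^{m_i}\BZ_p$, and equals $0$ otherwise; in particular it is $0$ for $n$ large when $a=0$. If $t_i=0$ then $uq(p^mL)$ lies in a high power of $p$, so for $n$ large the piece is empty when $a\neq 0$; and when $a=0$ the hypothesis $\phi(0,u)=0$ forces the coefficient $c_i$ of this piece to vanish. Summing over $i$ gives $I_n=\bigl(\sum_i c_i\kappa_i\bigr)p^{-n}$ for $n$ large, with the sum empty (so $C=0$) when $a=0$. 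This is the claim.

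I expect the main obstacle to be precisely the local analysis in the last step — the exact identification of the ball $uq(t_i)+p^{m_i}\BZ_p$ and the measure-uniformity of $s\mapsto uq(t_i+s)$, with care needed in residue characteristic $2$ — though this is routine $p$-adic analysis via Taylor expansion and successive approximation. Conceptually, $W^\circ_a(s,u,\phi)$ is, up to elementary factors, the Igusa-type local zeta integral $\int_{K_pj_p}\phi(t,u)\,|uq(t)-a|^{s-1}\,d_ut$; the factor $1-p^{-s}$, which vanishes at $s=0$, is what is needed to cancel its poles, and the hypothesis on $\phi(0,u)$ when $a=0$ is exactly what prevents the origin (where the fibre $\{uq=0\}$ degenerates to a point) from producing an uncancelled pole.
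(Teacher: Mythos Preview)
Your argument is correct and in fact more complete than the paper's. The paper dispatches the case $a\neq 0$ in one line --- ``If $a\neq 0$, then \eqref{Wcirc} is a finite sum'' --- and refers the case $\phi(0,u)=0$ to Lemma~\ref{goodlem}~(2). The finite-sum assertion, however, fails for general $a\neq 0$: when $a\in uq(K_pj_p\setminus\{0\}) = -u\ep_p\Nm(K_p^\times)$, the sets $D_n(a)$ are nonempty for all $n$ and one gets precisely the geometric tail $I_n=Cp^{-n}$ you compute, with $C\neq 0$ in general. What the paper is silently using is that in the only application (the definition of $k_\phi$ in \eqref{kphi}) the relevant $a$ is $uq(y_2)$ for $y_2\in K_pj(p)\setminus\{0\}$ coming from the \emph{other} quaternion algebra $B(p)_p$; since $\ep_p$ and $\ep(p)_p$ lie in opposite cosets of $\Nm(K_p^\times)$, such $a$ never lies in $uq(K_pj_p\setminus\{0\})$, so $D_n(a)=\emptyset$ once $n$ is large and the sum is literally finite. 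Your route handles arbitrary $a$: even when the tail is infinite, the prefactor $(1-p^{-s})$ collapses the exact geometric series to a single term, giving an explicit Laurent polynomial in $p^{-s}$. One caveat worth noting: for such $a$ the series \eqref{Wcirc} itself converges absolutely only for $\Re s>0$, so the lemma as literally stated needs that qualification --- but the entire continuation, which is what matters downstream, is exactly what you establish.
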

 \begin{proof}If $a\neq 0$, then \eqref{Wcirc} is a finite sum. For the case  $\phi(0)= 0$, see Lemma \ref{goodlem} (2).
 \end{proof}
 
  \begin{lem}\label{goodlem}Fix $u\in \BQ_p^\times$.  Let  $W(a)=\frac{d}{ds}|_{s=0}W^\circ_{a}(s,u,\phi)$, as a function on $\BQ_p^\times$.
  
    (1) Regard  $W(a)$ as a function on $\BQ_p$ with singularity at 0. Then it  has compact support.
  
  (2) If $\phi(0,u)= 0$, then $W(a)$   extends to a Schwartz function function on $\BQ_p$.
  
  (3)  Let $u\in \BZ_p^\times$.  If   $\phi(0,u)=1$, then $\frac{L(1,\eta_p)}{\Vol(K_p^1)}  W(a)-\frac{\ord_p (a)}{2}\cdot 1_{\BZ_p}\cdot \log p$    extends to a Schwartz function on $\BQ_p$. (One may replace $\BZ_p$ by any open compact neighborhood of $0$.)
\end{lem}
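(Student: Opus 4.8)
The plan is to work directly from the explicit series formula \eqref{Wcirc} for $W^\circ_a(s,u,\phi)$, differentiate in $s$ at $s=0$, and analyze the resulting sum $W(a) = \frac{d}{ds}\big|_{s=0}W^\circ_a(s,u,\phi)$ term by term according to how the "balls" $D_n(a) = \{t\in K_pj_p : uq(t)\in a + p^n\BZ_p\}$ behave as $n\to\infty$. Writing $c_n(a) = \int_{D_n(a)}\phi(t,u)\,d_ut$, we have $W^\circ_a(s,u,\phi) = (1-p^{-s})\sum_{n\ge 0} p^{-ns+n}c_n(a)$, and differentiating, the factor $(1-p^{-s})$ kills the naive divergence: one gets $W(a) = (\log p)\sum_{n\ge 0} p^n\big(c_n(a) - c_{n-1}(a)\big)\cdot(\text{something})$ — more precisely, after the product rule, $W(a) = (\log p)\sum_{n\ge 0}p^n c_n(a) - (\log p)\sum_{n\ge 0}(n+1)p^{n}(c_n(a)-c_{n+1}(a))\cdot\ldots$; the key point is that the quantity that controls everything is the difference $c_n(a) - c_{n-1}(a)$, i.e. $\int_{D_{n-1}(a)\setminus D_n(a)}\phi$. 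For $a\ne 0$ fixed, $D_n(a)$ stabilizes once $p^n$ is smaller than $|a|_p^{-1}$ times the relevant constant (because then $a + p^n\BZ_p$ avoids $0$ and $uq$ is a fixed polynomial), so the series is a finite sum and $W(a)$ is locally constant there; combined with the fact that $D_n(a)$ is a union of residue classes that shrink to $\emptyset$ once $\ord_p(a)$ is very negative (since $q$ is anisotropic on $K_pj_p$ — it is $-\ep\cdot\Nm$ composed with the norm form, which represents only norms from $K_p$), one gets that $W(a) = 0$ for $\ord_p(a)$ sufficiently negative. That is part (1).

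For part (2), assume $\phi(0,u) = 0$. Then for $n$ large, $D_n(0) = \{t : uq(t)\in p^n\BZ_p\}$ is a small neighborhood of $0$ on which $\phi(\cdot,u)$ vanishes identically (by continuity, since $\phi(0,u)=0$ and $\phi$ is locally constant), so $c_n(0) = 0$ for $n\gg 0$; more generally $c_n(a)$ for $a$ near $0$ is unaffected by the ball around $0$. Hence the defining series for $W^\circ_a(s,u,\phi)$ is already a finite sum uniformly for $a$ in a neighborhood of $0$ (this is exactly the content cited in Lemma \ref{conhol}), so $W(a)$ extends across $0$ by the same finite-sum expression, and it is locally constant with compact support by part (1): it is Schwartz.

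For part (3), the subtle case, assume $u\in\BZ_p^\times$ and $\phi(0,u) = 1$. Now $c_n(0) \to \Vol$ of a shrinking neighborhood of $0$, but does not vanish; instead $c_n(a)$ for fixed small $a\ne 0$ picks up the contribution of the residue class of $0$ until $p^n$ separates $a$ from $0$, i.e. until $n > \ord_p(a) + O(1)$. The strategy is to subtract off a model function $\phi_0$ with $\phi_0(0,u) = 1$ supported near $0$ for which the difference $W_{\phi}(a) - W_{\phi_0}(a)$ comes from $\phi - \phi_0$, which vanishes at $0$, so by part (2) it is Schwartz; then one computes $W_{\phi_0}(a)$ explicitly. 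For the model (say $\phi_0 = 1_{L_j}\otimes 1_{u^{-1}\BZ_p^\times}$ for a suitable lattice $L_j\subset K_pj_p$), the integrals $\int_{D_n(a)}\phi_0$ are literally volumes of the sets $\{t\in L_j : uq(t)\in a + p^n\BZ_p\}$, whose sizes as $n$ grows are governed by the local density / representation numbers of the norm form of $K_p$ — this is where the factor $L(1,\eta_p)/\Vol(K_p^1)$ and the linear-in-$\ord_p(a)$ term $\frac{\ord_p(a)}{2}\log p$ come from: the count of lattice points representing a fixed $p$-adic integer by the anisotropic binary form $q$ grows like a constant in $\ord_p(a)$ (when $\eta_p$ is unramified, with the $\frac12$ because $q\sim -\Nm$ is a norm form of degree $2$), and telescoping the differences $c_n - c_{n-1}$ against $p^n$ produces precisely $\frac{\ord_p(a)}{2}\cdot 1_{\BZ_p}\cdot\log p$ up to a Schwartz error.

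The main obstacle I expect is part (3): getting the constant $\frac12$ and the normalizing factor $\frac{L(1,\eta_p)}{\Vol(K_p^1)}$ exactly right, which requires a careful bookkeeping of the self-dual measure $d_ut$ on $(K_pj_p, uq)$ versus the measure on $K_p^1$, and handling both the $\eta_p$ unramified and (excluded here, but relevant for the structure) ramified behavior of the norm form — essentially a local density computation for the binary quadratic form attached to $K_p/\BQ_p$. The earlier parts are soft: part (1) is a finiteness/support statement that follows from anisotropy of $q$ on $K_pj_p$ and the stabilization of $D_n(a)$, and part (2) is an immediate consequence of part (1) together with the observation that $\phi(0,u)=0$ makes the series terminate uniformly near $a=0$.
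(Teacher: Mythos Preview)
Your proposal is correct and follows essentially the same approach as the paper: work from the explicit series \eqref{Wcirc}, deduce (1) from the compact support of $\phi(\cdot,u)$, deduce (2) from the fact that $\phi(0,u)=0$ forces the sum to terminate uniformly for $a$ near $0$, and for (3) subtract a model function (the paper uses $1_{\cO_{K_p}j_p}$) so that (2) applies to the difference, then invoke the explicit model computation. The paper does not carry out that last computation but cites \cite[p.~195]{YZZ} and \cite[p.~598]{YuanZ}; your plan to redo it by counting lattice points for the norm form is exactly what those references contain.
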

  \begin{proof} We use \eqref{Wcirc}.
  
 (1)  For  $a$ large enough, $\phi(\cdot, u)$ is 0 on $D_{n}(a)$.

 (2) Without loss of generality, assume that $ uq(\supp \phi(\cdot ,u))\subset p^A\BZ_p^\times$ where $A$ is an integer.   
   If $\ord_p(a)\geq A+1$, then  the  nonzero summands in \eqref{Wcirc} are $n=0,...,A$ (if $A<0$, then every summands in \eqref{Wcirc}  is 0). And for these $n$'s, $D_n(a)$ does not depend on $a$ for $a$ . So
  $W_{a}(s,1,u)$ does not depend on $a$.   Thus  the extension   follows.

  (3)   follows from a direct computation. For example, see \cite[p. 195]{YZZ} and \cite[p. 598]{YuanZ}. Or one may bypass  the computation as follows: by  \cite[p. 195]{YZZ} and \cite[p. 598]{YuanZ}, (3) holds   if $\phi(\cdot, u)=1_{\cO_{K_p}j_p}$.  The general case follows by applying (2).
  \end{proof}  

   For $\phi_1\in \cS(K_p\times \BQ_p^\times)$, $\phi_2\in \cS(K_pj_p\times \BQ_p^\times)$, and  $(y_1,x_2)\in  \BB_p=K_p\oplus K_pj_p $,
  let $$\phi_1\otimes \phi_2((y_1,x_2),u)= \phi_1(y_1,u)\phi_2(x_2,u).$$ 
Let $B(p)_p$ be the   quaternion algebra over $\BQ_p$ non-isomorphism to $\BB_p$. 
Fix an embedding $K_p\incl B(p)_p$ and  a decomposition $B(p)_p= K_p\bigoplus   K_p j(p)$ as in \S \ref {matchorb}.

First, let $p$ be nonsplit in $K$.
For \begin{equation*}(y_1,y_2)\in B(p)_p-K_p =K_p\oplus   (K_p j(p) -\{0\}),\end{equation*} 
and $u\in \BQ_p^\times$,  define
\begin{equation}\label{kphi}k_{\phi_1\otimes \phi_2} ((y_1,y_2),u) = \frac{L(1,\eta_p)}{\Vol(K_p^1)} \cdot  \phi_1(y_1,u) \cdot \frac{d}{ds}|_{s=0}W^\circ_{uq(y_2),p}(s,u,\phi_2) .\end{equation}
For a general  $\phi\in \cS(\BB_p\times \BQ_p^\times)$, we define $k_{\phi}$ by linear extension. Then $k_{\phi}$ is a smooth function on $ (B(p) -K_p )\times \BQ_p^\times$.

Lemma \ref{goodlem} (2) implies the following lemma.
 \begin{lem}\label{morek2} Assume that $\phi $ vanishes on $K_p \times \BQ_p^\times$. Then  $k_{\phi}$  extends to a Schwartz function on $B(p)_p \times  \BQ_p^\times$.
 \end{lem}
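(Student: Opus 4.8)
The plan is to reduce the assertion to Lemma~\ref{goodlem}(2) by a linearity (``refolding'') identity, and then to convert the resulting Schwartz property in the variable $a=uq(y_2)$ into one in $y_2$, using that the reduced norm $q$ is anisotropic on $B(p)_p$ --- this is where the hypothesis that $p$ is nonsplit enters, since then $B(p)_p$ is the division quaternion algebra over $\BQ_p$ and $K_p$ is a field.

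First I would note, from \eqref{Wcirc} (and the fact that the Weil-representation operators occurring in \eqref{wasu} preserve the $u$-fibre), that for fixed $u$ the quantity $W^\circ_{a,p}(s,u,\psi)$ depends on $\psi\in\cS(K_pj_p\times\BQ_p^\times)$ only through $\psi(\cdot,u)\in\cS(K_pj_p)$, and linearly so. Writing a general $\phi\in\cS(\BB_p\times\BQ_p^\times)$ as a finite sum of tensors $\phi_1\otimes\phi_2$ and using \eqref{kphi} together with linear extension, one then obtains, for $y_1\in K_p$, $y_2\in K_pj(p)-\{0\}$ and $u\in\BQ_p^\times$,
\begin{equation*}
k_\phi((y_1,y_2),u)=\frac{L(1,\eta_p)}{\Vol(K_p^1)}\,\frac{d}{ds}|_{s=0}\,W^\circ_{uq(y_2),p}(s,u,\phi_{y_1}),
\qquad
\phi_{y_1}(x,u'):=\phi((y_1,x),u'),
\end{equation*}
where $\phi_{y_1}\in\cS(K_pj_p\times\BQ_p^\times)$. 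The hypothesis that $\phi$ vanishes on $K_p\times\BQ_p^\times$ says exactly that $\phi_{y_1}(0,u)=0$ for all $y_1$ and $u$, so Lemma~\ref{goodlem}(2) applies to $\phi_{y_1}$: for each $(y_1,u)$ the function $a\mapsto\frac{d}{ds}|_{s=0}W^\circ_{a,p}(s,u,\phi_{y_1})$ extends to a Schwartz function $\wt W_{y_1,u}$ on $\BQ_p$. Since $\phi$ is locally constant with compact support, $y_1$ ranges over a fixed compact subset of $K_p$, $u$ over a fixed compact subset of $\BQ_p^\times$, and $\phi_{y_1}$ takes only finitely many values; hence these extensions may be chosen uniformly (supported in one compact $\Lambda\subset\BQ_p$ and constant on the cosets of one lattice), and $\wt W_{y_1,u}$ depends locally constantly on $(y_1,u)$. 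I would then put $\wt k_\phi((y_1,y_2),u):=\frac{L(1,\eta_p)}{\Vol(K_p^1)}\wt W_{y_1,u}(uq(y_2))$ for all $(y_1,y_2)\in B(p)_p$ and $u\in\BQ_p^\times$; by the displayed identity this agrees with $k_\phi$ on $(B(p)_p-K_p)\times\BQ_p^\times$.

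It remains to check $\wt k_\phi\in\cS(B(p)_p\times\BQ_p^\times)$. It is locally constant: in $y_1$ by construction, in $y_2$ because $y_2\mapsto q(y_2)$ is polynomial (hence continuous) and $\wt W_{y_1,u}$ is locally constant on $\BQ_p$, and in $u$ by combining local constancy of $\wt W_{y_1,u}$ in $(y_1,u)$ with continuity of $u\mapsto uq(y_2)$. For compact support: $\wt W_{y_1,u}(uq(y_2))\neq0$ forces $y_1$ and $u$ into fixed compacta and $uq(y_2)\in\Lambda$; then $|u|_p$ is bounded above and below, and since $q$ is anisotropic on $B(p)_p$ the set $\{y_2\in K_pj(p):|q(y_2)|_p\le R\}$ is compact for every $R$, so $y_2$ lies in a fixed compact set. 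Thus $\supp\wt k_\phi$ is compact in $B(p)_p\times\BQ_p^\times$, and $\wt k_\phi$ is the desired extension. The one genuinely substantive point, as opposed to bookkeeping, is this last step --- passing from ``Schwartz in $uq(y_2)$'' to ``Schwartz in $y_2$'' --- which hinges on the anisotropy of the reduced norm at the nonsplit place $p$; the linearity reduction and the uniformity in $(y_1,u)$ are routine.
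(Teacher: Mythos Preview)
Your proof is correct and takes the same approach as the paper, which simply asserts that Lemma~\ref{goodlem}(2) implies the result. You have spelled out the details the paper leaves implicit --- the refolding identity expressing $k_\phi$ through $\phi_{y_1}$, the uniformity in $(y_1,u)$ coming from local constancy and compact support of $\phi$, and the passage from Schwartz-in-$a=uq(y_2)$ to Schwartz-in-$y_2$ via anisotropy of the reduced norm on the division algebra $B(p)_p$.
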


 \subsubsection{Split case}\label{k1yu}  
    
Assume that $p$ is split in $K$.  
  For  $u\in \BQ_p^\times$ and $\phi\in \cS(K_pj_p\times \BQ_p^\times)$, let     $$ W^\circ_0(s,u,\phi)= \gamma_{ (K_pj_p,uq)}^{-1}\frac{\zeta_{\BQ_p}(s+1)}{\zeta_{\BQ_p}(s)}W_0(s,u,\phi).$$ 
    \begin{rmk}  
Here our normalization of the Whittaker integral   is the same with the one in \cite[6.1.1]{YZZ}, and  differs  from the one for  in the nonsplit case, see Remark \ref{w00p}.
\end{rmk}
  Note that $\frac{\zeta_{\BQ_p}(s+1)}{\zeta_{\BQ_p}(s)}$ has a zero at $s=0$, and $ W^\circ_0(s,u,\phi)$ has analytic continuation to  $s=0$. 
  In fact, later we will only consider $\phi$ such that $\phi(0,u)=0$. In this case, we  have  an analog of Lemma    \ref{conhol}  so that $ W _0(s,u,\phi)$ is automatically holomorphic at $s=0$.

For $y\in K_p $ 
and $u\in \BQ_p^\times$,  define
\begin{equation}\label{kphispl}c_{\phi_1\otimes \phi_2} (y,u) =   \phi_1(y,u) \cdot \frac{d}{ds}|_{s=0}W^\circ_{0,p}(s,u,\phi_2) .\end{equation}
For a general  $\phi\in \cS(\BB_p\times \BQ_p^\times)$, we define $c_{\phi}$ by linear extension.  

  We have the following  analog of
Lemma \ref{morek2}, which is deduced from an analog of  Lemma \ref{goodlem} (2).
\begin{lem}\label {aidlem20} 

 Assume that $\phi $ vanishes on $K_p \times \BQ_p^\times$. Then  $c_{\phi}$  extends to a Schwartz function on $K_p \times  \BQ_p^\times$.
 \end{lem}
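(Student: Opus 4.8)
The plan is to reduce Lemma~\ref{aidlem20} to the analogous Schwartz-ness statement in the split case for the one-variable factor $\frac{d}{ds}|_{s=0}W^\circ_{0,p}(s,u,\phi_2)$, exactly as the nonsplit Lemma~\ref{morek2} was deduced from Lemma~\ref{goodlem}~(2). Since $c_{\phi_1\otimes\phi_2}(y,u)=\phi_1(y,u)\cdot\frac{d}{ds}|_{s=0}W^\circ_{0,p}(s,u,\phi_2)$ is already a finite linear combination over a decomposition of $\phi$ into pure tensors, and the $\phi_1$-factor is manifestly Schwartz in $y$ with no singularity, everything comes down to showing that for each pure-tensor piece with $\phi_2(0,u)=0$, the derivative $\frac{d}{ds}|_{s=0}W^\circ_{0,p}(s,u,\phi_2)$ is a well-defined (in particular, the analytic continuation to $s=0$ is holomorphic) bounded function of $u$ with compact support in $u\in\BQ_p^\times$, which then extends across $u=0$ to a Schwartz function. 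The hypothesis ``$\phi$ vanishes on $K_p\times\BQ_p^\times$'' guarantees precisely that in every pure-tensor summand either $\phi_1$ vanishes identically (contributing nothing) or $\phi_2(0,u)=0$ for all $u$, so the degenerate term $\phi_2(0,u)$ that produces the pole of $\frac{\zeta_{\BQ_p}(s+1)}{\zeta_{\BQ_p}(s)}W_0$ at $s=0$ drops out.

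First I would record the split analogue of formula \eqref{Wcirc}: for $p$ split in $K$, the quadratic space $(K_pj_p,uq)$ is split (hyperbolic of rank $2$), so the same local computation of the $a=0$ Whittaker integral as in \cite[Proposition 6.10]{YZZ} gives an expression of the shape
\begin{equation*}
W^\circ_0(s,u,\phi_2)=\gamma_{(K_pj_p,uq)}^{-1}\frac{\zeta_{\BQ_p}(s+1)}{\zeta_{\BQ_p}(s)}\Big(\text{sum of }\int_{K_pj_p}\phi_2(t,u)\,|q(t)|_p^{\,s}\cdot(\dots)\,d_ut\Big),
\end{equation*}
in which, after separating the contribution of a neighborhood of $0\in K_pj_p$ from the contribution of $q(t)\neq 0$, the only term carrying the $\zeta$-denominator's pole at $s=0$ is proportional to $\phi_2(0,u)$. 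When $\phi_2(0,u)=0$ this term is absent, the remaining integral converges absolutely for $\Re s>-\epsilon$, and $W^\circ_0(s,u,\phi_2)$ is holomorphic near $s=0$; this is the promised analogue of Lemma~\ref{conhol}. Differentiating at $s=0$ then produces a finite combination of integrals $\int_{K_pj_p}\phi_2(t,u)\log|q(t)|_p\,(\dots)\,d_ut$, which one analyzes exactly as in the proof of Lemma~\ref{goodlem}~(2): choosing $A$ with $q(\supp\phi_2(\cdot,u))\subset p^A\BZ_p^\times$ shows the result is eventually constant (in fact $0$ once $\ord_p u$ is large) as $\ord_p u\to+\infty$, hence compactly supported in $u$, and locally constant in $u$ on $\BQ_p^\times$ by smoothness of $\phi_2$; together these give the Schwartz extension across $u=0$.

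Finally I would assemble: write $\phi=\sum_i \phi_{1,i}\otimes\phi_{2,i}$ as a finite sum of pure tensors; the vanishing hypothesis lets me arrange, for each $i$, that $\phi_{2,i}(0,u)\equiv 0$ (absorbing any term where $\phi_{1,i}\equiv 0$), so by the previous paragraph each $\frac{d}{ds}|_{s=0}W^\circ_{0,p}(s,u,\phi_{2,i})$ is Schwartz in $u$; multiplying by the Schwartz function $\phi_{1,i}(y,u)$ in the two variables $(y,u)\in K_p\times\BQ_p$ and summing over $i$ shows $c_\phi$ extends to a Schwartz function on $K_p\times\BQ_p^\times$, indeed on $K_p\times\BQ_p$. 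I expect the main obstacle to be purely bookkeeping at $s=0$ in the split case: one must check carefully that the pole of $\frac{\zeta_{\BQ_p}(s+1)}{\zeta_{\BQ_p}(s)}$ (equivalently, the zero of $\zeta_{\BQ_p}(s)^{-1}$) exactly cancels the divergence of the $a=0$ Whittaker integral coming from the $\phi_2(0,u)$-term and nothing else, so that removing that term leaves a genuinely holomorphic object — this is the split-place analogue of the subtlety flagged in Remark~\ref{w00p}, and is where \cite[6.1.1]{YZZ} must be invoked rather than quoted verbatim.
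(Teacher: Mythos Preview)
Your approach is correct and matches the paper's: the paper's own proof is the one-line remark that the lemma is ``deduced from an analog of Lemma~\ref{goodlem}~(2)'', and you have filled in exactly those details---reduce to pure tensors, use the vanishing hypothesis to force $\phi_{2,i}(0,u)=0$ (which works cleanly in the $p$-adic setting since local constancy makes the support of $\phi$ genuinely disjoint from $K_p\times\{0\}\times\BQ_p^\times$), and then observe that the resulting derivative is Schwartz in $u$.

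One slip worth fixing: the factor $\dfrac{\zeta_{\BQ_p}(s+1)}{\zeta_{\BQ_p}(s)}=\dfrac{1-p^{-s}}{1-p^{-s-1}}$ has a \emph{zero} at $s=0$, not a pole (the paper says this explicitly, and your own parenthetical ``the zero of $\zeta_{\BQ_p}(s)^{-1}$'' is the correct statement). The picture is therefore: when $\phi_2(0,u)\neq 0$ the unnormalized $W_0$ has a simple pole which the zeta-zero cancels, while when $\phi_2(0,u)=0$ the unnormalized $W_0$ is already holomorphic at $s=0$, so $W^\circ_0$ vanishes to first order there and
\[
\frac{d}{ds}\Big|_{s=0}W^\circ_0(s,u,\phi_2)=\gamma^{-1}\cdot\Big(\frac{d}{ds}\Big|_{s=0}\frac{\zeta_{\BQ_p}(s+1)}{\zeta_{\BQ_p}(s)}\Big)\cdot W_0(0,u,\phi_2),
\]
a constant multiple of $W_0(0,u,\phi_2)$, whose Schwartz-ness in $u$ is the direct split-place analogue of Lemma~\ref{goodlem}~(2). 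This actually simplifies the ``bookkeeping obstacle'' you anticipated.
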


  
  \subsubsection{A local analytic kernel $k  _p(y,x)$}\label{lokk}
  
  Let $p$ be a  finite  place of $\BQ$  nonsplit in $K$.
 Let $U_p$ be an open compact subgroup of $\BB_p^\times$.
Let $t\in K_p^\times $ act on $B(p)_p^\times$ by  right multiplication  by $t^{-1}$, and act on $\BB_p^\times$  via  left multiplication by $t$.
 Define a function $k_p $ on $  B(p)_p^\times  \times ^{K_p^\times}\BB_p^\times/U_p -\{(1,1)\}$
 by \begin{equation*}
 k_p(y,x )=k_{1_{x^{-1}U_p \times q(x )q(U_p ) }}(y ,q(y^{-1}) ). 
   \end{equation*} 
 (If $y\in K_p^\times$ and $x\not\in K_p^\times$, we use Lemma \ref{morek2}.)
The invariance by $K_p^\times$ can be checked directly using  \eqref{wasu} and  \eqref{kphi}.
Note that $k_p$ in fact depends on $U_p$ though we do not indicate this dependence in the notation. 
Later, we will fix a $U_p$, and 
compare $k_p$ with the multiplicity function $m_p$ defined in \S \ref{ipartm}, see Corollary \ref{arithmatch} and Corollary \ref{finally}.

Now we   study  properties of $k_p$. 
First, we consider its support.
\begin{lem} \label{morek0} 
  If $k_p(y,x)\neq 0$, then $q(y)  q(x ) \in q(U_p)$.

\end{lem}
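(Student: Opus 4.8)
The plan is to unwind the definition of $k_p$ and trace the support condition back to the basic Whittaker integral $W^\circ_a(s,u,\phi)$ described in \eqref{Wcirc}. Recall that by definition
\[
k_p(y,x)=k_{1_{x^{-1}U_p\times q(x)q(U_p)}}(y,q(y^{-1})),
\]
and that for a pure tensor $\phi=\phi_1\otimes\phi_2$ on $\BB_p=K_p\oplus K_pj_p$ the function $k_\phi((y_1,y_2),u)$ is, up to the constant $L(1,\eta_p)/\Vol(K_p^1)$, the product $\phi_1(y_1,u)\cdot\frac{d}{ds}|_{s=0}W^\circ_{uq(y_2)}(s,u,\phi_2)$. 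So the first step is: decompose $1_{x^{-1}U_p}$ (a Schwartz function on $\BB_p$) into a finite sum of pure tensors $\phi_1\otimes\phi_2$ with $\phi_i$ supported on the appropriate component, and reduce the claim to a statement about the support of $\phi_1(y_1,u)$ and of $\frac{d}{ds}|_{s=0}W^\circ_{uq(y_2)}(s,u,\phi_2)$.

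Next I would extract the two constraints. Writing $y=y_1+y_2j(p)\in B(p)_p$ with $y_1\in K_p$, $y_2\in K_p$, and $u=q(y^{-1})$, the factor $\phi_1(y_1,u)$ is nonzero only when $(y_1,u)$ lies in the support of the $K_p$-component of $1_{x^{-1}U_p}$, which forces $y_1$ into a coset determined by $x$ and $U_p$; this already pins down $q(y_1)$ modulo $q(U_p)$-type data. The second factor, $\frac{d}{ds}|_{s=0}W^\circ_{uq(y_2)}(s,u,\phi_2)$, is nonzero — by the series \eqref{Wcirc} and the support analysis underlying Lemma \ref{goodlem}(1) (where it is shown, using that $\phi_2(\cdot,u)$ vanishes on $D_n(a)$ for $a$ large, that $W(a)$ has compact support) — only when $uq(y_2)$ lies in a bounded set controlled by $q$ of the support of $\phi_2$, i.e. by $q(x^{-1}U_p)$. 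Combining these with the identity $q(y)=q(y_1)-\epsilon_p q(y_2)$ (the norm form on $B(p)_p=K_p\oplus K_pj(p)$, cf.\ \ref{matchorb}) and keeping track of $u=q(y)^{-1}$, one concludes $q(y)q(x)\in q(U_p)$. The $K_p^\times$-invariance of $k_p$, already noted in \ref{lokk}, lets us normalize $x$ and makes this bookkeeping clean; the edge case $y\in K_p^\times$, $x\notin K_p^\times$ is handled by Lemma \ref{morek2}, which gives the Schwartz extension of $k_\phi$ in that locus, and the support statement survives the extension since it is a closed condition.

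The main obstacle I anticipate is the careful bookkeeping of normalizations and of the interaction between the variable $u$ (here specialized to $q(y)^{-1}$) and the quadratic form $uq$ appearing inside $W^\circ$: the self-dual measure $d_ut$ and the set $D_n(a)=\{t\in K_pj_p: uq(t)\in a+p^n\BZ_p\}$ both depend on $u$, so one must be careful that the support estimate for $\frac{d}{ds}|_{s=0}W^\circ_{uq(y_2)}(s,u,\phi_2)$ is uniform enough in $u$ to yield a clean multiplicative constraint. This is essentially the same computation as in \cite[6.1]{YZZ} transported to our weight $2k$ setting, and no genuinely new difficulty arises beyond tracking these dependencies; the statement is, as the proof should make explicit, a direct translation of the support properties of the local Whittaker integral.
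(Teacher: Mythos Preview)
Your proposal overcomplicates the argument and the detailed plan you sketch --- tracking separate constraints on $y_1$ and on $uq(y_2)$, then recombining via $q(y)=q(y_1)-\epsilon' q(y_2)$ --- is not the route that actually yields the clean conclusion. The projections of the coset $x^{-1}U_p$ onto $K_p$ and onto $K_pj_p$ are not nice cosets in general, so ``pins down $q(y_1)$ modulo $q(U_p)$-type data'' is vague and does not sharpen to the precise statement $q(y)q(x)\in q(U_p)$; likewise, the support estimate for $W^\circ_a$ from Lemma~\ref{goodlem}(1) only gives $a\in uq(\supp\phi_2(\cdot,u))+\BZ_p$, which does not combine additively with a constraint on $q(y_1)$ to recover a multiplicative condition on $q(y)$.

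The paper's proof is a one-liner that bypasses all of this: look only at the $\BQ_p^\times$-component (the variable $u$). The test function is $\phi=1_{x^{-1}U_p}\otimes 1_{q(x)q(U_p)}$ as a function of $(b,u)\in\BB_p\times\BQ_p^\times$, so $\phi(\cdot,u)\equiv 0$ unless $u\in q(x)q(U_p)$. From \eqref{Wcirc} and \eqref{kphi} it is immediate that $k_\phi(\cdot,u)=0$ whenever $\phi(\cdot,u)=0$, regardless of any pure-tensor decomposition in the spatial variable. Evaluating at $u=q(y)^{-1}$ forces $q(y)^{-1}\in q(x)q(U_p)$, i.e.\ $q(y)q(x)\in q(U_p)$. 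You touch on the $u$-variable in your last paragraph as a ``bookkeeping obstacle'', but in fact it is the entire content of the proof; the spatial analysis you propose is unnecessary.
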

\begin{proof}
Express  $k_{\Phi_p}$ using \eqref{Wcirc} and  \eqref{kphi}, 
 consider  the $\BQ_p^\times$-components of $\BB_p^\times\times\BQ_p^\times $ and  $B(p)_p^\times \times\BQ_p^\times$.
 The lemma follows directly.
  \end{proof}

Second, we consider  $k_p(y,x)$ with $x\in K^\times_p$. By the $K_p^\times$-action, we only need to consider the case $x=1$.

\begin{lem}\label{morek1}There  is an open compact subgroup  $U'$ of $B(p)_p^\times$ such that:  

 (1)  $U'\cap K_p^\times=U_p\cap K_p^\times$ as a subgroup of $K_p^\times$;

(2) the function   $k_p(y,1)-\frac{\ord_p(\lambda(y))}{2}\cdot 1_{U'}\cdot \log p $  on $B(p)_p^\times  -{K_p^\times}$
extends to a smooth function on $B(p)_p^\times$.  Here $\lambda$ is the invariant defined in \S \ref{Notations}.
\end{lem}
\begin{proof}  Let $U_0\subset U_p$ be an open compact subgroup of the form  \begin{equation}U_0= (1+U)\times Uj\subset \BB_p=K_p\oplus K_pj_p\label{equa}
\end{equation}where   $U $ is an open compact subgroup    of $K_p^\times$ (small enough so that $U_0$ is indeed a  group).
Let $\{t_1,...,t_n\}$ be a set  of representatives of $(K_p^\times U_0\cap U_p)/U_0$. 
Let \begin{equation*} 
 k _i(y )=k_{ t_i  1_{ U_0 \times  q(U_p ) }}(y ,q(y^{-1}) ) 
   \end{equation*} 
   and  express  $k_i(y)$ using \eqref{Wcirc} and  \eqref{kphi}.  
 By  Lemma \ref{goodlem} (3),  $k_i(y)$  satisfies the conditions in the lemma.  
Let $k(y)=\sum_{i=1}^n k_i(y) .$ By the $K_p^\times$-invariance of $\lambda$,  $k(y)$ also satisfies the conditions in the lemma.  
Note that  $$1_{ U_p \times  q(U_p )} -\sum_{i=1}^n  1_{ t_i U_0 \times  q(U_p ) }$$ vanishes on $K_p \times \BQ_p^\times$.
By Lemma \ref{morek2}, $k_p(y,1)-k(y)$ extends to a Schwartz function on $B(p)_p \times  \BQ_p^\times$.
Then the lemma follows.  \end{proof}

By Lemma \ref{morek2}  and Lemma \ref{morek1}, we have the following corollary.
\begin{cor}\label{morek20} The function $ k_p $  on  $B(p)_p^\times  \times ^{K_p^\times}\BB_p^\times/U_p-\{(1,1)\}$ is smooth.\end{cor}

Finally,  we consider $k_p(y,x)$ with $x$ far away from $K^\times_p$, in the sense that $\lambda(x)$ is far away from 0. 

\begin{lem}\label{morek}Assume that $\BB_p $ is the matrix algebra.
 Let $V$ be an open compact neighborhood of $0$ in $\BQ_p$. For  $U_p$ small enough and $V$ large enough, 
 $ k_p(y,x )=0$ for all $y\in B(p)_p^\times$ and  $x\in \BB_p^\times$ with $\lambda(x)\not \in V $.
 
 \end{lem}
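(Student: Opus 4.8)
The plan is to combine the explicit formula \eqref{Wcirc} for $W^\circ$ with a direct computation of the $K_pj$-component of $x^{-1}u_0$ for $u_0\in U_p$, and to show that $k_p(y,x)\neq 0$ forces $|\lambda(x)|_p$ to be small once $U_p$ is taken small. Fix an embedding $K_p\incl\BB_p=M_2(\BQ_p)$ and a decomposition $\BB_p=K_p\oplus K_pj$; since the two $\BQ_p$-linear projections are continuous, they carry the compact set $M_2(\BZ_p)$ into bounded subsets of $K_p$ and of $K_pj$. We may assume $|\epsilon|_p=1$. Take $U_p=1+p^rM_2(\BZ_p)$ with $r$ large; I will show $k_p(y,x)=0$ whenever $\lambda(x)\notin\BZ_p$ (a slightly larger ball is needed at $p=2$), so that $V=\BZ_p$ works.

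First I would unwind the definition. Decomposing $1_{x^{-1}U_p}$ into a finite sum of products of Schwartz functions on $K_p$ and on $K_pj$ and substituting \eqref{Wcirc}, one gets
\[ k_p(y,x)=\frac{L(1,\eta_p)}{\Vol(K_p^1)}\,1_{q(U_p)}\!\big(q(x)q(y)\big)\cdot\log p\sum_{n\geq 0}p^n\int_{D_n(\lambda(y))}1_{x^{-1}U_p}(y_1+t)\,d_u t, \]
where $y=y_1+y_2$ with $y_1\in K_p$, $y_2\in K_pj(p)$, $u=q(y)^{-1}$ (so $uq(y_2)=\lambda(y)$), and $D_n(a)=\{t\in K_pj:uq(t)\in a+p^n\BZ_p\}$; the value $y\in K_p^\times$, where $\lambda(y)=0$, is covered by the extension in Lemma \ref{morek2}. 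As the $D_n(a)$ are nested, $k_p(y,x)\neq 0$ forces (i) $q(x)q(y)\in q(U_p)$, hence $|u|_p=|q(x)|_p$, and (ii) the existence of $u_0\in U_p$ with $x^{-1}u_0=y_1+t^\ast$, $t^\ast\in K_pj$, and $uq(t^\ast)\in\lambda(y)+\BZ_p$.

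Next I would compute $q(t^\ast)$. Writing $x=a_0+b_0j$ and $u_0=\gamma_0+\delta_0j$ (and noting that $a_0b_0=0$ forces $\lambda(x)\in\{0,1\}\subset\BZ_p$, so we may assume $a_0b_0\neq 0$), a short multiplication using $jz=\bar zj$ shows that the $K_pj$-component of $x^{-1}u_0$ is $\tfrac{\bar a_0\delta_0-b_0\bar\gamma_0}{q(x)}\,j$, whence with $w:=q(x)q(y)\in q(U_p)$ and $\lambda(x)=-\Nm(b_0)\epsilon/q(x)$,
\[ uq(t^\ast)=\frac{\lambda(x)}{w}\,\Nm\!\Big(\frac{\bar a_0\delta_0}{b_0}-\bar\gamma_0\Big). \]
For $r$ large, $w$ and $\gamma_0$ are units congruent to $1$, while $|\delta_0|_{K_p}\leq Cp^{-r}$ for a constant $C$ depending only on the decomposition. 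Since $B(p)_p$ is a division algebra, $\inv(y)\notin\Nm(K_p^\times)$; an elementary argument (using $1+p\BZ_p\subset\Nm(K_p^\times)$ when $p$ is odd, with the obvious modification at $p=2$) then gives $|\lambda(y)|_p\leq 1$. Hence condition (ii) yields $uq(t^\ast)\in\BZ_p$, i.e. $|\lambda(x)|_p\,\big|\Nm(\tfrac{\bar a_0\delta_0}{b_0}-\bar\gamma_0)\big|_p\leq 1$.

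Finally, suppose $|\lambda(x)|_p>1$. Then $\big|\tfrac{\bar a_0\delta_0}{b_0}-\bar\gamma_0\big|_{K_p}<1=|\bar\gamma_0|_{K_p}$, so the ultrametric inequality forces $\big|\tfrac{\bar a_0\delta_0}{b_0}\big|_{K_p}=1$; since $\big|\tfrac{\bar a_0}{b_0}\big|_{K_p}=|\inv(x)|_p^{-1}$, this gives $|\inv(x)|_p=|\delta_0|_{K_p}\leq Cp^{-r}$, which is $<1$ for $r$ large. But then $|\lambda(x)|_p=|\inv(x)|_p/|1-\inv(x)|_p=|\inv(x)|_p<1$, contradicting $|\lambda(x)|_p>1$. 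Thus for $r$ large enough $k_p(y,x)=0$ whenever $\lambda(x)\notin\BZ_p$, which proves the lemma. I expect the most delicate steps to be the bookkeeping that extracts the support condition (ii) from the derivative formula and the uniform bound $|\lambda(y)|_p\leq 1$ (which rests on $B(p)_p$ being division); granting these, the conclusion is a one-line ultrametric estimate, modulo checking that the projection constant $C$ is genuinely independent of $x$ and $y$.
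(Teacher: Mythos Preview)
Your proof is correct and follows the same overall strategy as the paper: unwind $k_p$ via the explicit formula \eqref{Wcirc}, use Lemma~\ref{morek0} to pin down $q(x)q(y)\in q(U_p)$, exploit that $B(p)_p$ is division to bound $\lambda(y)$, and then show the support condition coming from $D_n$ is incompatible with $\lambda(x)$ being large. The execution differs in two mild ways. First, the paper begins by using the $K_p^\times$-action to reduce $y$ to a fixed compact $C\subset B(p)_p^\times$ (again because $B(p)_p$ is division), which lets it argue qualitatively that the $K_pj$-projection of $x^{-1}U_p$ lands at a single large valuation; you skip this reduction and instead compute $x^{-1}u_0$ explicitly, extracting the clean identity $uq(t^\ast)=\tfrac{\lambda(x)}{w}\Nm(\tfrac{\bar a_0\delta_0}{b_0}-\bar\gamma_0)$ and then using the uniform bound $|\lambda(y)|_p\le 1$. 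Second, the paper takes $U_p$ of the ``split'' shape \eqref{equa}, while you take a principal congruence subgroup $1+p^rM_2(\BZ_p)$; either works. Your version has the pleasant feature that it yields a concrete $V$ (essentially $\BZ_p$) rather than an unspecified ``large enough'' neighborhood. A couple of harmless imprecisions to tidy: the normalization of $|\cdot|_{K_p}$ in the step $|\bar a_0/b_0|_{K_p}=|\inv(x)|_p^{-1}$ should be stated (you are using $|z|_{K_p}=|\Nm z|_p$), and a factor of $|u|_p$ from the self-dual measure $d_ut$ is suppressed in your displayed formula for $k_p$, but neither affects the vanishing argument.
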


\begin{proof} Fix a compact subset $C$ of  $  B(p)_p^\times $ such that  $$C\times \BB_p^\times \to B(p)_p^\times  \times ^{K_p^\times}\BB_p^\times $$ is surjective (here we use the fact that $B(p)_p$ is the division algebra, which follows from the assumption that  $\BB_p $ is the matrix algebra).  Since $\lambda$ is $K_p^\times$-invariant, we may and we do    assume that $y\in C$.

By Lemma \ref{morek0},  $ k_p(y,x )= 0$ unless 
$q(x^{-1})\in q(C)q(U_p)$. Now assume $q(x^{-1})\in q(C)q(U_p)$.
 Write $x =(x_1,x_2)$ under  $\BB_p=K_p\oplus K_pj_p$.   Then $\lambda(x)=q(x_2)/q(x)\not \in V $   with $V$ large enough 
 is equivalent to that $\ord_p (q(x_2))$  is small enough. Since $q(x)$ is bounded, $\ord_p (q(x_2))=\ord_p (q(x_1))$. Choose $U_p$ to be as in \eqref{equa}.
  For $u_1\in U,u_2\in Uj$, $$x ( u_1+u_2)=  (x_1u_1+x_2u_2)+(x_2u_1+x_1u_2) $$
Choose $U $ small enough  such that $\ord_p (q(x_2u_1+x_1u_2))=\ord_p (q(x_2))$  for every pair of $u_1\in 1+U ,u_2\in Uj$.  
Thus for the integrand $\phi_2(t,1)$ in \eqref{Wcirc} to be nonzero, we necessarily have $\ord_p (q(t))=\ord_p (q(x_2))$.
 Now we check the condition $ q(t)\in q(y_2)+p^n  \BZ_p$ defining the integration domain $D_{n}(q(y_2))$ of \eqref{Wcirc}, where $n\geq 0$.  
Let $V$ be large enough such that $\ord_p (q(t))=\ord_p (q(x_2))<0$. Since $B(p)_p $ is the division algebra, $q(y_2)\not\in \Nm(K_p^\times)$. Thus 
$D_{n}(q(y_2))$ is empty, and $ k_p(y,x )=0$.  \end{proof}

\subsection{Global Whittaker functions}
 Now we study  the  Whittaker functions of the mixed theta-Eisenstein series, in particular, their holomorphic projections.
   
 \subsubsection{Schwartz functions}

   We further require    $\BB_\infty$ to be division (so the Hamilton quaternion algebra). 
Then in the   decomposition $\BB=\BA_K\bigoplus  \BA_Kj$,   $j_\infty^2>0$. 
Recall that we fixed  an integer $k>1$.
Let  \begin{equation}\Phi_\infty=\Phi_{\infty,1}\otimes \Phi_{\infty,2}\label{phiinf}\end{equation} where  $\Phi_{\infty,1} $ is   the standard  Gaussian on $ K_\infty $ of weight $1$  
and $\Phi_{\infty,2} $ is the    standard  Gaussian on $ K_\infty j $ of weight $2k-1$   defined  in \S \ref{Gaussian}.
For  $\Phi^\infty\in \cS(\BB^{\infty }\times   \BA^{ \infty,\times})$,
let $\Phi=\Phi^\infty\otimes \Phi_{\infty}$. 
 
 \subsubsection{Holomorphic projection}
 For an  automorphic form $f$ on $\GL_2(\BA)$, let $ \Pr f$ be the orthogonal projection of $f$ to the space of holomorphic cusp forms on $\GL_2(\BA)$
of weight $2k$. I.e. for every holomorphic cusp form $\varphi$ on $\GL_2(\BA)$
of weight $2k$, the Petersson inner product of $f$ and $\varphi$ equals the one of $ \Pr f$ and $\varphi$.
Recall that we have fixed the standard additive character $\psi$ of $\BA$ (see \cite[1.6.1]{YZZ}). Denote the $\psi$-Whittaker function of $f$  by $  f_\psi$.
Then   $\Pr f_\psi$
  can be explicitly computed as follows. 

For   a positive integer   $m$, the standard holomorphic   Whittaker function on $\GL_2(\BR)$ of weight $m$ is 
 $$ W^{(m)}(h)=|y|^{m/2} e^{2\pi i   (x+yi)} e^{mi\theta} 1_{\BR_{>0}}({ y})$$ 
where $h$ has Iwasawa decomposition  as in \eqref{iwad}. 

 \begin{prop} \label{holop}  Assume  $$f \lb \begin{bmatrix}a&0\\
 0&b\end{bmatrix} g \rb=  O_{g }\lb (\|a \|\|b\|)^{k-\ep}\rb$$ for some $\ep>0$.
 Then $$f_{\psi}(g)=W^{(2k)}(g_\infty)\int_{Z N \bsl \GL_2(\BR)}  f_\psi(gh)\ol{W^{(2k)}(h)}dh,$$ where $Z$ is the center of $\GL_2(\BR)$ and  $N$ is the upper triangular unipotent  subgroup of $\GL_2(\BR)$.

  \end{prop}

\begin{proof} 
 This is the adelic version of \cite[p. 288, (5.1)]{GZ}. One may also prove it in a way similar to  \cite[Proposition 6.12]{YZZ}   which is in the weight 2 case.  
  \end{proof} 
\subsubsection{Theta  series}\label{Theta  series}
Before the holomorphic projection of  the  Whittaker function of the mixed theta-Eisenstein series, we study the one of a  theta series, which is simpler and will also be used.

Assume that $\BB=B_\BA$ where $B$ is a quaternion algebra over  $\BQ$.
Then we have the theta series  $\theta(g,h,\Phi)$  defined  in \eqref{theta}.

\begin{lem} For   $a,b\in \BR^\times,$    we have  
  $$\theta\lb \begin{bmatrix}a&0\\
 0&b\end{bmatrix} g  ,h,\Phi\rb=O_{g,h}(\|a \|\|b\|)$$
 where $\|a\|=\max\{|a|,|a^{-1}|\}$.
\end{lem}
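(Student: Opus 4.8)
The plan is to let the torus element act explicitly through the Weil representation and then reduce everything to a classical lattice–point estimate, keeping careful track of the dependence on $a$ and $b$.

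First I would absorb $g$ and $h$. Since $r$ is a genuine representation of $\GL_2(\BA)\times\GO(\BB)$ on $\cS(\BB\times\BA^\times)$ in the extended sense of Remark~\ref{extweil}, and this global representation is the restricted tensor product of the local ones, the function $\wt\Phi:=r(g,h)\Phi$ is again a pure tensor $\wt\Phi^\infty\otimes\wt\Phi_\infty$ with $\wt\Phi^\infty=r(g^\infty,h^\infty)\Phi^\infty\in\cS(\BB^\infty\times\BA^{\infty,\times})$ and $\wt\Phi_\infty=r(g_\infty,h_\infty)\Phi_\infty$ still satisfying $\wt\Phi_\infty(\cdot,u_0)\in\cS(\BB_\infty)$ for each $u_0$, and
\begin{equation*}
\theta\bigl(\diag(a,b)\,g,\,h,\,\Phi\bigr)=\sum_{(x,u)\in B(\BQ)\times\BQ^\times} r\bigl(\diag(a,b),1\bigr)\wt\Phi(x,u).
\end{equation*}
Using the Iwasawa decomposition of $g_\infty$ and the fact that $\Phi_\infty$ has pure weight, one checks that $|\wt\Phi_\infty(y,u_0)|$ equals a constant (depending on $g,h$) times $|\Phi_\infty(Ay,\lambda u_0)|$ for a fixed $A\in\GL(\BB_\infty)$ and a fixed $\lambda>0$; here the positivity of the $u$-rescaling under the $h_\infty$-action uses $\nu(h_\infty)>0$, which holds because $\BB_\infty$ is positive definite. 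Since the standard Gaussians of weights $1$ and $2k-1$ are supported in $u_0>0$ and are dominated there by $e^{-c\,u_0|y|^2}$ after absorbing the polynomial prefactor $p_l$, one gets constants $\kappa,C>0$ depending on $g,h$ with $|\wt\Phi_\infty(y,u_0)|\le C\,e^{-\kappa u_0|y|^2}\,1_{u_0>0}$ on $\BB_\infty\times\BR^\times$.

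Next I would apply the torus formulas. Writing $\diag(a,b)=\diag(a,a^{-1})\diag(1,ab)$ and using $\dim_\BR\BB_\infty=4$, the two torus formulas of the Weil representation compose to
\begin{equation*}
r\bigl(\diag(a,b),1\bigr)\wt\Phi_\infty(x,u)=\chi_{(\BB_\infty,uq)}(a)\,\frac{|a|}{|b|}\,\wt\Phi_\infty\bigl(ax,(ab)^{-1}u\bigr),
\end{equation*}
so, taking absolute values (the character has modulus $1$),
\begin{equation*}
\bigl|\theta(\diag(a,b)g,h,\Phi)\bigr|\le\frac{|a|}{|b|}\sum_{u}\sum_{x}\bigl|\wt\Phi^\infty(x,u)\bigr|\,\bigl|\wt\Phi_\infty\bigl(ax_\infty,(ab)^{-1}u\bigr)\bigr|.
\end{equation*}
As $\wt\Phi^\infty$ is Schwartz, only finitely many $u\in\BQ^\times$ contribute, and for each such $u$ the function $\wt\Phi^\infty(\cdot,u)$ is bounded and supported, after intersection with $B(\BQ)$, in a finite union of cosets of full $\BZ$-lattices in $\BB_\infty$. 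A term with given $u$ vanishes unless $(ab)^{-1}u>0$, in which case $|(ab)^{-1}u|\cdot|ax_\infty|^2=\tfrac{|u|\,|a|}{|b|}\,|x_\infty|^2$, so the bound on $\wt\Phi_\infty$ and the classical estimate $\sum_{x\in\Lambda+v}e^{-\pi s|x|^2}\ll_\Lambda\max(1,s^{-2})$ for a full lattice $\Lambda\subset\BR^4$ and $s>0$ (immediate for $s\ge1$, Poisson summation for $s<1$) give $\sum_x|\wt\Phi_\infty(ax_\infty,(ab)^{-1}u)|\ll_{g,h}\max(1,|b|^2/|a|^2)$. Summing over the finitely many $u$ and multiplying by $|a|/|b|$ yields
\begin{equation*}
\bigl|\theta(\diag(a,b)g,h,\Phi)\bigr|\ll_{g,h}\max\Bigl(\frac{|a|}{|b|},\frac{|b|}{|a|}\Bigr)\le\|a\|\,\|b\|,
\end{equation*}
using $|a|/|b|\le\|a\|\|b\|$ and $|b|/|a|\le\|a\|\|b\|$. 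The one step that needs care is the bookkeeping of scales in the last part: after the torus action the Gaussian is evaluated at the dilated point $\bigl(ax,(ab)^{-1}u\bigr)$, so its effective width is governed by $|u|\,|a|/|b|$ rather than by $|a|$ or $|ab|$ separately, and this is exactly what makes the prefactor $|a|/|b|$ and the lattice bound $\max(1,|b|^2/|a|^2)$ combine into the symmetric answer; the remaining verifications (the composed torus formula, the Gaussian domination for $\wt\Phi_\infty$, and the lattice estimate) are routine.
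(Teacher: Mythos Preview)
Your argument is correct and gives the bound, but it takes a different route from the paper.

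The paper's proof is a two-line appeal to the standard dichotomy for automorphic forms: the difference $\theta-\theta_N$ between a theta series and its constant term along the unipotent is rapidly decreasing, so the growth is governed entirely by the constant term. Since $q$ is anisotropic on $B$ (as $B_\infty$ is division), that constant term collapses to the finite sum $\sum_{u}r(g,h)\Phi(0,u)$, whose size under $\diag(a,b)$ is read off immediately from the Weil-representation formulas as $O(|a|/|b|)\le O(\|a\|\|b\|)$. The non-constant part is handled by the general theory rather than by any estimate specific to this $\Phi$.

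Your approach instead treats all terms uniformly: you push $\diag(a,b)$ through the Weil representation, dominate $r(g_\infty,h_\infty)\Phi_\infty$ by a Gaussian, and bound the full lattice sum $\sum_{x}e^{-s|x|^2}$ by $\max(1,s^{-2})$. This is more elementary and self-contained (no appeal to the reduction-theory statement that $\theta-\theta_N$ is rapidly decreasing), and it produces the sharper symmetric bound $\max(|a|/|b|,|b|/|a|)$ directly. The price is the explicit bookkeeping you carry out; the paper's version trades that for a citation of a standard fact. One small remark: your claim that $|\wt\Phi_\infty(y,u_0)|$ \emph{equals} a constant times $|\Phi_\infty(Ay,\lambda u_0)|$ is literally true here because $r(1,h_\infty)$ acts by a real change of variable and $\Phi_\infty$ has pure $K_\infty$-weight (so that $|r(g_\infty,1)\Phi_\infty(v,u)|=|y_g|\,|\Phi_\infty(v,y_gu)|$ from \eqref{hui1}), but for the argument you only need the Gaussian domination you immediately pass to, so the proof is robust even if one is uneasy about the exact equality.
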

\begin{proof} 
Only need to check the constant term of $\theta(g,h,\Phi)$, which is the finite sum 
$$\sum_{u\in \BQ^\times} r(g,h)\Phi(0,u).$$  The growth is checked directly by definition.
\end{proof}

Then the $\psi$-Whittaker function $\Pr \theta(g,h,\Phi )_\psi$ of the holomorphic projection of    $\theta(g,h,\Phi)$  can be  computed using  Proposition \ref{holop}  and \eqref{hui1}.  Then we have   
\begin{equation} \label{Prtheta} \Pr \theta(g,h,\Phi )_\psi= \frac{(4\pi)^{k-1}  (k-1)!}{(2k-2)!} \sum_{y \in B^\times }  r(g^\infty,h^\infty) \Phi^\infty (y,q(y)^{-1})  W^{(2k)}_{1}(g_\infty)   \cdot  P_{k-1}(1-2\lambda (y) ) 
\end{equation} 
 if  $h_\infty$ fixes $\Phi_\infty$.
 Here
$W^{(2k)}_{1}$ is the  standard holomorphic  Whittaker function (see   \S \ref{Gaussian}),  
$P_{k-1}(t)$ is the $(k-1)$-th Legendre polynomial (see \S \ref{Notations}) and $\lambda$ is the invariant defined in \S \ref{Notations}.
\begin{rmk}
The computation is  essentially the first integral in the second displayed formula on \cite[p. 293]{GZ}, except now we are the adelic setting. 
  \end{rmk}


\subsubsection{Mixed theta-Eisenstein series}\label{mtes}
Assume that $\BB$ is  an  incoherent quaternion algebra   over $\BA_F$ (see \S \ref{Decomposition under the pure  matching condition}). 
In other words, for an odd number of places $v$ of $\BQ$  (including $\infty$),  $\BB_v$  is the division quaternion algebra over $\BQ_v$. 
For a place $v$ of $\BQ$ nonsplit in $K$, let $B(v)$ be the unique quaternion algebra over $\BQ$ such that $B(v)(\BA_F^v)\cong \BB^v$, and fix an embedding $K\incl B(v)$ so that  $\BA_K^v\subset B(v)(\BA_F^v)$ is identified with the $\BA_K^v\subset  \BB^v$.
 Let $B(p)_p$ be the   quaternion algebra over $\BQ_p$ non-isomorphism to $\BB_p$. 
Fix a decomposition $B(v)= K\bigoplus   K j(v)$ as in \S \ref {matchorb}.

Recall the  mixed theta-Eisenstein  series  $I(s,g,\Phi)$ on $  \GL_2(\BA)$   defined   in \eqref{mix}.
Let $I'(0,g,\Phi)$ be  the  derivative  at $s=0$.
\begin{lem} For   $a,b\in \BR^\times,$  we have   
$$I'\lb 0,\begin{bmatrix}a&0\\
 0&b\end{bmatrix} g  ,\Phi\rb=O_{g}\lb \|a \|(\log \|a\|)\|b\|(\log \|b\|)\rb$$
 where $\|a\|=\max\{|a|,|a^{-1}|\}$.
\end{lem}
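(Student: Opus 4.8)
The plan is to reduce the growth estimate for $I'(0,g,\Phi)$ to the corresponding statement for the mixed theta-Eisenstein series itself plus elementary bounds on the theta series. Recall from \eqref{mix} that $I(s,g,\Phi)=\sum_{u\in\BQ^\times}\theta(g,u,\Phi_1)E(s,g,u,\Phi_2)$, and that this sum is finite (only finitely many $u$ contribute, by the support of the finite component of $\Phi$ together with the Gaussian at $\infty$ forcing $uy>0$ and a bounded range). So it suffices to estimate each summand $\theta(g,u,\Phi_1)E(s,g,u,\Phi_2)$ and its $s$-derivative at $s=0$, uniformly in the finitely many relevant $u$. The product rule then gives $I'(0,g,\Phi)=\sum_u\bigl(\theta(g,u,\Phi_1)E'(0,g,u,\Phi_2)\bigr)$ together with terms where the derivative hits $\theta$, which here is $s$-independent, so only the Eisenstein factor is differentiated.

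First I would handle the theta factor: by the lemma in \ref{Theta series} (the growth lemma for $\theta(g,h,\Phi)$) — or rather its direct analogue for $\theta(g,u,\Phi_1)$, which is proved the same way by inspecting the constant term — we have $\theta\bigl(\begin{bmatrix}a&0\\0&b\end{bmatrix}g,u,\Phi_1\bigr)=O_g(\|a\|\|b\|)$. Then I would handle the Eisenstein factor: the standard estimate for (derivatives of) Eisenstein series of $\GL_2$ under the diagonal torus gives, at $s=0$, a bound of the shape $O_g\bigl(\|a\|(\log\|a\|)\|b\|(\log\|b\|)\bigr)$ for $E'(0,\cdot,u,\Phi_2)$ — the extra logarithmic factors are exactly what one picks up from differentiating the $\delta(\gamma g)^s$ factor in \eqref{Eis} at $s=0$, since $\tfrac{d}{ds}\delta^s|_{s=0}=\log\delta$, and $\delta$ grows polynomially in $\|a\|,\|b\|$ along the torus. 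Multiplying the two bounds, summing over the finitely many $u$, and absorbing the mixed lower-order terms (the theta bound $\|a\|\|b\|$ is dominated by $\|a\|(\log\|a\|)\|b\|(\log\|b\|)$ for $\|a\|,\|b\|\geq 1$) yields the claimed estimate.

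The one point that needs a little care — and which I expect to be the main obstacle — is getting the Eisenstein growth bound with the logarithmic factor \emph{uniformly} and with the correct archimedean normalization, given that $\Phi_{2,\infty}$ is the standard Gaussian of weight $2k-1$ rather than a generic Schwartz function. Here I would use that $r(g,1)\Phi_{2,\infty}(0,u)$ is explicitly computed in \eqref{hui2} as $|y|^{1/2}e^{(2l+1)i\theta}1_{\BR_{>0}}(uy)$, so the archimedean section in \eqref{Eis} is the standard weight-$(2k-1)$ flat section (after the $\delta^s$ twist), whose Eisenstein series and its $s$-derivative have classical growth estimates along the torus; the finite part contributes only a bounded factor since $\Phi_2^\infty$ is Schwartz and the relevant $u$ range over a finite set. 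Combining the archimedean and finite estimates and using that $P(\BQ)\backslash\GL_2(\BQ)$-summation converges for $\Re s$ large with holomorphic continuation to $s=0$ (as recalled after \eqref{Eis}), one gets the bound on $E'(0,\cdot,u,\Phi_2)$; the rest is the product-rule bookkeeping described above.
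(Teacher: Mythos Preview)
Your overall strategy—decompose $I(s,g,\Phi)$ as a finite sum of $\theta(g,u,\Phi_1)\cdot E(s,g,u,\Phi_2)$ and bound each factor—is the right shape and matches what the paper (via \cite[Lemma 6.13, Proposition 6.14]{YZZ}) does. But the exponents you quote are too coarse, and the multiplication then fails. You claim $\theta\bigl(\mathrm{diag}(a,b)g,u,\Phi_1\bigr)=O(\|a\|\|b\|)$ and $E'\bigl(0,\mathrm{diag}(a,b)g,u,\Phi_2\bigr)=O(\|a\|(\log\|a\|)\|b\|(\log\|b\|))$; the product of these is $O\bigl(\|a\|^2\|b\|^2(\log\|a\|)(\log\|b\|)\bigr)$, not the target. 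The parenthetical about the theta bound being dominated by the target is irrelevant here—you are bounding a product, not a sum.

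What actually works is the sharp half-exponent for each factor. The theta series on the \emph{two}-dimensional space $K_\infty$ with the weight-$1$ Gaussian has constant term $|y|^{1/2}$ by \eqref{hui2}, so $\theta(g,u,\Phi_1)=O_g(\|a\|^{1/2}\|b\|^{1/2})$. (The lemma you cite is for the four-dimensional theta and gives exponent $1$; you cannot just transplant that bound.) For the Eisenstein factor, the constant term of $E'(0,g,u,\Phi_2)$ has two pieces: the derivative of the section $\delta(g)^s r(g,1)\Phi_{2,\infty}(0,u)$, which by \eqref{hui2} is $O(|y|^{1/2}\log|y|)$, and the derivative at $s=0$ of the $0$-th Whittaker integral $W_0(s,u,\Phi_{2,\infty})$ evaluated at $g$. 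This second piece is exactly what the paper isolates as the only genuinely new archimedean input beyond the weight-$2$ case, and it is \emph{not} a ``classical growth estimate'' one can wave at: the paper invokes \cite[Lemma 2.3 (1)]{Yan} to get $W\bigl(\mathrm{diag}(a,b)g\bigr)=O_g\bigl(\|a\|^{1/2}(\log\|a\|)\|b\|^{1/2}(\log\|b\|)\bigr)$. Once both factors carry exponent $1/2$, their product gives the claimed $O\bigl(\|a\|(\log\|a\|)\|b\|(\log\|b\|)\bigr)$.

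So the gap is twofold: first, fix the exponents to $1/2$ for both the two-dimensional theta and the two-dimensional Eisenstein pieces; second, for the Eisenstein derivative you must actually control the archimedean $W_0'(0,\cdot,\Phi_{2,\infty})$ for the weight-$(2k-1)$ Gaussian, which is the substantive step the paper extracts from Yang's explicit formulas.
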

\begin{proof} The growth is computed in \cite[Lemma 6.13]{YZZ}  and  \cite[Proposition  6.14]{YZZ} in the weight 2 case, and the same computation can be done in our weight $2k$ setting.  Let  $W(g)$ be the derivative   at 0 of the  $0$-th Whittaker function of $\Phi_{\infty,2}$ (normalized or not). Then we only need   to show that  $$W\lb \begin{bmatrix}a&0\\
 0&b\end{bmatrix} g   \rb  =O_g\lb \|a \|^{1/2}(\log \|a\|)\|b\|^{1/2}(\log \|b\|)\rb.$$ This fact follows  from \cite[Lemma 2.3 (1)]{Yan}.
\end{proof}

We want to give an expression of $ \Pr I'(0,g,\Phi)_\psi $ that is similar to \eqref{Prtheta}. To lighten notations, we were only consider $ \Pr I'(0,1,\Phi)_\psi $, which will be enough for purpose.
We further
assume the following assumption.
\begin{asmp}\label{asmp20}
There  is a finite places $p_0$ nonsplit in $K$ such that $\Phi=\Phi^{p_0}\otimes \Phi_{p_0}$ with $\Phi_{p_0}$ vanishes on $ K_{p_0}\times  \BQ_{p_0}^\times$.
 \end{asmp}
One effect of Assumption \ref{asmp20} is   the following lemma.
\begin{lem} \label{wcl}Let $\phi=\phi^\infty\otimes \phi_{\infty}$ where $\phi^\infty\in \cS(\BV_2^\infty \times \BA^{ \infty,\times})$ is a pure tensor and  
 $\phi_{\infty}$ is a smooth function   on $\BV_{2,\infty} \times \BR ^\times$  such that $\phi_{\infty}(x,u)\in \cS(\BV_{2,\infty})$ for every $u$.
Let $E_0'(0,1,u)$ be the value at $g=1$ of the constant term of  $\frac{d}{ds}|_{s=0}E(s,g,u,\phi)$ (see
 \ref{Eis}).
  Assume that $\phi=\phi_{p^0}\otimes \phi_{p_0}$ such that
  $\phi_{p_0}(0,u)=0$. Then
  $$E_0'(0,1,u)=2   \frac{L(1,\eta_{p_0})}{\Vol(K_{p_0}^1)} \cdot \frac{d}{ds}|_{s=0}  W_{0,p_0}^\circ(s,u,\phi_{p_0})\cdot  \phi_0^{p_0}(0,u).$$
 
    \end{lem}
\begin{proof} The lemma follows from the same reasoning as in  the proof of   \cite[Proposition 6.7]{YZZ} (with different normalization, see Remark \ref{w00p}).   \end{proof}

  For a finite place $p $ nonsplit in $K$,   
let  
\begin{equation} \label{Prmix2} \cK^{(p)}_{\Phi}   = \sum_{y \in  B(p) ^\times   }   \lb \Phi^{p,\infty}  \cdot  k _{   \Phi_p}\rb (y,q(y)^{-1})    \cdot   P_{k-1}(1-2\lambda (y) )  \cdot  W^{(2k)}_{1}(1) 
\end{equation}  where $W^{(2k)}_1$,  $P_{k-1}$, and $\lambda$  are the ones  in
\eqref{Prtheta}. This is a finite sum.
If $p=p_0$, $k _{   \Phi_p}(y,q(y)^{-1}) $ is well-defined for $y\in K^\times$ by Assumption  \ref{asmp20} and Lemma \ref{morek2}.
 If $p\neq p_0 $, the summand  in \eqref{Prmix2} corresponding to $y\in K^\times$  is  understood as 0 by Assumption  \ref{asmp20}. 
 For the infinite place, let  
\begin{equation} \label{Prmix3}\cK^{(\infty)}_{\Phi} =   \sum_{ y  \in  B( \infty) _{>0}-K^\times }   \Phi^\infty (y, q(y)^{-1})    \cdot    Q_{k-1}(1-2\lambda (y)  )   \cdot  W^{(2k)}_{1}(1) ,
\end{equation} where $Q_{k-1}$ is the Legendre function  of the second kind (see \S \ref{Notations}).

The $\psi$-Whittaker function $\Pr I'(0,g,\Phi)_\psi $ of  the (cuspidal) holomorphic projection $\Pr I'(0,g,\Phi) $ of    $I'(0,g,\Phi)$  is  computed using  Proposition \ref{holop}. 
 By \eqref{hui2},  \cite[Lemma 2.3 (2) (3)]{Yan} and the proof of (which is the discussion above) \cite[p. 294, (5.8)]{GZ},  
 we have   
 \begin{equation} \label{Prmix1} \Pr I'(0,1,\Phi)_\psi = - 2 \sum_{v  } \int_{\BA^\times K^\times\bsl\BA_K^\times } \cK^{(v)}_{r(1, [t,t]) \Phi}   dt 
\end{equation}
where the sum is over all nonsplit places of $\BQ$ and   the volume of  $\BA^\times K^\times\bsl\BA_K^\times$ is chosen to be 1.  
Note that $r(1, [t,t]) \Phi$ also satisfies the condition in Assumption \ref{asmp20}.


 \begin{rmk}\label{formally} Formally, the above result is the same as  \cite[Proposition 6.5, Proposition 6.15]{YZZ}, which is in the weight 2 case.    \end{rmk}

 \subsection{Local comparison II}\label {Local comparison II}
  We compare the    height  pairing  $H(f )  = \pair{  Z_{\Omega,f },z_{\Omega^{-1}}} $
 with  $ \Pr I'(0,1,\Phi)_\psi$, the derivative of the mixed Eisenstein series (more precisely,  the value at $g=1$ of  its $\psi$-Whittaker function) 
 computed in \eqref {Prmix1}. The main conclusion is 
 an arithmetic mixed Siegel--Weil formula
 \eqref{concl}.
 
 The proof of \eqref{concl} consists of 9 steps, each is a subsubsection. 
We briefly sketch them as follows:
 \begin{itemize}
 
  \item[(1)] recall/set up notations;
   \item[(2)]  review local heights $H(f )_p ,p<\infty $, especially when $p$ is nonsplit in $K$; 
   
   \item[(3)] for  $p$ is nonsplit in $K$, rewrite local height  $H(f )_p $ using  a Schwartz function $\Phi$ on $ \BB^\infty$;
   \item[(4$\&$5$\&$6$\&$7)]  finish the comparison for $p$ nonsplit in $K$ (we will give instructions there in the beginning);
     \item[(8$\&$9)]    finish the comparison for $p$ split in $K$ and  at $\infty$. 

\end{itemize}
 
\subsubsection{Notations} \label {Local comparison IImn}
Let $\BB$  be  the  
  quaternion algebra $\BB=\BD\times \RM_2(\BA^\infty)$ over $\BA$ where $\BD$ is the unique division quaternon algebra over $\BR$. We consistently use  $ \BB^{\infty,\times} $ to replace   $\GL_2(\BA^\infty)$ used before (see  \S \ref{Height distribution}).
  Thus we do not confuse this  $\GL_2(\BA^\infty)$  with the $\GL_2$ used in the Weil representation in \S \ref{Analytic kernel function}.

With the given 
$\BB$, let $B(v)$ be the quaternion algebra over $\BQ$ as in \S \ref{mtes}, i.e.   for a  finite place $p$ of $\BQ$, let $B(p)$ be the unique   quaternion algebra over $\BQ$ such that $B(p)_v$ is division only for $v=p$ and $v=\infty$, and 
 $B(\infty) $ is the matrix algebra. 
Then $B(v)$'s are the same as in \S \ref{Notations}.

For a quaternion algebra $B$ over a local field, 
 let $ h\in B^\times$ act on $\cS(B\times F^\times)$ by  $r(1,[h,1])$, where $r$ is the Weil representation and  $[h,1]$ is defined in \S \ref{Quaternion algebra as quadratic space}. Then the induced action of $f\in \cS(B^\times  )$ on $\phi\in \cS(B\times F^\times)$ is  given by 
 \begin{equation}\label{rfr}r(f)\phi(x,u): =\int_{h\in B^\times}\phi(h^{-1}x,q(h)u) f(h)dh.\end{equation}

  We    choose   measures so that $$\Vol(  K^\times\bsl \BA_K^{\infty,\times}/\BA^{\infty,\times})=\Vol(  K^\times\bsl  \BA_K^{\infty,\times})=1.$$

\subsubsection{Review height decomposition}\label{rewrite}

 Let ${S }$ be a finite set of  finite places of $\BQ$ which contains 2 and  all  finite  places of $\BQ$   ramified in $K$ or  ramified for $\Omega$.
Recall that  $S_\nspl\subset S$ is the subset of  places nonsplit in $K$.
Let $N$ be  the product of two relatively prime integers which are $\geq 3$ such that   the  prime factors of $N$  are contained in ${S }$. 
Let $ U=U(N)$, the corresponding principal congruence subgroup of $\GL_2(\BA^\infty)$.

We  consider  right $U $-invariant Schwartz functions of the form $f=f_{S }\otimes f^{S }$. (Note that we do not require the bi-$U^S$-invariance of $f^S$ as in \S \ref{Height distribution}.)
  Assume that    $f_{S}=\otimes_{p\in S} f_p$ is a pure tensor for simplicity.
    Also assume the following assumption until \ref{remove}  where we prove Theorem \ref{strongmodularity} and Theorem  \ref{strongermodularity}.
    \begin{asmp}\label{asmp2'}
There exists $p_0\in {S}$ nonsplit in $K$ such that $f_{p_0}$  vanishes on $K_{p_0}^\times   $.
    \end{asmp}

Though Assumption \ref{asmp2'}  is weaker than Assumption \ref{asmp2} (and we relaxed the invariance on $f^{S}$),
  the decompositions of $H(f)$  in \S \ref{Height distribution}  still hold after suitable modifications, which are given below.
  By  Assumption \ref{asmp2'}, $Z_{\Omega,f_S} $ and $Z_{\Omega^{-1}} $ do not intersect. So we have the decomposition   of  $H(f)$  into local heights 
  as in \eqref{Hdec} and \eqref{Hdec2}. We continue to use the notations in \eqref{Hdec} and \eqref{Hdec2}.

We treat the case that  $p$ is split in $K$ in \S \ref{leave}.

Now let $p$ be  a finite place of $\BQ$ nonsplit  in $K$. 
Let $m_p$ be the multiplicity function  on $B(p)_p^\times\times ^{K_p^\times}\BB_p^\times-\{(1,1)\}$  defined in \S \ref{ipartm}.
For $t_1,t_2\in \BA_K^{\infty,\times}$, let 
   \begin{align} \begin{split} 
          h(f, t_1,t_2)_p = &  \sum_{\delta\in  B(p) ^\times  }
    P_{k-1}(1-2\lambda(\delta ))\\
 &    \sum _{x\in \BB^{\infty,\times} /U}   f (x_p )   m_p( t_{1,p } ^{-1}  \delta t_{2,p }  ,   x_p^{-1} )  f(x^p) 1_{ U^p}((t_1^p x ^p)^{-1}  \delta t_2 ^p)     \label{111819}.
  \end{split}     \end{align} 
  Here
$P_{k-1}(t)$ is the $(k-1)$-th Legendre polynomial (see \S \ref{Notations}) and $\lambda$ is the invariant defined in \S \ref{Notations}.

\begin{rmk} (1)  Note that  $m_p( \cdot   ,   x_p^{-1} ) $  only depends on $x_p U_p$ by  Lemma \ref{mnonv} (2). So the sum  over $x$ is well-defined.

(2)
   If $p=p_0$ and  $x_p\in K_p^\times$,   $ f (x _p) =0$  by Assumption \ref{asmp2'}. Then
  $f (x _p)   m_p( t_{1,p } ^{-1}  \delta t_{2,p }  ,   x_p^{-1} ) $  is understood as 0.
If $p\neq p_0 $ and  $\delta\in K^\times$,   $f(x^p) 1_{ U^p}((t_1^p x ^p)^{-1}  \delta t_2 ^p) =0$ by Assumption \ref{asmp2'}.
Then the summand  in \eqref{111819} corresponding to $\delta\in K^\times$  is understood as 0.
  \end{rmk}
  
  Then   \eqref{11181} is rewritten as follows:
   \begin{align} \begin{split} 
   i(f)_ p
   &  =      - \int_{ K^\times\bsl \BA_K^{\infty,\times}/\BA^{\infty,\times}}\int_{  K^\times\bsl  \BA_K^{\infty,\times}}    h(f, t_1,t_2) _p\log p \cdot  \Omega^{-1}( t_{2 })\Omega(t_{1 })   dt_{2 }d t_{1 } \label{11183}.
  \end{split}     \end{align}

\subsubsection{Rewrite the local height following \cite{YZZ}}

Consider the following basic Schwartz function on $ \BB^\infty$: 
 \begin{equation} \label{Testfunction1}
\Phi_0=  1_{U_S \times q(U_S)} \bigotimes  1_{\RM_2(  \wh \BZ^S)\times \wh \BZ^{S,\times}}\bigotimes \Phi_\infty, \end{equation}
where      $\Phi_\infty$ is the Gaussian on $\BB_\infty$ as in \eqref{phiinf}.
      Let  \begin{equation*} \Phi  = r\lb   f\rb \Phi_0 .\label{Testfunction}
\end{equation*}  
Then $\Phi$ is right invariant by $U$, and 
 \begin{equation} \Phi^\infty(x,q(x)^{-1})=   f(x) .\label {simple}\end{equation}
Moreover,  if  $u_Sq(x_S)\not\in q(U_S)$,    then \begin{equation} \Phi_S(x_S,u_S)=0 .\label {simple1}\end{equation}
In particular, Assumption \ref{asmp20} holds for $\Phi$ by Assumption \ref{asmp2'}.

For  $\phi \in \cS(\BB_p  \times \BQ_p^\times)$ right invariant by $U_p$, define a function $m_{\phi} $ on $( B(p)_{p}^ \times-K_p^\times)\times \BQ_p^\times $ as in \cite[Notation 8.3]{YZZ}:
  \begin{equation}\label{meqi}
m_{\phi}(y,u)=\sum_{x\in  \BB_p^\times/U_p}m_p(y ,x^{-1} )\phi(x,uq(y)/q(x)).  \end{equation}  
 The sum in \eqref{meqi} only involves finitely many nonzero terms.
 Moreover, if $\phi(x)=0$ for $x\in K_p \times\BQ_p^\times$, then $m_{\phi}(y,u)$ extends to  $ B(p)_{p}^ \times\times \BQ_p^\times $ by the same formula.



By    Lemma \ref{mnonv} (2),  \eqref{111819} , \eqref{simple} and a direct computation,  we have 
    \begin{align} \begin{split} 
 h(f, t_1,t_2)_p =       &    \sum_{y\in  B(p) ^\times }
    P_{k-1}(1-2\lambda(y ))\\
 &   r(1, [t_1,t_2] ) \Phi^{p,\infty}(y, q(y)^{-1})m_{r(1, [t_1,t_2] ) \Phi_p}(y,q(y)^{-1})  \label{heqi},
  \end{split}   .  \end{align}

       \subsubsection{Comparison  for $p$ nonsplit in $K$}\label{Matp}
 Below in \S \ref{Matp}-\ref{jpart},
 we compare the local component of      $H(f )$ and $ \Pr I'(0,1,\Phi)_\psi$ for a finite place $p$ nonsplit in $K$.
      By \eqref{Prmix1} and  \eqref{11183},  up to  the $j$-part which will be dealt in \S \ref{jpart}, 
      we need to compare   $h(f,t_1,t_2)_p$ and $ \cK^{(p)}_{r(1,[t_1,t_2])\Phi} $
      defined in   \eqref {Prmix2}. 
        Then by  \eqref{heqi}, we are led to compare 
      $k_{\phi} $ and $m_{\phi} \cdot \log p$ for $\phi\in \cS(\BB_p  \times \BQ_p^\times)$.  This is the content of \ref{Matp}
and  \ref{content}.
      
     \subsubsection{Matching for $p\not\in {S}$}\label{Matp}
Assume that $p\not\in {S }$ and nonsplit in $K$.
\begin{lem}
For $y\in B(p)_{p}^ \times$, we
  have 
$$k_{r(1,[t_1,t_2])\Phi_p}(y,q(y)^{-1})=m_{r(1,[t_1,t_2])\Phi_p}(y,q(y)^{-1})\cdot \log p .$$\end{lem}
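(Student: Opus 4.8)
The plan is to reduce the asserted equality of functions to a pointwise identity of the two local kernels $k_p$ and $m_p$, and then to invoke the known unramified computations. Applying the tautological formula \eqref{kkkernel} to $\phi=r(1,[t_1,t_2])\Phi_p$ (which is right $U_p$-invariant) gives
\[
k_{r(1,[t_1,t_2])\Phi_p}(y,q(y)^{-1})=\sum_{x\in \BB_p^\times/U_p} k_p(y,x^{-1})\, r(1,[t_1,t_2])\Phi_p\bigl(x,1/q(x)\bigr),
\]
and evaluating \eqref{meqi} at $u=q(y)^{-1}$ (so that $uq(y)/q(x)=1/q(x)$) produces the same expression with $m_p$ in place of $k_p$. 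Hence it is enough to prove $k_p(y,z)=m_p(y,z)\log p$ for $y\not\in K_p^\times$, where both kernels are defined. By the $K_p^\times$-invariance of $k_p$ (checked just after its definition) and of $m_p$ (Lemma \ref{mnonv} (2)), together with the common vanishing unless $q(y)q(z)\in q(U_p)$ (Lemma \ref{morek0}, Lemma \ref{mnonv} (1)), this reduces to finitely many ``diagonal'' representatives. I would also record that both $k_p$ and $m_p$ are purely $p$-adic quantities: the weight $2k$ enters the comparison only through the factors $P_{k-1}$ and $W^{(2k)}_1$, which are common to both sides and belong to neither kernel.

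Next I would compute the two kernels explicitly. On the analytic side, $p\not\in S$ forces $K_p/\BQ_p$ unramified and $\Phi_p$ equal, up to the $u$-variable, to the characteristic function of the self-dual lattice $M_2(\BZ_p)$; then \eqref{Wcirc} writes $W^\circ_a(s,u,\Phi_{p,2})$ as a finite sum, and since the prefactor $1-p^{-s}$ vanishes at $s=0$ its $s$-derivative at $0$ is $\log p$ times an elementary function of $\ord_p(a)$ and $\ord_p(u)$. With the normalizing factor $L(1,\eta_p)/\Vol(K_p^1)$ and the $\Phi_{p,1}$-factor this is exactly the computation behind \cite[Proposition 6.10]{YZZ}, and it gives a closed form for $k_p$ depending only on $\ord_p\lambda(y)$ and, through the support condition, on $\ord_p q(y)$. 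On the arithmetic side, for $p\not\in S$ the level $U_p=\GL_2(\BZ_p)$ is hyperspecial, the supersingular Lubin--Tate space $\cS_{U_p}$ needs no desingularization, and $m_p(\delta,z)$ is the length of the locus on which the quasi-endomorphism $\delta$ deforms; this is the Gross--Keating computation (see \cite[4.5, 4.6]{Zha97} and the arithmetic fundamental lemma \cite[Proposition 10.2.1]{Qiu}), which gives the same elementary function of $\ord_p\lambda(\delta)$ and which vanishes unless $q(\delta)\in\BZ_p^\times\Nm(K_p^\times)$. Comparing the two closed forms yields $k_p=m_p\log p$, hence the lemma.

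I expect the substantive point to be the last one — matching the arithmetic intersection multiplicity at the good place with the elementary expression coming from the derivative of the local Whittaker function, i.e. the unramified arithmetic Siegel--Weil identity. Since at $p\not\in S$ the kernels $k_p$ and $m_p$ are literally the $p$-adic objects of the weight-$2$ theory of Yuan--Zhang--Zhang (the weight being carried entirely by $P_{k-1}$ and $W^{(2k)}_1$), I plan to quote this matching directly from \cite{YZZ}; equivalently it can be deduced by combining the fundamental lemma and the arithmetic fundamental lemma of \cite{Qiu}, which match $k_p$ and $m_p\log p$ respectively to the $s$-derivative of one and the same orbital integral. The remaining work is the bookkeeping of the first paragraph: keeping careful track of the $K_p^\times$-equivariance and the support conditions so that the pointwise kernel identity really does upgrade to the stated identity of functions for every right-$U_p$-invariant test function.
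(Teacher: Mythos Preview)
Your reduction in the first paragraph is sound, and you are right that $k_p$ and $m_p$ are weight-independent objects; the factor $P_{k-1}$ and $W^{(2k)}_1$ drop out of the comparison. The remark that $r(1,[t_1,t_2])\Phi_p$ is right $U_p$-invariant is also correct (for $p\not\in S$ inert one has $K_p^\times=\BQ_p^\times\cO_{K_p}^\times$, which normalizes $\GL_2(\BZ_p)$). One small slip: the set $\{(y,x):q(y)q(x)\in q(U_p)\}$ modulo $K_p^\times$ is not finite, so ``finitely many diagonal representatives'' is not the right phrase; but this is harmless since you only need the identity pointwise, not on a finite set.

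The substantive divergence from the paper is in how you get the pointwise identity $k_p(y,x)=m_p(y,x)\log p$. The paper does \emph{not} prove this; instead it uses Howe duality (Lemma~\ref{not2}) to show that $\Phi_p$ itself can be written as a linear combination of $r(g,1)\Phi_{0,p}$ with $g$ in the Borel of $\GL_2(\BQ_p)$, i.e.\ it trades the orthogonal-side translate $r(f_p)$ for a $\GL_2$-side translate. Once $\Phi_p=r(g,1)\Phi_{0,p}$, the identity follows from \eqref{kkkernel1} together with \cite[Lemma~6.6, Proposition~8.8]{YZZ}, which give $k_\phi=m_\phi\log p$ for $\phi$ in the $\GL_2$-Weil orbit of the standard function and the requisite equivariance. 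The point is that \cite[Proposition~8.8]{YZZ} is stated for this restricted class of test functions, not for the functions $1_{x^{-1}U_p\times q(x)q(U_p)}$ that define $k_p(y,x)$ --- those are not $\rO(L)$-invariant for general $x$, so they lie outside the scope of the Howe-duality reduction.

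This is the gap in your plan: neither \cite[Proposition~8.8]{YZZ} nor the arithmetic fundamental lemma \cite[Proposition~10.2.1]{Qiu} gives $k_p=m_p\log p$ off the shelf. The former handles only the $\GL_2$-orbit of $\Phi_{0,p}$; the latter is phrased in terms of orbital integrals for the relative trace formula, and undoing the $K_p^\times$-integration to recover a kernel identity on $B(p)_p^\times\times_{K_p^\times}\BB_p^\times$ is a nontrivial extra step. Your direct-computation route (Gross--Keating for $m_p$, the explicit \eqref{Wcirc} for $k_p$) would certainly succeed, but it is genuine work that has to be carried out, not cited. The paper's Howe-duality maneuver is exactly the device that sidesteps this: it confines attention to test functions for which YZZ already did the computation, which is all that is needed since $r(1,[t_1,t_2])\Phi_p$ lies in that span.
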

\begin{proof} 
By the descriptions of $\GO(\BB_p)$ and $\rO(\BB_p)$ in \S \ref{Quaternion algebra as quadratic space} and a direct computaion, we see that $\Phi_p$ is a linear combination of functions in the form of the right hand side of \eqref{not21} in
  Lemma \ref{not2}. By Lemma \ref{not2},  we may assume that $\Phi_p=r(g,1)\Phi_{0,p} $ for some $g$  in the standard Borel subgroup of $\GL_2(\BQ_p)$.
  By definition,  for $a\in F^\times$, we have  \begin{equation*}  k_{\phi}(ay,a^{-1}q(y)^{-1})=k_{ r \lb a,1\rb\phi}(y, q(y)^{-1}).  \end{equation*}  
 Then the lemma follows  
\cite[Lemma 6.6, Proposition 8.8]{YZZ}. 
\end{proof}
   Then  by  \eqref {Prmix2} and \eqref{heqi}, we  have
    \begin{equation*} 
  W^{(2k)}_{1}(1)  h(f,t_1,t_2)_p\log p= \cK^{(p)}_{r(1,[t_1,t_2])\Phi} .\end{equation*}
Since $j(f)_p=0$,  by  \eqref{11183}, 
we  have   \begin{align} \begin{split} 
   W^{(2k)}_{1}(1)   H(f)_ p
   &  = -   \int_{ K^\times\bsl \BA_K^{\infty,\times}/\BA^{\infty,\times}}\int_{  K^\times\bsl  \BA_K^{\infty,\times}}   \cK^{(p)}_{r(1,[t_1,t_2])\Phi}    \Omega^{-1}( t_{2 })\Omega(t_{1 })   dt_{2 }d t_{1 } .\label{hck}
  \end{split}     \end{align}
  
  \subsubsection{Coherence for $p\in S_\nspl$: difference between $i$-part and the analytic kernel}
\label{content}

  Let $p\in S_\nspl$.
  
  \begin{lem}
[{\cite[Lemma 9.3.16]{Qiu}}]\label{9313}
There  is an open compact subgroup  $U'$ of $B(p)_p^\times$ such that:  

 (1)  $U'\cap K_p^\times=U\cap K_p^\times$ as a subgroup of $K_p^\times$;

(2) $m_p(y,1)-\frac{\ord_p(\lambda(y))}{2}\cdot 1_{U'}$ extends to a smooth function on $B(p)_p^\times$. Here $\lambda$ is the invariant defined in \S \ref{Notations}.
\end{lem}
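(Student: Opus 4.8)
\textbf{Proof plan for Lemma \ref{9313}.}

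The statement is the exact analog, for the multiplicity function $m_p$, of what Lemma \ref{morek1} asserts for the analytic kernel $k_p$. The plan is to reduce to a local computation on the Lubin--Tate deformation space $\cS_{U_p}$ and its desingularization $\cS'_{U_p}$. First I would recall that $m_p(y,1)$ is, by definition (see \ref{ipartm}), $1/[F:\BQ]$ times the intersection multiplicity on $\cS'_{U_p}$ of the CM point $(y,1)$ with the base CM point $(1,1)$, and that by Lemma \ref{mnonv} it is supported on the locus where $q(y)\in q(U_p)$; in particular, away from $K_p^\times$ it is already known to be smooth by the part of Lemma \ref{9123}/Lemma \ref{mnonv} machinery (the horizontal intersection multiplicities on a regular two-dimensional scheme vary locally constantly with $y$ once $y$ stays in the regular locus $B(p)_{p,\reg}^\times$ and away from the ``bad'' diagonal). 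So the only issue is the behavior as $y\to K_p^\times$, i.e. as $\lambda(y)\to 0$.

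The core step is to analyze the intersection multiplicity $m_p(y,1)$ for $y$ near $K_p^\times$. Here I would use the explicit description of the supersingular Lubin--Tate space: the point $(1,1)$ corresponds to a quasi-canonical (CM) lifting, and $(y,1)$ for $y\in B(p)_p^\times$ near $K_p^\times$ corresponds to a nearby CM lifting whose relative position to $(1,1)$ is governed by $\ord_p(\lambda(y))$ (this is the standard Gross--Keating / Gross ``quasi-canonical lifting'' picture; it is also precisely the input to \cite[Lemma 9.3.13]{Qiu}, which is cited). The intersection number grows linearly in $\ord_p(\lambda(y))$ with slope $1/2$ (after normalization by $[F:\BQ]$), with the remaining discrepancy being a locally constant --- hence smooth --- function on $B(p)_p^\times$; and the subgroup $U'$ controlling the support of the leading term is exactly the one coming from the level structure, which meets $K_p^\times$ in $U\cap K_p^\times$ because the level structure at $p$ is $U_p$ and its intersection with the torus is $U\cap K_p^\times$. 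Since this is literally the content of \cite[Lemma 9.3.13]{Qiu}, the cleanest route is simply to invoke that reference: the present Kuga-Sato setting at the supersingular point reduces, via \ref{Local structure at finite places} and \ref{ipartm}, to the same local intersection-theoretic computation on $\cS_{U_p}$ used in \cite{Qiu}, so Lemma \ref{9313} holds with no new work. I would write the proof as a one-line citation to \cite[Lemma 9.3.13]{Qiu}, possibly with a sentence noting that the desingularization $\cX'\to\cX_{\cO_F}$ at supersingular points does not affect the statement because, as recorded in \ref{Local structure at finite places}, $\cY'\to\cX'$ is just $\cY\times_\cX\cX'\to\cX'$ there and the CM cycles in play are horizontal.

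The main obstacle, if one did not want to cite \cite{Qiu}, would be establishing the precise slope $1/2$ and the smoothness of the remainder: this requires knowing the exact deformation-theoretic length formula for consecutive quasi-canonical liftings and checking that the normalization by $[F:\BQ]$ converts the arithmetic-geometric quantity into the clean $\tfrac12\ord_p(\lambda(y))$. But since \cite[Lemma 9.3.13]{Qiu} is available and is stated for exactly this local situation, I expect the proof to be a direct quotation of that lemma together with the identification of the present local model with the one in loc. cit.

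\begin{proof}
This is \cite[Lemma 9.3.13]{Qiu}. Indeed, by \ref{Local structure at finite places} the desingularization $\cX'\to\cX_{\cO_F}$ at the supersingular point does not change the local picture for the Kuga-Sato variety, since $\cY'\to\cX'$ is $\cY\times_\cX\cX'\to\cX'$ there, and by \ref{ipartm} the multiplicity function $m_p$ is defined by exactly the same intersection-theoretic quantity on the Lubin--Tate deformation space $\cS'_{U_p}$ as the one treated in loc. cit. The subgroup $U'$ and the normalization by $1/[F:\BQ]$ are as in \cite{Qiu}.
\end{proof}
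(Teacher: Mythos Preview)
Your proposal is correct and matches the paper's approach exactly: the paper provides no proof for this lemma and simply attributes it to \cite[Lemma 9.3.13]{Qiu} in the lemma header, treating it as a direct citation. Your additional sentence identifying the local model here with the one in \cite{Qiu} is a reasonable gloss but is not present in the paper.
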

By Lemma \ref{mnonv} (1), Lemma \ref{morek0} (2), Lemma \ref{morek1} and Lemma \ref{9313}, we have 
the following corollary. (This corollary could be regarded as an analog of the ``arithmetic smooth matching" \cite[Proposition 10.4.1]{Qiu}.)
\begin{cor}  The function   $k_p(y,1)- m_p(y,1 )\cdot \log p$  on $y\in  B(p)_{p}^ \times-K_p^\times$ extends to a smooth function on 
$  B(p)_{p}^ \times  $.
\end{cor}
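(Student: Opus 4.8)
The final statement to prove is the Corollary:

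\emph{The function $k_p(y,1) - m_p(y,1)\cdot \log p$ on $y \in B(p)_p^\times - K_p^\times$ extends to a smooth function on $B(p)_p^\times$.}

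The plan is to combine the four cited structural results on the local building blocks of $k_p$ and $m_p$, each of which isolates the singular behaviour of one of the two functions near $K_p^\times$ and identifies it with the same elementary expression $\tfrac{\ord_p(\lambda(y))}{2}\cdot 1_{U'}\cdot\log p$. First I would recall from Lemma \ref{morek0} (the smoothness clause, strictly Corollary \ref{morek20}) and Lemma \ref{morek1} that $k_p(y,1)$ is smooth on $B(p)_p^\times - K_p^\times$ and that there is an open compact subgroup $U'_1\subset B(p)_p^\times$ with $U'_1\cap K_p^\times = U\cap K_p^\times$ such that $k_p(y,1) - \tfrac{\ord_p(\lambda(y))}{2}\cdot 1_{U'_1}\cdot\log p$ extends smoothly across $K_p^\times$ to all of $B(p)_p^\times$. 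Dually, from Lemma \ref{mnonv}(1) (giving the support condition that makes $m_p(y,1)$ make sense / vanish appropriately near $K_p^\times$) and Lemma \ref{9313}, there is an open compact subgroup $U'_2\subset B(p)_p^\times$ with $U'_2\cap K_p^\times = U\cap K_p^\times$ such that $m_p(y,1) - \tfrac{\ord_p(\lambda(y))}{2}\cdot 1_{U'_2}$ extends smoothly to $B(p)_p^\times$.

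Next I would subtract: on $B(p)_p^\times - K_p^\times$ we have
\[
k_p(y,1) - m_p(y,1)\log p = \Bigl(k_p(y,1) - \tfrac{\ord_p(\lambda(y))}{2}\cdot 1_{U'_1}\log p\Bigr) - \Bigl(m_p(y,1) - \tfrac{\ord_p(\lambda(y))}{2}\cdot 1_{U'_2}\Bigr)\log p + \tfrac{\ord_p(\lambda(y))}{2}\bigl(1_{U'_1} - 1_{U'_2}\bigr)\log p .
\]
The first two bracketed terms extend smoothly to $B(p)_p^\times$ by the cited lemmas. For the last term, the point is that $1_{U'_1} - 1_{U'_2}$ is supported on the symmetric difference $U'_1 \triangle U'_2$, which is disjoint from a neighbourhood of $K_p^\times$: indeed, since $U'_1\cap K_p^\times = U'_2\cap K_p^\times = U\cap K_p^\times$ and both $U'_i$ are open, any point of $U'_1\triangle U'_2$ lies at positive distance from $K_p^\times$, so on $U'_1\triangle U'_2$ the function $\ord_p(\lambda(\cdot))$ is locally constant (the singularity of $\lambda$ only occurs as $\lambda \to 0$, i.e. near $K_p^\times$). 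Hence $\tfrac{\ord_p(\lambda(y))}{2}(1_{U'_1}-1_{U'_2})\log p$ is itself a locally constant, compactly supported, hence smooth function on $B(p)_p^\times$. Adding the three smooth pieces gives the desired smooth extension.

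I expect the main (though modest) obstacle to be the bookkeeping around the two a priori different auxiliary subgroups $U'_1, U'_2$: one must verify carefully that their agreement on $K_p^\times$ forces $1_{U'_1} - 1_{U'_2}$ to vanish near $K_p^\times$, and that $\ord_p\circ\lambda$ is bounded (equivalently locally constant) away from $K_p^\times$, so that multiplying it by $1_{U'_1}-1_{U'_2}$ does not reintroduce a singularity. A cleaner alternative, if one prefers, is to replace $U'_1$ and $U'_2$ by a common open compact subgroup $U'' \subset U'_1\cap U'_2$ with $U''\cap K_p^\times = U\cap K_p^\times$ — then $\tfrac{\ord_p(\lambda(y))}{2}\cdot 1_{U''}\log p$ is a single common correction term and $1_{U'_i} - 1_{U''}$ is supported away from $K_p^\times$ for each $i$, reducing the computation to the same elementary smoothness argument. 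Either way, everything needed is already contained in Corollary \ref{morek20}, Lemma \ref{morek1}, Lemma \ref{mnonv}(1) and Lemma \ref{9313}, so no new input is required beyond this elementary patching.
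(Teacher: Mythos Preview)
Your proposal is correct and follows essentially the same approach as the paper: the paper's proof is a one-line citation of Lemma \ref{mnonv}(1), Lemma \ref{morek0}, Lemma \ref{morek1} and Lemma \ref{9313}, and you have correctly filled in the details. Your explicit treatment of the possibly different auxiliary subgroups $U'_1,U'_2$ is exactly the point that needs to be checked, and your argument that $1_{U'_1}-1_{U'_2}$ vanishes near $K_p^\times$ (using that both $U'_i$ are clopen with the same intersection $U\cap K_p^\times$ with $K_p^\times$) is valid.
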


Combined with  Corollary  \ref{morek20}, we have the following corollary.
\begin{cor}\label{arithmatch}  The function   $k_p(y,x)- m_p(y,x )\cdot \log p$    extends to a smooth function on 
$B(p)_p^\times\times ^{K_p^\times}\BB_p^\times$.
\end{cor}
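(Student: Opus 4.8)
The plan is to reduce Corollary~\ref{arithmatch} to the case $x=1$, which is settled by the preceding corollary, by exploiting the $K_p^\times$-equivariance of both kernels. Concretely, $k_p$ and $m_p\cdot\log p$ are each functions on the balanced product $B(p)_p^\times\times_{K_p^\times}\BB_p^\times/U_p$; the issue is that ``smooth'' on this quotient space means smooth after choosing local sections, and the singularity set $\{(1,1)\}$ (together with the locus $y\in K_p^\times$, $x\in K_p^\times$) must be handled. So the first step is to stratify the source: away from the slice $x\in K_p^\times\BB_p^{\times,\mathrm{sing}}$ there is nothing to prove once we know smoothness of each kernel separately, and the content is concentrated near $x\in K_p^\times U_p$.

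First I would record the equivariance $k_p(ty,tx)=k_p(y,x)$ and $m_p(ty,tx)=m_p(y,x)$ for $t\in K_p^\times$ — the former was noted right after the definition of $k_p$ in \ref{lokk} (``the invariance by $K_p^\times$ can be checked directly''), the latter is Lemma~\ref{mnonv}(2) combined with the construction of $m_p$. Using this, any point of $B(p)_p^\times\times_{K_p^\times}\BB_p^\times/U_p$ with $x$ in the $K_p^\times$-orbit of $U_p$ can be translated so that $x\in U_p$, and since both $k_p$ and $m_p\cdot\log p$ are right $U_p$-invariant in the second slot (immediate from the defining formulas \eqref{kphi}, \eqref{Wcirc}, \eqref{meqi} and the fact that $\Phi_p$ is $U_p$-invariant), we may further reduce to $x=1$. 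On that slice, $k_p(y,1)-m_p(y,1)\cdot\log p$ extends smoothly to all of $B(p)_p^\times$ by the corollary just proved in \ref{content}. Then one spreads this out: the difference $k_p-m_p\cdot\log p$ is, locally near the slice $\{x\in K_p^\times U_p\}$, the pullback of the smooth function on $B(p)_p^\times$ (the $x=1$ restriction) via the $K_p^\times$-action, and it is smooth on the complementary open set by Corollary~\ref{morek20} (for $k_p$) together with Lemma~\ref{mnonv}(1), Lemma~\ref{morek0}, and the smoothness of $m_p$ stated in \ref{ipartm} (for $m_p$). Gluing these two descriptions over the overlap gives a globally smooth function on $B(p)_p^\times\times_{K_p^\times}\BB_p^\times$.

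The main obstacle is bookkeeping around the partial quotient: one has to verify that the two local descriptions of $k_p-m_p\cdot\log p$ — one coming from the slice $x=1$ and the $K_p^\times$-translates thereof, the other from the region where $x\notin K_p^\times\cdot(\text{small nbhd of }1)$ — genuinely agree on their overlap and that there is no leftover singularity along $\{y\in K_p^\times\}$. For $k_p$ this last point is exactly what Lemma~\ref{morek2} and Lemma~\ref{morek1} are for (the potential $y\in K_p^\times$ singularity is killed because $\Phi_p$ vanishes on $K_p\times\BQ_p^\times$ under Assumption~\ref{asmp20}, or rather one uses the extension statements directly); for $m_p$ the analogous input is that $m_{\phi}$ extends when $\phi$ vanishes on $K_p\times\BQ_p^\times$, noted right after \eqref{meqi}. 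Once these extension facts are in hand the gluing is formal, so I expect this corollary to be essentially a repackaging: the real work was already done in the two preceding corollaries and in Lemma~\ref{morek1}/Lemma~\ref{9313}.
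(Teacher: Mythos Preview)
Your proposal is correct and follows essentially the same route as the paper: away from $(1,1)$ both kernels are already smooth (Corollary~\ref{morek20} for $k_p$, the statement in \ref{ipartm} for $m_p$), and near $(1,1)$ one reduces via $K_p^\times$-equivariance and right $U_p$-invariance to the slice $x=1$, where the preceding corollary applies. One small citation slip: the $K_p^\times$-equivariance of $m_p$ is built into its being defined on the balanced product, not Lemma~\ref{mnonv}(2) (which is an inversion symmetry), and Assumption~\ref{asmp20} plays no role here since the local kernel $k_p$ is defined without it.
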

Let $d_p$ be  such a smooth extension. Then $d_p$ is  supported on the union of the supports of $k_p$ and $m_p$. For $\phi\in \cS(\BB_p  \times \BQ_p^\times)$, 
define a smooth function $d_{\phi} $ on $ B(p)_{p}^ \times \times \BQ_p^\times $ by
     \begin{equation}\label{dphi}
d_{\phi}(y,u)=\sum_{x\in  \BB_p^\times/U_p}d_p(y,x^{-1})\phi(x,uq(y)/q(x)).  \end{equation}  
The sum in \eqref{dphi} only involves finitely many nonzero terms.
As we can recover  (part of) $k_{\phi}$ from $k_p$ by the following tautological formula
 \begin{equation*} k_{\phi}(y,q(y)^{-1})=\sum_{x\in  \BB_p^\times/U_p}k_p(y,x^{-1})\phi(x,1/q(x)),  \end{equation*}  
 we  have     $$d_{\phi}(y,q(y)^{-1})=k_{\phi}(y,q(y)^{-1})-m_{\phi}(y,q(y)^{-1}).$$
 
 Moreover, by   Lemma \ref{mnonv} (1),  Lemma \ref{morek0},   Corollary \ref{arithmatch}, and  that $\Phi_p\in \cS(\BB_p^\times  \times \BQ_p^\times)$ (see \eqref{simple} and \eqref{simple1}), 
 we have  $$d_{r(1,[t_1,t_2]) \Phi_p}\in \cS(B(p)_p^\times  \times \BQ_p^\times).$$
 (In particular, its extension to $B(p)_p  \times \BQ_p^\times$ by 0 is a Schwartz function.)
By   \cite[Lemma 6.6, Lemma 8.5]{YZZ},  we have 
$k_{r(1,[t_1,t_2]) \Phi_p} =r(1,[t_1,t_2])k_{  \Phi_p} $ and 
$m_{r(1,[t_1,t_2])\Phi_p} =r(1,[t_1,t_2])m_{  \Phi_p} $
 (see Remark \ref{extweil}).
Thus $d_{r(1,[t_1,t_2]) \Phi_p} =r(1,[t_1,t_2])d_{  \Phi_p} $. Then 
by  \eqref{Prtheta}, \eqref {Prmix2},    \eqref{11183}, \eqref{heqi}, 
 we have 
 \begin{align} \begin{split} 
 &  { W^{(2k)}_{1}(1)}   i(f)_ p
 \\
   =&   -   \int_{ K^\times\bsl \BA_K^{\infty,\times}/\BA^{\infty,\times}}\int_{  K^\times\bsl  \BA_K^{\infty,\times}}   \cK^{(p)}_{r(1,[t_1,t_2])\Phi}   \Omega^{-1}( t_{2 })\Omega(t_{1 })   dt_{2 }d t_{1 } 
   \\
  &  +    \int_{ K^\times\bsl \BA_K^{\infty,\times}/\BA^{\infty,\times}}\int_{  K^\times\bsl  \BA_K^{\infty,\times}}   \Pr \theta(1, [t_1,t_2 ],d_{\Phi_p}\otimes \Phi ^p)_\psi\Omega^{-1}( t_{2 })\Omega(t_{1 })   dt_{2 }d t_{1 } . \label{Cohp2}   \end{split}     \end{align}

     \subsubsection{Coherence for $p\in S_\nspl$: $j(f)_p$}\label{jpart}
  For   $j(f)_p$, the treatment is similar, and we use   the discussion in \S \ref{nonj}. 
  Let $l_C$ be the smooth function on $B(p)_p^\times\times ^{K_p^\times}\GL_2(\BQ_p)$ as in \eqref{1120j}. Here 
$C$  is a   vertical divisor   of  the desingularized   deformation space $  \cM_{U_p}'$.
  
  For $\phi\in \cS(\BB_p  \times \BQ_p^\times)$, define a smooth function $l_\phi$ on $B(p)_p^\times\times   \BQ_p^\times $:
     \begin{equation*} 
l_{\phi}(y,u)=\sum_{x\in  \BB_p^\times/U_p}l_C(y,x^{-1})\phi(x,uq(y)/q(x)) \cdot \log p.  \end{equation*}  
This sum  only involves finitely many nonzero terms.
By Lemma \ref{9123}  and  that $\Phi_p\in \cS(\BB_p^\times  \times \BQ_p^\times)$ (see \eqref{simple} and \eqref{simple1}), 
  it is easy to check that
     \begin{equation}\label{lPp} l_{r(1,[t_1,t_2]) \Phi_p}\in \cS(B(p)_p^\times\times  \BQ_p^\times).\end{equation}
 Then  similar to \eqref{Cohp2}, we have  \begin{align}  
  W^{(2k)}_{1}(1)    j(f)_ p
    =     -   \int_{ K^\times\bsl \BA_K^{\infty,\times}/\BA^{\infty,\times}}\int_{  K^\times\bsl  \BA_K^{\infty,\times}}   \Pr \theta(1,[t_1,t_2 ],l_{\Phi_p}\otimes \Phi ^p)_\psi\Omega^{-1}( t_{2 })\Omega(t_{1 })   dt_{2 }d t_{1 } . \label{Cohp2'}       \end{align}

\begin{lem}\label{finallyl} There   exists an open compact subgroup $U'_p$ of $B(p)_p^\times$ such that 
 $l_{C}$ is left $U'_p$-invariant. In particular, 
  $l_{\phi}$ is left $U'_p$-invariant for all  $\phi\in \cS(\BB_p  \times \BQ_p^\times)$.

         \end{lem}
         \begin{proof} 
      Let $B(p)_p^\times$ act on     the desingularized   deformation space $  \cM_{U_p}'$ by the functoriality of the minimal desingularization.          Let  $U_p'$ be the stabilizer of  $C$.   Then the lemma follows from Lemma \ref{9123}  (2).      \end{proof}

        \subsubsection{Split $p$}\label{leave}
      
Let   $p$ be split in $K$. Then $i(f)_p=0$, $j(f)_p=0$.   The proof is the same as the proof in \S \ref{split0}.  
   
    \subsubsection{Local height at $\infty$} 
    By \eqref{Hdecinf}
     and a similar (and easier, since we do not need holomorphic projection,) process as in   \cite[8.1]{YZZ}  (which contains a sign mistake due to the sign mistake in the definition of the Green function in  \cite[7.1.3]{YZZ}), we have   \begin{align} \begin{split} 
   W^{(2k)}_{1}(1)   H(f)_ \infty
   &  =    -   \int_{ K^\times\bsl \BA_K^{\infty,\times}/\BA^{\infty,\times}}\int_{  K^\times\bsl  \BA_K^{\infty,\times}}   \cK^{(\infty)}_{r(1,[t_1,t_2])\Phi}  \Omega^{-1}( t_{2 })\Omega(t_{1 })   dt_{2 }d t_{1 } .\label{111821}
  \end{split}     \end{align}
See \eqref {Prmix3} for the definition of $\cK^{(\infty)}_{\Phi} $.

 \subsubsection{Conclusion}\label{Conclusion}
 Define  the following holomorphic cusp forms on $\GL_2(\BA)$
of weight $2k$     and central character   $\Omega |_{\BA^\times}$:    \begin{align} \label{intrep}   I(g,\Phi )=\int_{K^\times\bsl \BA_K^{\infty,\times}} \Pr I'(0,g,r(1,[t,1])\Phi)\Omega(t) dt 
  ; \end{align} 
  \begin{align}\label{intrep1}\theta_{d,p}(g,\Phi)=\int_{K^\times\bsl \BA_K^{\infty,\times}/\BA^{\infty,\times}} \int_{K^\times\bsl \BA_K^{\infty,\times}}  \Pr \theta(g, [t_1,t_2], d _{\Phi_p} \otimes \Phi ^p)   \Omega^{-1}( t_{2 })\Omega(t_{1 })   dt_{2 }d t_{1 } ;\end{align}
  \begin{align}\label{intrep2}
\theta_{l,p}(g,\Phi):  =   \int_{ K^\times\bsl \BA_K^{\infty,\times}/\BA^{\infty,\times}}\int_{  \BQ^\times\bsl  \BA^{\infty,\times}}  \Pr \theta(g,[t_1,t_2],l_{\Phi_p}\otimes \Phi ^p)  \Omega^{-1}( t_{2 })\Omega(t_{1 })    dzd t_{1 }  .  \end{align}

Assume Assumption \ref{asmp2'}. Combining     \eqref{Prmix1},  \eqref{hck},  \eqref{Cohp2}, \eqref{Cohp2'} and  \eqref{111821},  we have 
\begin{equation}2   W^{(2k)}_{1}(1)  H(f)= I(1,\Phi)_\psi+\sum_{p\in S_\nspl}(\theta_{d,p}(1,\Phi)_\psi-\theta_{l,p}(1,\Phi)_\psi). \label{concl}\end{equation}
Here each term on the right hand is  the value of  the corresponding $\psi$-Whittaker function at $g=1$.
\begin{rmk}\label{clear}
  In the current set-up,   \textit{modularity of generating series of heights}  means 
a generalization  of \eqref{concl} where on the right hand side, the variable $1$ is replaced by  a general $g\in \GL_2(\BA)$, and the left hand side is modified accordingly. 
 Such a  generalization  is not hard to prove for $g\in 1_{\GL_2(\BA_S)} \GL_2(\BA^S)$,   see \cite[1.5.10, 7.4.3]{YZZ} for  the weight 2 case. 
Moreover, in the weight 2 case, such a  generalization  is proved  in \cite[1.5.10, 7.4.3]{YZZ} for 
 general $g\in \GL_2(\BA)$ by using another  modularity result, proved in a separate work of them \cite{YZZ1}.
In higher weights, S. Zhang \cite{Zha97} established  such a generalization for some $\Phi$, even without Assumption \ref{asmp2'}. From  his result, we deduce an 
analog of \eqref{concl}  for some $\Phi$, see \eqref{conc1}. 
 
  \end{rmk}

\subsection{Proof of  Theorem \ref{strongmodularity}}\label{remove} 
The proof of  Theorem \ref{strongmodularity} is done in \S\ref{remove1}.
 Our strategy to prove the modularity  is  to show that 
 $H(f)$ vanishes if $f$ acts as 0 on some representations, precisely Proposition \ref{strongmodularity1} below. This is done by showing the vanishing of the right hand side of
 \eqref{concl}.
 The proof of this proposition is the focus of this subsection.

 We continue to use the notations in \S \ref{Local comparison II}. 
Recall that $\BB$  is  the  
  quaternion algebra $\BB=\BD\times \RM_2(\BA^\infty)$ over $\BA$ where $\BD$ is the unique division quaternon algebra over $\BR$ and $\RM_2$ denotes  the matrix algebra over $\BZ$. So  $ \BB^{\infty,\times} \cong \GL_2(\BA^\infty)$, but we use the former consistently (unless we need to specify the isomorphism). 
    Thus we do not confuse this  $\GL_2(\BA^\infty)$  with the $\GL_2$ used in the Weil representation in \S \ref{Analytic kernel function}.

Also recall  that $\cA$ is our set  of cuspidal automorphic representations of $\GL_2(\BQ)$ defined in \S \ref{Modularitysec} of weight $2k$. In view our convention on  the  use of the
   quaternion algebra $\BB$, we consistently consider $\cA$ as a set of admissible representations of $\BB^\times$  by the Jacquet-Langlands correspondence to $\BB^\times$.  
   if $\pi\in \cA$, then it is simply viewed as $1_{\BB_\infty^\times}\otimes\pi^\infty$ as a representation of $\BB^\times$ (under $ \BB^{\infty,\times} \cong \GL_2(\BA^\infty)$).
      For an open compact subgroup $V \subset \BB^{\infty,\times}$, let $\cA^{V}\subset \cA$ be the finite subset of   representations with nonzero  $V$-invariant vectors.

 Let $S$ be as in Theorem \ref{strongmodularity}. 
    Let  $N$  be a  product of two relatively prime integers which are $\geq 3$ and assume   that the  prime factors of $N$  are contained in ${S }$.  Let $ U=U(N)$, the corresponding principal congruence subgroup of $\BB^{\infty,\times}= \GL_2(\BA^\infty)$.   Then  $\cH^S$ is the Hecke algebra of bi-$U^S$-invariant Schwartz functions on $(\BB^{S,\infty})^{\times}.$

Recall that $S_\nspl\subset S$ is the subset of places nonsplit in $K$.

  \begin{prop} \label{strongmodularity1}  
   There is an open compact subgroup $V_{S_\nspl}\subset U_{S_\nspl}$ 
  such that if $f ^S\in \cH^S$ acts as 0 on all representations in $\cA^{V_{S_\nspl}U^{S_\nspl}}$,  then for every    pure tensor $f_S=\otimes_{p\in S} f_p$   right invariant by $ U_S$, we have $ H( f_S\otimes f^S)=0$.
\end{prop}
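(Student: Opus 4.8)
The plan is to reduce Proposition \ref{strongmodularity1} to the modularity of the analytic side, via the comparison \eqref{concl}. First I would observe that, under the hypotheses of the proposition, we may arrange the test function so that Assumption \ref{asmp2'} holds: since $f^S\in\cH^S$ and the statement is about all pure tensors $f_S$ right-invariant by $U_S$, we can replace $f_S$ by $f_S*\iota$ for a suitable idempotent or simply note that the vanishing we want is linear in $f_S$, so it suffices to treat $f_S$ for which some component $f_{p_0}$ vanishes on $K_{p_0}^\times$ — and the general $f_S$ is handled afterwards by the reduction in \ref{remove} using S. Zhang's result (the analog of \eqref{concl} without Assumption \ref{asmp2'}, namely \eqref{conc1}). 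So the core case is: $f=f_S\otimes f^S$ satisfying Assumption \ref{asmp2'}, and $f^S$ kills every $\pi\in\cA^{V_{S_\fn}U^{S_\fn}}$ for an appropriate $V_{S_\fn}$.

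Next I would apply \eqref{concl}:
\begin{equation*}
2\,W^{(2k)}_{1}(1)\,H(f)= I(1,\Phi)_\psi+\sum_{p\in S_\fn}\bigl(\theta_{d,p}(1,\Phi)_\psi-\theta_{l,p}(1,\Phi)_\psi\bigr),
\end{equation*}
where $\Phi=r(f)\Phi_0$, and each term on the right is the value at $g=1$ of the $\psi$-Whittaker function of a holomorphic cusp form of weight $2k$ on $\GL_{2,\BQ}$ (this is why the holomorphic projection was taken). The key point is that all three families of automorphic forms $I(g,\Phi)$, $\theta_{d,p}(g,\Phi)$, $\theta_{l,p}(g,\Phi)$ are built from the theta/Eisenstein machinery in a way that is \emph{Hecke-equivariant} in $f^S$: the unramified Hecke action on $f^S$ matches, under the Weil representation and the integral representations \eqref{intrep}, \eqref{intrep1}, \eqref{intrep2}, the usual Hecke action on the automorphic forms on $\GL_{2,\BQ}$. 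Concretely, $R_{f^S}$ acting on the cycle side translates into right translation of $\Phi^S$ by $f^S$, which by the standard intertwining of the Weil representation with Hecke operators corresponds to the action of $f^S$ on the $\GL_{2,\BQ}$-automorphic form. Since these forms are holomorphic cusp forms of weight $2k$ with central character $\Omega|_{\BA^\times}$ (the theta kernel built from $\BB$ forces the Jacquet--Langlands type at $\infty$ to be the weight-$2k$ discrete series), their spectral support lies in $\cA$. Moreover, because $\Phi_0$ is $U^S$-invariant outside $S$ and the remaining ramification is controlled by $V_{S_\fn}$, the relevant forms have level dividing some $V_{S_\fn}U^{S_\fn}$, so their spectral support lies in the finite set $\cA^{V_{S_\fn}U^{S_\fn}}$. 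Hence if $f^S$ annihilates every $\pi$ in this set, it annihilates each of $I(g,\Phi)$, $\theta_{d,p}(g,\Phi)$, $\theta_{l,p}(g,\Phi)$, so the right-hand side of \eqref{concl} vanishes, and since $W^{(2k)}_{1}(1)\neq 0$ we get $H(f)=0$.

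The main obstacle, and the step requiring genuine work, is establishing the Hecke-equivariance of the comparison \eqref{concl} precisely enough to conclude the level bound and the spectral containment in $\cA^{V_{S_\fn}U^{S_\fn}}$ — in other words, tracking how $R_{f^S}$ on the cycle side passes through the unfolding/refolding identities of \ref{Local comparison II} to become the Hecke action on the mixed theta--Eisenstein series. This involves: (i) verifying that $\Phi=r(f_S\otimes f^S)\Phi_0=r(f^S)\bigl(r(f_S)\Phi_0\bigr)$ and that $r(f^S)$ commutes appropriately with the $[t_1,t_2]$-translations and with holomorphic projection (which is automatic since these act at disjoint places); (ii) checking that the local kernels $k_p$, $m_p$, $l_C$, $d_p$ at $p\in S_\fn$ are unaffected by the choice of $f^S$, so the $f^S$-dependence of every term on the right of \eqref{concl} is purely through the $\GL_{2,\BQ}$-Hecke action on the $S$-component being translated; and (iii) pinning down $V_{S_\fn}$ as (a subgroup of $U_{S_\fn}$ small enough so that) the level of $\Phi_{S_\fn}=r(f_{S_\fn})\Phi_{0,S_\fn}$ is controlled independently of the pure tensor $f_{S_\fn}$ — here one uses that $f_S$ ranges over $U_S$-right-invariant functions, so $\Phi_S$ is $U_S$-right-invariant, giving a uniform level bound. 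Once these are in place the argument is essentially formal; the subtlety is entirely in the bookkeeping of the Weil-representation intertwining and in extracting a \emph{uniform} (in $f_S$) finite set of representations.
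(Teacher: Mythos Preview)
Your overall strategy matches the paper's: reduce to Assumption \ref{asmp2'} (handling the remaining case via S.~Zhang's result, as in Theorem \ref{deyt} and the spanning argument following it), then use the comparison \eqref{concl} and show each term on the right vanishes when $f^S$ kills a suitable finite set of representations. That much is right.

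The gap is in how you control $\theta_{d,p}(g,\Phi)$ and $\theta_{l,p}(g,\Phi)$. You claim that the $f^S$-action on $\Phi^S$ via the orthogonal side corresponds to the $\GL_{2,\BQ}$-Hecke action on the $g$-variable, and that the resulting automorphic forms have $\GL_2$-level bounded by some $V_{S_\fn}U^{S_\fn}$ uniformly in $f_S$. Your justification in (iii)---that $\Phi_S$ is $U_S$-right-invariant---is the wrong invariance: $r(1,[1,h])$-invariance of $\Phi_S$ says nothing about the $r(g,1)$-level. More seriously, the input to $\theta_{d,p}$ at the place $p$ is $d_{\Phi_p}\in\cS(B(p)_p\times\BQ_p^\times)$, and the map $\phi\mapsto d_\phi$ (and likewise $\phi\mapsto l_\phi$) is \emph{not} $\GL_2(\BQ_p)$-equivariant---it intertwines only the Borel action (see the use of \cite[Lemma 6.6, Lemma 8.5]{YZZ})---so there is no reason for the $\GL_2(\BQ_p)$-level of $d_{\Phi_p}$ to be bounded independently of $f_p$.

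The paper's actual mechanism (Lemma \ref{444}) is different: pair with a cusp form $\phi\in\sigma$ and pass to the Shimizu lift on $B(p)^\times\times B(p)^\times$, which lands in $\pi'\boxtimes\wt\pi'$. The uniform level bound lives on the \emph{orthogonal} side: the key technical input is that the kernel functions $d_p$ and $l_C$ themselves are left $U_p'$-invariant for a fixed $U_p'\subset B(p)_p^\times$ (Corollary \ref{finally} and Lemma \ref{finallyl}, resting on Lemmas \ref{morek} and \ref{morem}). This forces $\pi'$ to have a $U_p'U^p$-fixed vector, hence finitely many $\sigma$; one then chooses $V_p\subset U_p$ so that the Jacquet--Langlands transfer of each $\pi'_p$ to $\BB_p^\times$ has a $V_p$-fixed vector. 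The $f^S$-action enters via the $B(p)^\times$-equivariance of theta (equation \eqref{haoln}), not via the $\GL_2$-Hecke action on $g$. A parallel subtlety occurs for $I(g,\Phi)$ when $L(1/2,\pi_K\otimes\Omega)\neq 0$: the paper needs Lemma \ref{aidlem} (proved globally from the weight-2 case in \S\ref{aid}) to bound the level via $B(p)_p^\times$, which you do not mention.
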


  We will prove Proposition \ref{strongmodularity1} in \S \ref{mod2'}.
  We will first prove it 
 under Assumption \ref{asmp2'} using the comparison \eqref{concl} in \S \ref{mod2}, and then remove the assumption.
  Before the proof, we need some  preparations on the analytic kernels (i.e. the right hand side) in \eqref{concl}.
   Below we always let $f=f_S\otimes f^S$. Assume that    $f_S=\otimes_{p\in S} f_p$  and  $f^S\in \cH^S$. 
The right invariance of $f_S$ will be imposed in \S \ref{mod2}.

  \subsubsection{Integral representation of $L$-function}\label{mod3}  
We study the first term $I(g,\Phi)_\psi$
on the right hand side of the comparison \eqref{concl}, using the fact that it is a kernel function for  integral representations of the twisted base change $L$-functions.
The main results are Lemma \ref{unramShimizu}-\ref{jequishim}. 
  
Let $\Phi_{0}^\infty$ be  a Schwartz function on $ \BB^\infty\times \BA^{\infty,\times}$   invariant by  $r(1,[1,h])$
for $h\in U$. (Recall that $r$ is the Weil representation and  $[1,h]$ is defined in \S \ref{Quaternion algebra as quadratic space}.)
Let  \begin{equation*}  
\Phi_{0}=  \bigotimes_{p\in S} \Phi_{0,p} \bigotimes\Phi_0^{S ,\infty}\bigotimes \Phi_\infty, \end{equation*}
where      $\Phi_\infty$ is the Gaussian as in \eqref{phiinf} (generalizing \eqref{Testfunction1}). 
Let   \begin{equation}\label{phip} \Phi  = r\lb   f\rb \Phi_0.\end{equation} Here  $r(f)$ is defined in \eqref{rfr}.  Then $\Phi$  is  still invariant by  $r(1,[1,h])$
for $h\in U$.

Let  $  I(g,\Phi)$ be the  holomorphic cusp forms on $\GL_2(\BA)$     of weight $2k$ and central character   $\Omega |_{\BA^\times}$
 as in
 \eqref{intrep} (with the current $\Phi$ as the input).
 Let $\sigma$ be a cuspidal automorphic representation of $\GL_{2,\BQ}$ and  $\phi\in \sigma$.  
Consider   the Petersson pairing (not taking complex conjugate of the second term) $\pair{I(g,\Phi),\phi}$ of  
$  I (g,\Phi)$  and $\phi$.
 We assume that the central character of $\sigma$ is $\Omega^{-1}|_{\BA^\times}$ and 
$\sigma_\infty$ is holomorphic discrete   of weight $2k$. Otherwise,   $\pair{I(g,\Phi),\phi}=0$.
Let $\pi$ be the Jacquet-langlands correspondence of $\sigma$ to $\BB$.

Let $W$ be the $\psi^{-1}$-Whittaker function of $\phi$. Without loss of generality,  assume that $W$ is a pure tensor.
Also assume that $\Phi$ is  a pure tensor.
For a place $v$ of $\BQ$, define a local integral  
$$P_v(s,\Omega_v,\Phi_v,W_v)=  \int_{K_v^\times/\BQ_v^\times}\Omega_v(t)dt\int_{N_v\bsl \GL_2(\BQ_v)}\delta(g)^sW_v(g)r((g,1))\Phi_v(t^{-1},q(t)) dg.$$ 
Here $N_v$ is the upper triangular  unipotent subgroup and $\delta$ the standard modular character \cite[1.6.6]{YZZ}. 
Let $$P^\circ_v(s,\Omega_v,\Phi_v,W_v) =\frac{L((s+1)/2,\pi_{K_v}\otimes\Omega_v)}{L(s+1,\eta_v)}P_v(s,\Omega_v,\Phi_v,W_v),$$
where   $\eta$ is the Hecke character of $\BQ^\times $ associated to $K$ by the class field theory.
 Then (up to normalizing the measures) $$P^\circ_v(0,\Omega_v,\Phi_v,W_v) =\alpha^\sharp_{\pi_v}(\Theta_v(\Phi_v\otimes W_v)),$$
 where $\alpha^\sharp_{\pi_v}$ 
is defined in \S \ref {Local periods} (and $\alpha^\sharp_{\pi_\infty}$ is defined similarly), and $\Theta_v(\Phi_v\otimes W_v)\in \pi_v\boxtimes \wt \pi_v$ is the normalized local Shimizu lifting   \cite[2.2.2]{YZZ}.
Let $\Theta=\prod_v \Theta_v$ where the product is over all places of $\BQ$.

 By  \cite[2.3]{YZZ},  (up to normalization of the measures) we have
$$ \pair{I(\cdot,\Phi),\phi}=\frac{d}{ds} |_{s=0} \lb \prod_{v} P_v (s,\Omega,\Phi,W)\rb.$$
where the product is over all places of $\BQ$.

 If $L(1/2,\pi_K\otimes\Omega) =0$, then
\begin{equation}\label{use2} \pair{I(\cdot,\Phi),\phi} =\frac{L'(1/2,\pi_K\otimes\Omega)}{L(1,\eta)} \alpha^\sharp_{\pi}(\Theta(\Phi\otimes W)). \end{equation}

Assume that $L(1/2,\pi_K\otimes\Omega)\neq 0$. Then $\ep(1/2,\pi_K\otimes\Omega)=1$. Since 
$\ep(1/2,\pi_{K_\infty}\otimes\Omega_\infty)\Omega_\infty(-1)=-1$ (indeed, $\Omega_\infty$ is trivial),
   there exists $p\in S $ (necessary nonsplit in $K$) such that  $\ep(1/2,\pi_{K_p}\otimes\Omega_p)\Omega_p(-1)=-1$.
   Let $\pi_p'$ be the Jacquet-Langlands correspondence of  $\sigma_p$ to $B(p)_p^\times $.
  By the    theorem of Tunnell \cite{Tun} and  Saito \cite{Sai},   $\Hom_{K_{p}} (\pi_p\otimes \Omega_p,\BC)=0$ and 
  $\Hom_{K_{p}} (\pi_p'\otimes \Omega_p,\BC)\neq 0$. 
  In particular,  $\alpha^\sharp_{\pi_p}(\Theta_p(\Phi_p\otimes W_p))=0$ and  $\pi_p'\neq \{0\}$.
From $\alpha^\sharp_{\pi_p}(\Theta_p(\Phi_p\otimes W_p))=0$, we have 
  \begin{equation} \label{use1}\pair{I(\cdot,\Phi),\phi} =\frac{L(1/2,\pi_K\otimes\Omega)}{L(1,\eta)}  (P^\circ_p)'(0,\Omega_p,\Phi_p,W_p) \alpha^\sharp_{\pi^p}(\Theta^p(\Phi^p\otimes W^p)).
 \end{equation} 
 We need to control $(P^\circ_p)'(0,\Omega_p,\Phi_p,W_p) $ using $\pi_p'$, as follows.
 \begin{lem}\label{aidlem}   Assume that   $p$ is nonsplit in $K$ such that  $\ep(1/2,\pi_{K_p}\otimes\Omega_p)\Omega_p(-1)=-1$. There is an  open  compact subgroup $U_p'$ of $B(p)_p^\times $  such that   $(P^\circ_p)'(0,\Omega_p,\Phi_p,W_p) = 0$ unless  $\pi_p'$ has a nonzero $U_p'$-invariant vector.  Here $U_p'$ is  uniform for all such $\pi_p$ and  all $f_p\in \cS(\GL_2(\BQ_p))$.  (Note that $ \Phi _p = r\lb   f_p\rb \Phi_{0,p}$, see \eqref{phip}.)
  \end{lem}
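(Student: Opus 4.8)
The plan is to analyze the local integral $P_p(s,\Omega_p,\Phi_p,W_p)$ by writing out the action of $r(f_p)$ on the fixed Schwartz function $\Phi_{0,p}$ and reorganizing the integral as a zeta-type integral for the local Shimizu lifting. First I would use the identity $P^\circ_v(0,\Omega_v,\Phi_v,W_v)=\alpha^\sharp_{\pi_v}(\Theta_v(\Phi_v\otimes W_v))$ recorded above, together with the vanishing $\alpha^\sharp_{\pi_p}(\Theta_p(\Phi_p\otimes W_p))=0$ (which holds because $\Hom_{K_p}(\pi_p\otimes\Omega_p,\BC)=0$ by Tunnell--Saito), to conclude that the leading term of $P^\circ_p$ at $s=0$ vanishes; so $(P^\circ_p)'(0,\cdot)$ is the relevant quantity. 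The key structural input is that, when $\ep(1/2,\pi_{K_p}\otimes\Omega_p)\Omega_p(-1)=-1$, the local theta/Shimizu machinery realizes $(P^\circ_p)'(0,\Omega_p,\Phi_p,W_p)$ through the \emph{split} quaternion algebra being ``turned off'' and the contribution being governed instead by the local period on $B(p)_p^\times$. Concretely, I would invoke the see-saw / local Siegel--Weil identity (\cite[Theorem 2.2]{YZZ}, cf. Remark \ref{formally}(2)) to rewrite $r((g,1))\Phi_p(t^{-1},q(t))$ integrated against $W_p$ and $\Omega_p$ as a matrix-coefficient integral attached to $\pi_p'$, the Jacquet--Langlands transfer of $\sigma_p$ to $B(p)_p^\times$; the derivative in $s$ then lands in $\Hom_{K_p}(\pi_p'\otimes\Omega_p,\BC)$, which is $1$-dimensional.

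The next step is to extract the uniform level. Since $\Phi_p=r(f_p)\Phi_{0,p}$ with $\Phi_{0,p}$ a \emph{fixed} Schwartz function and $f_p$ ranging over all of $\cS(\GL_2(\BQ_p))$, the image $\Theta_p(\Phi_p\otimes W_p)$ — and likewise the $\pi_p'$-component appearing after the see-saw rewriting — lies in the $\GL_2(\BQ_p)\times B(p)_p^\times$-span of $\Theta_p(\Phi_{0,p}\otimes(\text{fixed } W_p\text{-space}))$. This span is contained in a single $\pi_p'\boxtimes(\dots)$ isotypic piece whose $B(p)_p^\times$-level is bounded purely in terms of the level of $\Phi_{0,p}$ and of the Whittaker vectors occurring for $\sigma_p$ of the fixed type (holomorphic discrete of weight $2k$ at $\infty$, central character $\Omega^{-1}$, with nonzero $U_p$-invariants); all of these are controlled by $N$ and $S$ alone, not by $f_p$ or $\pi_p$. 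Taking $U_p'$ to be the intersection of these finitely many stabilizers gives the asserted uniform open compact subgroup, and if $\pi_p'$ has no nonzero $U_p'$-invariant vector then $\Theta_p(\Phi_{0,p}\otimes W_p)$ has zero $\pi_p'$-component, forcing $(P^\circ_p)'(0,\Omega_p,\Phi_p,W_p)=0$.

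I expect the main obstacle to be making the ``uniform level'' argument precise: one must check that passing from $\Phi_{0,p}$ to $\Phi_p=r(f_p)\Phi_{0,p}$ and then applying the derivative of the normalized local Shimizu zeta integral genuinely stays inside a finite-level subspace of $\pi_p'$, with a bound independent of $f_p$. The subtlety is that $r(f_p)$ can move $\Phi_{0,p}$ around within $\cS(\BB_p\times\BQ_p^\times)$ quite a lot, so one needs to argue on the $\pi_p'$-isotypic quotient rather than on Schwartz space: the point is that $W_p$ itself ranges over a finite-dimensional space (the $U_p$-fixed Whittaker vectors of a $\sigma_p$ of the prescribed archimedean type and central character), and the $\pi_p'$-component of $\Theta_p(r(f_p)\Phi_{0,p}\otimes W_p)$ equals $\pi_p'(\bar f_p)$ applied to the $\pi_p'$-component of $\Theta_p(\Phi_{0,p}\otimes W_p)$, hence lies in the cyclic $B(p)_p^\times$-module it generates, whose generating vector has level bounded by that of $\Phi_{0,p}$. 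Once this is set up, the conclusion is formal. A secondary technical point is bookkeeping the normalizing $L$-factor $L((s+1)/2,\pi_{K_p}\otimes\Omega_p)/L(s+1,\eta_p)$ so that the derivative at $s=0$ is well-defined and the epsilon-factor dichotomy is correctly invoked; this follows the computations in \cite[2.3, 2.4]{YZZ} with the weight-$2k$ archimedean component replaced as in \ref{oinfsec}.
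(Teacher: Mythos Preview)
Your plan has a genuine gap at the key step. You propose to use ``the see-saw / local Siegel--Weil identity (\cite[Theorem~2.2]{YZZ})'' to rewrite the derivative $(P^\circ_p)'(0,\Omega_p,\Phi_p,W_p)$ as a period integral on $B(p)_p^\times$, but the local Siegel--Weil formula only identifies \emph{values} of Whittaker integrals with theta-type integrals on the other member of the dual pair; it says nothing about the $s$-derivative. Passing from the value identity to an identity for the derivative at a point where the value vanishes is precisely a ``local arithmetic Siegel--Weil'' phenomenon, and no such statement is invoked or proved here. Without it, there is no mechanism by which the representation $\pi_p'$ of $B(p)_p^\times$ enters the picture at all from your purely local analysis of the zeta integral on $\GL_2(\BQ_p)$.

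Relatedly, your level-bounding step is not well-posed: $\Theta_p$ lands in $\pi_p\boxtimes\wt\pi_p$ with $\pi_p$ a representation of $\BB_p^\times=\GL_2(\BQ_p)$, so speaking of ``the $\pi_p'$-component of $\Theta_p(\Phi_{0,p}\otimes W_p)$'' or of ``$\pi_p'(\bar f_p)$'' does not make sense, since $f_p$ is a Schwartz function on the wrong group. And $W_p$ is not restricted to a fixed finite-dimensional space: the lemma must hold uniformly over all $\pi_p$ with the stated epsilon condition and all $W_p$ in their Whittaker models, so one cannot simply intersect finitely many stabilizers.

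The paper in fact takes a completely different route: it acknowledges that a local proof ought to exist but gives a \emph{global} argument. It globalizes $\pi_p'$ to an automorphic representation $\pi^{(2)}$ of $B(p)^\times$ (Lemma~\ref{Globalization}), inserts $\Phi_p$ into a weight-$2$ global test function, and uses the weight-$2$ comparison \eqref{concl1} from \cite{YZZ} between heights and analytic kernels. Tunnell--Saito kills all terms except those at $p$, reducing the claim to showing that certain theta pairings $\pair{\theta(\cdot,h,d_{\Phi_p}\otimes\Phi^{(2),p}),\phi^{(2)}}$ and $\pair{\theta(\cdot,h,l_{\Phi_p}\otimes\Phi^{(2),p}),\phi^{(2)}}$ are $U_p'$-invariant in $h$; this last point follows from Lemma~\ref{finallyl} and Corollary~\ref{finally}, which bound the level of $d_p$ and $l_C$ independently of $\Phi_p$. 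The uniformity in $f_p$ and $\pi_p$ thus comes from the fixed geometric data $m_p,k_p,l_C$, not from any direct local manipulation of the zeta integral.
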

 We will prove  Lemma \ref{aidlem} in \S \ref{aid}.

\begin{lem} \label{unramShimizu} There is an open compact subgroup $V_{S_\nspl}\subset U_{S_\nspl}$ 
  such that if    $f ^S\in \cH^S$ acts as 0 on all representations in $\cA^{V_{S_\nspl}U^{S_\nspl}}$,  then    $  I(g,\Phi)=0$. 
\end{lem}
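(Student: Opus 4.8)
The plan is to deduce this from the integral-representation identities \eqref{use2} and \eqref{use1} together with the finiteness of the relevant set of automorphic representations. Since $I(g,\Phi)$ is an automorphic form of $\GL_{2,\BQ}$ whose archimedean component is forced to be holomorphic discrete of weight $2k$ (by the Gaussian $\Phi_\infty$ and the shape of the holomorphic projection), and whose central character is $\Omega^{-1}|_{\BA^\times}$, it suffices to show that its Petersson pairing $\pair{I(\cdot,\Phi),\phi}$ vanishes for every $\phi$ in every cuspidal $\sigma$ with these invariants; equivalently, for every $\pi\in\cA$ (via Jacquet--Langlands to $\BB^\times$). Here I would first fix that $\Phi = r(f)\Phi_0$ depends on $f = f_S\otimes f^S$, and that $I(g,\Phi)$ is built by integrating $\Pr I'(0,g,r(1,[t,1])\Phi)$ against $\Omega$ over $K^\times\bsl\BA_K^{\infty,\times}$.

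First I would observe that $\Phi$, being invariant under $r(1,[1,h])$ for $h\in U$, has bounded level at the places outside $S$: outside $S$ the function $\Phi_0$ is the standard unramified Gaussian $1_{M_2(\wh\BZ^S)\times\wh\BZ^{S,\times}}$, and $f^S\in\cH^S$ is bi-$U^S$-invariant, so $\Phi^S$ stays $U^S$-invariant on the right in the relevant variable. Consequently $\pair{I(\cdot,\Phi),\phi}$ can only be nonzero when $\pi^S$ has a nonzero $U^S$-invariant vector, i.e. $\pi\in\cA^{U^S}$. Next, the $S_\fn$-places: I would run through the two cases of \eqref{use2} and \eqref{use1}. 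When $L(1/2,\pi_K\otimes\Omega)=0$, formula \eqref{use2} expresses $\pair{I(\cdot,\Phi),\phi}$ as $\frac{L'(1/2,\pi_K\otimes\Omega)}{L(1,\eta)}\alpha^\sharp_\pi(\Theta(\Phi\otimes W))$; here the factor $\alpha^\sharp_\pi$ factors over places, and I would use that $\Theta_p(\Phi_p\otimes W_p)$ with $\Phi_p = r(f_p)\Phi_{0,p}$ lands in a space of bounded level depending only on the level of $\Phi_{0,p}$, hence the $\pi_p$-contribution is nonzero only if $\pi_p$ has a nonzero $V_p$-invariant vector for some fixed small $V_p\subset U_p$, uniform in $f_p$. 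When $L(1/2,\pi_K\otimes\Omega)\neq0$, formula \eqref{use1} reduces the question to $(P^\circ_p)'(0,\Omega_p,\Phi_p,W_p)$ for a bad place $p\in S$, and Lemma \ref{aidlem} already gives a fixed open compact $U_p'\subset B(p)_p^\times$ such that this vanishes unless $\pi_p'$ (the Jacquet--Langlands transfer of $\sigma_p$) has a nonzero $U_p'$-invariant vector — again uniform in $f_p$. Combining these, I set $V_{S_\fn}\subset U_{S_\fn}$ to be the product (over $p\in S_\fn$) of the subgroups extracted this way, pulled back to $U_{S_\fn}$; then $\pair{I(\cdot,\Phi),\phi}\neq0$ forces $\pi\in\cA^{V_{S_\fn}U^{S_\fn}}$. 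Since there are only finitely many such $\pi$ (finiteness of $\cA^V$ for $V$ open compact), if $f^S$ annihilates all of them, the spectral expansion of the cusp form $I(\cdot,\Phi)$ has all components zero, so $I(g,\Phi)=0$.

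The main obstacle I expect is the bounded-level bookkeeping at the places in $S_\fn$, i.e. making precise that $\Theta_p(\Phi_p\otimes W_p)$ and $(P^\circ_p)'(0,\Omega_p,\Phi_p,W_p)$ have levels controlled by a subgroup independent of the arbitrary test function $f_p$ — this is exactly where Lemma \ref{aidlem} (still to be proved in \ref{aid}) does the heavy lifting in the case $L(1/2,\pi_K\otimes\Omega)\neq0$, and an analogous but easier uniformity statement is needed for the local Shimizu lifting in the case $L(1/2,\pi_K\otimes\Omega)=0$. Once uniformity is in hand, the reduction to finitely many representations and then to the hypothesis on $f^S$ is routine, using the cuspidality of $I(\cdot,\Phi)$ and strong multiplicity one to pass from "$\pair{I,\phi}=0$ for all $\phi$" to "$I=0$".
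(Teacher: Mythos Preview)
Your approach is essentially the paper's: reduce to Petersson pairings against cusp forms $\phi\in\sigma$, split according to whether $L(1/2,\pi_K\otimes\Omega)$ vanishes, and invoke \eqref{use2}, \eqref{use1}, and Lemma~\ref{aidlem}. Two points deserve sharpening.

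First, in the case $L(1/2,\pi_K\otimes\Omega)=0$ you work harder than necessary. There is no need for a separate ``bounded level uniform in $f_p$'' argument at $p\in S_\fn$: since $r(f)$ acts via the first $\BB^\times$-variable $[h,1]$ and $\Phi_0$ is invariant under $r(1,[1,U])$, the function $\Phi$ remains right $U$-invariant; hence $\Theta(\Phi\otimes W)\in\pi\boxtimes\wt\pi$ is $U$-invariant on the $\wt\pi$-side, forcing $\pi\in\cA^{U}$ directly. In the case $L(1/2,\pi_K\otimes\Omega)\neq 0$, the subgroup $U_p'$ from Lemma~\ref{aidlem} lives in $B(p)_p^\times$, not in $\BB_p^\times$, so there is no literal ``pullback''. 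The paper instead observes that the global conditions (a $U_p'U^p$-fixed vector in the Jacquet--Langlands transfer to $B(p)^\times$) cut out finitely many $\sigma$, and then chooses $V_p\subset U_p\subset\BB_p^\times$ small enough that each of the finitely many $\pi_p$ has a $V_p$-fixed vector.

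Second, your final step skips the key link. Knowing that only $\pi\in\cA^{V_{S_\fn}U^{S_\fn}}$ can contribute is not by itself enough; you must explain why $f^S$ annihilating $\pi$ forces $\pair{I(\cdot,\Phi),\phi}=0$. This is the $\BB^\times\times\BB^\times$-equivariance of the local Shimizu lift on the first factor:
\[
\Theta^{S,\infty}\bigl(\Phi^{S,\infty}\otimes W^{S,\infty}\bigr)=\Theta^{S,\infty}\bigl(r(f^S)\Phi_0^{S,\infty}\otimes W^{S,\infty}\bigr)=\pi(f^S)\,\Theta^{S,\infty}\bigl(\Phi_0^{S,\infty}\otimes W^{S,\infty}\bigr),
\]
which vanishes once $f^S$ kills $\pi$, and then \eqref{use2} and \eqref{use1} give $\pair{I(\cdot,\Phi),\phi}=0$. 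With these two clarifications your plan matches the paper's proof.
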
 
\begin{proof} 
We show that  there exists $V_{S_\nspl}$ such that for every $f^S$ as in the lemma and 
 every  cusp form $\phi$ of $\GL_{2,\BQ}$,  $\pair{I(g,\Phi),\phi}=0$. Then the  lemma follows.  
 We may and we do assume that $\phi\in \sigma$ as above. Let $\pi$ and $W$ be as above.

Since $\Theta$ is $\BB^\times\times \BB^\times$-equivariant  \cite[2.2.2]{YZZ}  (we only use the second $\BB^\times$ here) and  $\Phi$ is right $U$-invariant,
$\Theta(\Phi\otimes W)\in  \pi\boxtimes \wt \pi$ is $U$-invariant, where $U$ acts on   $\wt\pi$. By  \eqref{use2} and  \eqref{use1}, 
a necessary condition for $\pair{I(\cdot,\Phi),\phi} $ to be nonzero is:
\begin{itemize}
\item [(1)] if $L(1/2,\pi_K\otimes\Omega) =0$, then $\pi\in \cA^U$;

\item[(2)]   if $L(1/2,\pi_K\otimes\Omega) \neq 0$, then there exists $p\in S_\nspl$ such that   the Jacquet-Langlands correspondence of $\sigma$ to $B(p) ^\times $ has a nonzero   $U_p'U^p$-invariant  vector, where $U_p'$ is as in Lemma \ref{aidlem}. There are   only finitely many such $\sigma$ (independent of $\pi$). 
Let $V_p$ an open compact subgroup of $U_p $ such that the Jacquet-Langlands correspondence of  $\sigma_p$, for every such $\sigma$, to $\BB_p^\times $ has a nonzero  $V_p$-invariant vector.  Thus $\pi\in\cA^{V_{S_\nspl}U^{S_\nspl}}$.

\end{itemize}

Let $f^S$ act as 0 on $\pi$, we only need to show that   $\pair{I(\cdot,\Phi),\phi}= 0$. 
Note that  by the $\BB^\times\times \BB^\times$-equivariance of $\Theta$ (we only use the first $\BB^\times$ here), we have 
\begin{equation*}\label{rho1}\Theta^{S,\infty}(\Phi^{S,\infty}\otimes W^{S,\infty})=\pi (f^{S}) \Theta^{S,\infty}\lb  \Phi_{0}^{{S,\infty} }\otimes W^{S,\infty}\rb.\end{equation*}
   Then $\pair{I(\cdot,\Phi),\phi}=0$ by \eqref{use2} and \eqref{use1}. 
\end{proof}

\begin{lem}  \label{equishim} Assume that $\Phi_0^{S,\infty}$ is invariant by the standard maximal compact subgroups of $ \GL_2(\BA^{S,\infty})$ and $ (\BB^{S,\infty})^{\times}$, where $g\in  \GL_2(\BA^{S,\infty})$ acts by  $r(g,1)$ and 
$h\in (\BB^{S,\infty})^{\times}$  acts by   $r(1,[h,1])$.
We have    $ I(g, \Phi)= \rho_{\GL_2}(f^{S,\vee})I(g,\Phi_0)$. 
Here  $\rho_{\GL_2}(f^S)$ is the usual Hecke action of $f^S$, regarded as a Schwartz function on $ \GL_2(\BA^{S,\infty})$ via $(\BB^{S,\infty})^{\times}= \GL_2(\BA^{S,\infty})$.
\end{lem}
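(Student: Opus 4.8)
The plan is to reduce the identity to a comparison of Hecke eigenvalues, using that both sides are holomorphic cusp forms and invoking the integral‑representation formula of \ref{mod3}. First I would observe that $I(g,\Phi)$ and $\rho_{\GL_2}(f^{S,\vee})I(g,\Phi_0)$ are both holomorphic cusp forms on $\GL_2(\BA)$ of weight $2k$ and central character $\Omega^{-1}|_{\BA^\times}$: the former because holomorphic projection lands in holomorphic cusp forms and the $\Omega$‑integral over $K^\times\bsl\BA_K^{\infty,\times}$ preserves this, the latter because right convolution by a Schwartz function preserves holomorphy, cuspidality, and central character. Since the Petersson pairing is non‑degenerate on this space, it suffices to check $\pair{I(\cdot,\Phi),\phi}=\pair{\rho_{\GL_2}(f^{S,\vee})I(\cdot,\Phi_0),\phi}$ for every $\phi$ in a cuspidal $\sigma$ with $\sigma_\infty$ holomorphic discrete of weight $2k$ and central character $\Omega^{-1}|_{\BA^\times}$ (all other $\phi$ pair to zero with both sides), and by linearity I may take $\phi$ to be a pure tensor whose away‑from‑$S$ Whittaker function $W^{S,\infty}$ is the normalized spherical vector.

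Next I would evaluate the left‑hand pairing via \eqref{use2} and \eqref{use1}: in both the cases $L(1/2,\pi_K\otimes\Omega)=0$ and $\neq 0$, the dependence of $\pair{I(\cdot,\Phi),\phi}$ on the away‑from‑$S$ part of $\Phi$ is carried entirely by the value $\Theta^{S,\infty}(\Phi^{S,\infty}\otimes W^{S,\infty})$ of the away‑from‑$S$ Shimizu lift. Since $\Phi^{S,\infty}=r(f^S)\Phi_0^{S,\infty}$ with $f^S$ acting through the first $\BB^\times$‑factor $r(1,[\cdot,1])$, and $\Theta$ is $\BB^\times\times\BB^\times$‑equivariant, this value is obtained from $\Theta^{S,\infty}(\Phi_0^{S,\infty}\otimes W^{S,\infty})$ by applying the Jacquet--Langlands‑transported Hecke action of $f^S$ to the $\pi$‑slot (with $\pi$ the Jacquet--Langlands transfer of $\sigma$, so $\pi_v=\sigma_v$ at every finite place since $\BB$ is split there); exactly as in the proof of Lemma \ref{unramShimizu}, this transported action is the usual Hecke action of $f^S$ on $\GL_2(\BA^{S,\infty})$. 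The hypothesis that $\Phi_0^{S,\infty}$ is invariant under the maximal compact $U^S$ for the $r(1,[\cdot,1])$‑action, together with $W^{S,\infty}$ spherical, forces $\Theta^{S,\infty}(\Phi_0^{S,\infty}\otimes W^{S,\infty})$ into the spherical line of $\pi^{S,\infty}\boxtimes\tilde\pi^{S,\infty}$, on which this Hecke operator acts by the Satake eigenvalue attached to $\pi^{S,\infty}$. Hence $\pair{I(\cdot,\Phi),\phi}$ is that scalar times $\pair{I(\cdot,\Phi_0),\phi}$.

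Then I would treat the right‑hand pairing by Petersson adjunction, $\pair{\rho_{\GL_2}(f^{S,\vee})I(\cdot,\Phi_0),\phi}=\pair{I(\cdot,\Phi_0),\rho_{\GL_2}(\overline{f^S})\phi}$: the metaplectic $U^S$‑invariance of $\Phi_0^{S,\infty}$ makes $I(\cdot,\Phi_0)$ right $U^S$‑invariant, so its $\sigma$‑component is spherical outside $S$ and $\rho_{\GL_2}(f^{S,\vee})$ acts on it through the same Satake eigenvalue of $\pi^{S,\infty}=\sigma^{S,\infty}$. Matching the two scalars then yields the required equality. The main obstacle is precisely this last bookkeeping: one must reconcile three a priori distinct ``Hecke'' operations --- the orthogonal‑group translation $r(1,[h,1])$ on $\cS(\BB^{S,\infty})$, the Jacquet--Langlands‑transported action on $\pi^{S,\infty}$ in the Shimizu‑lift equivariance, and the right‑convolution action $\rho_{\GL_2}$ on automorphic forms --- keeping track of where the transpose $f^S\leftrightarrow f^{S,\vee}$ and a complex conjugation enter, and verifying that the two cases \eqref{use2}--\eqref{use1} contribute symmetrically because in each the away‑from‑$S$ dependence passes through $\Theta^{S,\infty}(\Phi^{S,\infty}\otimes W^{S,\infty})$ alone. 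None of this is deep, but it is where all the care is needed.
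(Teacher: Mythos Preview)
Your proposal is correct and follows essentially the same strategy as the paper: reduce to Petersson pairings with cusp forms $\phi\in\sigma$, then use \eqref{use2}--\eqref{use1} together with the equivariance of the local Shimizu lift $\Theta_p$ to control the away-from-$S$ dependence.

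The paper's execution is slightly cleaner and bypasses the bookkeeping you flag at the end. Rather than computing both $\pair{I(\cdot,\Phi),\phi}$ and $\pair{\rho_{\GL_2}(f^{S,\vee})I(\cdot,\Phi_0),\phi}$ as the same Satake eigenvalue times $\pair{I(\cdot,\Phi_0),\phi}$ and then matching scalars, the paper shows directly that
\[
\pair{I(\cdot,\Phi),\phi}=\pair{I(\cdot,\Phi_0),\rho_{\GL_2}(f^S)\phi}
\]
by invoking, for each $p\notin S$, the chain
\[
\Theta_p(\Phi_p\otimes W_p)=\pi_p(f_p)\Theta_p(\Phi_{0,p}\otimes W_p)=\Theta_p(\Phi_{0,p}\otimes\sigma_p(f_p)W_p),
\]
where the first equality is $\BB_p^\times\times\BB_p^\times$-equivariance and the second is the unramified Shimizu lifting for the standard $\Phi_{0,p}$. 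This transfers the Hecke action on $\Phi$ directly to a Hecke action on the Whittaker vector $W$, so no Petersson adjunction (and hence no $f^{S,\vee}\leftrightarrow\overline{f^S}$ headache) is needed; the identity $I(\cdot,\Phi)=\rho_{\GL_2}(f^{S,\vee})I(\cdot,\Phi_0)$ then follows because pairing against $\rho_{\GL_2}(f^S)\phi$ is the same as pairing $\rho_{\GL_2}(f^{S,\vee})$ of the form against $\phi$. In particular the paper's argument does not require restricting to spherical $W^{S,\infty}$, though (as you note) that restriction is harmless here since both sides are right $U^S$-invariant.
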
 
\begin{proof}
It is enough to show that for every  cusp form $\phi$ of $\GL_{2,\BQ}$, we have $$\pair{I(g,\Phi),  \phi}=\pair{I(g,\Phi_0), \rho_{\GL_2}(f^S)\phi}$$
Again, we assume that $\phi\in \sigma$ as above, and  let $\pi$ and $W$ be as above.
 By \eqref{use2} and \eqref{use1},
the lemma follows from the 
following equations  for $p\not \in S$:
 $$\Theta_p( \Phi_{p}\otimes W_p)   =\pi_p(f_p) \Theta_p( \Phi_{0,p}\otimes W_p)= \Theta_p( \Phi_{0,p}\otimes \sigma_p(f_p) W_p) ,$$
 where the first equation is the $\BB^\times_p\times \BB^\times_p$-equivariance of $\Theta_p$, and the second equation is the unramified Shimizu lifting.
 \end{proof} 

Define a ``special value" version of   $   I(g,\Phi )$ by  \begin{align*}   J(g,\Phi )=\int_{K^\times\bsl \BA_K^{\infty,\times}} \Pr I(0,g,r(1,[t,1])\Phi)\Omega(t) dt 
  . \end{align*}  
 \begin{lem}  \label{jequishim}  In  Lemma \ref{unramShimizu} and  Lemma \ref{equishim}, replacing $I(g,\Phi )$ by $J(g,\Phi )$, the same results hold.
 
 \end{lem}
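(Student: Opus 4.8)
The plan is to observe that $J(g,\Phi)$ is in fact identically zero, so that both assertions become trivial: the analogue of Lemma \ref{unramShimizu} then holds for every choice of $V_{S_\fn}$ (e.g.\ $V_{S_\fn}=U_{S_\fn}$), and the analogue of Lemma \ref{equishim} is the identity $0=\rho_{\GL_2}(f^{S,\vee})\cdot 0$. The point is that $J$ is built from the \emph{value}, not the derivative, at $s=0$ of a mixed theta--Eisenstein series whose Eisenstein factor is attached to incoherent data.

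Concretely: by \ref{mtes} we have $\Phi$ on $\BB\times\BA^\times$ with $\BB=\BA_K\oplus\BA_K j$, and $I(s,g,\Phi)=\sum_{u\in\BQ^\times}\theta(g,u,\Phi_1)E(s,g,u,\Phi_2)$ with the theta factor on the quadratic space $\BV_1=\BA_K$ and the Eisenstein factor on $\BV_2=\BA_K j$ (with the norm form scaled by $u$). Writing $\BB=\BV_1\perp\BV_2$ as quadratic spaces under the reduced norm, additivity of the local Hasse--Witt invariants together with the Hilbert-symbol product formula, combined with the fact that $\BV_1=(K,N_{K/\BQ})\otimes_\BQ\BA$ is coherent, give $\prod_v\ep_v(\BV_{2,v})=\prod_v\ep_v(\BB_v)=-1$ (the last equality because $\BB$ is division at an odd number of places); so $\BV_2$ is incoherent, while its discriminant is globally consistent. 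Hence the Eisenstein series $E(s,g,u,\Phi_2)$ vanishes at the centre $s=0$ of its functional equation \cite[\S6]{YZZ} (see also \cite{Kud}), so $I(0,g,\Phi)=0$ for all $g$; applying $\Pr$ and integrating against $\Omega$ over $K^\times\bsl\BA_K^{\infty,\times}$ gives $J(g,\Phi)=0$, as wanted.

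If one prefers to stay parallel to the proofs of Lemma \ref{unramShimizu} and Lemma \ref{equishim}, one can instead compute $\pair{J(\cdot,\Phi),\phi}$ for a cusp form $\phi$: reducing to $\phi\in\sigma$ with $\sigma_\infty$ holomorphic discrete of weight $2k$ and central character $\Omega^{-1}|_{\BA^\times}$, the unfolding of \cite[2.3]{YZZ} yields the value at $s=0$ of $\prod_v P_v(s,\Omega,\Phi,W)$, which vanishes --- if $L(1/2,\pi_K\otimes\Omega)=0$ trivially, and otherwise because $\ep(1/2,\pi_{K_\infty}\otimes\Omega_\infty)\Omega_\infty(-1)=-1$ (here $\Omega_\infty$ is trivial and $\Hom_{K_\infty^\times}(\pi_\infty\otimes\Omega_\infty,\BC)=0$ on the $\GL_2$ side, see \cite{Gro1}) while every place outside $S$ is unramified, forcing some $p\in S$ nonsplit in $K$ with $\ep(1/2,\pi_{K_p}\otimes\Omega_p)\Omega_p(-1)=-1$, whence $\Hom_{K_p^\times}(\pi_p\otimes\Omega_p,\BC)=0$ by Tunnell \cite{Tun} and Saito \cite{Sai} and the corresponding local period vanishes. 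Either way $J(g,\Phi)=0$. The only step demanding genuine care is the incoherence of $\BV_2$ (equivalently, the root-number bookkeeping in the second argument) and the appeal to the vanishing of incoherent Eisenstein series at the centre; beyond that the lemma is a formality, and in particular the Shimizu-equivariance input of Lemma \ref{equishim} plays no role here since the target is $0$.
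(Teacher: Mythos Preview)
Your proof is correct and takes a genuinely different route from the paper's.

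The paper argues exactly in parallel with Lemmas \ref{unramShimizu} and \ref{equishim}: it pairs $J(\cdot,\Phi)$ against a cusp form $\phi\in\sigma$, uses the single clean formula
\[
\pair{J(\cdot,\Phi),\phi}=\frac{L(1/2,\pi_K\otimes\Omega)}{L(1,\eta)}\,\alpha^\sharp_\pi\bigl(\Theta(\Phi\otimes W)\bigr),
\]
and then invokes the $\BB^\times$-equivariance of the Shimizu lift and the assumption on $f^S$ (for the analogue of Lemma \ref{unramShimizu}) or the unramified Shimizu lift (for the analogue of Lemma \ref{equishim}). The point the paper stresses is that this is \emph{easier} than the $I$-case: there is no case split into $L(1/2)=0$ versus $L(1/2)\neq 0$, and Lemma \ref{aidlem} is not needed.

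You go further and prove the stronger statement $J\equiv 0$. Your first argument --- incoherence of $\BV_2=\BA_K j$ forcing $E(0,g,u,\Phi_2)=0$, hence $I(0,g,\Phi)=0$ --- is valid (the discriminants of $\BV_1$ and $\BV_2$ are both the global class $-\disc K$, so the cross-term $(d_1,d_2)_v$ has product $1$ by Hilbert reciprocity, and the Hasse-invariant bookkeeping goes through). Your second argument is essentially the paper's formula pushed one step further: once $L(1/2)\neq 0$ forces a finite place $p\in S_\fn$ with $\ep_p\Omega_p(-1)=-1$, Tunnell--Saito kills $\alpha^\sharp_{\pi_p}$ on all of $\pi_p\boxtimes\wt\pi_p$, so the pairing vanishes for every $\phi$ regardless of $f^S$.

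What each approach buys: your route is conceptually cleaner, yields a strictly stronger conclusion, and in passing shows that the $b\cdot J$-term in \eqref{conc1} is identically zero. The paper's route stays uniform with the surrounding lemmas and avoids importing the (standard, but not otherwise used in this paper) vanishing of incoherent Eisenstein series at the center.
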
 
\begin{proof} 
The proofs are similar to the proofs of Lemma \ref{unramShimizu} and  Lemma \ref{equishim}. It is in fact easier  since we only need to  
use $ \pair{J(\cdot,\Phi),\phi} =\frac{L(1/2,\pi_K\otimes\Omega)}{L(1,\eta)} \alpha^\sharp_{\pi}(\Theta(\Phi\otimes W)) $ instead of \eqref{use2}
and \eqref{use1}. In particular, we do not need Lemma \ref{aidlem}.
\end{proof} 
  
  \subsubsection{Invariance of $d_{\phi} $}We study the second term $\theta_{d,p}(g,\Phi)_\psi$
  on the right hand side of the comparison \eqref{concl}.
The main result is Corollary \ref{finally}, about  its local test function $d_{\phi} $ (see \eqref{dphi}).
The same result for the  local test function $l_{\phi}$ of the  third term $\theta_{l,p}(g,\Phi)_\psi$ was already done in Lemma \ref
 {finallyl}.

 Let  $\lambda$ be the invariant defined in \S \ref{Notations}. We abuse notation and  use $\lambda$ to denote this invariant on both $\BB_p$ and $B(p)_p$.
\begin{lem}\label{morem}
 Let $V$ be an open compact neighborhood of $0$ in $\BQ_p$. For $V$ large enough, there exists an open compact subgroup $U'_p$ of $B(p)_p^\times$ such that 
for all $x\in  \BB_p ^\times$ with $\lambda(x)\not \in V $, $ m_p(y,x )$ is left $U'_p$-invariant  as a function on 
$y$. 
\end{lem}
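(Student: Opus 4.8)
The plan is to argue, exactly as in the analogous finiteness statements for the multiplicity and kernel functions already proved (e.g. Lemma \ref{morek}, Lemma \ref{morek0}), that the condition $\lambda(x)\not\in V$ for $V$ large forces $x$ (up to the $K_p^\times$-action, which is harmless since $\lambda$ is $K_p^\times$-invariant) to have ``large denominator'' in the decomposition $\BB_p=K_p\oplus K_pj_p$, and that such $x$ meet the support of $m_p(\cdot,x)$ only on a locus which is uniformly controlled. First I would recall from \ref{ipartm} that $m_p(\delta,g)$ is the normalized intersection multiplicity of the CM point $(\delta,g)$ with $(1,1)$ on the (desingularized) Lubin–Tate space $\cS'_{U_p}$, and that by Lemma \ref{mnonv}(1) nonvanishing of $m_p(y,x)$ forces $q(y)q(x)\in q(U_p)$; in particular $q(x)$ is pinned down up to $q(U_p)$ once $y$ ranges over a fixed compact set $C$ with $C\times\BB_p^\times\twoheadrightarrow B(p)_p^\times\times_{K_p^\times}\BB_p^\times$ (such $C$ exists since $B(p)_p$ is a division algebra when $\BB_p$ is split).

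Next I would make the support condition explicit. Writing $x=(x_1,x_2)$ under $\BB_p=K_p\oplus K_pj_p$, the inequality $\lambda(x)=q(x_2)/q(x)\not\in V$ with $V$ large (together with $q(x)$ bounded) means $\ord_p(q(x_2))$, hence also $\ord_p(q(x_1))$, is very negative. I would then invoke the geometry of $\cS'_{U_p}$: the point $(1,1)$ sits over the closed point of the deformation space, and a CM point $(\delta,g)$ with $g$ very deep in the supersingular Lubin–Tate tower (which is what large $|\lambda(x)|$ records, via the level-$U_p$ parametrization) reduces into a very small formal neighborhood whose intersection pattern with $(1,1)$ depends only on finitely much data. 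Concretely, enlarging $V$ forces the relevant $x_p$ to lie in $K_p^\times\cdot(\text{small neighborhood})\cdot U_p$, and one reads off that $m_p(y,x)$, as a function of $y\in C$, is left-invariant under the stabilizer in $B(p)_p^\times$ of the corresponding small formal subscheme — an open compact subgroup $U_p'$ depending only on $V$ (and $U_p$), not on $x$. The smoothness of $m_p$ (stated in \ref{ipartm}) and property (2) of Lemma \ref{mnonv} then upgrade this pointwise invariance to genuine left $U_p'$-invariance.

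I expect the main obstacle to be making the passage ``$|\lambda(x)|$ large $\Rightarrow$ the intersection locus is uniformly shallow'' rigorous without re-deriving the local structure of $\cS'_{U_p}$; the clean way is to mimic the proof of Lemma \ref{morek} verbatim, replacing the analytic kernel $k_p$ by $m_p$ and using Lemma \ref{mnonv}(1) in place of Lemma \ref{morek0}, since both lemmas have the same shape and rely on the same support constraint $q(y)q(x)\in q(U_p)$ plus the division-algebra property $q(y_2)\not\in\Nm(K_p^\times)$. A secondary point needing care is the uniformity of $U_p'$ in $x$: one must choose $U_p$ of the special form \eqref{equa}, $U_0=(1+U)\oplus Uj$, shrink $U$ so that $\ord_p(q(x_2u_1+x_1u_2))=\ord_p(q(x_2))$ for all $u_1\in 1+U$, $u_2\in Uj$ (exactly as in the proof of Lemma \ref{morek}), and then take $U_p'$ to be the corresponding stabilizer; the key is that this choice is dictated by the fixed compact set $q(C)q(U_p)$ of possible values of $q(x)$, so it is independent of the individual $x$. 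Everything else is a routine change of variables together with Lemma \ref{mnonv}(2).
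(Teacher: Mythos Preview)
Your proposal has a genuine gap. The approach of mimicking Lemma \ref{morek} verbatim cannot work: that lemma proves that $k_p(y,x)=0$ for $\lambda(x)$ large, whereas here you must prove only that $m_p(y,x)$ is left $U_p'$-invariant in $y$. In fact $m_p(y,x)$ does \emph{not} vanish for $\lambda(x)$ large --- the CM points $(y,x)'$ and $(1,1)'$ still meet on $\cS'_{U_p}$ --- so any argument that tries to show the intersection locus is ``empty'' by the $q(y_2)\not\in\Nm(K_p^\times)$ trick from Lemma \ref{morek} will fail. Your vague sentence about ``the stabilizer in $B(p)_p^\times$ of the corresponding small formal subscheme'' does not supply a mechanism: there is no reason, from the support constraint $q(y)q(x)\in q(U_p)$ alone, that the intersection number $m_p(y,x)$ should be locally constant in $y$.

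The paper's proof uses two ingredients you are missing. First, it splits $m_p=\mu_p+(m_p-\mu_p)$, where $\mu_p(y,x)$ is the intersection number on the \emph{non}-desingularized space $\cS_{U_p}$. For $\mu_p$ there is an explicit formula due to Li \cite{Li}:
\[
\mu_p(y,x)=c\int_{g\in U_px}\bigl|(1-\lambda(y))-(1-\lambda(g))\bigr|_p\,dg,
\]
and since $B(p)_p$ is division one has $|1-\lambda(y)|_p\le 1$ for all $y$, so for $\lambda(x)$ large the integrand is independent of $y$ --- hence $\mu_p(\cdot,x)$ is literally constant, trivially $U_p'$-invariant. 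Second, for the difference $m_p-\mu_p$, the projection formula identifies it with the intersection of $(y,x)'$ against the vertical part $C$ of the preimage of $(1,1)$ under $\cS'_{U_p}\to\cS_{U_p}$; then the stabilizer $U_p'$ of $C$ gives the required invariance via Lemma \ref{9123}(2). The paper even remarks afterward that the explicit formula for $\mu_p$ is essential and not available when $\BB_p$ is division, which underscores that the argument is not a formal support-and-stabilizer argument of the kind you sketched.
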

\begin{proof} We need some  notations  defined in \S \ref{ipartm}.
Let $\cM_{U_p}$ be the base change to  $\cO_{F_p^\ur}$ of the supersingular   formal deformation space of level $U_p$. (This notation is slightly different from \ref{ipartm} for convenience.)
 Let $\cM_{U_p}'$ be  the minimal desingularization of   $\cM_{U_p}$. For  $(y,x)\in B(p)_p^\times\times ^{K_p^\times}\BB^\times_p$, let $(y,x)$
denote its image in $\cM_{U_p}$ and 
 let $(y,x)'$
denote its image in $\cM'_{U_p}$. 
  Then $ m_p(y,x )$ is the intersection number of   $(y,x)'$ and $(1,1)'$ on $\cM_{U_p}'$, and we let $\mu_p(y,x)$ be the  intersection number of   $(y,x)$ and $(1,1)$ on   $\cM_{U_p}$.

We  have  the following explicit formula of $\mu_p(y,x)$
 \cite[Theorem 1.3]{Li}:  $$\mu_p(y,x)= c \int_{g\in U_px} | (1-\lambda(y))- (1-\lambda(g))|_p dg,$$
 where $c$ is a nonzero constant independent of $y,g$.
Recall    $ 1-\lambda(y )=\frac{ 1}{1-\inv(y)}   $  (see \S \ref{Notations}) where  $\inv(y)\in \ep \Nm   (K_p^\times) \cup\{0,\infty\}  $ 
with $\ep\not \in \Nm   (K_p^\times)$ (see \S \ref{matchorb}). Since  $|1-\inv(y)|_p\geq 1$, $|1-\lambda(y)|_p\leq 1$. Thus $\mu_p(y,x)$ does not depend on $y$  for $\lambda(x)$ large enough.

By the projection formula,  $ m_p(y,x )-\mu_p(y,x )$ 
 is  the  intersection number  of $(y,x)'\in \cM_{U_p}'$  with the vertical part $C$ of the preimage of $(1,1)\in \cM_{U_p}$  by the morphism $\cM'_{U_p}\to \cM_{U_p}$. Let $U_p'$   be the stabilizer of   $C$.
 Then the lemma follows from Lemma \ref{9123}  (2).  
\end{proof}

 \begin{rmk}The condition that $\BB_p$  is a matrix algebra is essential in Lemma \ref{morem}. If $\BB_p$ is a division algebra, an explicit formula of the corresponding $\mu_p(y,x)$   is not known yet. 

 \end{rmk}
 
\begin{cor} \label{finally}For $U_p$ small enough, there   exists an open compact subgroup $U'_p$ of $B(p)_p^\times$ such that 
 $ d_p=k_p-m_p$ is left $U'_p$-invariant. In particular, $d_{\phi} $  is left $U'_p$-invariant for all $\phi\in \cS(\BB_p  \times \BQ_p^\times)$.
  \end{cor}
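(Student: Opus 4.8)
The plan is to split $Q:=B(p)_p^\times\times_{K_p^\times}\BB_p^\times$ into the ranges $\lambda(x)\notin V$ and $\lambda(x)\in V$ for a suitable open compact neighbourhood $V$ of $0$ in $\BQ_p$; on the first range the vanishing of $k_p$ (Lemma \ref{morek}) together with the left-uniformity of $m_p$ (Lemma \ref{morem}) settles things at once, while on the second range I would reduce to a compact region on which smoothness of $d_p$ (Corollary \ref{arithmatch}) forces uniform left-invariance. First fix $V$ large enough for both Lemma \ref{morek} and Lemma \ref{morem} and take $U_p$ small enough for Lemma \ref{morek}. Recall from Corollary \ref{arithmatch} that $d_p$ is smooth on $Q$, and from Lemma \ref{morek0} and Lemma \ref{mnonv} (1) that $d_p$ is supported in $\Sigma_0:=\{(y,x)\in Q:q(y)q(x)\in q(U_p)\}$. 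Since $\BB_p$ is the matrix algebra throughout \ref{Local comparison II}, the algebra $B(p)_p$ is division, so its norm-one subgroup $B(p)_p^1$ is compact; note that left translation by $B(p)_p^1$ preserves $\Sigma_0$, preserves the condition $\lambda(x)\in V$, and leaves $q(y)$ unchanged.

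On the range $\lambda(x)\notin V$ (which forces $x\notin K_p^\times$, as $0\in V$) we have $k_p=0$, so $d_p=-m_p\log p$ there, and this is left-invariant under some open compact $U_1\subset B(p)_p^1$ by Lemma \ref{morem} (intersect the group produced by Lemma \ref{morem} with $B(p)_p^1$). On the range $\lambda(x)\in V$, $d_p$ is supported in $\Sigma:=\Sigma_0\cap\{\lambda(x)\in V\}$, a clopen subset of $Q$, and the crucial point is that $\Sigma$ is compact. To see this, fix a compact $C\subset B(p)_p^\times$ with $CK_p^\times=B(p)_p^\times$, which exists since $B(p)_p^\times/\BQ_p^\times$ is compact and $\BQ_p^\times\subset K_p^\times$. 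Every class in $\Sigma$ has a representative with $y\in C$, and then $q(x)\in q(C)^{-1}q(U_p)$, a compact subset of $\BQ_p^\times$; writing $x=x_1+x_2j$ under $\BB_p=K_p\oplus K_pj$ one has $\Nm(x_1)=(1-\lambda(x))q(x)$ and $\ep\,\Nm(x_2)=-\lambda(x)q(x)$, so the bounds on $\lambda(x)$ (from $V$) and on $q(x)$ confine $x$ to a compact subset of $\BB_p^\times$. Hence $\Sigma$ is the image in $Q$ of a compact subset of $B(p)_p^\times\times\BB_p^\times$, and being clopen it is compact.

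Now the uniformization is routine: the function $(w,(y,x))\mapsto d_p(wy,x)-d_p(y,x)$ on $B(p)_p^1\times\Sigma$ is locally constant (since $d_p$ is smooth and the domain is compact) and vanishes on $\{1\}\times\Sigma$, hence on $U_2\times\Sigma$ for some open compact $U_2\subset B(p)_p^1$; so $d_p$ is left $U_2$-invariant on $\{\lambda(x)\in V\}$. Setting $U'_p:=U_1\cap U_2\subset B(p)_p^1$, the function $d_p$ is then left $U'_p$-invariant on all of $Q$, because it vanishes off $\Sigma_0$ and $U'_p$ preserves $\Sigma_0$. The last assertion is immediate: in \eqref{dphi} a left translation of the $B(p)_p^\times$-variable by $u'\in U'_p$ leaves $q(y)$ unchanged (as $U'_p\subset B(p)_p^1$) and enters only through the first argument of $d_p$, so $d_\phi(u'y,u)=d_\phi(y,u)$ term by term.

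The step demanding genuine care is the compactness of $\Sigma$ in the quotient $Q$; everything else is an assembly of Lemma \ref{morek}, Lemma \ref{morem}, Corollary \ref{arithmatch}, and the standard compact-uniformization argument. Conceptually, the hypothesis that $\BB_p$ is the matrix algebra enters only through the fact that $B(p)_p$ is then division, which is what makes Lemma \ref{morek} and Lemma \ref{morem} applicable and what makes $B(p)_p^1$ compact above.
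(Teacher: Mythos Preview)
Your strategy is exactly the paper's: split according to whether $\lambda(x)\in V$, invoke Lemma \ref{morek} and Lemma \ref{morem} on the outer range, and on the inner range show the support $\Sigma$ is compact so that smoothness of $d_p$ (Corollary \ref{arithmatch}) yields uniform invariance. Your compactness argument is in fact tidier than the paper's: you read off directly that $\Nm(x_1)=(1-\lambda(x))q(x)$ and $\ep\,\Nm(x_2)=-\lambda(x)q(x)$ lie in compact subsets of $\BQ_p$, hence $x$ lies in a compact of $\BB_p$ with $q(x)$ bounded away from $0$, whereas the paper does a small case analysis on whether the $K_p$-component of $x$ is near $0$.

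There is, however, one genuine slip. You force $U'_p\subset B(p)_p^1$ so that $q(y)$ is unchanged, but then $U'_p$ is \emph{not} open in $B(p)_p^\times$ (the norm-one subgroup has codimension one), contradicting the statement and, more importantly, breaking the later application in Lemma \ref{444}, which needs an open compact subgroup of $B(p)_p^\times$ to invoke finiteness of automorphic representations with a fixed level. The fix is painless: drop the restriction to $B(p)_p^1$. Any open compact $U'_p\subset B(p)_p^\times$ with $q(U'_p)\subset q(U_p)$ still preserves $\Sigma_0$ (since $q(U_p)$ is a group), and your uniformization argument on $\Sigma$ works verbatim with $B(p)_p^1$ replaced by any compact neighborhood of $1$ in $B(p)_p^\times$. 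For the ``in particular'' clause, the intended invariance is under the Weil-representation action $r(1,[u',1])$, i.e.\ $d_\phi(u'^{-1}y,\,q(u')u)=d_\phi(y,u)$; unwinding \eqref{dphi} shows this follows immediately from left $U'_p$-invariance of $d_p$ \emph{without} needing $q(u')=1$, since the factor $q(u')$ in the second slot is exactly cancelled by $q(u'^{-1}y)=q(u')^{-1}q(y)$.
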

  \begin{proof}Let $V$ be an open compact neighborhood of $0$ in $\BQ_p$ such that both Lemma \ref{morek} and  Lemma \ref{morem}  hold.
  We only need to show that for $x\in  \lambda^{-1}(V)\subset \BB_p^\times$, $d_p(y,x)$ is  left $U'_p$-invariant for some $U'_p$.
   By Lemma \ref{mnonv} (1), Lemma \ref{morek0} and Corollary \ref{arithmatch}, we only need to show that 
  $$\{(y,x)\in B(p)_p^\times\times ^{K_p^\times}\BB_p^\times:   \lambda(x)\in V, \ q(y)q(x)\in q(U_p)\}$$ is compact.
 Up to the  $K_p^\times$-action, we may assume that $y$ is  in a fixed compact subset of $B(p)_p^\times$ (here we use that $B(p)_p$ is a division algebra), so that 
 $q(x)$ is  in a fixed compact subset   $U_1\subset \BZ_p^\times$.  Write $x=a+bj_p$ as in \S \ref{matchorb}. Then  $b$ is  in a fixed compact subset  $U_2\subset K ^\times$ by  $\lambda(x)\in V$. By $q(x)\in U_1$,   $a$ is  in a fixed compact subset  $U_3\subset K $. 
Let  $U_3=U_{3,1}\cup  U_{3,2}$ where $ U_{3,1}$ is a compact subset of  $  K_p^\times $ and $U_{3,2}$ is a small compact neighborhood of  $0$. If $a\in U_{3,1}$, $a+bj\in U_{3,1}(1+ U_{3,1} U_2j)$, which is compact.  
If $a\in U_{3,1}$, then by $q(x)\in U_1$,   $b$ is  in a compact subset  $U_{2,1}\subset K_p^\times $. 
Then  $a+bj\in U_{2,1}(U_{2,1} U_3+  j)$, which is compact.  
  \end{proof}
 
\subsubsection{Proposition \ref{strongmodularity1} under Assumption \ref{asmp2'}}\label{mod2}
 
 Still,  let $\Phi  =  r\lb   f\rb \Phi_0 $ be as in \eqref{phip}.
 Assume  that $f_S=\otimes_{p\in S} f_p$ is  right invariant by $ U_S$.
  For  $p\in S_\nspl$, let the  holomorphic cusp forms   
   $\theta_{d,p}(g,\Phi) $ and $\theta_{l,p}(g,\Phi)$ of $\GL_{2,\BQ}$ on $\GL_2(\BA)$     of weight $2k$ and central character   $\Omega |_{\BA^\times}$     be as in
 \eqref{intrep1} and  \eqref{intrep2}.
 
We have the following analog of Lemma \ref{unramShimizu}.
\begin{lem}\label{444} There is an open compact subgroup $V_{S_\nspl}\subset U_{S_\nspl}$ 
  such that if $f ^S\in \cH^S$ acts as 0 on all representations in $\cA^{V_{S_\nspl}U^{S_\nspl}}$,  then for every $f_S$ satisfying Assumption \ref{asmp2'},
   $\theta_{d,p}(g,\Phi) =\theta_{l,p}(g,\Phi)=0 $ for all  $p\in S_\nspl$.

\end{lem}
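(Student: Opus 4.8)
The plan is to reduce Lemma \ref{444} to the vanishing statements already established for the leading term $I(g,\Phi)$, namely Lemma \ref{unramShimizu}, by exhibiting $\theta_{d,p}(g,\Phi)$ and $\theta_{l,p}(g,\Phi)$ as Hecke translates of ``universal'' theta integrals against a fixed Schwartz datum, so that the same finiteness-of-$\cA^{V U^S}$ argument applies. First I would fix, via Corollary \ref{finally} and Lemma \ref{finallyl}, open compact subgroups $U_p'\subset B(p)_p^\times$ (for $p\in S_\fn$) such that $d_p$ and $l_C$ are left $U_p'$-invariant; consequently $d_{\Phi_p}$ and $l_{\Phi_p}$, and hence the theta integrands $d_{\Phi_p}\otimes\Phi^p$ and $l_{\Phi_p}\otimes\Phi^p$ appearing in \eqref{intrep1} and \eqref{intrep2}, transform under the first $\BB^\times$-factor only through its image in a fixed compact quotient. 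Since $\Phi=r(f)\Phi_0=r(f_S\otimes f^S)\Phi_0$ and $f^S\in\cH^S$ is bi-$U^S$-invariant, the dependence on $f^S$ enters $d_{\Phi_p}\otimes\Phi^p$ only through the $\Phi^{S,\infty}$-factor, which equals $\pi(f^S)$ applied to a fixed Gaussian-type function by the $\BB^\times\times\BB^\times$-equivariance used repeatedly in \ref{mod3}.

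Next I would argue the spectral content. For a cuspidal $\sigma$ of $\GL_{2,\BQ}$ with holomorphic weight-$2k$ archimedean component and central character $\Omega^{-1}|_{\BA^\times}$, the Petersson pairing $\pair{\Pr\theta(g,[t_1,t_2],d_{\Phi_p}\otimes\Phi^p),\phi}$ is, by the Rallis inner product / Siegel--Weil machinery of \cite{YZZ} (the weight $2k$ analog of \cite[Proposition 6.5]{YZZ}), a product of local zeta integrals that factors through the period $\alpha^\sharp_\pi$ of the Jacquet--Langlands transfer $\pi$ of $\sigma$ to $\BB^\times$; in particular it vanishes unless $\pi$ has a nonzero vector fixed by the compact group cut out by $U_p'$ at $p\in S_\fn$ and by $U^{S_\fn}$ away from $S_\fn$ (the relevant invariance coming from left-$U_p'$-invariance of $d_p$, $l_C$, the right-$U$-invariance of $\Phi$ at the remaining places of $S$, and bi-$U^S$-invariance of $f^S$). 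Set $V_{S_\fn}$ to be the intersection of all these $U_p'$ (shrunk inside $U_{S_\fn}$); then $\pi\in\cA^{V_{S_\fn}U^{S_\fn}}$ is forced, and this is a finite set. Because $f^S$ acts as $0$ on every $\pi\in\cA^{V_{S_\fn}U^{S_\fn}}$ and the $f^S$-dependence of the integrand is exactly $\pi(f^S)$ on the $S^c$-component, every Petersson pairing vanishes; since $\Pr$ lands in holomorphic cusp forms, $\theta_{d,p}(g,\Phi)=0$, and the identical argument with $l_{\Phi_p}$ gives $\theta_{l,p}(g,\Phi)=0$.

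The main obstacle is the local unramified computation at places $p\notin S$: one must check that for such $p$ the local zeta integral attached to $d_{\Phi_p}\otimes\Phi^p$ (resp. $l_{\Phi_p}\otimes\Phi^p$) is, up to the normalized $L$-factor, the matrix coefficient $\alpha^\sharp_{\pi_p}$ evaluated on the local Shimizu lift of $\pi_p(f_p)$ of the basic vector, so that the product over $v$ really does factor as a Hecke translate of a single universal theta integral. This is the analog, for the ``difference'' kernels $d_p$ and $l_C$ rather than for $m_p$ itself, of \cite[Lemma 6.6, Proposition 8.8]{YZZ}; here one uses that $d_p$ and $l_C$ are supported at the bad place $p\in S_\fn$ and are trivial (have no contribution) at $p\notin S$, so the unramified places only see the Gaussian part of $\Phi^p$ and the computation is exactly the one in \cite{YZZ}. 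The archimedean factor is controlled, as elsewhere in the paper, by $\Hom_{K_\infty^\times}(\pi_\infty\otimes\Omega_\infty,\BC)=\{0\}$ together with the weight-$2k$ holomorphic projection formulas of \S\ref{Whittaker functions}, which forces the relevant $\sigma_\infty$ to be the holomorphic discrete series and otherwise kills the pairing. Once these local identifications are in place, the finiteness of $\cA^{V_{S_\fn}U^{S_\fn}}$ and the hypothesis on $f^S$ close the argument.
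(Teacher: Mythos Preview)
Your overall architecture is right --- pair with cusp forms, use the Shimizu lifting, exploit left-$U_p'$-invariance of $d_p$ and $l_C$ from Corollary \ref{finally} and Lemma \ref{finallyl}, and then kill everything with the hypothesis on $f^S$ --- but there is a genuine gap in how you handle the groups.

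The subgroups $U_p'$ produced by Corollary \ref{finally} and Lemma \ref{finallyl} live in $B(p)_p^\times$, \emph{not} in $\BB_p^\times$. Thus your sentence ``Set $V_{S_\fn}$ to be the intersection of all these $U_p'$ (shrunk inside $U_{S_\fn}$)'' is a type error: $U_p'$ and $U_{S_\fn}$ sit in different groups and cannot be intersected. Relatedly, the Schwartz function $d_{\Phi_p}\otimes\Phi^p$ lives on $B(p)(\BA)\times\BA^\times$, so the theta series $\theta(g,h,d_{\Phi_p}\otimes\Phi^p)$ is a function of $h\in\bigl(B(p)^\times\times B(p)^\times\bigr)(\BA)$, and its pairing with $\phi\in\sigma$ is the global Shimizu lift of $\sigma$ to $B(p)^\times$, landing in $\pi'\boxtimes\wt\pi'$ where $\pi'$ is the Jacquet--Langlands transfer of $\sigma$ to $B(p)^\times$. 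The left-$U_p'$-invariance therefore forces $\pi'$ (not $\pi$) to have a nonzero $U_p'U^p$-fixed vector. There are only finitely many such $\pi'$, and for each one you must take the Jacquet--Langlands transfer of $\pi'_p$ \emph{back} to $\BB_p^\times$ and choose $V_p\subset U_p$ small enough that this transfer has a $V_p$-fixed vector. Only then does the corresponding $\pi$ on $\BB^\times$ lie in $\cA^{V_{S_\fn}U^{S_\fn}}$, and the hypothesis on $f^S$ applies.

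A secondary point: you do not need to factor the Petersson pairing through $\alpha^\sharp_\pi$ or invoke any local zeta-integral identity at $p\notin S$. It suffices to observe, by $\bigl(B(p)^\times\times B(p)^\times\bigr)(\BA)$-equivariance of the theta series, that
\[
\bigl\langle \Pr\theta(\cdot,h,d_{\Phi_p}\otimes\Phi^p),\phi\bigr\rangle
=\pi'(f^p)\bigl\langle \Pr\theta(\cdot,h,d_{\Phi_p}\otimes\Phi_0^p),\phi\bigr\rangle,
\]
and then use that $f^S$ acts as $0$ on $\pi'$ (via the identification $\pi'^{S,\infty}\cong\pi^{S,\infty}$ as $\cH^S$-modules). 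This is simpler than what you sketch and avoids the ``main obstacle'' you flag, which is not actually an obstacle here.
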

\begin{proof}

Let $U_{p}'\subset B(p)^\times$ be such that both Lemma \ref{finallyl} and Corollary \ref{finally} hold.
  There are only finitely many  automorphic representations  $\pi$ of $B(p)^\times$ with  nonzero $U_p'U^p$-invariant vectors such that 
   the central character of $\pi$ is $\Omega^{-1}|_{\BA^\times}$ and 
    the Jacquet-Langlands correspondence of $\pi_\infty$  to $\GL_2(\BR)$ is holomorphic discrete   of weight $2k$.   
 Choose $V_p\subset U_p$  such that  for every such $\pi$,  the Jacquet-Langlands correspondence of $\pi _p$  to $\BB^\times_p$  has   nonzero $V_p$-invariant vectors. Let $V_{S_\nspl}=\prod_{p\in {S_\nspl}} V_p$.
  
  We show that   for every $f=f_S\otimes f^S$ as in the lemma and 
 every  cusp form $\phi$ of $\GL_{2,\BQ}$, the  Petersson pairings   $\pair{\theta_{d,p}(g,\Phi),\phi} $ and $\pair{ \theta_{l,p}(g,\Phi),\phi}  $  are 0, where $p\in S_\nspl$.
Let $\phi$ be in a cuspidal automorphic representation $\sigma$.  
We assume that the central character of $\sigma$ is $\Omega^{-1}|_{\BA^\times}$ and 
$\sigma_\infty$ is holomorphic discrete   of weight $2k$. (Otherwise,   $\pair{\theta_{d,p}(g,\Phi),\phi}=\pair{\theta_{l,p}(\cdot ,\Phi),\phi}=0$ already holds.) 
Consider $\pair{ \Pr  \theta(\cdot,h,d_{\Phi_p}\otimes \Phi_0 ^p) ,\phi}$ as  a function on $h \in \lb B(p)^\times\times B(p)^\times\rb(\BA)$.
 Let us  show that $$\pair{ \Pr  \theta(\cdot,h,d_{\Phi_p}\otimes \Phi ^p) ,\phi}=0.$$  Then  $\pair{\theta_{d,p}(g,\Phi),\phi}=0$ by definition.

 Note that   $\pair{ \Pr  \theta(\cdot,h,d_{\Phi_p}\otimes \Phi_0 ^p) ,\phi}$ lies in the global Shimizu lifting $\pi'\boxtimes\wt\pi'$   of $\sigma$ \cite[2.2]{YZZ}. Here $\pi'$ is the Jacquet--Langalnds correspondence of $\sigma$ to $B(p)^\times$.
 By the $ \lb B(p)^\times\times B(p)^\times\rb(\BA)$-equivariance  of the formation of the theta series, 
  $\pair{ \Pr  \theta(\cdot,h,d_{\Phi_p}\otimes \Phi_0 ^p) ,\phi}$ is  invariant by $U_p'\times U^p\subset \lb B(p)^\times\times B(p)^\times\rb(\BA)$. Thus $ \pi'$ has a nonzero $U_p'U^p$-invariant vector.
By the $ \lb B(p)^\times\times B(p)^\times\rb(\BA)$-equivariance again, 
we have 
  \begin{equation}\label{haoln}\pair{ \Pr  \theta(\cdot,h,d_{\Phi_p}\otimes \Phi ^p) ,\phi}=\pi'(f^p)\pair{ \Pr  \theta(\cdot,h,d_{\Phi_p}\otimes \Phi_0 ^p) ,\phi}.\end{equation}  
Since $f ^S\in \cH^S$ acts as 0 on $\pi' $ by our choice of $V_{S_\nspl}$,  
   $\pair{ \Pr  \theta(\cdot,h,d_{\Phi_p}\otimes \Phi ^p) ,\phi}=0$  by \eqref{haoln}.

By the same argument, we have $\pair{\theta_{l,p}(\cdot ,\Phi),\phi}=0$. The lemma is proved.
 \end{proof} 
 
 Combining \eqref{concl}, Lemma \ref{unramShimizu}  and  Lemma \ref{444}, we have the following corollary.
 \begin{cor}\label{445}  Proposition \ref{strongmodularity1}  holds under Assumption \ref{asmp2'} on $f_S$.  
\end{cor}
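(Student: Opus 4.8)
\textbf{Proof plan for Corollary \ref{445}.}
The goal is to prove Proposition \ref{strongmodularity1} under the extra hypothesis that $f_S$ satisfies Assumption \ref{asmp2'} (i.e. some $f_{p_0}$ vanishes on $K_{p_0}^\times$). The strategy is to feed the comparison \eqref{concl} into the machinery already assembled in this subsection. First I would recall \eqref{concl}: under Assumption \ref{asmp2'} we have
\begin{equation*}
2 W^{(2k)}_{1}(1) H(f)= I(1,\Phi)_\psi+\sum_{p\in S_\fn}\bigl(\theta_{d,p}(1,\Phi)_\psi-\theta_{l,p}(1,\Phi)_\psi\bigr),
\end{equation*}
where $\Phi=r(f)\Phi_0$ for the basic Schwartz function $\Phi_0$. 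Since $W^{(2k)}_1(1)\neq 0$, it suffices to show that the right-hand side vanishes when $f^S\in\cH^S$ acts as $0$ on every representation in $\cA^{V_{S_\fn}U^{S_\fn}}$, for a suitable open compact $V_{S_\fn}\subset U_{S_\fn}$ independent of $f_S$.

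The right-hand side has two kinds of terms. The term $I(1,\Phi)_\psi$ is handled by Lemma \ref{unramShimizu}: there is an open compact $V_{S_\fn}^{(1)}\subset U_{S_\fn}$ such that if $f^S$ kills all of $\cA^{V_{S_\fn}^{(1)}U^{S_\fn}}$ then $I(g,\Phi)\equiv 0$, hence in particular its $\psi$-Whittaker function vanishes at $g=1$. The terms $\theta_{d,p}(1,\Phi)_\psi$ and $\theta_{l,p}(1,\Phi)_\psi$ for $p\in S_\fn$ are handled by Lemma \ref{444}: there is an open compact $V_{S_\fn}^{(2)}\subset U_{S_\fn}$ such that if $f^S$ kills all of $\cA^{V_{S_\fn}^{(2)}U^{S_\fn}}$ then $\theta_{d,p}(g,\Phi)=\theta_{l,p}(g,\Phi)=0$ for all $p\in S_\fn$. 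One then takes $V_{S_\fn}=V_{S_\fn}^{(1)}\cap V_{S_\fn}^{(2)}$, which is still an open compact subgroup of $U_{S_\fn}$ and does not depend on the choice of the pure tensor $f_S$ (the finiteness of $\cA^{V_{S_\fn}U^{S_\fn}}$ is what makes this work). With this $V_{S_\fn}$, every term on the right of \eqref{concl} vanishes, so $H(f_S\otimes f^S)=0$, which is exactly Proposition \ref{strongmodularity1} under Assumption \ref{asmp2'}.

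Almost all of the real content has already been extracted into Lemma \ref{unramShimizu} and Lemma \ref{444}; those in turn rest on the integral-representation identities \eqref{use2}, \eqref{use1} together with the Tunnell--Saito dichotomy and the local control provided by Lemma \ref{aidlem}, Lemma \ref{finallyl} and Corollary \ref{finally}. So at the level of Corollary \ref{445} the only step is the bookkeeping of intersecting the two level groups and invoking \eqref{concl}; there is no genuine obstacle here. The real work — and the main obstacle, which lies upstream — is establishing \eqref{concl} itself (the local comparisons in \ref{Matp}--\ref{leave} and the archimedean computation) and proving Lemma \ref{aidlem}, i.e. controlling the derivative $(P^\circ_p)'(0,\Omega_p,\Phi_p,W_p)$ in terms of the existence of a $U_p'$-fixed vector in the Jacquet--Langlands transfer $\pi_p'$, uniformly in $\pi_p$ and in $f_p$. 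Once those inputs are in place, Corollary \ref{445} follows immediately by the argument above.
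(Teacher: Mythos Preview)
Your proposal is correct and follows exactly the paper's own approach: the paper simply states that the corollary follows by combining \eqref{concl}, Lemma \ref{unramShimizu}, and Lemma \ref{444}. Your extra bookkeeping step of taking $V_{S_\fn}=V_{S_\fn}^{(1)}\cap V_{S_\fn}^{(2)}$ is the obvious way to make the one-line proof explicit, and your remarks about where the real work lies upstream are accurate.
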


 \subsubsection{Proof of Proposition \ref{strongmodularity1}}\label{mod2'}
For a positive integer $N_0$ with prime factors in $S$,  let $U_0(N_0) $ be the  open compact subgroup of $\BB^{\infty,\times}= \GL_2(\BA^\infty) $ corresponding to the usual  congruence subgroup $\Gamma_0(N_0)$ of $\SL_2(\BZ)$.

\begin{thm}\label{deyt}  
 Assume that  $N_0$ is the product of  two relatively prime integers which are $\geq 3$ whose prime factors  are all split in $K$.
Then Proposition \ref{strongmodularity1}   holds  for   $f_S=1_{U_0(N_0)_{ S}}$.
\end{thm}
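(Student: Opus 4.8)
\textbf{Proof plan for Theorem \ref{deyt}.} The goal is to remove Assumption \ref{asmp2'} from Proposition \ref{strongmodularity1} in the special case $f_S = 1_{U_0(N_0)_S}$, where $N_0$ has all prime factors split in $K$. The obstruction to applying Corollary \ref{445} directly is that $1_{U_0(N_0)_p}$ does not vanish on $K_p^\times$ for split $p$, so there is no $p_0$ as in Assumption \ref{asmp2'}. The strategy will be to replace the analytic input (the comparison \eqref{concl}) by S.\ Zhang's original computation of the generating series of heights in \cite{Zha97}: for $f_S = 1_{U_0(N_0)_S}$, the CM cycles $Z_{\Omega, f_S \otimes f^S}$ are exactly the Heegner-type cycles on Kuga--Sato varieties over $X_0(N_0)$ (twisted by $\Omega$), and the unramified height pairing $\pair{Z_{\Omega, f_S \otimes f^S}, Z_{\Omega^{-1}}}$ as $f^S$ ranges over $\cH^S$ is the $m$-th Fourier coefficient of a holomorphic cusp form of weight $2k$, by the main theorem of \cite{Zha97}. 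This gives an analog of \eqref{concl} --- call it \eqref{conc1} as promised in \ref{Conclusion} --- without needing Assumption \ref{asmp2'}, at the cost of only controlling the unramified Hecke action.

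First I would make precise the dictionary between the level structure: $U_0(N_0)$ corresponds to $\Gamma_0(N_0)$, and the CM cycle $Z_{\Omega, 1_{U_0(N_0)_S} \otimes f^S}$ on $Y(N)$ pushes down (or is built from) the Heegner cycle on the Kuga--Sato variety over $X_0(N_0)$, since the prime factors of $N_0$ are split in $K$ so that the Heegner condition is satisfied in the relevant sense. Then S.\ Zhang's ``unramified modularity of generating series of heights'' \cite[Theorem, \S4]{Zha97} says that $m \mapsto \pair{Z_{\Omega,f_S}, T_m Z_{\Omega^{-1}}}$ is the $m$-th coefficient of a holomorphic cusp form $\phi$ of weight $2k$ and level $\Gamma_0(N_0)$ (twisted appropriately by $\Omega$). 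I would feed this into the argument exactly as \eqref{concl} was fed into the proof of Proposition \ref{strongmodularity1}: the unramified spectral decomposition of $\phi$ is supported on the $\sigma \in \cA^{U_0(N_0)}$ with the right central character, and each unramified Hecke eigencomponent corresponds to such a $\sigma$. Therefore, if $f^S \in \cH^S$ annihilates all $\pi \in \cA^{V_{S_\fn} U^{S_\fn}}$ (and a fortiori all $\pi \in \cA^{U_0(N_0)} \subset \cA^{V_{S_\fn} U^{S_\fn}}$ for suitable $V_{S_\fn}$), then the $m$-th coefficient of $\phi$ after applying $f^S$ vanishes for all $m$, hence $\pair{Z_{\Omega, 1_{U_0(N_0)_S} \otimes f^S}, Z_{\Omega^{-1}}} = 0$, which by \eqref{ydy} and the projection formula gives $H(1_{U_0(N_0)_S} \otimes f^S) = 0$ against any $f_0$. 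This is precisely the conclusion of Proposition \ref{strongmodularity1} for this $f_S$.

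The main obstacle will be the bookkeeping needed to match S.\ Zhang's classical/geometric setup in \cite{Zha97} --- which works with a fixed Kuga--Sato variety over $X_0(N_0)$, classical Hecke operators $T_m$ with $(m,N_0)=1$, and a scalar-valued generating series --- with the adelic, $\Omega$-twisted, $\vil_N$-formalism of the present paper, including the normalization of the Hecke action by $[\det g]^{-(k-1)}$ in \eqref{normalization} and the precise meaning of ``$\Omega$-component''. In particular I need to check that the twist by the anticyclotomic character $\Omega$ is compatible with S.\ Zhang's generating series being a genuine (holomorphic) cusp form, and that its level and nebentypus land it inside the span of $\sigma$'s in $\cA$; this uses that the prime factors of $N_0$ are split in $K$ (so $\Omega$ is unramified at them and the Heegner hypothesis holds). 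The archimedean holomorphy --- that the generating series is actually a holomorphic cusp form of weight $2k$, not merely a quasimodular or nearly-holomorphic form --- is exactly \cite[\S4, \S5]{Zha97} and is what makes the unramified spectral decomposition land in $\cA$; I would cite this rather than reprove it. Once the dictionary is set up, the deduction is formally identical to the proof of Corollary \ref{445} via Lemma \ref{unramShimizu} and Lemma \ref{444}, with the role of \eqref{concl} played by S.\ Zhang's theorem, so I expect no new analytic difficulty beyond the translation.
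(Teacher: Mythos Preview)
Your plan is correct and rests on the same input as the paper's proof: S.~Zhang's result in \cite{Zha97} that for $f_S=1_{U_0(N_0)_S}$ the sequence $n\mapsto H(f_S\otimes A_n)$ is the sequence of Fourier coefficients of an element of $S_{2k}(N_0)$. From there you propose to spectrally decompose this cusp form directly inside $S_{2k}(N_0)$, note that each Hecke eigencomponent corresponds to some $\sigma\in\cA^{U_0(N_0)}$ (which is unramified at every $p\in S_\fn$, hence lies in $\cA^{V_{S_\fn}U^{S_\fn}}$ for any $V_{S_\fn}$), and conclude.

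The paper does something formally a bit different. Rather than arguing from the bare fact that the generating series is cuspidal, it records Zhang's identity in the form
\[
H(1_{U_0(N_0)_S}\otimes A_n)=a\, I(1,\Phi_{N_0})_{\psi_n}+b\, J(1,\Phi_{N_0})_{\psi_n},
\]
uses the Fourier--Hecke relation together with Lemma~\ref{equishim} (and its $J$-analog in Lemma~\ref{jequishim}) to rewrite this as the promised identity \eqref{conc1} with $\Phi=r(f_S\otimes A_n)\Phi_0$, and then invokes Lemmas~\ref{unramShimizu} and~\ref{jequishim} to kill $I(g,\Phi)$ and $J(g,\Phi)$. What this buys is a uniform shape with \eqref{concl}: the height is expressed through the \emph{same} kernels $I,J$ that were already analyzed, so Proposition~\ref{strongmodularity1} is finished by applying lemmas already on the shelf, and the explicit identity \eqref{conc1} is available for reference. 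Your direct route avoids this detour and is shorter; the only cost is that you do not obtain \eqref{conc1} as a byproduct.

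One small correction: you should not say the deduction runs ``via Lemma~\ref{unramShimizu} and Lemma~\ref{444}''. Lemma~\ref{444} is about the coherent theta terms $\theta_{d,p},\theta_{l,p}$, which never appear in Zhang's identity (there is no Assumption~\ref{asmp2'}, hence no decomposition \eqref{concl} and no such terms). In your route the replacement for those lemmas is just the elementary finite spectral decomposition of $S_{2k}(N_0)$; in the paper's route it is Lemma~\ref{unramShimizu} together with the $J$-part of Lemma~\ref{jequishim}.
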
 
\begin{proof}
   The theorem  follows from a result of S. Zhang  \cite[Theorem 0.2.1]{Zha97}, explained as follows.

Let $S_{2k}(N_0)$ be the space of $U_0(N_0)$-invariant automorphic forms of $\GL_2(\BQ)$ holomorphic of weight $2k$.
 For a positive integer $n$ with prime factors outside $S$, let $A_n\in \cH^S$ be as in Remark \ref{yongshangl}.  
Let $\psi_n=\psi(n\cdot)$. 
By  \cite[Theorem 0.2.1]{Zha97}, there exists $ \Phi_{N_0,S}\in \cS(\BB_S^\times)$ such that  for 
\begin{equation*}   \Phi_{N_0}=  \Phi_{N_0,S} \bigotimes \bigotimes  1_{\RM_2(  \wh \BZ^S)\times \wh \BZ^{S,\times}}\bigotimes \Phi_\infty, \end{equation*}
where      $\Phi_\infty$ is the Gaussian as in \eqref{phiinf},
 we have 
$I(g,   \Phi_{N_0}), J(g,   \Phi_{N_0})\in S_{2k}(N_0)$, 
 and 
$$H(1_{U_0(N)_{ S}} \otimes A_n)= a \cdot I(1,   \Phi_{N_0})_{\psi_n}+b\cdot J(1,   \Phi_{N_0})_{\psi_n}. $$
Here  $a,b$ are constants independent of $n$ (indeed, $a,b$ are  multiples of $L(1,\eta^{\infty})$ and $L'(1,\eta^{\infty})$ respectively, see \cite[p. 271]{GZ}). 
Regard  $A_n$ as a Schwartz function on $ \GL_2(\BA^{S,\infty})$ via $(\BB^{S,\infty})^{\times}= \GL_2(\BA^{S,\infty})$.
Let  $\rho_{\GL_2}$ denote the usual Hecke action  by Schwartz functions.
 Then $$I(1,   \Phi_{N_0})_{\psi_n}=\lb\rho_{\GL_2}(A_n) I(1,   \Phi_{N_0})\rb_{\psi}, $$
 and $$J(1,   \Phi_{N_0})_{\psi_n}=\lb\rho_{\GL_2}(A_n) J(1,   \Phi_{N_0})\rb_{\psi}. $$
 Indeed, span $I(g,   \Phi_{N_0})$ (resp. $J(g,   \Phi_{N_0})$) as a linear combination of Hecke eigenforms (for all Hecke operators $A_n$'s).
  Then the corresponding equation follows from the classical relation  between Fourier coefficients and Hecke eigenvalues.
 On 
   $S_{2k}(N_0)$ (as , $ \rho_{\GL_2}(A_n) =\rho_{\GL_2}(A_n^\vee)$.
   Then by    Lemma \ref{equishim} and  Lemma \ref{jequishim}, we have 
\begin{equation}\label{conc1}H(1_{U_0(N)_{ S}} \otimes A_n)= a \cdot I(1,   \Phi )_{\psi}+b\cdot J(1,   \Phi )_{\psi}, \end{equation}
where $\Phi  = r\lb   1_{U_0(N)_{ S}} \otimes A_n\rb \Phi_0$, see \eqref{phip}.
As $A_n$'s span  $\cH^S$  modulo the center  of $\GL_2$,
  the theorem is  implied by  Lemma \ref{unramShimizu} and  Lemma \ref{jequishim}.
\end{proof}

\begin{lem}
 
   Functions satisfying Assumption \ref{asmp2'} and   left translations  of    $1_{U_0(N_0)_{ S}}$ by   $\BA_{K,S}^\times $ span $\cS(\BB_S^\times)$.
   \end{lem}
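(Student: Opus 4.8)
The plan is to reduce the assertion, via the tensor‑product structure $\cS(\BB_S^\times)=\bigotimes_{p\in S}\cS(\GL_2(\BQ_p))$, to an elementary spanning statement on the torus $\BA_{K,S}^\times$, sitting in $\BB_S^\times=\prod_{p\in S}\GL_2(\BQ_p)$ via the fixed embedding $\BA_K^\times\incl\GL_2(\BA^\infty)$. First I would identify the span of the functions satisfying Assumption \ref{asmp2'}. For each finite $p$, restriction to the closed subset $K_p^\times$ is a surjection $\res_p\colon\cS(\GL_2(\BQ_p))\to\cS(K_p^\times)$ whose kernel consists of the functions vanishing on $K_p^\times$; hence $\res:=\bigotimes_{p\in S}\res_p\colon\cS(\BB_S^\times)\to\cS(\BA_{K,S}^\times)$ is surjective with kernel $\sum_{p_0\in S}(\ker\res_{p_0})\otimes\bigotimes_{p\ne p_0}\cS(\GL_2(\BQ_p))$. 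A pure tensor $\bigotimes_p f_p$ satisfies Assumption \ref{asmp2'} exactly when some $f_{p_0}\in\ker\res_{p_0}$, so the span $W$ of all functions satisfying Assumption \ref{asmp2'} equals $\ker\res$, and $\cS(\BB_S^\times)/W\cong\cS(\BA_{K,S}^\times)$. Thus the lemma is equivalent to: the $\res$‑images of the left $\BA_{K,S}^\times$‑translates of $1_{U_0(N_0)_S}$ span $\cS(\BA_{K,S}^\times)$.

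Next I would make these images explicit. Since $U_0(N_0)_S$ is a group and $t\in\BA_{K,S}^\times$, one has $\BA_{K,S}^\times\cap tU_0(N_0)_S=t\bigl(\BA_{K,S}^\times\cap U_0(N_0)_S\bigr)$, so the $\res$‑image of the translate by $t$ is the indicator of the coset $tV$, where $V:=\BA_{K,S}^\times\cap U_0(N_0)_S$ is an open compact subgroup of $\BA_{K,S}^\times$. As $t$ varies, these span all compactly supported right‑$V$‑invariant functions on $\BA_{K,S}^\times$. To finish, one must upgrade ``right‑$V$‑invariant'' to all of $\cS(\BA_{K,S}^\times)$, and this is where the hypotheses on $N_0$ enter: the prime divisors of $N_0$ split in $K$, so at such $p$ the group $K_p^\times$ is a split torus and the factor $K_p^\times\cap U_0(N_0)_p$ can be analysed explicitly; moreover, since Proposition \ref{strongmodularity1} for $f_S=1_{U_0(N_0)_S}$ is available for every admissible $N_0$ by Theorem \ref{deyt}, one is free to enlarge $N_0$ and combine the coset indicators $1_{tV'}$ coming from deeper levels $N_0'$ with elements of $W$ to reach arbitrarily fine cosets, recovering all of $\cS(\BA_{K,S}^\times)$.

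The main obstacle is precisely this last step: controlling $\BA_{K,S}^\times\cap U_0(N_0)_S$ and verifying that, after spanning and working modulo $W$, the cosets produced by the $\BA_{K,S}^\times$‑translates of the various $1_{U_0(N_0)_S}$ are fine enough to generate $\cS(\BA_{K,S}^\times)$. The places of $S$ that are inert or ramified in $K$ require separate attention, since $N_0$ is supported only at split primes; but there $U_0(N_0)_p=\GL_2(\BZ_p)$ and the corresponding factor of $W$ is just $\ker\res_p$, which is harmless for the reduction. Once the spanning on $\BA_{K,S}^\times$ is established, the lemma follows immediately, and with it Proposition \ref{strongmodularity1} for arbitrary $f_S$ by linearity of $H$ in $f_S$.
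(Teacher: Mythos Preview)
Your reduction via the restriction map $\res\colon\cS(\BB_S^\times)\to\cS(\BA_{K,S}^\times)$ is essentially the paper's argument, just organized globally rather than inductively. The paper inducts on $|S|$: for a chosen $p\in S$ it asserts the local claim that $\cS(\BB_p^\times\setminus K_p^\times)$ together with left $K_p^\times$-translates of $1_{U_0(N_0)_p}$ span $\cS(\BB_p^\times)$, then concludes that the total span contains $\cS(\BB_p^\times)\otimes 1_{U_0(N_0)_{S\setminus\{p\}}}$ and all its $\BA_{K,S}^\times$-translates, reducing the statement to $S\setminus\{p\}$. After passing to $\cS(K_p^\times)=\cS(\BB_p^\times)/\ker\res_p$, that local claim is exactly the spanning statement on $\cS(K_p^\times)$ that you isolate as the crux.

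The obstacle you flag is genuine, and the paper's one-line local assertion does not address it. For fixed $N_0$ one has $V_p:=K_p^\times\cap U_0(N_0)_p$ a fixed open compact subgroup of $K_p^\times$; in fact $V_p=\cO_{K_p}^\times$ whenever the embedding lands in $\GL_2(\BZ_p)$, and at a split prime $p$ (with $K_p^\times$ conjugate to the diagonal torus) one gets $V_p=\BZ_p^\times\times\BZ_p^\times$ regardless of the $p$-adic valuation of $N_0$. Hence left $K_p^\times$-translates of $1_{V_p}$ span only the right-$V_p$-invariant part of $\cS(K_p^\times)$, not all of it, and your proposed remedy of varying $N_0$ through products of split primes does not shrink $V_p$. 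So both your sketch and the paper's proof share the same gap at this step; as literally stated the lemma appears to require either additional input (for instance bi-translation, or passing to a weaker target space sufficient for Proposition~\ref{strongmodularity1}) or a correction.
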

   \begin{proof}  Let $W\subset \cS(\BB_S^\times)$ be the span.
    Let $p\in S$. Then  $\{f_p\in \cS(\BB_p^\times-K_p^\times)\}$ and  left translations  of    $1_{U_0(N_0)_{ p}}$ by   $ {K_p}^\times $ 
   span $\cS(\BB_p^\times)$.
   Thus  $W$ contains   $\cS(\BB_p^\times)\otimes \{1_{U_0(N_0)_{S-\{p\}}}\}$ and its  left translations  by   $\BA_{K,S}^\times $.
    Thus the lemma is reduced to the same statement for $S-\{p\}$. So the lemma is proved by  induction.
   \end{proof}
   Note that the truth of $ H( f_S\otimes f^S)=0$ and the right invariance of $f_S$  do not change  if we   left translate   $f_S$ by $\BA_{K,S}^\times $.
   Thus Proposition \ref{strongmodularity1}  follows from
 Corollary  \ref{445} and Theorem \ref{deyt}.

 \subsubsection{Proof of  Theorem \ref{strongmodularity}}\label{remove1}       
  We shall prove   Theorem \ref{strongmodularity}  for $ \ol{CM}(\wt \pi)$,  for which our previous set-up is more handy to use.

 For $f_{S}\in \GL_2(\BA_S)$, 
define a subspace of $CM(\Omega^{-1} ,\cH^S)$ by $$ CM(\Omega ^{-1},f_{S}\cH^S) =\{  Z_{\Omega^{-1},f_{S}\otimes f^S } :  f ^S\in \cH^S\}.$$
       Let $ \ol {CM}(\Omega^{-1} ,f_{S}\cH^S) $ be the image of $ CM(\Omega^{-1} ,f_{S}\cH^S) $ in $\ol {CM}(\Omega^{-1})$. 
Recall that
 by     the strong multiplicity one theorem \cite{PS}, $L_{\pi^S}$'s are pairwise non-isomorphic.  
 Claim: there is an open compact subgroup $V_S\subset  \GL_2(\BA_S)$ such that  $ \ol{CM}(\Omega^{-1},f_{S}\cH^S)$,  as an $\cH ^S$-module,
       is  a direct sum  of  $L_{ \pi^S}$'s of multiplicity  at most one, where $\pi\in \cA^{V_S U^S  }$.       
     Then  Theorem \ref{strongmodularity}  follows. 

For the claim, we want to show that  the    $ \cH^S$
action on $ \ol{CM}(\Omega^{-1},f_{S}\cH^S)$ factors through  its action 
on  a direct sum  of  $L_{ \pi^S}$'s. By  \eqref{ZTZ0} and \eqref{ydy},            it is enough to show that if $f^S$ acts  on  $ \pi$ as 0  for  all   $\pi\in \cA^{V_SU^S}$, then     $H( f_0*(f_S^\vee\otimes f^{S}))=0$  for all $f_0\in \cS(\BB^{\infty,\times})$.
This is  Proposition \ref{strongmodularity1}.

  \subsection{Proofs of  Theorem \ref{strongermodularity} and Theorem \ref{Vani}}\label{removeer} 
Similar to \S \ref{remove}, our strategy to prove the modularity   is  to show that 
 $H(f)$ vanishes under suitable conditions, precisely Proposition \ref{strongmodularity100} below. This is again done by showing the vanishing of the right hand side of
 \eqref{concl}.  
       \begin{prop} \label{strongmodularity100} Let  $f_S \in \cS(\GL_2(\BA_S)) $ be right $U_S$-invariant such that Assumption \ref{asmp2'} holds. Let $V_{S_\nspl}\subset U_{S_\nspl}$ be as in   Lemma \ref{unramShimizu} and  Lemma \ref{444}.   
    Assume that  $f ^S\in \cH^S$ acts as 0 on all representations in $\cA^{V_{S_\nspl}U^{S_\nspl}}$ other than $\pi$.     Then $ H( f_S\otimes f^S)=0$ under condition  (1) or (2):
    \begin{itemize}  
 \item[(1)]    $L (1/2 ,\pi_K\otimes \Omega) =0$ and  $f_S$  acts as 0 on $\pi$;
  
 \item[(2)]   $\ep (1/2 ,\pi_{K_v}\otimes \Omega_v)\Omega_v(-1) = -1$ for more than one  $v<\infty$.
\end{itemize}
\end{prop}
We use the proofs  of Lemma \ref{unramShimizu} and  Lemma \ref{444}. 
The new ingredient is  to apply  the Waldspurger formula under the condition $L (1/2 ,\pi_K\otimes \Omega) =0$.

 \begin{proof} 
Let  $\Phi  =  r\lb   f_S\otimes f^S\rb \Phi_0 $ be as in \eqref{phip}. 
By \eqref{concl}, we only need to
  show that for every  cusp form $\phi$ of $\GL_{2,\BQ}$, the  Petersson pairings  $\pair{ I(g,\Phi) ,\phi}$,
  $\pair{\theta_{d,p}(g,\Phi),\phi} $ and $\pair{ \theta_{l,p}(g,\Phi),\phi}  $ for $p\in S_\nspl$  are 0.
   As in the proof of Lemma \ref{unramShimizu} and Lemma \ref{444}, let $\sigma$  be a cuspidal automorphic representation of $\GL_{2,\BQ}$ 
with  central character   $\Omega^{-1}|_{\BA^\times}$ and 
$\sigma_\infty$ holomorphic discrete   of weight $2k$. And we  may assume that $\phi\in \sigma$.  

First, we  claim that  $\pair{\theta_{d,p}(g,\Phi),\phi} $ and $\pair{ \theta_{l,p}(g,\Phi),\phi}  $  are 0  for all  $p\in S_\nspl$.  
The proof is as follows.
Let $\pi'$ be the Jacquet--Langalnds correspondence of $\sigma$ to $B(p)^\times$. 
By the  reasoning in the paragraph of  \eqref{haoln} in the proof of  Lemma \ref{444} and our assumption on $f ^S\in \cH^S$, $\pair{ \Pr  \theta(\cdot,h,d_{\Phi_p}\otimes \Phi ^p) ,\phi}=0$  unless   $\pi'^{S,\infty}=\pi^{S,\infty}$ as representations of $B(p)^\times(\BA^{S,\infty})\cong 
(\BB^{S,\infty})^\times$. 
Assume this is the case. Then $\pi$ is the Jacquet-Langlands correspondence of $\sigma$ to $\BB^\times$.
In particular,
 $L (1/2 ,\pi'_K\otimes \Omega)=L (1/2 ,\pi_K\otimes \Omega)  $, and $\pi'^{p}=\pi^{p}$ as representations of $B(p)^\times(\BA^{p})\cong 
(\BB^{p})^\times$. 
So under condition (1), as
 $L (1/2 ,\pi_K\otimes \Omega) =0$, by   the Waldspurger formula \cite[1.4.2]{YZZ} (or more directly, the fourth equation on \cite[p 44]{YZZ}), the claim follows. 
Now assume condition (2),
  $\ep (1/2 ,\pi_{K_v}\otimes \Omega_v)\Omega_v(-1) = -1$ for more than one  $v<\infty$.  (Such $v$ is necessary in $S_\nspl$.)
 In particular, there is $v\in  S_\nspl-\{p\}$ such that   $\ep (1/2 ,\pi'_{K_v}\otimes \Omega_v)\Omega_v(-1) = -1$.
 By  the theorem of Tunnell \cite{Tun} and  Saito \cite{Sai},
  $\Hom_{K_{v}} (\pi_v'\otimes \Omega_v,\BC)=0$.
  Recall that as  a function on $h \in \lb B(p)^\times\times B(p)^\times\rb(\BA)$,  $\pair{ \Pr  \theta(\cdot,h,d_{\Phi_p}\otimes \Phi_0 ^p) ,\phi}$ lies in the global Shimizu lifting $\pi'\boxtimes\wt\pi'$   of $\sigma$ \cite[2.2]{YZZ}.    
So by the definition \eqref{intrep1} of $ \theta_{d,p}(g,\Phi)  $, $\pair{\theta_{d,p}(g,\Phi),\phi} =0$. Similarly, $\pair{ \theta_{l,p}(g,\Phi),\phi}  =0$.  Thus the claim follows.

Now,  we prove that $\pair{ I(g,\Phi) ,\phi}=0$.
By the reasoning in the proof of Lemma \ref{unramShimizu} and   the assumption on $f^S$, the Petersson pairing $\pair{I(g,\Phi),\phi}$ is 0 unless   the Jacquet-langlands correspondence of $\sigma$ to $\BB$ is $\pi$.
    Assume (1) so that   we are in the case of  \eqref{use2}. Then $ I(g,\Phi )=0$ by the assumption on $f_S$.
    Assume (2). There is $v\in  S_\nspl-\{p\}$ such that   $\ep (1/2 ,\pi_{K_v}\otimes \Omega_v)\Omega_v(-1) = -1$.
Then
     both  \eqref{use2} and  \eqref{use1} are 0 by  the    theorem of Tunnell \cite{Tun} and  Saito \cite{Sai}.
  \end{proof}
\begin{rmk}In condition (1), the part for $f_S$ is only used in proving  that  $\pair{ I(g,\Phi) ,\phi}=0$.
 \end{rmk}

    We shall prove   Theorem \ref{strongermodularity} and Theorem \ref{Vani}  for $ \ol{CM}(\wt \pi)$,  for which our previous set-up is more handy to use.

    Let ${T }$ be a finite set of  finite places of $\BQ$ as in Theorem \ref{strongmodularity}. (We save the  notation $S$   for later use.)
 For  $f\in \cS(\GL_2(\BA_T))\otimes \cH^T$, 
let $z_{\Omega^{-1},f }$ be      the image of $Z_{\Omega^{-1},f}$ in $\ol {CM}(\Omega^{-1})$.            
For $\pi\in \cA^{U^T}$, let $z_{\Omega^{-1},f,\wt\pi}$ be the   $L_{\pi^T} $-component of $z_{\Omega^{-1},f}$ via Theorem \ref{strongmodularity}.  
Then the conclusion in            Theorem \ref{strongermodularity} is then that the    $ \cH^T$
action on $\ol{CM} (\wt \pi)$ factors through  its action 
on $\pi_T$. 
  By  \eqref{ZTZ0},            Theorem \ref{strongermodularity} is equivalent to (1) of the following theorem. Similarly,    Theorem \ref{Vani} is equivalent to (2) of the following theorem.       \begin{thm}  \label{strongermodularitylem0}

      (1)    Assume that   $L (1/2 ,\pi_K\otimes \Omega) =0$.  If    $ f_{T}\in \GL_2(\BA_T)$ acts as 0 on $\pi_T$, then $z_{\Omega^{-1},f_T^\vee\otimes f^T,\wt\pi}=0$ for all 
  $f^T\in\cH^T$.  
  
  (2)  Assume that     $\ep (1/2 ,\pi_{K_p}\otimes \Omega_p)\Omega_p(-1) = -1$ for more than one  $p<\infty$. Then $z_{\Omega^{-1},f_T^\vee\otimes f^T,\wt\pi}=0$ for all 
 $f_{T}\in \GL_2(\BA_T)$ and  $f^T\in\cH^T$. \end{thm}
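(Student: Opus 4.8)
The plan is to reduce both parts of Theorem \ref{strongermodularitylem0} to the comparison \eqref{concl}, exactly as Theorem \ref{strongmodularity1} was reduced to it in \ref{mod2'}, but now tracking the $\pi$-isotypic component instead of merely the vanishing of $H(f)$. Fix $\pi\in\cA$ and enlarge $T$ (harmlessly) so that it contains $2$ and all finite places ramified in $K$ or for $\Omega$; write $S=T$ in the notation of \ref{Local comparison II}. By \eqref{ydy} and \eqref{ZTZ0}, showing $z_{\Omega^{-1},f_T^\vee\otimes f^T,\wt\pi}=0$ for all $f^T\in\cH^T$ is equivalent to showing that the $\pi$-component (under the $\cH^S$-action, via strong multiplicity one as in \ref{proofs}) of $H(f_0*(f_T^\vee\otimes f^T))$ vanishes for all $f_0\in\cS(\BB^{\infty,\times})$; and since convolution by $f_0$ and the $U(N)$-level can be absorbed, it suffices to control the $\pi$-component of $H(f)$ for all $f=f_S\otimes f^S$ with $f_S$ a pure tensor. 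First I would dispose of the case where $f_S$ satisfies Assumption \ref{asmp2'}: then \eqref{concl} applies, and I take the $\pi$-component of each term on the right. For the mixed theta terms $\theta_{d,p},\theta_{l,p}$, the argument of Lemma \ref{444} shows their Petersson pairing against any cusp form lands in the Shimizu lift $\pi'\boxtimes\wt\pi'$ of the Jacquet–Langlands transfer to $B(p)^\times$; so their $\pi$-component on $\GL_{2,\BQ}$ is governed by whether $\pi'$ exists, i.e.\ by $\ep(1/2,\pi_{K_p}\otimes\Omega_p)\Omega_p(-1)$. For the main term $I(1,\Phi)_\psi$, the integral representation \ref{mod3} gives its Petersson pairing against $\phi\in\sigma$ in terms of $L'(1/2,\pi_K\otimes\Omega)$ (when $L(1/2,\pi_K\otimes\Omega)=0$, via \eqref{use2}) or $L(1/2,\pi_K\otimes\Omega)$ times a ramified local factor (via \eqref{use1}).

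\textbf{Part (1).} When $L(1/2,\pi_K\otimes\Omega)=0$: by \eqref{use2}, $\pair{I(\cdot,\Phi),\phi}$ for $\phi\in\sigma\leftrightarrow\pi$ is a nonzero multiple of $\alpha^\sharp_\pi(\Theta(\Phi\otimes W))$, and since $\Theta$ is $\BB^\times\times\BB^\times$-equivariant and $\Phi=r(f)\Phi_0$, the dependence on $f_T$ factors as $\pi_T(f_T)$ acting on the $T$-component of $\Theta_T(\Phi_{0,T}\otimes W_T)$. Hence if $f_T$ acts as $0$ on $\pi_T$ then the $\pi$-component of $I(1,\Phi)_\psi$ vanishes; and the same equivariance (cf.\ \eqref{haoln}) kills the $\pi$-components of the $\theta_{d,p},\theta_{l,p}$ terms once we also let $f^T$ be arbitrary — but we must be careful here: the statement only assumes $f_T$ acts as $0$ on $\pi_T$, not on $\pi'_T$ for the quaternionic transfers. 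However, since $L(1/2,\pi_K\otimes\Omega)=0$ forces (by the theorem of Tunnell \cite{Tun} and Saito \cite{Sai}, since $\Omega_\infty$ is trivial so the archimedean local sign is $-1$) an \emph{odd} number of finite ramified local signs to be $-1$, the relevant incoherent quaternion algebra is split at $\infty$ only if there is at least one such finite place; one then checks, as in Lemma \ref{444}, that the transfer $\pi'$ to $B(p)^\times$ at a place $p\in S_\fn$ where the sign is $+1$ has $\pi'_p\cong\pi_p$ locally, so $f_p$ acting as $0$ on $\pi_p$ also kills $\pi'_p$. Combining, the $\pi$-component of the whole right side of \eqref{concl} is $0$, hence $z_{\Omega^{-1},f_T^\vee\otimes f^T,\wt\pi}=0$ for $f_S$ satisfying Assumption \ref{asmp2'}. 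To remove Assumption \ref{asmp2'}, I invoke the span lemma and Theorem \ref{deyt} verbatim as in \ref{mod2'}: functions satisfying \ref{asmp2'} together with $\BA_{K,S}^\times$-translates of $1_{U_0(N_0)_S}$ span $\cS(\BB_S^\times)$, and for $f_S=1_{U_0(N_0)_S}$ the identity \eqref{conc1} expresses $H(f)$ through $I(1,\Phi)_\psi$ and $J(1,\Phi)_\psi$, to both of which the above $\pi$-component analysis (using \eqref{use2} for $I$ and its $J$-analogue) applies; the $\BA_{K,S}^\times$-translation does not affect the conclusion.

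\textbf{Part (2).} When $\ep(1/2,\pi_{K_p}\otimes\Omega_p)\Omega_p(-1)=-1$ for more than one finite $p$: I want the $\pi$-component of \emph{every} term in \eqref{concl} to vanish unconditionally in $f$. For the $\theta_{d,p},\theta_{l,p}$ terms: the relevant transfer $\pi'$ lives on the quaternion algebra $B(p)$ ramified exactly at $p$ and $\infty$, which is coherent; since $\pi$ has at least two finite ``bad'' places, $\pi$ cannot be globally realized on $B(p)^\times$ (its local transfers would have to be nonzero everywhere but the product of local signs over the bad places gives the wrong Hasse invariant), so the Shimizu lift $\pi'\boxtimes\wt\pi'$ is zero as an automorphic representation — hence these terms have vanishing $\pi$-component for all $f$. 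For the $I(1,\Phi)_\psi$ term: if $L(1/2,\pi_K\otimes\Omega)=0$ we are in Part (1)'s situation, but actually the two-bad-places hypothesis forces $\ep(1/2,\pi_K\otimes\Omega)=(-1)\cdot(\text{product of finite signs})$ — with at least two $-1$'s among the finite signs and the $-1$ at $\infty$, the global root number can be either sign; in the $\ep=+1$ case $L(1/2,\pi_K\otimes\Omega)$ may be nonzero, and then \eqref{use1} shows $\pair{I(\cdot,\Phi),\phi}$ is proportional to $(P^\circ_p)'(0,\Omega_p,\Phi_p,W_p)\,\alpha^\sharp_{\pi^p}(\cdots)$, and by the theorem of Tunnell and Saito $\alpha^\sharp_{\pi_q}=0$ at a \emph{second} bad place $q\neq p$, forcing the whole product to be $0$; in the $\ep=-1$ case $L(1/2,\pi_K\otimes\Omega)=0$ automatically and \eqref{use2} gives $\alpha^\sharp_\pi(\Theta(\Phi\otimes W))=0$ by Tunnell–Saito at any bad place. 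Either way the $\pi$-component of $I(1,\Phi)_\psi$ vanishes for all $f$, and one removes Assumption \ref{asmp2'} by the same span/Theorem \ref{deyt} argument. Since $\Omega\leftrightarrow\Omega^{-1}$ is symmetric, the statement for $\ol{CM}(\pi)$ follows identically, yielding Theorem \ref{Vani}; and in Part (1) the resulting semisimplicity plus the $\Hom$-space identification gives Conjecture \ref{modconj}, i.e.\ Theorem \ref{strongermodularity}. \emph{The main obstacle I expect} is the bookkeeping of local root numbers and Jacquet–Langlands transfers needed to see that ``$f_T$ kills $\pi_T$'' also kills the relevant quaternionic transfers $\pi'_p$ at the unramified-sign places in Part (1), and dually that the two-bad-places hypothesis genuinely forces both the Shimizu lift to vanish and a $\alpha^\sharp$-factor to vanish in Part (2) regardless of the global sign; making this parity argument airtight, uniformly over all test functions, is the delicate point.
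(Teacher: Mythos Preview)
Your overall architecture (take the $\pi$-component of \eqref{concl}) is right, but there are two genuine gaps, and the paper's proof handles both differently.

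\textbf{The $\theta$-terms in Part (1).} Your argument for why $\theta_{d,p}(g,\Phi)$ and $\theta_{l,p}(g,\Phi)$ have vanishing $\pi$-component is wrong. You try to deduce it from ``$f_T$ kills $\pi_T$'' by claiming that at a good-sign place $p$ one has $\pi'_p\cong\pi_p$; but the relevant $\pi'_p$ is a representation of $B(p)_p^\times$, the \emph{division} algebra at $p$, so it is never isomorphic to $\pi_p$, and the hypothesis on $f_T$ gives you no control at the one place where the groups differ. The paper's proof (Proposition \ref{strongmodularity100}) instead observes that the double integral over $K^\times\bsl\BA_K^{\infty,\times}$ in the definition \eqref{intrep1} of $\theta_{d,p}$ is a toric period of an element of $\pi'\boxtimes\wt\pi'$; by the Waldspurger formula this period is a multiple of $L(1/2,\pi'_K\otimes\Omega)=L(1/2,\pi_K\otimes\Omega)=0$. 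So the $\theta$-terms vanish \emph{unconditionally} in $f$ under the hypothesis $L(1/2)=0$, without any appeal to $f_T$. (The remark after the proof of Proposition \ref{strongmodularity100} makes this explicit: the condition on $f_S$ is used only for the $I$-term.)

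\textbf{Removing Assumption \ref{asmp2'}.} Your span-lemma route fails for Part (1): the decomposition of $f_S$ into functions satisfying \ref{asmp2'} and translates of $1_{U_0(N_0)_S}$ does not preserve the hypothesis ``$f_T$ acts as $0$ on $\pi_T$'', so you cannot apply the $\pi$-component analysis to each piece. The paper avoids this entirely by a trick: choose an auxiliary inert place $p_0\notin T$, set $S=T\cup\{p_0\}$, and take the $\cH^T$-projector $f_1^T=f_{1,p_0}\otimes f_1^S$ where $f_{1,p_0}\in\cH_{p_0}$ is chosen to \emph{vanish on $K_{p_0}^\times$} and to act as identity on $\pi_{p_0}$. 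Then the convolved test function automatically satisfies Assumption \ref{asmp2'} at $p_0$, and Proposition \ref{strongmodularity100} applies directly, with no span argument needed.

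\textbf{Part (2).} Your claim that ``$\pi$ cannot be globally realized on $B(p)^\times$, so the Shimizu lift is zero as an automorphic representation'' is false: $B(p)$ is ramified only at $p$ and $\infty$, and since $\pi_\infty$ is discrete series and $\pi_p$ transfers (the local sign hypothesis forces $\pi_p$ to be discrete series), the global Jacquet--Langlands transfer $\pi'$ exists. The correct mechanism, which you do invoke for the $I$-term but not for the $\theta$-terms, is Tunnell--Saito at a \emph{second} bad place $v\in S_\fn\setminus\{p\}$: there $B(p)_v\cong\BB_v$, so $\pi'_v\cong\pi_v$ and $\Hom_{K_v^\times}(\pi'_v\otimes\Omega_v,\BC)=0$, which kills the toric period in \eqref{intrep1}. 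This is exactly the paper's argument.
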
  
     
   \begin{proof}
          
      By definition (see the beginning \ref{Height and derivatives1}), $z_{\Omega^{-1},f_T^\vee\otimes f^T  ,\wt\pi}  $ is given as follows. Let $\cA_1$
      be a finite subset  of $\cA $ containing $\pi$ such that  of $z_{ \Omega ^{-1}, f_T^\vee\otimes f^T}$    lies in
 the sum of the $L_{\wt\pi_1^T}$-components of $\ol {CM}(\Omega^{-1},\cH^T)$ over all $\pi_1\in \cA_1$. Let $f_1^T\in \cH^T$   act   as identity  on   $ \pi_1$   if $\pi_1=\pi$,
 and as 0  otherwise. 
 Then $z_{\Omega^{-1},f_T^\vee\otimes f^T  ,\wt \pi}  =z_{\Omega^{-1},(f_T^\vee\otimes f^T )* f_1^{T,\vee}  }  $.

We want  choose  suitable $\cA_1$ and $f_1^T$ so that we can apply Proposition \ref{strongmodularity100} to prove the theorem. Let $p_0\not \in T$ be a finite place of $\BQ$ inert in $K$ such that $f_{p_0}=1_{\GL_2(\BZ_p)}$.
Let $S=T\cup\{p_0\}$ and let $\cA_1$  be $\cA^{V_{S_\nspl}U^{S_\nspl}}$ in Proposition \ref{strongmodularity100}. 
Let $f_1^S\in \cH^S$  act   as identity  on   $ \pi_1\in \cA_1$   if $\pi_1=\pi$,
 and as 0  otherwise (the same requirement as $f^T$, replacing $T$ by $S$). Let 
 $f_{1,p_0}\in \cH_{p_0}$, the unramified Hecke algebra at $p_0$,  vanish  on $K_{p_0}^\times$ and act as   identity  on   $  \pi$ so that Assumption \ref{asmp2'} holds. (The existence of such $f_{1,p_0}$ is easy to prove.)  Let $f_1^T=f_{1,p_0}\otimes f_1^S$
    By \eqref{ydy},  the theorem follows from Proposition \ref{strongmodularity100}.
  \end{proof}

 \subsection{Aid from the weight 2 case}\label{aid} 
  We prove Lemma \ref{aidlem} in this subsection. Though it is purely local and purely analytic, we prove it via a global method and  use the work of Yuan, S. Zhang and W. Zhang \cite{YZZ} essentially.
 We expect that there  should be a local proof of  Lemma \ref{aidlem}.
 
 First,  we recall   \cite[(1.5.6)]{YZZ} (see also  \cite[7.4.3]{YZZ}, the second equation on page 227).
Let $\BB$ be as in \S \ref{Local comparison IImn}.
 Let    $\Phi^{(2)}=\Phi^{(2)}_p\otimes \Phi^{(2)}_\infty\otimes\Phi^{(2),p,\infty}\in \cS(\BB^\times)$ 
  such that  
   \begin{itemize}
\item[(1)]   $\Phi^{(2)}_p= \Phi_p$ where $\Phi_p$ is the one in Lemma \ref{aidlem};
\item[(2)]   
 $\Phi^{(2)}_\infty$ is the 
 standard   Gaussian used in \cite[4.1.1]{YZZ}  (same with our definition \eqref{phiinf} if we 
let $k=1$ in  \eqref{phiinf});
\item[(3)] $ \Phi^{(2),p,\infty}$  satisfies the degeneracy conditions in \cite[5.2]{YZZ} such that 
$ \alpha^\sharp_{\sigma^{(2),p}}(\Theta^p(\Phi^{(2),p}\otimes W^{(2),p}))\neq 0.$   \end{itemize}
 
 The existence of  such  $W_p^{(2)}$ and $  \Phi^{(2),p,\infty}$
is established in \cite[Proposition 5.8]{YZZ}.
Note that at $p$, we do not require the degeneracy condition  \cite[Assumption 5.3]{YZZ}. 
However,    the   analog of \eqref
{Cohp2} in this  setting still holds by  the same proof. Then  \cite[(1.5.6)]{YZZ}  still holds, 
which is an equation between automorphic forms of $\GL_2(\BQ)$ of weight 2:  \begin{equation} c Z(g,\Omega,\Phi^{(2)})= I(g,\Phi^{(2)}) +\sum_{v\in S_\nspl}(\theta_{d,v}(g,\Phi^{(2)}) -\theta_{l,v}(g,\Phi^{(2)}) ). \label{concl1}\end{equation}
Here $c$ is a nonzero constant, $Z(g,\Omega,\Phi^{(2)})$ is the height pairing between  a CM divisor and its  translation by the generating series of Hecke operators (which is an automorphic form!), and the terms on the  right hand side are defined as  in \S \ref {Conclusion}, with holomorphic projection of weight 2.

 \begin{rmk}  (1) For  $\theta_{d,v}(g,\Phi^{(2)})_\psi$  and $\theta_{l,v}(g,\Phi^{(2)})_\psi$, the holomorphic projection in \S \ref {Conclusion}  is  redundant.
 
 (2) To be precise, in the definition of $\theta_{l,v}(g,\Phi^{(2)})_\psi$,
  the  $l_{\Phi_v}$    is the one   in \cite[YZZ]{YZZ}. It is formed in the same way as the one in
    \eqref{lPp}, with a different    vertical divisor   of  the desingularized   deformation space $  \cM_{U_v}'$.
 So  it has  the same   properties with the one in
    \eqref{lPp}. (Note that we do not need to know the   vertical divisor explicitly.)

  \end{rmk}

 Let the notations be as in Lemma \ref{aidlem}. Then  
 \begin{itemize} \item $B(p)$ is the quaternion algebra over $\BQ$ only division  at $v=p$ and $v=\infty$;     
\item  $\pi_p'$ is an irreducible admissible representation of $B(p)_p^\times$ such that $\Hom_{K_{p}} (\pi_p'\otimes \Omega_p,\BC)\neq 0$;
\item $\pi_p$  is the Jacquet-Langlands correspondence of $\pi_p'$ to $\BB_p^\times$ with   $\Hom_{K_{p}} (\pi_p\otimes \Omega_p,\BC)=0$;
\item $W_p$ is in the $\psi_p^{-1}$-Whittaker model  of  the Jacquet-Langlands correspondence of $\pi_p'$ to $\GL_2(\BQ_p)$.
\end{itemize}
By Lemma \ref{Globalization}, there exists an automorphic representation $\pi^{(2)}$ of $B(p)^\times$ such that \begin{itemize}
\item 
  $\pi^{(2)}_p= \pi'_p$, and  $\pi^{(2)}_\infty$ is the trivial representation;
\item  $\Hom_{K_{v}} (\pi^{(2)}_v\otimes \Omega_v,\BC)\neq 0$ for every  place $v$ of $\BQ$.
 \end{itemize}
 Let $\sigma^{(2)}$ be the Jacquet-Langlands correspondence of $\pi^{(2)}$ to $\GL_2(\BQ)$. 
 Then  $W$ is in the $\psi_p^{-1}$-Whittaker model of $\sigma^{(2)}_p$.
 Let  $\phi^{(2)}\in \sigma^{(2)}$,
 and $\Phi^{(2)}\in \cS(\BB^\times)$ 
  such that  
 the       $\psi^{-1}$-Whittaker function $W^{(2)}$  of  $\phi^{(2)}$ satisfies $W^{(2)}=W_p^{(2)}\otimes W^{(2),p}$  with   $W_p^{(2)}=W_p$. 
 Consider the Petersson inner product of each term of \eqref{concl1} with $\phi^{(2)}$. 
\begin{itemize} \item Let $v\in S_\nspl$.  
Recall that as  a function on $h \in \lb B(v)^\times\times B(v)^\times\rb(\BA)$,  $$\pair{ \Pr  \theta(\cdot,h,d_{\Phi^{(2)}_v}\otimes \Phi_0 ^{(2),v}) ,\phi^{(2)}}$$ lies in the global Shimizu lifting $\pi(v)\boxtimes\wt\pi(v)$   of $\sigma^{(2)}$ \cite[2.2]{YZZ}. Here   $\pi(v)$ is the Jacquet--Langalnds correspondence of $\sigma^{(2)}$ to $B(v)^\times$. If $v\neq p$, then $\pi(v)_p=\pi_p$ as  representations of 
$B(v)_p^\times \cong 
\BB_p^\times$. 
Since $\Hom_{K_{p}} (\pi_p\otimes \Omega_p,\BC)=0$,
by   the definition \eqref{intrep1} of $ \theta_{d,v}(g,\Phi^{(2)})  $, $\pair{\theta_{d,v}(g,\Phi^{(2)}),\phi^{(2)}} =0$. Similarly, $\pair{ \theta_{l,v}(g,\Phi^{(2)}),\phi^{(2)}}  =0$.   

\item  By \cite[Theorem 3.22]{YZZ} and $\Hom_{K_{p}} (\pi_p\otimes \Omega_p,\BC)=0$, $\pair{Z(g,\Omega,\Phi^{(2)}),\phi^{(2)}} =0$. 

   \end{itemize}Thus \eqref{concl1}  implies 
   $$\pair{I(\cdot,\Phi^{(2)}),\phi^{(2)}} +\pair{ \theta_{d,p}(\cdot ,\Phi^{(2)}),\phi^{(2)}}-\pair{ \theta_{l,p}(\cdot ,\Phi^{(2)}),\phi^{(2)}} =0.$$
  Then by \eqref{use1} and the  condition (3) on $\Phi$, Lemma \ref{aidlem} is implied by the following lemma
 \begin{lem} 
  There is an  open  compact subgroup $U_p'$ of $B(p)_p^\times $ such that  
  the Petersson pairings      $\pair{   \theta(\cdot,h,d _{\Phi_p^{(2)}} \otimes  \Phi^{(2),p}) ,\phi^{(2)}}$ and 
   $\pair{   \theta(\cdot,h,l_{\Phi_p^{(2)}}\otimes  \Phi^{(2),p}) ,\phi^{(2)}}$  are 0  for all  $h \in \lb B(p)^\times\times B(p)^\times\rb(\BA)$ unless   $\pi^{(2)}_p$  has a nonzero $U_p'$-invariant vector.   
\end{lem}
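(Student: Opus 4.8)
The plan is to reduce the statement, just as the preceding discussion does, to a statement about the local divisors $d_{\Phi_p^{(2)}}$ and $l_{\Phi_p^{(2)}}$ on the desingularized Lubin-Tate deformation space, and then to invoke the equivariance of the global Shimizu lifting. First I would note that the theta kernels $d_{\Phi_p^{(2)}}\otimes\Phi^{(2),p}$ and $l_{\Phi_p^{(2)}}\otimes\Phi^{(2),p}$ are Schwartz functions on $B(p)^\times(\BA)$: this is Corollary \ref{finally} for $d_{\Phi_p^{(2)}}$ and \eqref{lPp} (together with Lemma \ref{finallyl}) for $l_{\Phi_p^{(2)}}$, and at the places away from $p$ they agree with the degenerate Schwartz data $\Phi^{(2),p}$. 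Consequently the theta series $\theta(g,h,d_{\Phi_p^{(2)}}\otimes\Phi^{(2),p})$ and $\theta(g,h,l_{\Phi_p^{(2)}}\otimes\Phi^{(2),p})$ are honest automorphic forms on $\GL_2(\BA)\times(B(p)^\times\times B(p)^\times)(\BA)$, so their Petersson pairings against $\phi^{(2)}$ make sense and, by the Rallis inner product / Shimizu theory recalled in \cite[2.2]{YZZ}, lie in the global Shimizu lift $\pi(p)\boxtimes\wt\pi(p)$ of $\sigma^{(2)}$, where $\pi(p)$ is the Jacquet-Langlands transfer of $\sigma^{(2)}$ to $B(p)^\times$. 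Since $\sigma^{(2)}_p=\pi^{(2)}_p$ as representations with the same Jacquet-Langlands class at $p$, we have $\pi(p)_p\cong\pi^{(2)}_p$.

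Next I would exploit the $\bigl(B(p)^\times\times B(p)^\times\bigr)(\BA)$-equivariance of the formation of the theta series in the $h$-variable (the second $B(p)^\times$ acting through $r(1,[1,\cdot])$), exactly as in the proof of Lemma \ref{444}. By Corollary \ref{finally} and Lemma \ref{finallyl}, there is an open compact subgroup $U_p'\subset B(p)_p^\times$ — uniform in $\pi_p$ and in $f_p$, since $\Phi_p=r(f_p)\Phi_{0,p}$ and both $d_p$ and $l_C$ are left $U_p'$-invariant for a $U_p'$ independent of these data — such that $d_{\Phi_p^{(2)}}$ and $l_{\Phi_p^{(2)}}$ are invariant under $r(1,[u,1])$ for $u\in U_p'$. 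Therefore both Petersson pairings, as vectors in $\pi(p)_p\boxtimes\wt\pi(p)_p$ at the $p$-component (tensored with the fixed pairing away from $p$), are fixed by $U_p'$ acting on the $\pi(p)_p=\pi^{(2)}_p$ factor. Hence if $\pi^{(2)}_p$ has no nonzero $U_p'$-invariant vector, both pairings vanish for every $h$, which is precisely the asserted lemma.

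The one point requiring care — and the place I expect the real work to be — is the uniformity of $U_p'$ across all admissible $\pi_p$ (equivalently all $\pi_p'$ with $\Hom_{K_p}(\pi_p'\otimes\Omega_p,\BC)\neq 0$) and all $f_p\in\cS(\GL_2(\BQ_p))$, since the statement of Lemma \ref{aidlem} demands that $U_p'$ not depend on these. The resolution is that $U_p'$ is extracted purely from geometry: it is the stabilizer of the vertical divisor $C$ on $\cS_{U_p}'$ appearing in Corollary \ref{arithmatch}, Corollary \ref{finally} and Lemma \ref{finallyl}, and this divisor depends only on the level $U_p$ of the deformation space, not on any representation-theoretic input. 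Since $\Phi_p=r(f_p)\Phi_{0,p}$ only changes $d_{\Phi_p}$ and $l_{\Phi_p}$ by convolving in the $g$-variable of the Weil representation — which commutes with left translation by $B(p)_p^\times$ on the kernels $d_p$, $l_C$ — the invariance group on the $h$-side is unaffected. Once this is in place, the lemma, and hence Lemma \ref{aidlem}, follows. I would conclude by remarking, as the authors anticipate, that a purely local proof should exist but the global route via \eqref{concl1} and \cite[Theorem 3.22, Proposition 5.8]{YZZ} is the one available here.
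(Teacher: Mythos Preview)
Your proposal is correct and follows essentially the same approach as the paper: both argue that the Petersson pairings, viewed as functions of $h$, lie in the global Shimizu lift $\pi^{(2)}\boxtimes\wt\pi^{(2)}$ of $\sigma^{(2)}$, and then use the $U_p'$-invariance of $d_{\Phi_p}$ and $l_{\Phi_p}$ (from Corollary~\ref{finally} and Lemma~\ref{finallyl}) together with the equivariance of the theta series to conclude that the pairings are $U_p'$-fixed in the $\pi^{(2)}_p$-factor. The paper's proof is terser (it cites Lemma~\ref{finallyl} and Lemma~\ref{morem}, the latter being an ingredient of Corollary~\ref{finally}), and your added paragraph on the uniformity of $U_p'$ in $\pi_p$ and $f_p$ is a helpful clarification of why this lemma suffices for Lemma~\ref{aidlem}, though not strictly part of the lemma's own proof.
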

\begin{proof} The proof is similar to (part of) the proof of Lemma \ref{444}.
As   functions on $h \in \lb B(p)^\times\times B(p)^\times\rb(\BA)$,    $\pair{   \theta(\cdot,h,d _{\Phi_p} \otimes \Phi^{(2),p}) ,\phi^{(2)}}$ and 
   $\pair{   \theta(\cdot,h,l_{\Phi_p}\otimes \Phi^{(2),p}) ,\phi^{(2)}}$   lie in the global Shimizu lifting \cite[2.2]{YZZ}  of $\sigma^{(2)}$, which is  
   $\pi^{(2)}\boxtimes\wt\pi^{(2)}$  \cite[2.2]{YZZ}.  Let $U_p' $ be as in Lemma \ref{finallyl} and Lemma \ref{morem}.
   By the $ \lb B(p)^\times\times B(p)^\times\rb(\BA)$-equivariance of the formation of the theta series,  $\pair{   \theta(\cdot,h,d _{\Phi_p} \otimes \Phi^{(2),p}) ,\phi^{(2)}}$ and 
   $\pair{   \theta(\cdot,h,l_{\Phi_p}\otimes \Phi^{(2),p}) ,\phi^{(2)}}$ 
are  invariant by $U_p' $, where  $U_p' $ acts on the $\pi^{(2)}$ -component of $\pi^{(2)}\boxtimes\wt\pi^{(2)}$.  
 Then the lemma follows. 
           \end{proof}

\end{document}